\let\originalleft\left
\let\originalright\right
\renewcommand{\left}{\mathopen{}\mathclose\bgroup\originalleft}
\renewcommand{\right}{\aftergroup\egroup\originalright}
\newcommand{\bwedge}{\raisebox{0.3ex}{${\scriptstyle\bigwedge\,}$}}
\newcommand{\smwedge}{{\scriptstyle \;\wedge\;}}
\newcommand{\ordinal}[1]{[$\;\!#1\,$]}
\newcommand{\sgn}[1]{\ensuremath{{\scriptstyle (-1)^{\scriptstyle #1}}}}
\newcommand{\R}{\mathbb{R}}
\newcommand{\Z}{\mathbb{Z}}
\newcommand{\N}{\mathbb{N}}
\newcommand{\I}{\infty}
\newcommand{\defeq}{\vcentcolon=}
\newcommand{\BIGOP}[1]{\mathop{\mathchoice
{\raise-0.22em\hbox{\huge $#1$}}
{\raise-0.05em\hbox{\Large $#1$}}{\hbox{\large $#1$}}{#1}}}
\def\clap#1{\hbox to 0pt{\hss#1\hss}}
\theoremstyle{plain}
\newtheorem{theorem}{Theorem}[section]
\newtheorem{proposition}[theorem]{Proposition}
\newtheorem{corollary}[theorem]{Corollary}
\newtheorem{definition}[theorem]{Definition}
\theoremstyle{remark}
\newtheorem*{remark}{Remark}
\newtheorem{example}{Example}
\title{Homotopy Poisson-n Algebras from N-plectic Structures}
\author{Mirco Richter}
\email{mirco.richter@email.de}
\date {\today}
\thanks{
email: \href{mailto:mirco.richter@email.de}{\tt mirco.richter@email.de}}
\begin{document}

\newlength{\mylength}
\setlength{\mylength}{\linewidth-2\multlinegap}

\begin{abstract}We associate a homotopy Poisson-n algebra to any higher symplectic structure, which generalizes the common symplectic Poisson algebra of smooth functions. This provides robust n-plectic prequantum data for most approaches to quantization.

UPDATE: It has been brought to my attention that the exterior product does not close on the exterior cotensors called Poisson cotensors in the present paper. Therefore the set of Poisson cotensors as presented, is not a homotopy Poisson-n algebra, quite yet. However for those Poisson cotensor that do multiply into Poisson cotensors under the exterior product, the computation remains valid and interesting, nonetheless.

Therefor this paper might better be seen as a hint towards homotopy Poisson-n algebras in higher symplectic geometry, rather then a finished theory. Despite this flaw, I choose not to withdrawal the paper, as I think the direction is still pretty interesting.
\end{abstract}
\maketitle
\section{Introduction}
The basic notion of higher symplectic geometry is very simple: Instead of
symplectic $2$-forms, consider closed forms of arbitrary degree. 

This  idea, however, has a remarkable long history. It 
only gradually emerged over the last few decades, mostly 
from an attempt to find a covariant Hamilton formalism 
for physical fields. In fact first origins can be traced back to the work of 
Vito Volterra \cite{V}, which was published as early as 1890.
Many different flavors appeared ever since, most of 
them with a strong motivation from real world applications.

Despite all the effort, one has to admit, that many such theories have a 
certain anachronistic touch. Often no full analog of the symplectic
Poisson algebra is known and this renders most of them
very different from their symplectic counterparts. In fact it 
effectively prohibits allot symplectic techniques, let alone the known 
approaches to quantization.

The present work aims to fill that gap. We show that the natural structure
to expect from higher symplectic data is not a Poisson, but a 
\textit{homotopy Poisson-n} algebra. 

Homotopy Poisson-$n$ algebras are relatively modern structures and an 
explicit description was only found recently \cite{GTV}.
Nevertheless they are very pleasant in the sense, that they
provide robust prequantum data. The abstract theory says, that they are
the homology of certain $\mathbb{E}_n$-algebras and in this 
sense at least a deformation quantization is always guaranteed to exist. 
In addition they fit into the folklore, that observables should be the homology 
of something.
\\\\
\noindent{\it Acknowledgements.} 
I would like to thank Igor Kanatchikov for a fruitful discussion on
products in multisymplectic geometry and Bruno Valette for helping me towards
an understanding of homotopy Poisson-$n$ algebras.
\section{Summary}
We suggest an $n$-plectic structure $(A,\mathfrak{g},\omega)$
to be a torsionless (\ref{torsionless}) Lie Rinehart pair $(A,\mathfrak{g})$ with
 a degree $(n+1)$ cocycle $\omega$,	chosen from the de Rham complex $\Omega^\bullet(\mathfrak{g},A)$
of exterior cotensors (\ref{n-plectic_struct}). 

The symplectic Poisson algebra of differentiable functions is then 
refined to the set of all de Rham cotensors 
$f\in \Omega^\bullet(\mathfrak{g},A)$, which 
 satisfy the constrained first order differential equation 
$$
\begin{array}{lcr}
i_x\omega=df & \text{ and }& i_y\omega =f, 
\end{array}
$$ 
for some, not necessarily unique, Hamilton tensor $x$ and Poisson 
constraint $y$. 

We write $\mathcal{P}ois(A,\mathfrak{g},\omega)$ for this solution set 
(\ref{Poisson_cotensors}) and call it the $n$-plectic Poisson algebra, since 
it is a homotopy Poisson-n algebra (\ref{homotopy_poisson_n}) with respect to the following operations:

A differential graded commutative structure (\ref{theorem_dga_algebra}), 
inherited from the de Rham algebra $\Omega^\bullet(\mathfrak{g},A)$.
This means, that the differential restricts to a map
$$
d: \mathcal{P}ois(A,\mathfrak{g},\omega) \to \mathcal{P}ois(A,\mathfrak{g},\omega)
$$
on Poisson cotensors and that the ordinary exterior cotensor product  
restricts to an $n$-plectic multiplication
$$
\wedge : \mathcal{P}ois(A,\mathfrak{g},\omega) \times
 \mathcal{P}ois(A,\mathfrak{g},\omega) \to \mathcal{P}ois(A,\mathfrak{g},\omega)\;.
$$
If $x_1$ and $x_2$ are Hamilton tensors associated to the (homogeneous)
Poisson cotensors 
$f_1$ and $f_2$ by the fundamental equation $i_x\omega=df$, the 
exterior tensor
$$
x_{f_1\wedge f_2}:=(-1)^{|f_1|}j_{j_1}x_2 + (-1)^{(|f_1|-1)|f_2|}j_{f_2}x_1
$$
is a Hamilton tensor, associated to $f_1\smwedge f_2$ by the same equation. 
Moreover, if $y_2$ is a Poisson constraint associated to $f_2$ by the constraint 
equation $i_y\omega=f$, the exterior tensor
$$
y_{f_1\wedge f_2}\defeq  j_{f_1}y_2
$$
is a Poisson constraint, associated to $f_1\smwedge f_2$. (Here 
$j_{(\cdot)}\scriptstyle{(\cdot})$ is the
left contraction of a tensor by a cotensor (\ref{left contration})) 
This makes Poisson cotensers into a differential graded commutative algebra
(\ref{theorem_dga_algebra}).

The first step beyond this commutative structure appears as
the homotopy Poisson $2$-bracket, which is 
defined for any Poisson cotensors $f_1$ and $f_2$ and
associated (homogeneous) Hamilton tensors $x_1$ and $x_2$ by
$$
\{f_1,f_2\}=-L_{x_1}f_2+(-1)^{(|x_1|-1)(|x_2|-1)}L_{x_2}f_1\;.
$$
This bracket does not depend on the actual choices (\ref{well_def_2}) 
of $x_1$ or $x_2$ and is $(n-1)$-fold graded antisymmetric with respect to the
tensor degree of the $f_i$'s.
The de Rham differential acts as a derivation (\ref{jacobi-2}) and 
an associated Hamilton tensor can be computed by the 
Schouten-Nijenhuis bracket
$$
x_{\{f_1,f_2\}}=\, 2[x_2,x_1]\;.
$$
If $y_1$ resp. $y_2$ are Poisson constraints associated to $f_1$ resp. $f_2$,
a Poisson constraint of their Poisson bracket is given by
$$
y_{\{f_1,f_2\}}= \textstyle\left[x_2,y_1\right] -
(-1)^{(|x_1|-1)(|x_2|-1)}\left[x_1,y_2\right]\;.
$$
In case $n=1$ this bracket is equal to the usual 
symplectic Poisson bracket, up to the factor '2' and for $n\geq 2$ it refines
the $n$-pletic Lie bracket found by Rogers \cite{CR}, with an 
additional coboundary term (\ref{relation_to_Rogers}). 

Unlike the symplectic case, the Jacobi equation does not hold 'on 
the nose' as long as $n>1$. Instead it is satisfied up to 
certain coboundary terms (\ref{jacobi_id}) only. These terms are controlled by 
what we call the homotopy Poisson $3$-bracket, which is defined for any 
three Poisson cotensors $f_1$, $f_2$ resp. $f_3$ and associated Hamilton tensors 
$x_1$, $x_2$ resp. $x_3$ by the 'shuffle contractions'
\begin{multline*}
\{f_1,f_2,f_3\}\defeq 
i_{[x_2,x_1]}f_3 -(-1)^{(|x_2|-1)(|x_3|-1)}i_{[x_3,x_1]}f_2\\
+(-1)^{(|x_1|-1)(|x_2|-1)+(|x_1|-1)(|x_3|-1)}i_{[x_3,x_2]}f_1\;.
\end{multline*}
Again this does not depend on the actual choices of 
neither $x_1$, $x_2$ nor $x_3$ and associated Hamilton or Poisson 
constraint tensors can be computed explicit (\ref{homotopy_3_bracket}). 

With additional operators however come additional Jacobi-like 
equations and to control them we need additional operators. The general 
pattern is then an infinite series of $k$-ary brackets for arbitrary 
integers $k\in\N$. We call these operators the homotopy Poisson $k$-brackets and
define them inductively:

If $f_1,\ldots,f_{k+1}$ are Poisson  cotensors and
$x_{\{f_1,\ldots,f_k\}}$ is a Hamilton tensor associated to their 
homotopy Poisson $k$-bracket for some $k$, the homotopy Poisson $(k+1)$-bracket is  
the shuffled contraction
$$
\{f_1,\ldots,f_{k+1}\}\defeq \textstyle\sum_{\sigma\in Sh(k,1)}
\sgn{\sigma+k}e(s;sx_1,\ldots,sx_{k+1})
i_{x_{\{f_{\sigma(1)},\ldots,f_{\sigma(k)}\}}}f_{\sigma(k+1)}\;,
$$
where $|sx|$ just means $|x|+1$, $Sh(i,j)$ is the set of $(i,j)$-shuffle
permutations (\ref{shuffle_permutation}) and  
$\sgn{\sigma}e(\sigma;\cdot,\ldots,\cdot)$ is the antisymmetric Koszul sign 
(\ref{Koszul_sign_rules}) of $\sigma$.

Again this map does not depend on the choice of any $x_i$
and an associated Hamilton tensor as well as a Poisson constrain
can be computed inductively (\ref{main_theorem_1}). 

All these Poisson brackets interact with each other and the de Rham 
differential in terms 
of a so called $(n-1)$-fold shifted homotopy Lie algebra 
(\ref{homotopy_Lie_algebra}). The 
$(n-1)$-fold shifting is due to the degree relation $|x|= |f|+n$ 
between a Poisson cotensor $f$ and any associated Hamilton tensor $x$. This
in turn originates from the fundamental equation $i_x\omega=df$.

The interaction between the Poisson $2$-bracket 
and the exterior product is controlled by the Leibniz equation, which 
does not hold 'strictly' as long as $n>1$, 
but only up to certain correction terms.

These correction terms in turn are controlled by what we call the first Leibniz operator.
It is defined for any three Poisson cotensors $f_1$, $f_2$ resp. $f_3$ and 
associated Hamilton tensors $x_1$, $x_2$ resp. $x_3$ by 
\begin{multline*}
\{f_1 \| f_2, f_3\}\defeq
 -i_{x_{1}}(f_{2}\smwedge f_{3})
  +\sgn{(|x_{1}|-1)(|x_{f_{2}\wedge f_{3}}|-1)}
   i_{x_{f_{2}\wedge f_{3}}}f_{1}\\
\phantom{mmmmmmm}+\sgn{|x_{1}||f_{2}|} f_{2}\smwedge\left(i_{x_{1}}f_{3}
  -\sgn{(|x_{1}|-1)(|x_{3}|-1)} i_{x_{3}}f_{1}\right)\\
+\left(i_{x_{1}}f_{2}
 -\sgn{(|x_{1}|-1)(|x_{2}|-1)} i_{x_{2}}f_{1}\right)\smwedge f_{3}
\end{multline*}
and does not depend on the actual choices of any $x_i$.
An associated Hamilton tensor as well as a Poisson constrain
can be computed explicit. It is graded symmetric with respect to
the arguments on the right and interacts with the Poisson bracket and the
de Rham differential in terms of the $n$-plectic Leibniz equation:
\begin{multline*}
\{f_{1},f_{2}\smwedge f_{3}\}=
\{f_{1},f_{2}\}\smwedge f_{3}
+(-1)^{(|x_{1}|-1)|f_{2}|}f_{2}\smwedge\{f_{1},f_{3}\}\\
+d\{f_{1}\|f_{2},f_{3}\}
+\{df_{1}\|f_{2},f_{3}\}
-(-1)^{|x_{1}|}\{f_{1}\|df_{2},f_{3}\}\\
-(-1)^{|f_{2}|+|x_{1}|}\{f_{1}\|f_{2},df_{3}\}\;.
\end{multline*}
The interaction between the higher Poisson brackets and the exterior product has
no analog in the symplectic case. Nevertheless, higher 'Leibniz-like' equations
appear (\ref{higher_first_Leibniz}).
We say that such an equation holds 'strictly' or 'on the nose' if
\begin{multline*}
\{f_{1},\ldots,f_{k},f_{k+1}\smwedge f_{k+2}\}=
  \{f_{1},\ldots,f_{k},f_{k+1}\}\smwedge f_{k+2}\\
+\sgn{(\sum_{i=1}^{k}|x_{i}|-1)|f_{k+1}|}
 f_{k+1}\smwedge\{f_{1},\ldots,f_{k},f_{k+2}\}\;.
\end{multline*}
However, in general this equation does not hold strictly, but again only up to 
certain correction terms, which we call the higher Leibniz operators.

Typesetting these operators (\ref{higher_leibniz_op}) and all the 
additional equations (\ref{second_higher_leibniz}), (\ref{third_higher_leibniz}) 
is challenging, since explicit expression get quite complicated for $k\gg 1$.

The complete picture is that the $(n-1)$-fold shifted Lie algebra of higher Poisson brackets interacts
with the differential graded structure and the various Leibniz operators in
terms of a homotopy Poisson-$n$ algebra.

Finally it should be noted, that in an actual 
$n$-plectic setting all operators are trivial beyond a 
certain bound $\mathcal{O}(n)$, which means that for small $n$, only 
'a few' operators are different from zero. 
Loosely speaking we can say that the more $n$ deviates from
$1$, the more the structure deviates from being a Poisson algebra.

\section{Higher Symplectic Structures}
\subsection{N-plectic structures}
We define higher symplectic structures as torsionless Lie Rinehart pairs 
together with a distinguished cocycle from their de Rham complex. 
For an introduction to theses terms look at appendix 
(\ref{Lie_Rinehart_section}), or the references therein.
\begin{definition}\label{n-plectic_struct}
Let $(A,\mathfrak{g})$ be a torsionless 
(semi-reflexive) Lie Rinehart pair and 
$\omega\in \Omega^\bullet(\mathfrak{g},A)$ a degree $(n+1)$ cocycle 
with respect to the de Rham differential. Then
$(A,\mathfrak{g},\omega)$ is called an \textbf{n-plectic} 
\textbf{structure} and $\omega$ is called its 
\textbf{n-plectic cocycle}.
\end{definition}
We do not distinguish between $n$-plectic cocycles that are degenerate on
vectors and those that are not. For a \textit{homotopy Poisson-n algebra} 
to exist here, the fundamental pairing between functions and vector fields has to 
generalize to tensors and cotensors \textit{in a range of degrees} and almost all 
$n$-plectic cocycles have a non trivial kernel on higher tensors. 
Whether they are degenerated on vectors or not.
Unique pairings are an exception in the general $n$-plectic setting.
\begin{remark}Of course there are $n$-plectic structures, which are non
degenerate on vectors. If this becomes important, we could call them
\textit{non-degenerate}. For the general theory however, 
this distinction is irrelevant.
\end{remark}

The following example shows that
any torsionless Lie Rinehart pair gives rise to an $n$-plectic
structure, no matter how degenerate its de Rham complex might be:  
\begin{example}[Trivial $n$-plectic structure]Let 
$(A,\mathfrak{g})$ be a torsionless Lie Rinehart pair and 
$\omega\in\Omega^{n+1}(\mathfrak{g},A)$ the zero cocycle. 
Then $(A,\mathfrak{g},\omega)$ is called the 
\textbf{trivial} $n$-plectic structure.
\end{example}
Since the idea of "$n$-plectic" has such a long historical prologue, it appears 
in a lot of flavors. The following examples just list a few:
\begin{example}[Symplectic manifold]
Any symplectic manifold $(M,\omega)$ is equivalently an
$1$-plectic structure $(C^\I(M),\mathfrak{X}(M),\omega)$ on the
Lie Rinehart pair of smooth functions and vector fields over $M$.
\end{example}
\begin{example}[Presymplectic manifold]
Any presymplectic manifold $(M,\omega)$ is equivalently an
$1$-plectic structure $(C^\I(M),\mathfrak{X}(M),\omega)$ on the
Lie Rinehart pair of smooth functions and vector fields over $M$.
\end{example}
\begin{example}[Multisymplectic fiber bundle]
Let $p:P \to M$ be a smooth fiber bundle of rank $N$ 
over an $n$-dimensional manifold equipped with a closed and non-degenerate 
$(n+1)$-form $\omega$ defined on the total space which is $(n-1)$-\textit{horizontal} 
and admits an involutive and isotropic vector subbundle of the vertical bundle $VP$ 
of codimension $N$ and dimension $Nn+1$. 
(In \cite{FO1}, this is called a \textit{multisymplectic fiber bundle}). Then the
triple $(C^\I(P),\mathfrak{X}(P),\omega)$ is an $n$-plectic structure.
\end{example}
\subsection{Poisson cotensors}Every symplectic manifold has a Lie
bracket, which interacts with the 'dot'-product of smooth functions in
terms of a Poisson algebra. The origin of this bracket is the fundamental and unique pairing 
\begin{equation}\label{fake_fundamental_1}
i_x \omega = df
\end{equation}
between functions and vector fields, defined in terms of the 
non-degenerate symplectic 2-form.

In this section, we generalize the idea to the $n$-plectic 
setting and show that a certain constraint condition can replace the requirement 
of the pairing to be unique. 
This insight goes back to the work of Forger, Paufler and R\"omer \cite{FPR3}. 

Before we start, lets note the following important observations:
For a general n-plectic structure a pairing like (\ref{fake_fundamental_1})
makes sense in a range of cotensor degrees, not only in degree $(n-1)$. 
However the kernel of $\omega$ is  potentially 
non-trivial on tensors of arbitrary degrees and consequently the association 
\begin{center}
tensor $\Leftrightarrow$ cotensor 
\end{center}
via equation (\ref{fake_fundamental_1}) is neither always defined
nor unique. To some extend, this already happens in presymplectic 
geometry \cite{JLB}. 

One way to handle this would be to restrict the theory exclusively to 
n-plectic cocycles that are non-degenerate on vectors and the pairing 
(\ref{fake_fundamental_1}) only to
vectors and appropriate $(n-1)$ cotensors. This however comes with a lot of disadvantages:

If we restrict to non-degenerate cocycles only, we
exclude important applications like contact- or presymplectic-manifolds. Moreover not every torsionless Lie Rinehart pair has non-degenerate
cocycles and therefore the theory wouldn't be natural anymore.

If we allow degeneracy, but restrict the pairing to certain vectors and $(n-1)$ cotensors, a homotopy
\textbf{Poisson} structure can not arise from the de Rham
differential and the exterior product. Therefore such a 
theory is eventually restricted to a bare homotopy Lie structure, which is
not enough for some approaches to quantization.

Another consideration is that the Jacobi identity of the symplectic Lie 
bracket depend on properties that can't be expected in a general n-plecic 
setting. If the Poisson structure has to be replaced by a more general 
\textit{homotopy} Poisson-$n$ structure, a combination of the defining  
structure equations (\ref{main_hom_pois_struc_eq}) and the fundamental pairing 
(\ref{fake_fundamental_1}) leads to the equation
$$
i_y\omega = 
\textstyle\sum^{i>1}_{i+j=k+1}\sum_{s\in Sh(j,k-j)}
\pm\{\{f_{s(1)}, \ldots, f_{s(j)}\}, f_{s_(j+1)}, \ldots, f_{s(k)}\}
$$
and to ensure the existence of a solution $y$ to this equation,
the important additional constraint equation
\begin{equation}\label{fake_fundamental_2}
i_y \omega = f
\end{equation}
is required to hold on all arguments in addition to the first pairing in 
(\ref{fake_fundamental_1}).

We call (\ref{fake_fundamental_2}) the 
\textbf{Poisson constraint} of the fundamental pairing (\ref{fake_fundamental_1}),
since it is the only constraint that has to
be made on the solutions of (\ref{fake_fundamental_1}) in order 
to form a homotopy Poisson-$n$ algebra.
\begin{remark}This additional constraint equation 
is completely invisible in symplectic geometry, as it only appears implicitly.
To see that, let $\eta$ be the Poisson bivector field,
 associated to the symplectic form $\omega$. Then the bivector 
 $f\cdot\eta$ is a solution to (\ref{fake_fundamental_2}) 
for any function $f$ since 
$$
i_{\left(f\cdot \eta\right)}\omega =f\;.
$$ 
\end{remark}
\begin{remark}
To my knowledge, such a Poisson constraint appeared
first in the work of Forger and R\"omer \cite{FPR3}, where they introduced it
to handle the previously 
mentioned ambiguity inherent in the fundamental pairing (\ref{fake_fundamental_1}). 
In \cite{FPR3}, solutions to both equations (\ref{fake_fundamental_1}) and 
(\ref{fake_fundamental_2}) are called \textit{Poisson forms}.
\end{remark}
Summarizing all this, we define the n-plectic equivalent to
the Poisson algebra of smooth functions as the cotensor solution set of the
fundamental pairing with its Poisson constraint:
\begin{definition}
[The fundamental equation and its Poisson constraint]\label{Poisson_cotensors}
Let $(A,\mathfrak{g},\omega)$ be an $n$-plectic structure,
$X^\bullet(\mathfrak{g},A)$ the exterior tensor power
and $\Omega^\bullet(A,\mathfrak{g})$ the exterior cotensor power. The
\textbf{fundamental equation} of the $n$-plectic structure is the first order (algebraic) differential equation
\begin{equation}\label{fundamental_eq}
i_x\omega =df,
\end{equation}
defined on the product
$X^\bullet(\mathfrak{g},A)\times \Omega^\bullet(\mathfrak{g},A)$.
If the pair $(x,f)$ is a solution, $x$ is called a \textbf{Hamilton tensor},
$f$ is called a \textbf{Hamilton cotensor} and both are called 
\textbf{associated} to each other. 

We write $\mathcal{H}am(A,\mathfrak{g},\omega)$ for the set 
of all Hamilton tensors and $\mathcal{H}am^\ast(A,\mathfrak{g},\omega)$ 
for the set of all Hamiltonian cotensors. 

Moreover the \textbf{Poisson constraint} of the $n$-plectic structure
is the algebraic equation
\begin{equation}\label{Poisson_constraint_eq}
i_y\omega =f,
\end{equation}
defined on the product 
$X^\bullet(\mathfrak{g},A)\times \mathcal{H}am^\ast(A,\mathfrak{g},\omega)$. 
If a pair $(y,f)$ is a solution,
$f$ is called a \textbf{Poisson cotensor} and  $y$ is called a \textbf{Poisson constraint} associated to $f$.
We write $\mathcal{P}ois(A,\mathfrak{g},\omega)$ for the set of all Poisson 
cotensors. 
\end{definition}
Neither of the previously defined sets is empty:
Both, the fundamental equation and its Poisson constraint 
are linear in each argument and therefore the
kernel of $\omega$ and the zero cotensor are always a solution.

Each Poisson cotensor $f\in \mathcal{P}ois(A,\mathfrak{g},\omega)$ has a not 
necessarily unique 
Hamilton tensor $x$ associated to $f$ by the equation $i_x\omega=df$ 
and a not necessarily unique 
Poisson tensor $y$ associated to $f$ by the equation $i_y\omega=f$. 
We usually use the symbol $x$ to indicate that the tensor is Hamilton and the 
symbol $y$ to indicate that the tensor is a Poisson constraint.
\begin{corollary}
Let $(A,\mathfrak{g},\omega)$ be an $n$-plectic structure and $x\in \mathcal{H}am(A,\mathfrak{g},\omega)$ a Hamilton tensor.
Then $L_x\omega=0$.
\end{corollary}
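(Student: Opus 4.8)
The plan is to reduce the claim to the generalized Cartan (homotopy) formula together with the two closedness facts $d\omega = 0$ and $d^2 = 0$. First, by linearity: the operators $i_{(\cdot)}\omega$, $d$ and $L_{(\cdot)}$ are all additive in the tensor slot, so it suffices to prove $L_x\omega = 0$ for a homogeneous Hamilton tensor $x$ of some degree $|x| = p$, the inhomogeneous case then following componentwise.

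Next I would recall the homotopy formula for the contraction calculus on the de Rham complex $\Omega^\bullet(\mathfrak{g},A)$ of a torsionless Lie Rinehart pair (see the appendix, \ref{Lie_Rinehart_section}): the Lie derivative along a homogeneous tensor $x$ is the graded commutator of $i_x$ with the de Rham differential,
\[
L_x = i_x \circ d - (-1)^{|x|}\, d \circ i_x ,
\]
which for vector fields is the classical Cartan identity and for higher tensors is (equivalently) the defining relation for $L_x$. Applying this operator identity to the $n$-plectic cocycle $\omega$ yields
\[
L_x\omega = i_x(d\omega) - (-1)^{|x|}\, d(i_x\omega) .
\]

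Both summands now vanish for separate, elementary reasons. The first is zero because $\omega$ is, by Definition~\ref{n-plectic_struct}, a de Rham cocycle, so $d\omega = 0$. For the second, $x$ being a Hamilton tensor means, by Definition~\ref{Poisson_cotensors}, that there is an associated Hamilton cotensor $f$ with $i_x\omega = df$; hence $d(i_x\omega) = d(df) = d^2 f = 0$, using that the de Rham differential of the Lie Rinehart pair squares to zero. Combining these, $L_x\omega = 0$. Note that the argument is insensitive to the choice of associated cotensor $f$: beyond the closedness of $i_x\omega$ it uses nothing specific about $f$.

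I do not expect a genuine obstacle here. The only points that require care are fixing the sign convention in the homotopy formula consistently with the convention used for $L_x$ elsewhere in the paper, and quoting that formula in the required degree-$p$ generality rather than only for vector fields; once those are in place, the statement is exactly the two-line computation above.
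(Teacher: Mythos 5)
Your argument is correct and is essentially the paper's own proof: apply Cartan's infinitesimal homotopy formula (which in this paper is the \emph{definition} of $L_x$, equation (\ref{Cartans_formula})) to $\omega$, and observe that $i_x d\omega=0$ since $\omega$ is a cocycle while $d\,i_x\omega=d(df)=0$ by the fundamental equation. The only discrepancy is that you wrote the homotopy formula with the terms in the opposite order from the paper's convention $L_xf=d\,i_xf-(-1)^{|x|}i_x df$, which you already flagged and which is immaterial here since both summands vanish separately.
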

\begin{proof}
Since $i_x\omega=df$ for some cotensor $f$ we get $di_x\omega=0$ and therefore
$L_x\omega=0$ from Cartans infinitesimal homotopy formular, since $\omega$ is a cocycle.
\end{proof}
\begin{corollary}
Let $(A,\mathfrak{g},\omega)$ be an $n$-plectic structure, 
$f\in \mathcal{P}ois(A,\mathfrak{g},\omega)$ a homogeneous Poisson cotensor with 
associated Poisson constraint $y\in X(\mathfrak{g},A)$ and associated Hamilton
tensor $x\in \mathcal{H}am(A,\mathfrak{g},\omega)$. If both $x$ and $y$ are
homogeneous, then 
\begin{equation}
\begin{array}{lcr}
|x|=|f|+n& \mbox{ and } &|y|=|x|+1
\end{array}
\end{equation}
with respect to the tensor grading. Moreover the following boundary equation 
is satisfied:
\begin{equation}
di_y\omega = i_x\omega\;.
\end{equation}
\end{corollary}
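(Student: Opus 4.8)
The statement has two parts --- the degree bookkeeping and the boundary identity --- and both follow formally from the two equations that define a Poisson cotensor, without any integrability input. In particular, unlike the previous corollary, Cartan's formula is not needed here, and the torsionless hypothesis plays no role in this particular statement.

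For the degree relations, the key observation is that for a homogeneous tensor $x$ the interior product $i_x$ is a homogeneous operator on the de Rham complex $\Omega^\bullet(\mathfrak{g},A)$ whose degree shift is governed by the tensor degree $|x|$, while $d$ is homogeneous of degree $+1$. Thus in the fundamental equation $i_x\omega = df$ both sides are homogeneous cotensors, and matching their degrees --- using that $\omega$ is homogeneous with $|\omega| = n+1$ and unwinding the tensor/cotensor grading conventions fixed in \ref{Lie_Rinehart_section} --- forces $|x| = |f| + n$. Applying the same reasoning to the Poisson constraint $i_y\omega = f$: here $i_y\omega$ is homogeneous and equal to $f$, whereas in the fundamental equation $i_x\omega$ is equal to $df$, which sits one cotensor degree above $f$; comparing the two therefore places $y$ exactly one tensor degree above $x$, i.e. $|y| = |x| + 1$.

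For the boundary equation I would simply differentiate the Poisson constraint. Applying the de Rham differential to $i_y\omega = f$ gives $d\,i_y\omega = df$, and the right-hand side is $i_x\omega$ by the fundamental equation; hence $d\,i_y\omega = i_x\omega$. I expect the only point requiring any care to be the first step --- pinning down the constant in $|x| = |f| + n$ by tracking the tensor- versus cotensor-grading conventions consistently. This is exactly where the degree $n+1$ of $\omega$ (rather than $n$) enters, and where the $(n-1)$-fold shift of the whole homotopy structure originates; the boundary identity itself is immediate.
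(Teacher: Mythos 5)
Your proposal is correct and follows essentially the same route as the paper, whose proof simply declares the claims immediate from the grading conventions (tensors in positive, cotensors in negative tensor degrees); your degree count $|x|-(n+1)=|f|-1$ and $|y|-(n+1)=|f|$, together with applying $d$ to $i_y\omega=f$ and substituting $df=i_x\omega$, is exactly the implicit argument spelled out.
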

\begin{proof}
Immediate, since we assume tensors to be concentrated in positive tensor
degrees and cotensors as concentrated in negative tensor degrees.
\end{proof}
One of the most important consequence of the Poisson constraint 
(\ref{Poisson_constraint_eq}) is what we call the \textbf{kernel property}:
The kernel of the $n$-plectic cocycle $\omega$ is always contained in the kernel
of a Poisson cotensor. As a consequence, the contractions of a Poisson cotensor 
along tensors, associated to the same cotensor, are equal.
The following proposition makes this precise:
\begin{proposition}\label{kernel_prop}Let 
$(A,\mathfrak{g},\omega)$ be an $n$-plectic structure and
$f\in \mathcal{P}ois(A,\mathfrak{g},\omega)$ a Poisson cotensor. Then
\begin{equation}
\ker(\omega) \subset \ker(f).
\end{equation}
If $y$ and $y'$ are tensors, with $i_{y}\omega=f$ and $i_{y'}\omega=f$, 
the difference $y-y'$ is an element of the kernel of $\omega$ 
and the contractions $i_yg$ and $i_{y'}g$ are equal for all Poisson cotensors 
$g\in \mathcal{P}ois(A,\mathfrak{g},\omega)$.

Similar,  if $x$ and $x'$ are tensors, with $i_{x}\omega=df$ and 
$i_{x'}\omega=df$, 
the difference $x-x'$ is an element of the kernel of $\omega$ 
and the contractions $i_xg$ and $i_{x'}g$ are equal for all Poisson cotensors 
$g\in \mathcal{P}ois(A,\mathfrak{g},\omega)$.
\end{proposition}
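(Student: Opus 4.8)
The plan is to prove the three assertions in sequence, the first being the
engine that drives the other two. For the inclusion $\ker(\omega)\subset\ker(f)$,
recall that $f$ being a Poisson cotensor means there is a tensor $y$ with
$i_y\omega=f$. So let $v$ be any tensor in $\ker(\omega)$, i.e. $i_v\omega=0$.
I would compute $i_v f$ by substituting $f=i_y\omega$ and using the graded
commutativity of iterated contractions by tensors: $i_v i_y\omega = \pm\, i_y
i_v\omega = \pm\, i_y 0 = 0$, where the sign is the Koszul sign from permuting
$v$ past $y$ (it plays no role since the right-hand side vanishes). Hence
$i_v f = 0$, which is exactly $v\in\ker(f)$. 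The only thing to be careful about
here is that ``$\ker(f)$'' should be read as the set of tensors annihilating $f$
under left contraction, and that the identity $i_v i_y = \pm\, i_y i_v$ on
cotensors is the standard fact about contracting an exterior cotensor by two
tensors — this is part of the left-contraction formalism referenced in
(\ref{left contration}).

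Next, the statement about two Poisson constraints $y,y'$ of the \emph{same}
cotensor $f$. From $i_y\omega=f=i_{y'}\omega$ and linearity of $i_{(\cdot)}\omega$
in the tensor slot (noted right after Definition~\ref{Poisson_cotensors}), we get
$i_{y-y'}\omega = i_y\omega - i_{y'}\omega = 0$, so $y-y'\in\ker(\omega)$. Now let
$g\in\mathcal{P}ois(A,\mathfrak{g},\omega)$ be arbitrary. By the inclusion just
proved, $\ker(\omega)\subset\ker(g)$, so $i_{y-y'}g=0$, i.e. $i_y g = i_{y'} g$
by linearity of contraction in the tensor argument. This disposes of the second
paragraph of the proposition.

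The third assertion, about two Hamilton tensors $x,x'$ associated to the same
$f$, runs along identical lines: $i_x\omega=df=i_{x'}\omega$ gives
$i_{x-x'}\omega=0$ by linearity, so $x-x'\in\ker(\omega)\subset\ker(g)$ for every
Poisson cotensor $g$, whence $i_x g = i_{x'} g$. Note this last step does not use
the fundamental equation for $g$ at all — only that $g$ is a Poisson cotensor, so
that the kernel property of the first paragraph applies; the existence of an
associated Poisson constraint $y_g$ with $i_{y_g}\omega=g$ is exactly what makes
$g$ ``see'' only the $\omega$-nondegenerate part of a tensor. I expect no serious
obstacle: the only subtlety worth a sentence is making explicit that the kernel
property is a statement about \emph{Poisson} cotensors specifically (a Hamilton
cotensor that is not Poisson need not contain $\ker(\omega)$ in its kernel), which
is why the well-definedness of contractions $i_x g$, $i_y g$ is asserted only for
$g\in\mathcal{P}ois(A,\mathfrak{g},\omega)$ — and this is precisely the feature
that will later make the higher brackets independent of the choices of $x_i$.
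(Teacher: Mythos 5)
Your proposal is correct and follows essentially the same route as the paper's own proof: the inclusion $\ker(\omega)\subset\ker(f)$ via $i_v f = i_v i_y\omega = \pm\, i_y i_v\omega = 0$ using the Poisson constraint, then linearity of contraction to place $y-y'$ and $x-x'$ in $\ker(\omega)$, and finally the kernel property of $g$ to equate the contractions. The only cosmetic remark is terminological: the paper calls $i_x f$ the \emph{right} contraction of the cotensor $f$ along the tensor $x$, so $\ker(f)$ is defined via right contraction, but the operation you use is exactly the right one.
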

\begin{proof} The first part is an implication of the Poisson constraint equation
(\ref{Poisson_constraint_eq}). To see that assume $\xi \in \ker(\omega)$. Then 
there is an exterior tensor  $y$ with $i_\xi f = i_\xi i_y\omega =
\pm i_y i_\xi \omega=0$. 

For the second and third part compute
$0=f-f=i_y\omega - i_{y'}\omega= i_{\left(y-y'\right)}\omega$ and
$0=df-df=i_x\omega - i_{x'}\omega= i_{\left(x-x'\right)}\omega$, respectively. Similar $i_yg-i_{y'}g=i_{(y-y')}g=0$ and $i_xg-i_{x'}g=i_{(x-x')}g=0$ follows 
from the kernel property of $g$.
\end{proof}
\subsection{The differential graded commutative algebra}
We show that the exterior product and the de Rham differential project 
from arbitrary cotensors to Poisson cotensors and derive explicit formulas
for their associated Hamilton tensors and Poisson constraints. 
This refines the symplectic function algebra 
to the general $n$-plectic setting.

We start with an important technical detail, which is
a consequence of the Poisson constraint equation (\ref{Poisson_constraint_eq})
and equation (\ref{multi_rules-5}):
\begin{proposition}\label{prop_product}
Let $(A,\mathfrak{g},\omega)$ be an $n$-plectic
structure, $f\in \mathcal{P}ois(A,\mathfrak{g},\omega)$ a Poisson cotensor
and $x\in X(\mathfrak{g},A)$ any tensor. Then
\begin{equation}\label{fundament_product}
i_{j_f x}\omega = f\smwedge i_x\omega\;.
\end{equation} 
\end{proposition}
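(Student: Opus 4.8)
The strategy is to reduce the identity to the Poisson constraint (\ref{Poisson_constraint_eq}) together with the multivector rule (\ref{multi_rules-5}), letting the kernel property (Proposition \ref{kernel_prop}) absorb the correction terms.

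First, because $f$ is a Poisson cotensor, the constraint equation (\ref{Poisson_constraint_eq}) supplies a tensor $y$ with $i_y\omega = f$; fix one such $y$. Then the right hand side of (\ref{fundament_product}) becomes $\left(i_y\omega\right)\smwedge\left(i_x\omega\right)$, so the assertion is now a statement purely about contractions of the single cocycle $\omega$: contracting $\omega$ along the partial contraction $j_f x$ should reproduce the exterior product of the two contractions $i_y\omega$ and $i_x\omega$. Moreover, by Proposition \ref{kernel_prop} the contraction $i_z f$ of $f$ along any tensor $z$ equals $\pm\, i_z i_y\omega$ and is insensitive to modifying $z$ by an element of $\ker(\omega)$, so all the intermediate pairings that appear below are well defined independently of the choice of $y$.

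Next I would expand the left hand side with (\ref{multi_rules-5}). Contracting a cotensor along the left contraction $j_f x$ unfolds, via the (shuffle) coproduct of $x$, into the summand in which no slot of $x$ is paired with $f$ — which is exactly $f\smwedge i_x\omega$ — plus a sum of further terms in which a non-trivial sub-tensor $x'$ of $x$ is first paired with $f$ while the complementary sub-tensor $x''$ contracts $\omega$. Equivalently one can apply (\ref{multi_rules-5}) to $i_x(f\smwedge\omega)$ and single out $i_{j_f x}\omega$ among its summands; in either form the essential feature is that every summand other than $f\smwedge i_x\omega$ carries a factor $i_{x'}f$ with $x'$ of positive degree.

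The remaining, and main, step is to show that this collection of correction terms cancels. Substituting $f = i_y\omega$ rewrites each factor $i_{x'}f$ as $\pm\, i_{x'}i_y\omega$, i.e. as a further contraction of $\omega$; regrouping the resulting iterated contractions of $\omega$ and using the graded (anti)symmetry of the map $z\mapsto i_z\omega$ together with the inclusion $\ker(\omega)\subset\ker(f)$ of Proposition \ref{kernel_prop}, the correction terms pair off and annihilate, leaving precisely $i_{j_f x}\omega = f\smwedge i_x\omega$. The real work is the Koszul-sign bookkeeping required to see that the shuffle terms produced by (\ref{multi_rules-5}) cancel in matched pairs rather than merely summing to a coboundary; once the signs are arranged the rest is substitution.
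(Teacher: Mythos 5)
Your opening move is the paper's: use the Poisson constraint to produce $y$ with $i_y\omega=f$, and then invoke (\ref{multi_rules-5}). But after that the proposal goes astray, because it misreads what (\ref{multi_rules-5}) says. That equation is not an expansion of $i_{j_f x}\omega$ into a ``leading term'' $f\smwedge i_x\omega$ plus shuffle correction terms; it is already the exact identity you need, with no correction terms at all. Stated for an arbitrary cotensor and applied here with the cotensor taken to be $\omega$ itself, it reads
\begin{equation*}
i_{j_{(i_y\omega)}x}\,\omega \;=\; i_y\omega\smwedge i_x\omega\;,
\end{equation*}
which, after substituting $i_y\omega=f$, is precisely $i_{j_f x}\omega=f\smwedge i_x\omega$. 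The proposition is therefore an immediate instance of (\ref{multi_rules-5}) once $y$ exists; no cancellation argument, and no appeal to the kernel property, is required.

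The genuine gap in your plan is that all of its substance is deferred to the claim that the invented ``cross terms'' (where a sub-tensor of $x$ pairs with $f$) ``pair off and annihilate'' by graded antisymmetry of $z\mapsto i_z\omega$ together with $\ker(\omega)\subset\ker(f)$. No such expansion is available in the paper, and the proposed cancellation mechanism is not credible as stated: the counterexample immediately following the proposition (the closed but non-Poisson form $f=dx^3$ on $(\R^3,dx^1\smwedge dx^2)$) shows the identity fails for cotensors that merely have good kernel behaviour but admit no $y$ with $i_y\omega=f$, so any cancellation must use the substitution $f=i_y\omega$ in an essential, quantitative way — at which point you are simply re-proving (\ref{multi_rules-5}). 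Its proof in the appendix is a lengthy combinatorial normal-form computation with shuffles and falling factorials, not a pairing-off of terms, so the ``Koszul-sign bookkeeping'' you postpone is in fact the entire content and cannot be waved through. The fix is simple: drop the expansion picture and apply (\ref{multi_rules-5}) verbatim to $\omega$, $y$, $x$.
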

\begin{proof}
Since $f$ is Poisson, there is an associated Poisson constraint
$y\in X^\bullet(\mathfrak{g},A)$ with $i_y\omega=f$. Equation (\ref{multi_rules-5}) then implies the proposition.
\end{proof}
Equation (\ref{fundament_product}) does not work for arbitrary cotensors.
As the following counterexample shows, the cotensor
really needs to be Poisson:
\begin{example}Consider the presymplectic manifold 
$(\R^3,dx^1\smwedge dx^2)$, the closed 1-form $f\defeq dx^3$ and the 
Hamilton vector field $X\defeq \partial_1$. Clearly there can not
be a vector field $Y$ such that $i_Y\omega=dx^3$ and
therefore $f$ is Hamilton but not Poisson.
In this case equation (\ref{fundament_product}) is not satisfied since
we compute
$$
i_{j_f X}\omega=i_{j_{dx^3}\partial_1}\omega=0 \neq
dx^3\smwedge dx^2= f\smwedge i_X\omega\;. $$
\end{example}
Besides the kernel property (\ref{kernel_prop}), this simple counterexample is 
another justification for our definition of Poisson cotensors. 
The existence of a Poisson constraint 
(\ref{fake_fundamental_2}), hence a valid equation (\ref{fundament_product}), is
necessary for the exterior product to close on Poisson cotensors as we show in
(\ref{theorem_dga_algebra}). 

The following theorem is the first major step towards our homotopy Poisson-$n$ 
algebra. It solves the long standing problem of how to define a natural product 
in higher symplectic geometry. In a subsequent corollary we give explicit 
constructions for associated Hamilton tensors and Poisson constraints.
\begin{theorem}\label{theorem_dga_algebra}
Let $(A,\mathfrak{g},\omega)$ be an
$n$-plectic structure. Then
$\mathcal{P}ois(A,\mathfrak{g},\omega)$ is
a differential graded commutative and associative subalgebra of the de Rham 
algebra $\Omega^\bullet(\mathfrak{g},A)$.
\end{theorem}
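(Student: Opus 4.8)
The plan is to reduce the theorem to two closure statements: that the de Rham differential and the exterior product of $\Omega^\bullet(\mathfrak{g},A)$ restrict to $\mathcal{P}ois(A,\mathfrak{g},\omega)$. Once these are established, graded commutativity, associativity, the graded Leibniz rule and $d^2=0$ are inherited verbatim from the ambient de Rham algebra, so nothing else needs checking. Since both defining conditions --- the fundamental equation (\ref{fundamental_eq}) and its Poisson constraint (\ref{Poisson_constraint_eq}) --- are linear in each slot, sums of Hamilton tensors are Hamilton tensors and sums of Poisson constraints are Poisson constraints; splitting an inhomogeneous cotensor into its homogeneous components (which is legitimate because $i_{(\cdot)}\omega$ and $d$ are degree-homogeneous maps) shows $\mathcal{P}ois$ is a graded submodule, so it suffices to argue with homogeneous $f$.

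Closure under $d$ is immediate: if $f\in\mathcal{P}ois$ has Hamilton tensor $x_f$, then $d(df)=0=i_0\omega$ puts $df$ in $\mathcal{H}am^\ast$, and the identity $i_{x_f}\omega=df$ exhibits $x_f$ as a Poisson constraint for $df$; hence $df\in\mathcal{P}ois$. For closure under $\smwedge$, take homogeneous $f_1,f_2\in\mathcal{P}ois$ with Hamilton tensors $x_1,x_2$ and Poisson constraints $y_1,y_2$. I first check that $f_1\smwedge f_2$ is Hamiltonian, with Hamilton tensor
$$
x_{f_1\wedge f_2}\defeq(-1)^{|f_1|}j_{f_1}x_2+(-1)^{(|f_1|-1)|f_2|}j_{f_2}x_1 .
$$
Indeed, expanding $d(f_1\smwedge f_2)$ by the graded Leibniz rule, replacing $df_i$ by $i_{x_i}\omega$, commuting $f_2$ past $i_{x_1}\omega$ by graded commutativity of the de Rham product, and then applying Proposition \ref{prop_product} (equation (\ref{fundament_product})) once to the Poisson cotensor $f_1$ with the tensor $x_2$ and once to the Poisson cotensor $f_2$ with the tensor $x_1$, the right-hand side collapses to $i_{x_{f_1\wedge f_2}}\omega$. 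Then $f_1\smwedge f_2$ is Poisson: a second use of Proposition \ref{prop_product}, this time with $f_1$ and $y_2$, gives
$$
i_{j_{f_1}y_2}\omega=f_1\smwedge i_{y_2}\omega=f_1\smwedge f_2 ,
$$
so $y_{f_1\wedge f_2}\defeq j_{f_1}y_2$ is a Poisson constraint for $f_1\smwedge f_2$. Hence $f_1\smwedge f_2\in\mathcal{P}ois$, and the tensors displayed here are exactly the explicit Hamilton tensor and Poisson constraint recorded in the subsequent corollary.

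I expect the only genuine difficulty to be the sign and degree bookkeeping in the Hamilton-tensor computation: one must keep the tensor grading, the form grading on cotensors and the $n$-shift built into $i_x\omega=df$ mutually consistent, so that the Koszul sign produced by commuting $f_2$ across $i_{x_1}\omega$ lands on the prefactor $(-1)^{(|f_1|-1)|f_2|}$ demanded by the formula. The conceptual point to keep in view --- illustrated by the counterexample on $(\R^3,dx^1\smwedge dx^2)$ above --- is that equation (\ref{fundament_product}) is available only because both factors are Poisson; dropping the Poisson constraint breaks the argument at precisely the steps where (\ref{fundament_product}) is invoked, which is why the exterior product fails to close on merely Hamiltonian cotensors.
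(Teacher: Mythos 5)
Your proposal is correct and follows essentially the same route as the paper: linearity of the two defining equations, closure under $d$ by taking the Hamilton tensor of $f$ as a Poisson constraint for $df$ (with a kernel element as its Hamilton tensor), and closure under $\smwedge$ via Proposition (\ref{prop_product}), producing exactly the Hamilton tensor $(-1)^{|f_1|}j_{f_1}x_2+(-1)^{(|f_1|-1)|f_2|}j_{f_2}x_1$ and the Poisson constraint $j_{f_1}y_2$ recorded in Corollary (\ref{product_tensors}). The only cosmetic difference is that you expand $d(f_1\smwedge f_2)$ and collapse it to $i_{x_{f_1\wedge f_2}}\omega$, while the paper runs the same identity in the opposite direction.
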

\begin{proof} We have to to show, that $\mathcal{P}ois(A,\mathfrak{g},\omega)$
is a linear subspace and that both the exterior product 
and the de Rham differential close on Poisson
cotensors, i.e. that the fundamental 
equation (\ref{fundamental_eq}) and its Poisson constraint equation 
(\ref{Poisson_constraint_eq}) have solutions. 

To see the linear structure, observe that
both the fundamental equation and its Poisson constraint equation 
(\ref{Poisson_constraint_eq}) are linear in all arguments. 

To see that the exterior differential closes, 
let $f\in\mathcal{P}ois(A,\mathfrak{g},\omega)$
be a Poisson cotensor. Then there exists a Hamilton tensor $x$, associated to 
$f$ by the fundamental equation $i_x\omega=df$, which in turn is a Poisson 
constraint associated to $df$. In addition any element
from $\ker(\omega)$ is a Hamilton tensor associated to $df$. Therefore
$df\in\mathcal{P}ois(A,\mathfrak{g},\omega)$

To solve the Poisson constraint equation (\ref{Poisson_constraint_eq}), 
for the exterior product $f_1\smwedge f_2$ of 
Poisson cotensors $f_1$ and $f_2\in \mathcal{P}ois(A,\mathfrak{g},\omega)$,
let $y\in X^\bullet(\mathfrak{g},A)$ be a Poisson constraint associated to $f_2$. 
Using (\ref{fundament_product}) we get 
$i_{j_{f_1}y}\omega = f_1\smwedge f_2$, which shows that $j_{f_1}y$ is a Poisson
tensor associated to $f_1\smwedge f_2$.

The find a solution to the fundamental equation (\ref{fake_fundamental_1}),
let $x_1$ and $x_2$ be solutions of $i_{x_1}\omega=df_1$ and
$i_{x_2}\omega=df_2$, respectively, which exists, since both cotensors are
Poisson. Then, using (\ref{fundament_product}), we compute
\begin{align*}
(-1)^{|f_1|}i_{j_{f_1}x_2}\omega
&+(-1)^{(|f_1|-1)|f_2|}i_{ j_{f_2}x_1}\omega\\
&= (-1)^{|f_1|}f_1\smwedge df_2 + (-1)^{(|f_1|-1)|f_2|}f_2\smwedge df_1 \\
&=df_1\smwedge f_2 + (-1)^{|f_1|}f_1\smwedge df_2 \\
&=d(f_1\smwedge f_2)\;.
\end{align*}
\end{proof}
The previous proof provides an explicit way to compute Hamilton tensors and
Poisson constraints associated to the exterior product of two  Poisson cotensors. 
In particular we have:
\begin{corollary}[Associated tensors]
\label{product_tensors}
Let $f_1$, $f_2\in \mathcal{P}ois(A,\mathfrak{g},\omega)$ be two
Poisson cotensors with associated Hamilton tensors
$x_1$ and $x_2$ , respectively.
Then the exterior tensor
\begin{equation}
x_{f_1\wedge f_2}\defeq (-1)^{|f_1|}j_{f_1}x_2 +(-1)^{(|f_1|-1)|f_2|}j_{f_2}x_1
\end{equation}
is a solution to the fundamental equation $i_{x_{f_1\wedge f_2}}\omega=d(f_1\smwedge f_2)$ and therefore a Hamilton tensor associated to $f\smwedge g$.

If $y_2$ is associated to $f_2$ by the Poisson constraint
equation $i_{y_2}\omega=f_2$, the exterior tensor
\begin{equation}\label{product_Poisson_constraint}
y_{f_1\wedge f_2}\defeq j_{f_1}y_2
\end{equation}
is a solution to the equation $i_{y_{f_1\wedge f_2}}\omega=f_1\smwedge f_2$ and 
therefore a Poisson constraint associated to $f\smwedge g$.
\end{corollary}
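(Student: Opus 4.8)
The plan is essentially to unwind the proof of Theorem~\ref{theorem_dga_algebra} and record the witnesses it produces, so the corollary is almost immediate once the right bookkeeping is in place. First I would observe that the statement about the Poisson constraint $y_{f_1\wedge f_2}\defeq j_{f_1}y_2$ is exactly the computation already carried out in the proof of the theorem: since $f_1$ is Poisson and $i_{y_2}\omega=f_2$, Proposition~\ref{prop_product} (equation~(\ref{fundament_product})) applied with $x=y_2$ gives $i_{j_{f_1}y_2}\omega=f_1\smwedge i_{y_2}\omega=f_1\smwedge f_2$, which is the desired identity $i_{y_{f_1\wedge f_2}}\omega=f_1\smwedge f_2$. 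Nothing more is needed here; I would just cite (\ref{fundament_product}) and note that this is the same $y$ constructed in the theorem's proof.

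For the Hamilton tensor, I would again reproduce the chain of equalities from the theorem, but now phrased as a verification rather than an existence argument. Write $x_{f_1\wedge f_2}\defeq(-1)^{|f_1|}j_{f_1}x_2+(-1)^{(|f_1|-1)|f_2|}j_{f_2}x_1$ and apply $i_{(\cdot)}\omega$, using linearity of the contraction in the tensor slot to split it into two terms. To each term apply Proposition~\ref{prop_product}: $i_{j_{f_1}x_2}\omega=f_1\smwedge i_{x_2}\omega=f_1\smwedge df_2$ (legal because $f_1$ is Poisson, $x_2$ arbitrary), and likewise $i_{j_{f_2}x_1}\omega=f_2\smwedge df_1$. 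Then the graded-commutativity of $\smwedge$ on the de Rham algebra rewrites $(-1)^{(|f_1|-1)|f_2|}f_2\smwedge df_1=df_1\smwedge f_2$ — here one tracks that $df_1$ has cotensor degree $|f_1|-1$ in the convention that the de Rham differential raises form degree (equivalently, lowers tensor degree by one), so the Koszul sign for transposing $df_1$ past $f_2$ is $(-1)^{(|f_1|-1)|f_2|}$. Finally the graded Leibniz rule for $d$ on $\Omega^\bullet(\mathfrak{g},A)$ collapses $df_1\smwedge f_2+(-1)^{|f_1|}f_1\smwedge df_2=d(f_1\smwedge f_2)$, giving $i_{x_{f_1\wedge f_2}}\omega=d(f_1\smwedge f_2)$ as claimed.

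The only genuine point requiring care — and what I would flag as the main obstacle, though it is minor — is the sign bookkeeping: one must fix once and for all the degree conventions (tensor degree of $f$ negative, $d$ shifting degree by one, the sign rule for the left contraction $j_{(\cdot)}(\cdot)$ and for transposition in the graded-commutative product) and check that the two exponents $|f_1|$ and $(|f_1|-1)|f_2|$ in the definition of $x_{f_1\wedge f_2}$ are exactly the ones that make the Leibniz rule come out right. Since Theorem~\ref{theorem_dga_algebra} has already performed this calculation, I would simply refer back to the displayed computation in its proof, remark that it establishes precisely the identities asserted here, and note that the final sentence ($y_{f_1\wedge f_2}$ is a Poisson constraint, hence $f_1\smwedge f_2$ is Poisson) is what closes the exterior product on $\mathcal{P}ois(A,\mathfrak{g},\omega)$, consistently with the theorem. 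No separate argument is needed that $x_{f_1\wedge f_2}$ or $y_{f_1\wedge f_2}$ lie in the appropriate tensor spaces, since $j_{(\cdot)}(\cdot)$ manifestly outputs exterior tensors and the relevant degrees are bounded as in the corollary following Proposition~\ref{kernel_prop}.
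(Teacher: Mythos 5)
Your proposal is correct and follows essentially the same route as the paper: the paper's own proof of the corollary is just a pointer back to the proof of Theorem (\ref{theorem_dga_algebra}), where exactly your two computations appear — $i_{j_{f_1}y_2}\omega=f_1\smwedge f_2$ via Proposition (\ref{prop_product}), and the two-term contraction of $x_{f_1\wedge f_2}$ rearranged by graded commutativity and the Leibniz rule into $d(f_1\smwedge f_2)$. Your added remarks on the sign conventions and on applying (\ref{fundament_product}) only to the Poisson factors ($f_1$ resp.\ $f_2$) are consistent with the paper's argument and add no new content beyond it.
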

\begin{proof}
See the proof of theorem (\ref{theorem_dga_algebra}).
\end{proof}
\begin{remark}Strictly speaking it is not correct to write 
$x_{f_1\wedge f_2}$ since associated tensors are in general not unique and
the expression depends on the particular chosen Hamilton tensors
 $x_i$. By abuse of notation we stick to the symbol
$x_{f_1\wedge f_2}$ to indicate that we have at least \textit{some} 
Hamilton tensor associated
to the exterior product $f_1\smwedge f_2$. 
According to proposition (\ref{kernel_prop}) 
all representatives compute equal contractions and Lie derivations of
Poisson cotensors and therefore the risk of confusion is low.  
\end{remark}
\begin{corollary}\label{corollary_main_1}
Let $(A,\mathfrak{g},\omega)$ be an $n$-plectic structure.
Then the differential graded algebra 
$(\mathcal{P}ois(A,\mathfrak{g},\omega),\wedge,d)$, satisfies the structure
equation (\ref{main_hom_pois_struc_eq}) of a homotopy Poisson-$n$ algebra
for the parameter $k=1$ and all $p_1\in\N$.
\end{corollary}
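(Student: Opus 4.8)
The plan is to read off the structure equation \eqref{main_hom_pois_struc_eq} in the degenerate regime $k=1$ and to recognize that, since it involves none of the higher brackets, it collapses to the axioms of a differential graded commutative associative algebra -- and these are exactly what Theorem \ref{theorem_dga_algebra} already provides.

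First I would unwind \eqref{main_hom_pois_struc_eq} for $k=1$ and arbitrary $p_1\in\N$. In this regime the only operations entering the equation are the unary operation $d$ and the binary product $\smwedge$, so, once the $(n-1)$-fold shifted grading is accounted for, the $k=1$ structure equations reduce to precisely: $d\circ d=0$; the graded Leibniz rule $d(f_1\smwedge f_2)=df_1\smwedge f_2+(-1)^{|f_1|}f_1\smwedge df_2$ in every cotensor degree (this is what the parameter $p_1$, ranging over $\N$, records); and graded commutativity together with associativity of $\smwedge$.

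Next I would invoke that the ambient de Rham complex $\Omega^\bullet(\mathfrak{g},A)$ is, by construction (see Appendix \ref{Lie_Rinehart_section}), a differential graded commutative associative algebra, so all of the identities above hold there, and then appeal to Theorem \ref{theorem_dga_algebra}: $\mathcal{P}ois(A,\mathfrak{g},\omega)$ is a linear subspace of $\Omega^\bullet(\mathfrak{g},A)$ that is closed under both $\smwedge$ and $d$. Since each of the identities above is an identity valid in the ambient differential graded commutative algebra and uses only $\smwedge$ and $d$, it restricts verbatim to the subalgebra $\mathcal{P}ois(A,\mathfrak{g},\omega)$. This delivers the $k=1$ instances of \eqref{main_hom_pois_struc_eq} for all $p_1\in\N$.

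The single point that needs care -- and the main, though modest, obstacle -- is the matching of conventions: \eqref{main_hom_pois_struc_eq} is phrased for the $(n-1)$-fold shifted grading, forced by the degree relation $|x|=|f|+n$ between a Poisson cotensor and an associated Hamilton tensor, whereas the de Rham identities come in the unshifted cotensor grading. One checks that a uniform shift of the grading by a fixed integer leaves $d\circ d=0$ and associativity untouched and modifies the Koszul signs in the Leibniz rule and in graded commutativity only by a global, $p_1$-independent factor, which is absorbed into the sign prescription of \eqref{main_hom_pois_struc_eq}. Once this translation of conventions is in place, the corollary follows immediately from Theorem \ref{theorem_dga_algebra}.
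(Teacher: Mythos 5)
Your proposal is correct and follows essentially the same route as the paper's own proof: take the (shifted) de Rham differential and exterior product as the only nonzero structure maps of (\ref{main_hom_pois_struc_maps}), observe that the $k=1$ instances of (\ref{main_hom_pois_struc_eq}) then collapse to the axioms of a differential graded commutative associative algebra, and invoke Theorem (\ref{theorem_dga_algebra}) to verify them on $\mathcal{P}ois(A,\mathfrak{g},\omega)$. One small correction that does not affect your argument: the parameter $p_1$ records the tensor weight of the argument $v_1\in\overline{sV^{\otimes p_1}}$, so $p_1=1,2,3$ yield respectively $d\circ d=0$, the Leibniz rule and associativity, while for $p_1\geq 4$ the equation is trivially satisfied because all structure maps $D_q$ with $q\geq 3$ are chosen to be zero --- it is not the Leibniz rule ``in every cotensor degree'', and graded commutativity enters as the (shifted) symmetry making $D_2$ well defined on the shuffle quotient rather than as an instance of the structure equation.
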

\begin{proof}
Define the needed structure maps of (\ref{main_hom_pois_struc_maps}) 
for $k=1$ and $f_1,f_2\in \mathcal{P}ois(A,\mathfrak{g},\omega)$ by:
$D_{1}(s^{n-1}f_1) \defeq sdf_1$,
$D_{2}(s^{n-2}(\overline{sf_1\otimes sf_2}))\defeq \sgn{|sf_1|}s(f_1\smwedge f_2)$
and $D_{q}=0$ for all $q\geq 3$.

Then all of these shifted structure maps are homogeneous of degree $(1-n)$ and 
since $D_{2}(s^{n-2}(\overline{sf_2\otimes sf_1}))+(-1)^{|sf_1||sf_2|}
D_{2}(s^{n-2}(\overline{sf_1\otimes sf_2}))=0$, they have the correct 
(shifted) Harrison symmetry.

Since $D_q=0$ for all $q\geq 3$,
equation (\ref{main_hom_pois_struc_eq}) is only non trivial 
for $p_1\leq 3$. For $p_1=1$  it is  
the 'square zero condition' on the differential,
for $p_1=2$ it is the requirement, that
$d$ is a derivation with respect to the exterior product and for
$p_1=3$ it is the associativity law. All this follows from the differential
graded structure on $\mathcal{P}ois(A,\mathfrak{g},\omega)$.
\end{proof}
\subsection{The Poisson Bracket}\label{subsec_binary}
We refine the symplectic Poisson bracket 
to the general n-plectic setting. This bracket has all the expected properties,
except that for $n\geq 2$,
neither the Jacobi nor the Leibniz identity is satisfied strictly, but
in terms of a \textit{homotopy} Poisson-n algebra. 

The following approach was inspired by the work of 
Forger and R\"omer \cite{FPR3}, but works without the additional coboundary
condition on the $n$-plectic cocycle. In contrast to their bracket,
this one interacts accurate with the exterior derivative.
In addition it can be seen as some kind of "Poisson-theoretic" refinement 
of the $n$-plectic homotopy \textit{Lie} bracket as given by Rogers in \cite{CR}.
\newpage
\begin{definition}\label{D_2}
Let $\left(A,\mathfrak{g}, \omega\right)$ be an n-plectic structure and $\mathcal{P}ois\left(A,\mathfrak{g},\omega\right)$ the set of 
Poisson cotensors. The map
\begin{equation}
\{\cdot,\cdot\}:  \mathcal{P}ois\left(A,\mathfrak{g},\omega\right)
 \times \mathcal{P}ois\left(A,\mathfrak{g},\omega\right) \to 
  \mathcal{P}ois\left(A,\mathfrak{g},\omega\right)\;, \\
\end{equation}
defined for any homogeneous Poisson cotensors 
$f_1, f_2\in \mathcal{P}ois\left(A,\mathfrak{g},\omega\right)$ and 
associated Hamilton tensors 
$x_1, x_2\in\mathcal{H}am(A,\mathfrak{g},\omega)$ by  
the equation
\begin{equation}
\{f_1,f_2\}=-L_{x_{1}}f_2 +(-1)^{(|x_1|-1)(|x_2|-1)}L_{x_{2}}f_1
\end{equation}  
and then extended to all of 
$\mathcal{P}ois\left(A,\mathfrak{g},\omega\right)$ by linearity, is called the
\textbf{homotopy Poisson 2-bracket}.
\end{definition}
We propose the '\textit{homotopy}' modifier since theorem
(\ref{jacobi_id}) and (\ref{first_leibniz_theorem}) show that neither the Jacobi nor
the Leibniz identity hold 'strictly' but only 'up to higher homotopies' 
as we will see in the next sections. The bracket is the 
antisymmetric incarnation of the one I developed in \cite{MR1}. It interacts 
with the differential and the exterior product in terms of a so called 
\textit{homotopy Poisson}-$n$ algebra (\ref{main_hom_pois_proof}), 
which justifies the name.
\begin{remark}\label{relation_to_Rogers}
To highlight the relation to the n-plectic
homotopy Lie bracket as defined by Rogers in \cite{CR}, observe the identity
\begin{equation}\label{old_2-bracket}
\textstyle\frac{1}{2}\{f_1,f_2\}= 
\sgn{|x_1|}\cdot i_{x_2\wedge x_1}\omega
- \frac{1}{2}d\left(i_{x_1}f_2 - (-1)^{(|x_1|-1)(|x_2|-1)}i_{x_2}f_1\right)\;.
\end{equation}
The additional cocycle can be seen as some sort of Poisson-theoretic
correction term, necessary for a correct interaction with the differential graded
commutative structure.
\end{remark}
\begin{remark}[Symplectic manifolds]Let $(M,\omega)$ be a 
symplectic manifold. The homotopy Poisson 
$2$-bracket is equal to the common symplectic Poisson bracket, 
usually defined by $-i_{x_1\wedge x_2}\omega$ (up to the factor $2$). 
This is a consequence of the previous remark. In particular the Jacobi
identity holds strictly for $n=1$.
\end{remark}
On the technical level, a first thing to show is, that the bracket is 
independent of the particular chosen associated Hamilton tensors. 
This is guaranteed by the kernel property (\ref{kernel_prop}). The
following proposition gives the details and 
computes associated tensors explicitly.
\begin{proposition}\label{well_def_2}Let $(A,\mathfrak{g},\omega)$ be an
$n$-plectic structure and  $f_1$, $f_2\in \mathcal{P}ois\left(A,\mathfrak{g},\omega\right)$ two Poisson cotensors. 
The image  $\{f_1,f_2\}$ is a well defined Poisson cotensor. 
If $y_1$ resp. $y_2$ and $x_1$ resp. $x_2$ 
are tensors associated to $f_1$ and $f_2$ by the equations 
$i_{x_i}\omega=df_i$ and $i_{y_i}\omega=f_i$, respectively, the exterior tensor
$$
y_{\{f_1,f_2\}}= \textstyle\left[x_2,y_1\right] -
(-1)^{(|x_1|-1)(|x_2|-1)}\left[x_1,y_2\right]
$$
is associated to the homotopy Poisson $2$-bracket by the equation 
$i_{y_{\{f_1,f_2\}}}\omega= \{f_1,f_2\}$ and the exterior tensor
\begin{equation}
x_{\{f_1,f_2\}}\defeq  [x_2,x_1]-(-1)^{(|x_1|-1)(|x_2|-1)}[x_1,x_2]
\end{equation}
is associated to the homotopy Poisson $2$-bracket by the equation 
$i_{x_{\{f_1,f_2\}}}\omega= d\{f_1,f_2\}$.
\end{proposition}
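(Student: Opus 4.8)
The plan is to prove the three assertions in turn: first that the value $\{f_1,f_2\}$ does not depend on the chosen Hamilton tensors $x_1,x_2$ (well-definedness), then that the displayed $x_{\{f_1,f_2\}}$ solves the fundamental equation $i_{x_{\{f_1,f_2\}}}\omega = d\{f_1,f_2\}$, and finally that the displayed $y_{\{f_1,f_2\}}$ solves the Poisson constraint $i_{y_{\{f_1,f_2\}}}\omega = \{f_1,f_2\}$. Once the latter two are established, $\{f_1,f_2\}$ is a Poisson cotensor by Definition \ref{Poisson_cotensors}, so nothing further is needed for that part of the claim.

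For well-definedness, recall from Proposition \ref{kernel_prop} that any other Hamilton tensor associated to $f_1$ has the form $x_1+\xi$ with $\xi\in\ker(\omega)$ of the same tensor degree, so replacing $x_1$ by $x_1+\xi$ changes the right-hand side of the defining equation only by $-L_\xi f_2$ (the sign $(-1)^{(|x_1|-1)(|x_2|-1)}$ is unaffected because $|x_1+\xi|=|x_1|$). Cartan's infinitesimal homotopy formula gives $L_\xi f_2 = i_\xi\,df_2 + (-1)^{|\xi|}\,d\,i_\xi f_2$ up to the conventional sign, and since $f_2$ is Poisson we have $\ker(\omega)\subset\ker(f_2)$, while $df_2$ is again Poisson by Theorem \ref{theorem_dga_algebra}, so also $\ker(\omega)\subset\ker(df_2)$; hence both terms vanish and $L_\xi f_2=0$. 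The symmetric computation handles a change of $x_2$, and the same mechanism $i_{[x_2,\xi]}\omega = \pm L_{x_2}i_\xi\omega = 0$ (using $L_{x_2}\omega=0$ and $i_\xi\omega=0$) shows the tensor $y_{\{f_1,f_2\}}$ is well defined modulo $\ker(\omega)$, so that its defining contraction is unambiguous. Linearity then extends everything to all of $\mathcal{P}ois(A,\mathfrak{g},\omega)$.

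For the Hamilton tensor I would apply $d$ to the definition and commute it past the Lie derivatives using $[d,L_{x_i}]=0$ (up to the degree sign), which turns $d\{f_1,f_2\}$ into a combination of $L_{x_1}df_2$ and $L_{x_2}df_1$; inserting $df_i=i_{x_i}\omega$ and invoking the Cartan-calculus identity $i_{[x,y]}\omega = \pm[L_x,i_y]\omega$ together with $L_{x_i}\omega=0$ (the corollary that Hamilton tensors preserve $\omega$) yields $L_{x_1}i_{x_2}\omega = \pm i_{[x_1,x_2]}\omega$ and $L_{x_2}i_{x_1}\omega = \pm i_{[x_2,x_1]}\omega$. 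Graded antisymmetry of the Schouten--Nijenhuis bracket, $[x_1,x_2]=-(-1)^{(|x_1|-1)(|x_2|-1)}[x_2,x_1]$, collapses the two contributions into $2\,i_{[x_2,x_1]}\omega$; the same identity shows $x_{\{f_1,f_2\}}=[x_2,x_1]-(-1)^{(|x_1|-1)(|x_2|-1)}[x_1,x_2]=2[x_2,x_1]$, so this is exactly $i_{x_{\{f_1,f_2\}}}\omega = d\{f_1,f_2\}$. The Poisson constraint is obtained the same way without differentiating: from $i_{y_i}\omega=f_i$, the identity $i_{[x,y]}\omega=\pm[L_x,i_y]\omega$ and $L_{x_i}\omega=0$ give $i_{[x_2,y_1]}\omega=\pm L_{x_2}f_1$ and $i_{[x_1,y_2]}\omega=\pm L_{x_1}f_2$, and assembling these with the sign $(-1)^{(|x_1|-1)(|x_2|-1)}$ reproduces $-L_{x_1}f_2+(-1)^{(|x_1|-1)(|x_2|-1)}L_{x_2}f_1=\{f_1,f_2\}$.

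The structural content is short --- apply the Cartan calculus, use $L_x\omega=0$, and use the kernel property --- so I expect the only real obstacle to be the sign and degree bookkeeping: one must fix the operator degree of $i_x$, the shifted grading in which the Schouten bracket is graded-antisymmetric, and the precise form of $i_{[x,y]}=\pm[L_x,i_y]$ and $[d,L_x]=0$ in the torsionless Lie--Rinehart setting, and then verify that the residual signs produced in the two computations above coincide with those appearing in Definition \ref{D_2}. This is laborious rather than conceptually hard.
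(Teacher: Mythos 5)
Your proposal is correct and follows essentially the same route as the paper: well-definedness via the kernel property (\ref{kernel_prop}) together with Cartan's homotopy formula applied to $f$ and $df$, and the two associated-tensor identities via $L_{x_i}\omega=0$ and the commutator identity $i_{[x,y]}=\pm[L_x,i_y]$ applied to $i_{y_i}\omega=f_i$ and $i_{x_i}\omega=df_i$, with the graded antisymmetry of the Schouten bracket collapsing $x_{\{f_1,f_2\}}$ to $2[x_2,x_1]$. The only difference is cosmetic — you differentiate the bracket and work forwards while the paper contracts $\omega$ along the proposed tensors and works backwards — and the sign bookkeeping you defer is exactly what the paper's displayed computation carries out.
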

\begin{proof}
To see that $\{f_1,f_2\}$ is well defined, suppose $\xi$ is a tensor from the 
kernel of $\omega$. Then $L_{\xi}f=di_{\xi}f-(-1)^{|\,\xi\,|}i_{\xi}df=0$ since 
$f$ and $df$ have the kernel property. Therefore we get 
$L_{x+\xi}f=L_{x}f$ for any Poisson cotensor $f$ and Hamilton tensor $x$. 
By prop. (\ref{kernel_prop}) the difference of tensors, associated to 
the same Poisson cotensor is an element of the kernel of $\omega$ and 
consequently the image $\{f_1,f_2\}$ does not depend on any particular choice.

To show that $y_{\{f_1,f_2\}}$ satisfies the Poisson constraint equation
with respect to the homotopy Poisson $2$-bracket, we use $L_{x_i}\omega=0$
as well as $|x_i|=|y_i|-1$ and compute:
\begin{align*}
i_{y_{\{f_1,f_2\}}}\omega
&=\textstyle-(-1)^{(|x_1|-1)(|x_2|-1)}i_{\left[x_1,y_2\right]}\omega
	+i_{\left[x_2,y_1\right]}\omega\\
&=\textstyle-(-1)^{(|x_1|-1)(|x_2|-1)}\left(
	(-1)^{(|x_1|-1)(|x_2|-1)}L_{x_1}i_{y_2}\omega-i_{y_2}L_{x_1}\omega\right)\\
&\quad+\left((-1)^{(|x_1|-1)(|x_2|-1)}L_{x_2} i_{y_1}\omega
  -i_{y_1}L_{x_2}\omega\right)\\	
&=\textstyle -L_{x_1}i_{y_2}\omega 
  +(-1)^{(|x_1|-1)(|x_2|-1)}L_{x_2} i_{y_1}\omega\\
&=\textstyle -L_{x_1}f_2 +(-1)^{(|x_1|-1)(|x_2|-1)}L_{x_2} f_1\;.
\end{align*}
It only remains to show that the tensor $x_{\{f_1,f_2\}}$ satisfies the 
fundamental equation with respect to the homotopy Poisson $2$-bracket:
\begin{align*}
&\textstyle i_{[x_2,x_1]} \omega 
	-(-1)^{(|x_1|-1)(|x_2|-1)}i_{[x_1,x_2]} \omega\\
&=\textstyle((-1)^{(|x_2|-1)|x_1|}L_{x_2} i_{x_1} \omega
   - i_{x_1} L_{x_2} \omega ) \\
&\quad\textstyle -(-1)^{(|x_1|-1)(|x_2|-1)}((-1)^{(|x_1|-1)|x_2|}
 L_{x_1} i_{x_2} \omega
	- i_{x_2} L_{x_1} \omega ) \\
&= -\textstyle(-1)^{(|x_1|-1)(|x_2|-1)+|x_2|}
 L_{x_2} df_1 +(-1)^{|x_1|}L_{x_1}df_2\\
&=\textstyle (-1)^{(|x_1|-1)(|x_2|-1)}dL_{x_2}f_1 - dL_{x_1}f_2\\
&=\textstyle d((-L_{x_1}f_2+(-1)^{(|x_1|-1)(|x_2|-1)}L_{x_2}f_1))\\
&=d\{f_1,f_2\}\;.
\end{align*}
\end{proof}
\begin{remark}Strictly speaking it is not correct to write 
$x_{\{f_1,f_2\}}$ since associated tensors are in general not unique and
the expression depends on the particular chosen Hamiltons
 $x_i$. By abuse of notation we stick to the symbol
$x_{\{f_1,f_2\}}$ to indicate that we have at least \textit{some} 
Hamilton tensor associated
to the Poisson 2-bracket $\{f_1,f_2\}$. According to proposition (\ref{kernel_prop}) 
all representatives compute equal contractions and Lie derivations of
Poisson cotensors, therefore the risk of confusion is low.  
\end{remark}
Now lets see that the $n$-plectic Poisson $2$-bracket has the properties
expected from the bilinear bracket operator of a homotopy Poisson-$n$ algebra:
\begin{proposition}\label{jacobi-2}
The homotopy Poisson $2$-bracket is bilinear and homogeneous of degree 
$(n-1)$ with respect to the tensor grading. It satisfies the 
$(n-1)$-fold shifted antisymmetry equation
\begin{equation}
\{f_1,f_2\}=
-(-1)^{(|f_1|+n-1)(|f_2|+n-1)}
\{f_2,f_1\}
\end{equation}
for all homogeneous 
$f_1,f_2 \in\mathcal{P}ois(A,\mathfrak{g},\omega)$
and the $(n-1)$-fold shifted homotopy Jacobi equation in dimension two
\begin{equation}
d\{f_1,f_2\}=
 \{df_1,f_2\}-
  (-1)^{(|f_1|+n-1)(|f_2|+n-1)}
    \{df_2,f_1\}\;.
\end{equation}
\end{proposition}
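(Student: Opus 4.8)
The plan is to verify each of the three assertions in turn, drawing on the explicit formulas already established in Proposition~\ref{well_def_2} and the corollaries on degree relations.

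\medskip

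\noindent\textit{Bilinearity and degree.} Bilinearity is immediate from the defining formula $\{f_1,f_2\}=-L_{x_1}f_2+(-1)^{(|x_1|-1)(|x_2|-1)}L_{x_2}f_1$ together with the linearity of the fundamental equation $i_x\omega=df$ in both slots (so that Hamilton tensors may be chosen linearly in $f$). For the degree count, recall from the corollary following Definition~\ref{Poisson_cotensors} that a homogeneous Hamilton tensor $x_i$ associated to $f_i$ satisfies $|x_i|=|f_i|+n$. The Lie derivative $L_{x_1}$ lowers cotensor degree by one less than the tensor degree of $x_1$ in the appropriate shifted sense; concretely $L_{x_1}f_2$ has tensor degree $|x_1|-1+|f_2|$... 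I would simply compute: writing everything in tensor grading, $|L_{x_1}f_2| = |f_2| - (|x_1|-1) = |f_2| - |f_1| - n + 1$. Hmm — this must be reconciled with the claim that the bracket is homogeneous of degree $(n-1)$ relative to a suitable grading; the cleanest route is to track the \emph{shifted} degree $|f|+n-1$ and check the bracket shifts it by $-(n-1)$, i.e. acts as an element of degree $(1-n)$ on the shifted space, exactly as the structure maps $D_q$ in Corollary~\ref{corollary_main_1}. I would state the convention explicitly and then the count is bookkeeping.

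\medskip

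\noindent\textit{Shifted antisymmetry.} This is the most transparent part: swapping $f_1\leftrightarrow f_2$ in the defining formula exchanges the two terms and produces the sign $(-1)^{(|x_1|-1)(|x_2|-1)}$, which, using $|x_i|-1=|f_i|+n-1$, is precisely $(-1)^{(|f_1|+n-1)(|f_2|+n-1)}$. One then checks the overall sign works out to $\{f_1,f_2\}=-(-1)^{(|f_1|+n-1)(|f_2|+n-1)}\{f_2,f_1\}$ by comparing term by term. No genuine obstacle here.

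\medskip

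\noindent\textit{Homotopy Jacobi in dimension two.} Here I would use the second half of Proposition~\ref{well_def_2}, which already asserts that $x_{\{f_1,f_2\}}=[x_2,x_1]-(-1)^{(|x_1|-1)(|x_2|-1)}[x_1,x_2]$ is a Hamilton tensor associated to $\{f_1,f_2\}$ via $i_{x_{\{f_1,f_2\}}}\omega=d\{f_1,f_2\}$. The claimed identity $d\{f_1,f_2\}=\{df_1,f_2\}-(-1)^{(|f_1|+n-1)(|f_2|+n-1)}\{df_2,f_1\}$ should be read as: the de Rham differential is (up to the shifted sign) a derivation of the bracket, equivalently $D_1$ is a derivation of the shifted $\ell_2$ — which is exactly the $k=2$ case of the structure equation \eqref{main_hom_pois_struc_eq}. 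To prove it directly I would expand the right-hand side using that a Hamilton tensor associated to $df_i$ is $x_i$ itself (since $i_{x_i}\omega=df_i=d(df_i)$ trivially fails — rather, $df_i$ is Poisson with Hamilton tensor any element of $\ker\omega$, and $x_i$ is a Poisson \emph{constraint} for $df_i$). This is the subtle point: for $\{df_1,f_2\}$ one needs a Hamilton tensor for $df_1$, which by the proof of Theorem~\ref{theorem_dga_algebra} may be taken to be any kernel element, hence $\{df_1,f_2\}=-L_{0}f_2+\dots = (-1)^{\dots}L_{x_2}(df_1)$; and $L_{x_2}(df_1)=dL_{x_2}f_1$ since $[L_{x_2},d]=0$. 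Assembling, $\{df_1,f_2\}-(-1)^{(|f_1|+n-1)(|f_2|+n-1)}\{df_2,f_1\}$ collapses to $d\bigl((-1)^{(|x_1|-1)(|x_2|-1)}L_{x_2}f_1-L_{x_1}f_2\bigr)=d\{f_1,f_2\}$, using commutativity of $d$ with all Lie derivatives and the sign identification above. The one place to be careful — and what I expect to be the main obstacle — is getting the signs straight when passing between the tensor grading and the shifted grading in the antisymmetry factor, and correctly identifying which object (Hamilton tensor versus Poisson constraint) plays which role for the cotensor $df_i$; once the convention is pinned down, each of the three claims reduces to a short sign-tracking computation already implicit in Proposition~\ref{well_def_2}.
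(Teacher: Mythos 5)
Your outline follows the paper's own proof: bilinearity from linearity of all the operators involved, antisymmetry by swapping the two terms and using $|x_i|-1=|f_i|+n-1$, and the dimension-two Jacobi equation by observing that any Hamilton tensor associated to $df_i$ lies in $\ker(\omega)$, so that in $\{df_1,f_2\}$ only the term of type $L_{x_2}df_1$ survives, which is then converted into $dL_{x_2}f_1$. Two convention-level slips should be repaired, since the entire content of the statement is the signs. First, in the degree count you mix gradings: with the paper's tensor grading one has $|x_i|=|f_i|+n$ and $|L_{x_1}f_2|=|x_1|+|f_2|-1=|f_1|+|f_2|+n-1$, which is the asserted degree $n-1$; your formula $|f_2|-(|x_1|-1)$ is the cotensor-graded count and gives the opposite shift, so fix one grading and state it. Second, $d$ does not commute with Lie derivatives on the nose: by (\ref{multi_rules-1}), $dL_{x_2}f_1=(-1)^{|x_2|-1}L_{x_2}df_1$, and moreover the Hamilton tensor associated to $df_1$ has tensor degree $|x_1|-1$, so the Koszul prefactor in $\{df_1,f_2\}$ is $(-1)^{(|x_1|-2)(|x_2|-1)}$, not $(-1)^{(|x_1|-1)(|x_2|-1)}$. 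These two corrections cancel, giving $\{df_1,f_2\}=(-1)^{(|x_1|-1)(|x_2|-1)}dL_{x_2}f_1$, and together with the analogous expression for $\{df_2,f_1\}$ the right-hand side collapses to $d\{f_1,f_2\}$ exactly as in the paper's computation; your assembly as written (strict commutation plus an unspecified prefactor) only comes out right because these two sign errors happen to compensate, so track them explicitly rather than appealing to $[L_{x},d]=0$.
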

\begin{proof}
Bilinearity is a straight forward implication of the definition, since all 
relevant operators are linear in all arguments. We assume that 
$f_1,f_2\in \mathcal{P}ois(A,\mathfrak{g},\omega)$ are homogeneous.
Since $|f|=|x|-n$, homogeneity can be computed by
\begin{align*}
|\{f_1,f_2\}|
&=|L_{x_1}f_2|\\
&=|x_1|+|f_2|-1\\
&=|f_1|+|f_2|+n-1\;.
\end{align*}
Similar, the $(n-1)$-fold shifted antisymmetry is a consequence of $|f|=|x|-n$ 
and can be computed according to
\begin{align*}
&\{f_1,f_2\}\\ 
&= -L_{x_1}f_2 +(-1)^{(|x_1|-1)(|x_2|-1)}L_{x_2}f_1\\
&= -(-1)^{(|x_1|-1)(|x_2|-1)}
 (-L_{x_2}f_1 +(-1)^{(|x_1|-1)(|x_2|-1)}L_{x_1}f_2)\\
&=-(-1)^{(|x_1|-1)(|x_2|-1)}\{f_2,f_1\}\\
&=-(-1)^{(|f_1|+n-1)(|f_2|+n-1)}\{f_2,f_1\}.
\end{align*}
Finally we compute the $(n-1)$-fold shifted homotopy Jacobi equation in 
dimension two. Since any Hamilton tensor associated to the cocycle $df$ is 
an element from the kernel of $\omega$, we get 
\begin{align*}
d\{f_1,f_2\}
&=-dL_{x_1}f_2 +(-1)^{(|x_1|-1)(|x_2|-1)} dL_{x_2}f_1\\
&= -(-1)^{|x_1|-1}L_{x_1}df_2 
 -(-1)^{(|x_1|-1)(|x_2|-1)+|x_2|} L_{x_2}df_1\\
&= +(-1)^{(|x_1|-2)(|x_2|-1)}L_{x_2}df_1
  -(-1)^{|x_1|-1}L_{x_1}df_2\\	
&= \{df_1,f_2\}
 - (-1)^{(|x_1|-1)(|x_2|-1)}\{df_2,f_1\}\\	
&= \{df_1,f_2\}
 - (-1)^{(|f_1|+n-1)(|f_2|+n-1)}\{df_2,f_1\}\;.
\end{align*}
\end{proof}
For $n\geq 2$, the $n$-plectic Poisson bracket does not satisfy the 
usual Jacobi identity. 
Still this 'failure' is not random, but controlled
by a certain cocycle, that emerge from an 
additional $3$-ary bracket, as we will see in the next section. 
The corrected Jacobi identity is equation (\ref{jacoby_dim_4}). 
It is first evidence for the appearance of 
a homotopy algebra in the $n$-plectic formalism.
\begin{theorem}[Jacobi Identity]\label{jacobi_id} 
The Poisson $2$-bracket does not satisfy the Jacobi identity of a graded Lie 
algebra. Instead
\begin{multline}
\textstyle \sum_{\sigma\in Sh\left(2,1\right)}
 \sgn{\sigma}e\left(\sigma;sx_1,sx_2,sx_3\right)
  \{\{f_{\sigma(1)},f_{\sigma(2)}\},f_{\sigma(3)}\}\\
=-\textstyle\sum_{\sigma\in Sh\left(2,1\right)}\sgn{\sigma}
 e\left(\sigma;sx_1,sx_2,sx_3\right)
  L_{[x_{\sigma(2)},x_{\sigma(1)}]}f_{\sigma(3)}
\end{multline}
for any homogeneous Poisson cotensors 
$f_1,f_2,f_3 \in \mathcal{P}ois(A,\mathfrak{g},\omega)$ and associated
Hamilton tensors $x_1,x_2$ and $x_3$, respectively.
\end{theorem}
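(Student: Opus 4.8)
The plan is to expand the left-hand side directly from the definition (\ref{D_2}) of the homotopy Poisson $2$-bracket, using the associated Hamilton tensors computed in Proposition (\ref{well_def_2}). First I would observe that, by Propositions (\ref{kernel_prop}) and (\ref{well_def_2}), every bracket and Lie derivative occurring is well defined and independent of the chosen Hamilton tensors, so I may fix homogeneous representatives $x_1,x_2,x_3$ with $i_{x_i}\omega=df_i$ and $|x_i|=|f_i|+n$. For an inner bracket $\{f_1,f_2\}$ I then use the Hamilton tensor
\[ x_{\{f_1,f_2\}}=[x_2,x_1]-(-1)^{(|x_1|-1)(|x_2|-1)}[x_1,x_2]=2[x_2,x_1], \]
the last equality being the graded antisymmetry of the Schouten-Nijenhuis bracket. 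Applying (\ref{D_2}) to the outer bracket, each summand $\{\{f_{\sigma(1)},f_{\sigma(2)}\},f_{\sigma(3)}\}$ splits into a \emph{direct} piece $-2L_{[x_{\sigma(2)},x_{\sigma(1)}]}f_{\sigma(3)}$ and a piece proportional to $L_{x_{\sigma(3)}}\{f_{\sigma(1)},f_{\sigma(2)}\}$.

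The second step is to expand $L_{x_{\sigma(3)}}\{f_{\sigma(1)},f_{\sigma(2)}\}$ once more via (\ref{D_2}), which produces the second-order expressions $L_{x_{\sigma(3)}}L_{x_{\sigma(1)}}f_{\sigma(2)}$ and $L_{x_{\sigma(3)}}L_{x_{\sigma(2)}}f_{\sigma(1)}$. The key input is that the Lie-derivative representation is a morphism of graded Lie algebras from the Schouten-Nijenhuis bracket to the graded commutator,
\[ L_{[x,x']}=L_xL_{x'}-(-1)^{(|x|-1)(|x'|-1)}L_{x'}L_x, \]
so that each second-order expression rewrites as a Koszul-symmetrized second-order term plus a first-order term $L_{[x_{\sigma(3)},x_{\sigma(i)}]}f_{\sigma(j)}$. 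Substituting back, the left-hand side becomes a sum, over the three shuffles, of first-order terms $L_{[x_a,x_b]}f_c$ and symmetrized second-order terms, each weighted by the Koszul sign $\sgn{\sigma}e(\sigma;sx_1,sx_2,sx_3)$, by the antisymmetrization signs internal to (\ref{D_2}), and by the factor $2$.

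The final step is the summation over $\sigma\in Sh(2,1)$. I expect the Koszul-symmetrized second-order terms to cancel in pairs against the antisymmetrizing shuffle signs, while the first-order terms recombine (using Schouten antisymmetry $[x_a,x_b]=-(-1)^{(|x_a|-1)(|x_b|-1)}[x_b,x_a]$, the definition of $e(\sigma;\cdot)$, and the substitution $|x_i|=|f_i|+n$ to reconcile the bracket signs with the shuffle signs) into exactly $-\sum_{\sigma\in Sh(2,1)}\sgn{\sigma}e(\sigma;sx_1,sx_2,sx_3)L_{[x_{\sigma(2)},x_{\sigma(1)}]}f_{\sigma(3)}$. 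The main obstacle is precisely this sign bookkeeping: one has to check, shuffle by shuffle, that the factor $2$ coming from $x_{\{f_1,f_2\}}=2[x_2,x_1]$, the two antisymmetrization signs from the nested uses of (\ref{D_2}), and the Koszul shuffle signs all conspire so that the second-order terms vanish and the six first-order contributions collapse to the three on the right with the correct overall minus sign. One could instead route the argument through Remark (\ref{relation_to_Rogers}), comparing with Rogers' bracket up to a coboundary, but the sign bookkeeping is essentially the same.
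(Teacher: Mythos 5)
Your proposal follows essentially the same route as the paper's proof: expand the outer bracket using $x_{\{f_1,f_2\}}=2[x_2,x_1]$ from Proposition (\ref{well_def_2}), expand the remaining term $L_{x_{\sigma(3)}}\{f_{\sigma(1)},f_{\sigma(2)}\}$ into double Lie derivatives, and then use the Cartan-calculus identity expressing $L_{[x,y]}$ as a graded commutator of Lie derivatives so that, after antisymmetrizing over the shuffles, the contributions collapse to $-2\Sigma+\Sigma=-\Sigma$. The structure and key inputs (kernel property for well-definedness, the factor $2$, the commutator identity) coincide with the paper's argument, so the proposal is correct.
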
 
\begin{proof} To show that the equation is satisfied,
apply the definition of $\{\cdot,\cdot\}$ to
rewrite the left side into
\begin{multline*}
\textstyle
 -\sum_{\sigma\in Sh(2,1)}\sgn{\sigma}
   e\left(\sigma;sx_1,sx_2,sx_3\right)
   L_{2\cdot [x_{\sigma(2)},x_{\sigma(1)}]}f_{\sigma(3)}\\
+\textstyle\sum_{\sigma\in Sh(2,1)}\sgn{\sigma}
 e\left(\sigma;sx_1,sx_2,sx_3\right)
  (-1)^{(|[x_{\sigma(2)},x_{\sigma(1)}]|+1)(|x_{\sigma(3)}|+1)}L_{x_{\sigma(3)}}\\
\textstyle\left(-L_{x_{\sigma(1)}}f_{\sigma(2)} 
 +(-1)^{(|x_{\sigma(1)}|+1)(|x_{\sigma(2)}|+1)}
  L_{x_{\sigma(2)}}f_{\sigma(1)}\right)\;.
\end{multline*}
Since all terms in this expression are graded antisymmetric, we can simplify
and reorder to arrive at: 
\begin{multline*}
-\textstyle2\sum_{\sigma\in Sh(2,1)}\sgn{\sigma}
 e\left(\sigma;sx_1,sx_2,sx_3\right)
  L_{[x_{\sigma(2)},x_{\sigma(1)}]}f_{\sigma(3)}\\
+\textstyle\sum_{\sigma\in Sh(2,1)}\sgn{\sigma}
 e\left(\sigma;sx_1,sx_2,sx_3\right)\cdot{}\phantom{mmmmmmmm}\\
  \cdot((-1)^{(|x_{\sigma(1)}|+1)(|x_{\sigma(2)}|+1)}
    L_{x_{\sigma(2)}}L_{x_{\sigma(1)}}f_{\sigma(3)} 
     -L_{x_{\sigma(1)}}L_{x_{\sigma(2)}}f_{\sigma(3)})	\;.
\end{multline*}
Using (\ref{multi_rules-2}) the second shuffle sum can be rewritten in terms of 
the Schouten bracket to get the expression
\begin{multline*}
-\textstyle2\sum_{\sigma\in Sh(2,1)}\sgn{\sigma}
 e\left(\sigma;sx_1,sx_2,sx_3\right)
  L_{[x_{\sigma(2)},x_{\sigma(1)}]}f_{\sigma(3)}\\
+\textstyle\sum_{\sigma\in Sh(2,1)}\sgn{\sigma}
 e\left(\sigma;sx_1,sx_2,sx_3\right)L_{[x_{\sigma(2)},x_{\sigma(1)}]}f_{\sigma(3)}	
\end{multline*}
which proof the theorem on homogeneous and therefore on arbitrary Poisson cotensors.
\end{proof}
In contrast to the $n$-plectic Poisson bracket,
the Schouten-Nijenhuis bracket does satisfy the
usual Jacobi equation. From this follows, that the $n$-plectic 
Jacobi identity, although not trivial, is nevertheless a cocycle: 
\begin{theorem}\label{J2_mvf}Let $(A,\mathfrak{g},\omega)$ be an $n$-plectic
structure. The Jacobi expression
$$
\textstyle\sum_{\sigma\in Sh(2,1)}\sgn{\sigma}
 e\left(\sigma;sx_1,sx_2,sx_3\right)
\{\{f_{\sigma(1)},f_{\sigma(2)}\},f_{\sigma(3)}\}
$$
is a cocycle for any three Poisson cotensors
$f_1,f_2,f_3\in \mathcal{P}ois(A,\mathfrak{g},\omega)$.
If $y_1$, $y_2$, $y_3$ are Poisson constraints, associated to $f_1$, $f_2$ and
$f_3$, then the exterior tensor given by
\begin{equation}\label{J_2}
-\textstyle\sum_{\sigma\in Sh(1,2)}
 e\left(\sigma;sx_1,sx_2,sx_3\right)
  [[x_{\sigma(3)},x_{\sigma(2)}],y_{\sigma(1)}]
\end{equation}
is Poisson constraint associated to the Jacobi expression by 
equation (\ref{Poisson_constraint_eq}).
\end{theorem}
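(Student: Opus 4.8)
The plan is to verify the cocycle claim by a direct computation that reduces the differential of the Jacobi expression to the failure-of-Jacobi formula from Theorem~\ref{jacobi_id}, and then prove that the tensor \eqref{J_2} is a Poisson constraint by contracting it with $\omega$ and matching the result to the Jacobi expression. The two assertions are logically independent but the second immediately implies the first (a cotensor of the form $i_y\omega$ is automatically a cocycle, since $\omega$ is a cocycle and $d i_y \omega = i_{[\text{Hamilton}]} \omega$ modulo boundary by the corollary on boundary equations), so I would in fact prove the Poisson-constraint claim and derive the cocycle statement as a consequence, observing additionally that the Hamilton tensor associated to the Jacobi expression is $-\sum_{\sigma\in Sh(1,2)} e(\sigma;sx_1,sx_2,sx_3)[[x_{\sigma(3)},x_{\sigma(2)}],x_{\sigma(1)}]$, which after using the graded Jacobi identity for the Schouten--Nijenhuis bracket is seen to lie in $\ker(\omega)$.

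First I would contract the candidate tensor \eqref{J_2} with $\omega$ and push each $i_{[[x_{\sigma(3)},x_{\sigma(2)}],y_{\sigma(1)}]}\omega$ inward using the Cartan-type identity $i_{[a,b]}\omega = \pm L_a i_b \omega \mp i_b L_a \omega$ together with $L_{x_i}\omega = 0$ (the corollary that Hamilton tensors preserve $\omega$); this converts each summand into an expression of the shape $\pm L_{[x_{\sigma(3)},x_{\sigma(2)}]} i_{y_{\sigma(1)}}\omega = \pm L_{[x_{\sigma(3)},x_{\sigma(2)}]} f_{\sigma(1)}$, using the Poisson constraint equation $i_{y_i}\omega = f_i$. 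So the contraction becomes $\mp\sum_{\sigma\in Sh(1,2)} e(\sigma;\cdots) L_{[x_{\sigma(3)},x_{\sigma(2)}]} f_{\sigma(1)}$. The remaining task is a sign-and-shuffle bookkeeping exercise: one must recognize that the $Sh(1,2)$-sum with its Koszul signs, after relabelling, equals the $Sh(2,1)$-sum appearing on the right-hand side of Theorem~\ref{jacobi_id}, namely $-\sum_{\sigma\in Sh(2,1)} \sgn{\sigma} e(\sigma;sx_1,sx_2,sx_3) L_{[x_{\sigma(2)},x_{\sigma(1)}]} f_{\sigma(3)}$, and that the latter equals the Jacobi expression by the same theorem.

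The well-definedness — independence of the choices of $x_i$ and $y_i$ — follows from Proposition~\ref{kernel_prop}: replacing any $y_i$ by $y_i + \xi$ with $\xi \in \ker(\omega)$ changes \eqref{J_2} by a $\xi$-dependent term whose $\omega$-contraction vanishes, and likewise for the $x_i$, and one checks each such correction term either lies in $\ker(\omega)$ or contracts Poisson cotensors trivially; alternatively, since we only claim \eqref{J_2} \emph{is some} Poisson constraint, it suffices that \emph{one} choice works, which the above computation supplies.

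The main obstacle will be the combinatorics of the shuffle signs. Matching $Sh(1,2)$ against $Sh(2,1)$ requires tracking how the Koszul sign $e(\sigma;sx_1,sx_2,sx_3)$ behaves under the order reversal inside the nested bracket $[[x_{\sigma(3)},x_{\sigma(2)}],x_{\sigma(1)}]$ versus $[x_{\sigma(2)},x_{\sigma(1)}]$ acting on $f_{\sigma(3)}$, and the shifted degrees $|sx_i| = |x_i|+1 = |f_i|+n+1$ must be fed consistently into every Koszul sign. A secondary subtlety is justifying the interchange of $i$ and $L$ operators at the graded level, which requires the degree relations $|y_i| = |x_i| + 1$ and $|x_i| = |f_i| + n$ from the boundary-equation corollary; these govern all the $\pm$ in the Cartan identities. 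Once the shuffle-sign dictionary is fixed (most cleanly by noting that $Sh(1,2)$ and $Sh(2,1)$ are related by the longest permutation and computing its Koszul sign once and for all), the rest is routine substitution.
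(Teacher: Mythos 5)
Your proposal follows essentially the same route as the paper: the Poisson-constraint claim is established by contracting (\ref{J_2}) with $\omega$, using $L_{[x_i,x_j]}\omega=0$ together with (\ref{multi_rules-2}) to turn each summand into $\pm L_{[x_{\sigma(3)},x_{\sigma(2)}]}f_{\sigma(1)}$ and then matching the reindexed shuffle sum with Theorem (\ref{jacobi_id}), while the cocycle claim rests on the graded Jacobi identity of the Schouten--Nijenhuis bracket applied to the shuffle-antisymmetrized nested bracket $[[x_{\sigma(3)},x_{\sigma(2)}],x_{\sigma(1)}]$ --- exactly the paper's two ingredients, merely reorganized so that the constraint part comes first. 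One caveat: your parenthetical assertion that a cotensor of the form $i_y\omega$ is ``automatically a cocycle'' is false in general, since $di_y\omega=L_y\omega$ need not vanish; your argument survives only because, as you then observe, the Hamilton tensor associated to the Jacobi expression is the antisymmetrized nested Schouten bracket, which vanishes by the graded Jacobi identity and hence lies in $\ker(\omega)$, so that step, not the parenthetical, is what actually yields $d$ of the Jacobi expression equal to zero.
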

\begin{proof}
\begin{comment}
NEED TO SHOW THAT THE POR IS POISSON.
To see that (\ref{J_2}) is a Poisson tensor associated to the Jacobi expression,
recall that since 
$[x_i,x_j]\in \mathcal{S}ym(A,\mathfrak{g},\omega)$, the Lie derivative $L_{[X_i,X_j]}\omega$ vanishes and we get
\begin{align*}
&-\textstyle\sum_{\sigma\in Sh\left(1,2\right)}
 \sgn{\sigma}e\left(\sigma;sx_1,sx_2,sx_3\right)
  i_{[[x_{\sigma(3)},x_{\sigma(2)}],y_{\sigma(1)}]}\omega\\
&=-\textstyle\sum_{\sigma\in Sh\left(1,2\right)}
 \sgn{\sigma}e\left(\sigma;sx_1,sx_2,sx_3\right)
  (-1)^{(|[x_{\sigma(3)},x_{\sigma(2)}]|+1)|y_{\sigma(1)}|}
   L_{[x_{\sigma(3)},x_{\sigma(2)}]}i_{y_{\sigma(1)}}\omega\\
&=-\textstyle\sum_{\sigma\in Sh\left(1,2\right)}
 \sgn{\sigma}e\left(\sigma;sx_1,sx_2,sx_3\right)
  (-1)^{(|(x_{\sigma(3)}|+1)(|x_{\sigma(1)}|+1)+
   (|x_{\sigma(2)}|+1)(|x_{\sigma(1)}|+1)}\;\cdot\\
&\phantom{=.}\cdot L_{[x_{\sigma(3)},x_{\sigma(2)}]}i_{y_{\sigma(1)}}\omega\\  
&=-\textstyle\sum_{\sigma\in Sh\left(2,1\right)}
 \sgn{\sigma}e\left(\sigma;sx_1,sx_2,sx_3\right)
		L_{[x_{\sigma(2)},x_{\sigma(1)}]}i_{y_{\sigma(3)}}\omega\\		
&=-\textstyle\sum_{\sigma\in Sh\left(2,1\right)}
 \sgn{\sigma}e\left(\sigma;sx_1,sx_2,sx_3\right)
  L_{[x_{\sigma(2)},x_{\sigma(1)}]}f_{\sigma(3)}\\
&=\textstyle\sum_{\sigma\in Sh\left(2,1\right)}
 \sgn{\sigma}e\left(\sigma;sx_1,sx_2,sx_3\right)
	\{\{f_{\sigma(1)},f_{\sigma(2)}\},f_{\sigma(3)}\}\; .
\end{align*}
\end{comment}
To see that the Jacobi expression is a cocycle, we use the ordinary Jacobi identity 
of the Schouten bracket for exterior tensors. In particular we compute
\begin{align*}
0&=\textstyle\sum_{\sigma\in Sh(2,1)}
 e\left(\sigma;sx_1,sx_2,sx_3\right)
  i_{[[x_{\sigma(3)},x_{s_2}],x_{s_1}]}\omega\\
&=\textstyle\sum_{\sigma\in Sh(2,1)}
 e\left(\sigma;sx_1,sx_2,sx_3\right)
  i_{[[x_{\sigma(2)},x_{\sigma(1)}],x_{\sigma(3)}]}\omega\\
&=-\textstyle \sum_{\sigma\in Sh(2,1)}
   \sgn{\sigma}e\left(\sigma;sx_1,sx_2,sx_3\right)
    \sgn{|x_{\sigma(1)}|+|x_{\sigma(2)}|} 
     L_{[x_{\sigma(2)},x_{\sigma(1)}]}i_{x_{\sigma(3)}}\omega\\		
&=-\textstyle \sum_{\sigma\in Sh(2,1)}
   \sgn{\sigma}e\left(\sigma;sx_1,sx_2,sx_3\right)
    \sgn{|x_{\sigma(1)}|+|x_{\sigma(2)}|} 
     L_{[x_{\sigma(2)},x_{\sigma(1)}]}df_{\sigma(3)}\\
&=-\textstyle \sum_{\sigma\in Sh(2,1)}
   \sgn{\sigma} e\left(\sigma;sx_1,sx_2,sx_3\right)
    dL_{[x_{\sigma(2)},x_{\sigma(1)}]}f_{\sigma(3)}\\
&=\textstyle d\left(\sum_{\sigma\in Sh(2,1)}
   \sgn{\sigma}e\left(\sigma;sx_1,sx_2,sx_3\right)
    \{\{f_{\sigma(1)},f_{\sigma(2)}\},f_{\sigma(3)}\}\right)\;.
\end{align*}
\end{proof}
The existence of the associated tensor (\ref{J_2})
is tied to the assumption that any Poisson cotensor $f$ has
a solution $y$ to the Poisson constraint equation $i_y\omega=f$. 
If we drop that assumption, the equation
\begin{equation}\label{help_1}
i_y\omega = -\textstyle\sum_{\sigma\in Sh(2,1)}
 \sgn{\sigma}e\left(\sigma;sx_1,sx_2,sx_3\right)
  \{\{f_{\sigma(1)},f_{\sigma(2)}\},f_{\sigma(3)}\}
\end{equation}
must not have a solution anymore, as the following simple counterexample shows:
\begin{example}\label{counterexample_1}
Let $\left(\R^6,\omega\right)$ be the $3$-plectic manifold, with $3$-plectic
cocycle given by
$$ 
\omega\defeq  dx^1\smwedge dx^3\smwedge dx^5 \smwedge dx^6 +
dx^2\smwedge dx^4\smwedge dx^5 \smwedge dx^6\;.
$$
and consider
$f_1\defeq  \left(x_1^2\,x_3- x_4\right)\,dx^5\smwedge dx^6$ and
$f_2\defeq  -\left(x_3+x_2^2\,x_4\right)\,dx^5\smwedge dx^6$.
These differential form are Poisson, since Poisson constraint tensor fields 
are given (for example) by
$$
\begin{array}{lcr}
y_1\defeq  \left(x_1^2\,x_3-x_4\right)\,\partial_3\smwedge \partial_1 
&,&
y_2\defeq  -\left(x_3+x_2^2\,x_4\right)\,\partial_4\smwedge \partial_2
\end{array}
$$
and Hamilton tensor fields associated by the fundamental
equation (\ref{fake_fundamental_1}) are given (for example) by
$$
\begin{array}{lcr}
x_1\defeq  x_1^2\partial_1 -\partial_2 -2x_1x_3\partial_3 
&,&
x_2\defeq  -\partial_1 -x_2^2\partial_2 +2x_2x_4\partial_4 \;.
\end{array}
$$
\begin{comment}
since 
$$
df_1=-x_1^2 dx^3\smwedge dx^5 \smwedge dx^6
-2x_1x_3\,dx^1\smwedge dx^5\smwedge dx^6 + dx^4\smwedge dx^5 \smwedge dx^6
$$
$$
df_2=x_2^2 dx^4\smwedge dx^5 \smwedge dx^6 + 2x_2x_4\,dx^2\smwedge dx^5\smwedge dx^6  + dx^3\smwedge dx^5 \smwedge dx^6
$$
\end{comment}
In this case the Schouten bracket reduces to the usual Lie bracket and is given by
$$
[x_2,x_1]=2x_1\partial_1+2x_2\partial_2-2x_3\partial_3-2x_4\partial_4\;.
$$
Next define the differential form
$f_3\defeq  dx^1 \smwedge dx^2$.
Since any closed form is a Hamilton cotensor, so is $f_3$. In contrast $f_3$ is 
not Poisson, because there can't be any tensor field $y$ satisfying
$i_y\omega=f_3$. 

To find the Jacobian 
$\textstyle\sum_{\sigma\in Sh(2,1)}
 \sgn{\sigma}e\left(\sigma;sx_1,sx_2,sx_3\right)
  \{\{f_{\sigma(1)},f_{\sigma(2)}\},f_{\sigma(3)}\}$ we compute along the line of 
the proof of equation (\ref{jacobi_id}). Since $f_3$ is a cocycle, this gives 
$L_{[x_2,x_1]}f_3$, which is $4 dx^1 \smwedge dx^2$.
It follows that the Jacobi expression is not Poisson and that equation 
(\ref{help_1}) has no solution.
\end{example} 

Again, this justifies our proposed definition of Poisson cotensors. 
If we want a trilinear bracket operator to be related to our 
Poisson $2$-bracket by the homotopy Jacobi
equation in dimension four, a solution to the equation
\begin{multline*}
i_{x_{\{f_1,f_2,f_3\}}}\omega =
-\textstyle\sum_{\sigma\in Sh(2,1)}
 \sgn{\sigma}e\left(\sigma;sx_1,sx_2,sx_3\right)
  \{\{f_{\sigma(1)},f_{\sigma(2)}\},f_{\sigma(3)}\}\\
-\textstyle\sum_{\sigma\in Sh(1,2)}
 \sgn{\sigma}e\left(\sigma;sx_1,sx_2,sx_3\right)
  \{df_{\sigma(1)},f_{\sigma(2)},f_{\sigma(3)}\}
\end{multline*}
is required and since the contraction operator is linear, this needs 
a solution of equation (\ref{help_1}).
\begin{remark}
The Leibniz equation is in general not satisfied 'strictly'
by the n-plectic Poisson bracket, but only up to 
certain corrections terms. We will look at this in detail in section 
(\ref{Leibniz_operators_sec}). The correct homotopy Leibniz
equation is then derived in theorem (\ref{first_leibniz_theorem}).
\end{remark}
\subsection{The higher Poisson brackets}\label{subsec_trinary} 
Summarizing the last two sections we have seen that the general $n$-plectic
setting has something close to a Poisson algebra, but without the 
Jacobi and the Leibniz identity. 

In this section we look in detail on \textit{how} the
Jacobi equation fails. As it turns out this "failure" is controlled by another
bracket-like operator that has three arguments and is therefore called 
the Poisson $3$-bracket.

With additional operators, however, come additional Jacobi-like 
equations and to control them we need additional operators. The general 
pattern is then an infinite series of $k$-ary brackets for arbitrary integers 
$k\in\N$, that interact in terms of a so called $(n-1)$-fold shifted homotopy 
Lie algebra.

From a practical point of view it should be noted, that in an actual 
$n$-plectic setting all these $k$-ary brackets are trivial beyond a 
certain bound $\mathcal{O}(n)$, which means that for small $n$ only 'a few' brackets 
are different from zero. Loosely speaking we can say that the more $n$ deviates 
from $1$, the more the structure deviates from being a Poisson algebra.

\begin{definition}Let $(A,\mathfrak{g},\omega)$ be an n-plectic structure and $\mathcal{P}ois(A,\mathfrak{g},\omega)$ the set of Poisson cotensors. The 
\textbf{homotopy Poisson $3$-bracket}
\begin{equation}
\{\cdot,\cdot,\cdot\}: \textstyle\bigtimes^{\,3} \mathcal{P}ois(A,\mathfrak{g},\omega) \to 
\mathcal{P}ois(A,\mathfrak{g},\omega) \\
\end{equation}
is defined for any homogeneous 
$f_1,f_2,f_3\in \mathcal{P}ois(A,\mathfrak{g},\omega)$ and associated
Hamilton tensors $x_1$, $x_2$ resp. $x_3$, by the equation 
\begin{equation*}
\{f_1,f_2,f_3\}=
 \textstyle\sum_{\sigma\in Sh(2,1)}\sgn{\sigma}
  e\left(\sigma;sx_1,sx_2,sx_3\right)
   i_{[x_{\sigma(2)},x_{\sigma(1)}]}f_{\sigma(3)}
\end{equation*} 
and is then extended to all of $\mathcal{P}ois(A,\mathfrak{g},\omega)$ by linearity. 
\end{definition}
Again the kernel property (\ref{kernel_prop}) guarantees that this definition does not depend on the actual choice of any associated tensors. Hamilton tensors and
Poisson constraints can be computed explicitly.
\begin{theorem}\label{homotopy_3_bracket}
The homotopy Poisson $3$-bracket is trilinear, well defined and homogeneous of 
degree $(2n-1)$. It is $(n-1)$-fold shifted graded antisymmetric and 
for any three homogeneous Poisson cotensors 
$f_1$, $f_2$, $f_3 \in \mathcal{P}ois(A,\mathfrak{g},\omega)$ 
the $(n-1)$-fold shifted homotopy Jacobi equation in dimension four
\begin{multline}\label{jacoby_dim_4}
d\{f_1,f_2,f_3\}+
\textstyle\sum_{\sigma\in Sh(1,2)}\sgn{\sigma}
 e(\sigma;sx_1,sx_2,sx_3)
  \{df_{\sigma(1)},f_{\sigma(2)},f_{\sigma(3)}\}\\
+\textstyle\sum_{\sigma\in Sh(2,1)}\sgn{\sigma}
 e(\sigma;sx_1,sx_2,sx_3)
  \{\{f_{\sigma(1)},f_{\sigma(2)}\},f_{\sigma(3)}\}=0
\end{multline}
is satisfied. If $y_1$, $y_2$, $y_3$  are Poisson constraints and
$x_1$, $x_2$ and $x_3$ are Hamilton tensors, associated to 
$f_1$, $f_2$ and $f_3$ respectively,
the tensor
\begin{equation}\label{ass_Ham_3}
y_{\{f_1,f_2,f_3\}}\defeq 
 \textstyle\sum_{\sigma\in Sh(2,1)}\sgn{\sigma}
  e(\sigma;sx_1,sx_2,sx_3) 
   y_{\sigma(3)}\smwedge [x_{\sigma(2)},x_{\sigma(1)}]
\end{equation}
is a Poisson constraints associated to the homotopy Poisson $3$-bracket. In 
addition an associated Hamilton tensor is given by
\begin{multline*}\label{ass_sHam_3}
x_{\{f_1,f_2,f_3\}}\defeq 
\textstyle\sum_{\sigma\in Sh(2,1)}\sgn{\sigma}
 e(\sigma;sx_1,sx_2,sx_3)
  [[x_{\sigma(3)},x_{\sigma(2)}],y_{\sigma(1)}]\\
-\textstyle\sum_{\sigma\in Sh(2,1)}\sgn{\sigma}
 e(\sigma;sx_1,sx_2,sx_3)\sgn{|x_{\sigma(1)}|+|x_{\sigma(2)}|} 
   x_{\sigma(3)}\smwedge [x_{\sigma(2)},x_{\sigma(1)}]\;.
\end{multline*}
\end{theorem}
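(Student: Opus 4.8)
The plan is to verify each assertion of the theorem in turn, leaning on the two constructions already available — Proposition~\ref{prop_product}, which turns left-contraction by a Poisson cotensor into exterior multiplication under $\omega$, and Theorem~\ref{J2_mvf}, which already produced a Poisson constraint for the Jacobi expression. First I would dispose of the cheap items: trilinearity is immediate from the definition, since $i_{(\cdot)}(\cdot)$ and the Schouten bracket are linear in each slot; the homogeneity computation runs $|i_{[x_{\sigma(2)},x_{\sigma(1)}]}f_{\sigma(3)}| = |x_1|+|x_2|+|f_3| = (|f_1|+n)+(|f_2|+n)+|f_3|$, and since the total cotensor degree we are tracking is $|f_1|+|f_2|+|f_3|$ fixed, this reads as degree $2n$ on the unshifted count — I must be careful here to match the paper's convention and land on $(2n-1)$ after accounting for the degree shift built into the bracket, exactly as in Proposition~\ref{jacobi-2}. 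Well-definedness (independence of the choices $x_i$, $y_i$) is handled by the kernel property~\ref{kernel_prop}: replacing $x_i$ by $x_i+\xi$ with $\xi\in\ker\omega$ changes $[x_{\sigma(2)},x_{\sigma(1)}]$ by a Schouten bracket involving $\xi$, and one checks that $i_{[\xi,x]}f = \pm L_\xi i_x f \mp i_x L_\xi f$ vanishes on Poisson cotensors because both $f$ and its contractions inherit the kernel property; similarly for the $y_i$.

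The $(n-1)$-fold shifted graded antisymmetry I would get by the same bookkeeping that appears in Theorem~\ref{jacobi_id}: the sum is over $Sh(2,1)$ with the antisymmetric Koszul sign $\sgn{\sigma}e(\sigma;sx_1,sx_2,sx_3)$ attached, so transposing two of the $f_i$ permutes the shuffle sum back to itself up to precisely the Koszul sign in the shifted degrees $|f_i|+n-1$; this is a formal consequence of the shuffle structure and needs no $\omega$-specific input.

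For the homotopy Jacobi equation in dimension four, the strategy is to recognize that the second shuffle sum in~\eqref{jacoby_dim_4} is exactly the Jacobi expression of Theorem~\ref{jacobi_id}, which that theorem rewrites as $-\sum_\sigma \sgn{\sigma}e(\sigma;sx_1,sx_2,sx_3)L_{[x_{\sigma(2)},x_{\sigma(1)}]}f_{\sigma(3)}$. So it suffices to show that $d\{f_1,f_2,f_3\}$ plus the $Sh(1,2)$-sum $\sum_\sigma \pm\{df_{\sigma(1)},f_{\sigma(2)},f_{\sigma(3)}\}$ equals $+\sum_\sigma\pm L_{[x_{\sigma(2)},x_{\sigma(1)}]}f_{\sigma(3)}$. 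Expanding $d\{f_1,f_2,f_3\} = d\,i_{[x_{\sigma(2)},x_{\sigma(1)}]}f_{\sigma(3)}$ via Cartan's formula $d\,i_\xi = L_\xi - (-1)^{|\xi|}i_\xi d$, the Lie-derivative piece produces the desired right-hand side and the $i_\xi d$ piece produces contractions $i_{[x_{\sigma(2)},x_{\sigma(1)}]}df_{\sigma(3)}$; meanwhile each term $\{df_{\sigma(1)},f_{\sigma(2)},f_{\sigma(3)}\}$ in the $Sh(1,2)$-sum unpacks, using that a Hamilton tensor of $df$ may be taken in $\ker\omega$, into a $Sh(2,1)$-type contraction of $df_{\sigma(1)}$ — and a reindexing of the shuffle sums is supposed to cancel these residual $d$-terms against the $i_\xi d$ piece. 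The main obstacle, and the step I would spend the most care on, is precisely this shuffle-reindexing cancellation: matching the $Sh(1,2)$ sum over $\{df_i,f_j,f_k\}$ term-by-term against the Cartan leftover $\sum_{Sh(2,1)} i_{[x_{\sigma(2)},x_{\sigma(1)}]}df_{\sigma(3)}$ requires tracking how the Koszul signs $e(\sigma;sx_1,sx_2,sx_3)$ transform when one passes a $d$ past the shuffle, and getting every $(-1)$ right is where this calculation is genuinely delicate rather than routine.

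Finally, for the associated tensors: that~\eqref{ass_Ham_3} satisfies $i_{y_{\{f_1,f_2,f_3\}}}\omega = \{f_1,f_2,f_3\}$ follows by applying Proposition~\ref{prop_product} termwise — $i_{j_{y_{\sigma(3)}}[x_{\sigma(2)},x_{\sigma(1)}]}\omega$, wait, more precisely one writes $y_{\sigma(3)}\smwedge[x_{\sigma(2)},x_{\sigma(1)}]$ as a left-contraction $j_{f_{\sigma(3)}}[x_{\sigma(2)},x_{\sigma(1)}]$ up to sign using $i_{y_{\sigma(3)}}\omega = f_{\sigma(3)}$, then Proposition~\ref{prop_product} (with equation~\eqref{multi_rules-5}) gives $f_{\sigma(3)}\smwedge i_{[x_{\sigma(2)},x_{\sigma(1)}]}\omega = f_{\sigma(3)}\smwedge dL\cdots$, hmm — actually the cleanest route is: since $[x_{\sigma(2)},x_{\sigma(1)}]$ need not be in $\ker\omega$ but does satisfy $L_{[x_{\sigma(2)},x_{\sigma(1)}]}\omega=0$ (both $x_i$ are Hamilton, so their Schouten bracket is symplectic by the same Cartan argument as in the first corollary of the excerpt), one computes $i_{y_{\sigma(3)}\wedge[\ldots]}\omega$ directly via the multiplication rules~\eqref{multi_rules-5} relating $i_{a\wedge b}$ to $i_a i_b$, landing on $i_{[x_{\sigma(2)},x_{\sigma(1)}]}i_{y_{\sigma(3)}}\omega = i_{[x_{\sigma(2)},x_{\sigma(1)}]}f_{\sigma(3)}$ up to the stated sign. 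For the associated Hamilton tensor, one applies $d$ to the previous identity; the first sum in $x_{\{f_1,f_2,f_3\}}$ is exactly the Poisson constraint~\eqref{J_2} of the Jacobi expression from Theorem~\ref{J2_mvf}, and the second sum accounts for $d$ hitting the $\{df_i,f_j,f_k\}$ terms of~\eqref{jacoby_dim_4} — so $i_{x_{\{f_1,f_2,f_3\}}}\omega = d\{f_1,f_2,f_3\}$ drops out of~\eqref{jacoby_dim_4} after contracting with $\omega$ and using linearity, with no new work beyond the sign bookkeeping already done above.
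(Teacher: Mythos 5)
Your route coincides with the paper's proof in all essentials: the dimension-four Jacobi equation is obtained exactly as you describe, by expanding $d\{f_1,f_2,f_3\}$ with Cartan's formula, replacing the resulting Lie-derivative sum by the Jacobi expression via Theorem (\ref{jacobi_id}), and converting the leftover terms $i_{[x_{\sigma(2)},x_{\sigma(1)}]}df_{\sigma(3)}$ into the $Sh(1,2)$-sum of $\{df_{\sigma(1)},f_{\sigma(2)},f_{\sigma(3)}\}$ by using that a Hamilton tensor of a cocycle lies in $\ker(\omega)$; the Poisson constraint (\ref{ass_Ham_3}) is verified, as in the paper, by the one-line contraction $i_{y_{\sigma(3)}\smwedge[x_{\sigma(2)},x_{\sigma(1)}]}\omega=i_{[x_{\sigma(2)},x_{\sigma(1)}]}i_{y_{\sigma(3)}}\omega=i_{[x_{\sigma(2)},x_{\sigma(1)}]}f_{\sigma(3)}$ (no appeal to $L_{[x_{\sigma(2)},x_{\sigma(1)}]}\omega=0$ or to Proposition (\ref{prop_product}) is needed there); and the Hamilton tensor is obtained by combining the constraint (\ref{J_2}) of Theorem (\ref{J2_mvf}) with the already established dimension-four equation, which is the paper's computation verbatim.

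Two of your justifications, however, do not hold as written. For well-definedness you argue that $i_{[\xi,x]}f=\pm L_\xi i_x f\mp i_x L_\xi f$ vanishes ``because both $f$ and its contractions inherit the kernel property''; but $i_x f$ is in general neither Poisson nor endowed with the kernel property, and the summand $i_\xi\, d\, i_x f$ hidden inside $L_\xi i_x f$ is not controlled by Proposition (\ref{kernel_prop}). The correct (and shorter) argument avoids Lie derivatives of $i_xf$ altogether: since $x$ is Hamilton, $L_x\omega=0$, hence $i_{[x,\xi]}\omega=\pm L_x i_\xi\omega-i_\xi L_x\omega=0$, so $[x,\xi]\in\ker(\omega)$, and the kernel property of the Poisson cotensor $f_{\sigma(3)}$ then gives $i_{[x,\xi]}f_{\sigma(3)}=0$; independence of the $y_i$ is vacuous for the value of the bracket, since they do not enter its definition. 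Secondly, your degree count $|x_1|+|x_2|+|f_3|$ lands on $2n$ only because it omits that the Schouten bracket lowers tensor degree by one, $|[x_2,x_1]|=|x_1|+|x_2|-1$; with this the bracket is homogeneous of degree $2n-1$ as claimed, and no further ``shift built into the bracket'' needs to be invoked. Relatedly, the shifted antisymmetry is not purely a shuffle-sum formality: transposing the two arguments that sit inside the inner bracket of a fixed shuffle term requires the graded antisymmetry $[x_i,x_j]=-(-1)^{(|f_i|+n-1)(|f_j|+n-1)}[x_j,x_i]$ of the Schouten bracket, which is exactly the paper's one-line argument.
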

\begin{proof}
To see that the definition does not depend on the particular 
chosen associated Hamilton tensors, we use (\ref{kernel_prop}) and proceed 
as in the proof of (\ref{well_def_2}). 

If we assume that $f_1$ , $f_2$ and $f_3$ are homogeneous, we can see the 
homogeneity of the Poisson $3$-bracket, for example by
\begin{align*}
|\{f_1,f_2,f_3\}|
&= |i_{[x_2,x_1]}f_3|\\
&=|x_1|+|x_2|+|f_3|-1\\
&= |f_1|+|f_2|+|f_3|-1+2n\;.
\end{align*}

Since $[x_i,x_j]=-(-1)^{(|f_i|+n-1)(|f_j|+n-1)}[x_j,x_i]$,
the $(n-1)$-fold shifted graded antisymmetry follows directly from the
definition of the Poisson $3$-bracket.

To compute the homotopy Jacobi equation in dimension four, 
we apply the definition of the differential and the trinary
bracket to rewrite the mostleft term of the identity 
\begin{equation*}
d\{f_1,f_2,f_3\}=
 \textstyle \sum_{\sigma\in Sh(2,1)}
 \sgn{\sigma}e(\sigma;sx_1,sx_2,sx_3)
  di_{[x_{\sigma(2)},x_{\sigma(1)}]}f_{\sigma(3)}
\end{equation*}
and insert appropriate correction terms, using $|[x_i,x_j]|=|x_i|+|x_j|+1$. 
This leads to
\begin{multline*}
\textstyle \sum_{\sigma\in Sh(2,1)}
 \sgn{\sigma}e(\sigma;sx_1,sx_2,sx_3)\;\cdot\\
 \cdot (di_{[x_{\sigma(2)},x_{\sigma(1)}]}f_{\sigma(3)}
  +(-1)^{|x_{\sigma(1)}|+|x_{\sigma(2)}|}
   i_{[x_{\sigma(2)},x_{\sigma(1)}]}df_{\sigma(3)})\\
-\textstyle \sum_{\sigma\in Sh(2,1)}
 \sgn{\sigma}e(\sigma;sx_1,sx_2,sx_3)
  (-1)^{|x_{\sigma(1)}|+|x_{\sigma(2)}|}
   i_{[x_{\sigma(2)},x_{\sigma(1)}]}df_{\sigma(3)}\;.
\end{multline*}
Now we can rewrite the first shuffle sum, using Cartans graded infinitesimal
omotopy formular, into a sum over Lie derivations: 
\begin{multline*}
\textstyle \sum_{\sigma\in Sh(2,1)}
 \sgn{\sigma}e(\sigma;sx_1,sx_2,sx_3)L_{[x_{\sigma(2)},x_{\sigma(1)}]}f_{\sigma(3)}\\
-\textstyle \sum_{\sigma\in Sh(2,1)}
\sgn{\sigma} e(\sigma;sx_1,sx_2,sx_3)
  (-1)^{|x_{\sigma(1)}|+|x_{\sigma(2)}|}
   i_{[x_{\sigma(2)},x_{\sigma(1)}]}df_{\sigma(3)}\;.
\end{multline*}
According to (\ref{jacobi_id}) the first shuffle sum is just the negative Jacobi
expression. Since any Hamilton tensor associated to $df_{\sigma(3)}$ must be an
element of the kernel of $\omega$, we can rewrite the last expression to 
arrive at the right side of the equation:
\begin{multline*}
-\textstyle \sum_{\sigma\in Sh(2,1)}\sgn{\sigma}
  e(\sigma;sx_1,sx_2,sx_3)
  \{\{f_{\sigma(1)},f_{\sigma(2)}\},f_{\sigma(3)}\}\\
   -\textstyle \sum_{\sigma\in Sh(2,1)}\sgn{\sigma} e(\sigma;sx_1,sx_2,sx_3)
  (-1)^{|x_{\sigma(1)}|+|x_{\sigma(2)}|}
   \{f_{\sigma(1)},f_{\sigma(2)},df_{\sigma(3)}\}=
\end{multline*}
\begin{multline*}
-\textstyle \sum_{\sigma\in Sh(2,1)}
  \sgn{\sigma}e(\sigma;sx_1,sx_2,sx_3)
  \{\{f_{\sigma(1)},f_{\sigma(2)}\},f_{\sigma(3)}\}\\
   -\textstyle \sum_{\sigma\in Sh(1,2)} \sgn{\sigma}e(\sigma;sx_1,sx_2,sx_3)
   \{df_{\sigma(1)},f_{\sigma(2)},f_{\sigma(3)}\}\;.
\end{multline*}

To see that $y_{\{f_1,f_2,f_3\}}$ is a Poisson constraint associated 
to the trinary bracket, 
apply the contraction of $\omega$ along $y_{\{f_1,f_2,f_3\}}$ using 
$i_{y_i}\omega=f_i$. 

To see that $x_{\{f_1,f_2,f_3\}}$ is a Hamilton tensor
associated to $\{f_1,f_2,f_3\}$ use (\ref{J_2}) to compute:
\begin{align*}
i_{x_{\{f_1,f_2,f_3\}}}\omega
&=-\textstyle \sum_{s\in Sh\left(2,1\right)}\sgn{\sigma}e(s;sx_1,sx_2, sx_3)
  \{\{f_{\sigma(1)},f_{\sigma(2)}\},f_{\sigma(3)}\}\\
&\quad-\textstyle\sum_{s\in Sh\left(2,1\right)}\sgn{\sigma}
 e(s;sx_1,sx_2, sx_3)\sgn{|x_{\sigma(1)}|+|x_{\sigma(2)}|} 
  i_{x_{\sigma(3)}\wedge [x_{\sigma(2)},x_{\sigma(1)}]}\omega\\
&=-\textstyle \sum_{s\in Sh\left(2,1\right)}\sgn{\sigma}e(s;sx_1,sx_2, sx_3)
  \{\{f_{\sigma(1)},f_{\sigma(2)}\},f_{\sigma(3)}\}\\
&\quad-\textstyle\sum_{s\in Sh\left(1,2\right)}\sgn{\sigma}
e(s;sx_1,sx_2, sx_3)\sgn{|x_{\sigma(1)}|+|x_{\sigma(2)}|} 
  i_{[x_{\sigma(2)},x_{\sigma(1)}]}df_{\sigma(3)}\\
&=-\textstyle\sum_{s\in Sh\left(2,1\right)}\sgn{\sigma}e(s;sx_1,sx_2, sx_3)
					\{\{f_{\sigma(1)},f_{\sigma(2)}\},f_{\sigma(3)}\}\\
&\quad-\textstyle\sum_{s\in Sh\left(1,2\right)}\sgn{\sigma}e(s;sx_1,sx_2, sx_3)
			\{df_{\sigma(1)},f_{\sigma(2)},f_{\sigma(3)}\}\\
&=d\{f_1,f_2,f_3\} \; .			
\end{align*}
\end{proof}
\begin{remark}At this point we should stress again, that if there is 
\textit{no} tensor $y$ which satisfies 
$$i_y\omega = -\textstyle\sum_{s\in Sh\left(2,1\right)}
 \sgn{\sigma}e\left(s;sx_1,sx_2,sx_3\right)
	\{\{f_{\sigma(1)},f_{\sigma(2)}\},f_{\sigma(3)}\}\; ,
$$
then the previous proof shows, that the image 
$\{f_1,f_2,f_3\}$ must not be Hamilton. 
Regarding example (\ref{counterexample_1}) this justifies our definition of Poisson 
cotensors as exterior cotensors satisfying both the fundamental equation
and its Poisson constraint.
\end{remark}

With the homotopy Poisson $3$-bracket at hand,
all higher Poisson brackets are now defined inductively in terms of 
contractions along Hamilton tensors of the previously defined bracket:
\begin{definition}\label{niceBracket}
Let $(A,\mathfrak{g},\omega)$ be an n-plectic structure and $\mathcal{P}ois(A,\mathfrak{g},\omega)$ the set of Poisson cotensors. The 
\textbf{homotopy Poisson $k$-bracket}
\begin{equation}
\{\cdot,\cdots,\cdot\}: \textstyle\bigtimes^{\,k} \mathcal{P}ois(A,\mathfrak{g},\omega) \to 
\mathcal{P}ois(A,\mathfrak{g},\omega) \\
\end{equation}
is defined inductively for any $k>3$, homogeneous $f_1,\ldots,f_k\in \mathcal{P}ois(A,\mathfrak{g},\omega)$ 
and Hamilton tensor $x_{\{\cdot,\cdots,\cdot\}}$ 
associated to the homotopy Poisson $(k-1)$-bracket by  
\begin{equation*}
\textstyle\{f_1,\ldots ,f_{k}\}\defeq 
 \sgn{k-1}\sum_{\sigma\in Sh(k-1,1)}\sgn{\sigma}e(\sigma;sx_1,\ldots,sx_{k})
  i_{x_{\{f_{\sigma(1)},\cdots,f_{\sigma(k-1)}\}}}f_{\sigma(k)}
\end{equation*}
and is then extended to all of $\mathcal{P}ois(A,\mathfrak{g},\omega)$ by linearity. 
\end{definition}
The induction base is the homotopy Poisson $3$-bracket. If we refer to the 
Poisson $k$-bracket for arbitrary $k\in\N$, the differential $d$ is usually 
meant to be  the homotopy Poisson "$1$-bracket" and in this context 
sometimes written as $\{\cdot\}$.

The following theorem basically says, that the infinite
sequence of homotopy Poisson $k$-brackets defines an $(n-1)$-fold shifted
homotopy Lie algebra on Poisson cotensors.
\begin{theorem}\label{main_theorem_1}For any $k\in\N$,
the homotopy Poisson $k$-bracket is well defined, $(n-1)$-fold shifted
graded antisymmetric, homogeneous of degree $(k-1)n-1$
and the $(n-1)$-fold shifted homotopy Jacobi equation 
\begin{multline*}
\textstyle\sum_{j=1}^k\sum_{\sigma\in Sh(j,k-j)}\sgn{\sigma+j(k-j)}
e(\sigma;sx_1,\ldots,sx_k)\cdot{}\\
\cdot\{\{f_{\sigma(1)},\ldots, f_{\sigma(j)}\},
 f_{\sigma(j+1)},\ldots, f_{\sigma(k)}\}=0
\end{multline*}
is satisfied for any homogeneous 
$f_1,\ldots,f_k \in \mathcal{P}ois(A,\mathfrak{g},\omega)$. 
If $y_1,\ldots,y_k$ are Poisson constraints, associated to $f_1,\ldots,f_k$,
respectively, a Poisson constraint 
of their $k$-ary bracket is given by
\begin{multline}\label{ass_Ham_k}
y_{\{f_1,\ldots,f_k\}}\defeq \\
-\textstyle\sum_{\sigma\in Sh(k-1,1)}\sgn{\sigma+k}e(\sigma;y_1,\ldots,y_{k})\;
  y_{\sigma(k)}\smwedge x_{\{f_{\sigma(1)},\ldots,f_{\sigma(k-1)}\}}\;.
\end{multline}
If $x_1,\ldots,x_k$ are Hamilton tensors, associated to $f_1,\ldots,f_k$,
respectively, a Hamilton tensor of the $k$-ary bracket is given by
\begin{multline}\label{ass_sHam_k}
x_{\{f_1,\ldots,f_k\}}\defeq y_{J_k}(f_{\sigma(1)},\ldots,f_{\sigma(k)})+{}\\
\textstyle\sum_{\sigma\in Sh(k-1,1)}
 \sgn{\sigma}e\left(\sigma;sx_1,\ldots,sx_{k}\right)\\
  \sgn{\sum_{i=1}^{k-1}(|x_{\sigma(i)}|-1)}
   x_{\sigma(k)}\smwedge x_{\{f_{\sigma(1)},\ldots,f_{\sigma(k-1)}\}}\;,
\end{multline}
where the 'higher Jacobi' tensor 
$y_{J_k}(f_{\sigma(1)},\ldots,f_{\sigma(k)})$ is a Poisson constraint, 
defined by the equation
\begin{multline}\label{higher_Jacobi}
i_{y_{J_k}(f_{\sigma(1)},\ldots,f_{\sigma(k)})}\omega = \\
-\textstyle\sum_{j=2}^{k-1}\sum_{\sigma\in Sh(j,k-j)}\sgn{\sigma+j(k-j)}
e(\sigma;sx_1,\ldots,sx_k)\cdot{}\\
\phantom{=.}\cdot\{\{f_{\sigma(1)},\ldots, f_{\sigma(j)}\},
 f_{\sigma(j+1)},\ldots, f_{\sigma(k)}\}\;.
\end{multline}
\end{theorem}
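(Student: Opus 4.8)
The plan is to run an induction on $k\geq 3$ in which all four assertions---well-definedness, $(n-1)$-fold shifted antisymmetry, homogeneity of degree $(k-1)n-1$, and the shifted homotopy Jacobi equation---are carried simultaneously, together with the explicit formulas \eqref{ass_Ham_k} and \eqref{ass_sHam_k} for an associated Poisson constraint and Hamilton tensor. The base case $k=3$ is exactly Theorem~\ref{homotopy_3_bracket}, and the case $k=1,2$ is covered by Proposition~\ref{jacobi-2} and Theorem~\ref{jacobi_id}. So assume everything holds through level $k$ and prove it at level $k+1$, using Definition~\ref{niceBracket} to expand $\{f_1,\ldots,f_{k+1}\}$ as the shuffled contraction $\sum_{\sigma\in Sh(k,1)}\pm\, i_{x_{\{f_{\sigma(1)},\ldots,f_{\sigma(k)}\}}}f_{\sigma(k+1)}$; by the inductive hypothesis such a Hamilton tensor $x_{\{f_{\sigma(1)},\ldots,f_{\sigma(k)}\}}$ exists.

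First I would dispatch the easy items. Well-definedness is immediate from the kernel property \eqref{kernel_prop}: if $\xi\in\ker(\omega)$ then $L_\xi$ and $i_\xi$ annihilate every Poisson cotensor and its differential, so replacing any associated $x_i$ by $x_i+\xi$ (or any associated $x_{\{\cdots\}}$ by a kernel-equivalent one) does not change the contraction---exactly as in the proofs of \eqref{well_def_2} and \eqref{homotopy_3_bracket}. Homogeneity is a bookkeeping computation: $|i_{x_{\{f_{\sigma(1)},\ldots,f_{\sigma(k)}\}}}f_{\sigma(k+1)}| = |x_{\{f_{\sigma(1)},\ldots,f_{\sigma(k)}\}}| + |f_{\sigma(k+1)}|$, and the inductive degree of the $k$-bracket together with $|x_{\{\cdots\}}| = |\{\cdots\}| + n$ gives $(k-1)n-1+n+|f_{\sigma(k+1)}|+\sum|f_{\sigma(i)}| = kn-1+\sum_{i=1}^{k+1}|f_i|$, as claimed. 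The $(n-1)$-fold shifted antisymmetry follows because the Koszul sign $\sgn{\sigma+k}e(\sigma;sx_1,\ldots,sx_{k+1})$ in Definition~\ref{niceBracket} is precisely engineered to make the shuffled sum symmetric in the shifted inputs $sf_i$; one checks a transposition of adjacent arguments produces the required sign using the inductive antisymmetry of the $k$-bracket.

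The genuine obstacle is the shifted homotopy Jacobi identity at level $k+1$. The strategy is the one already visible at $k=3$ in Theorem~\ref{homotopy_3_bracket}: contract $\omega$ along the putative Hamilton tensor $x_{\{f_1,\ldots,f_{k+1}\}}$ defined by \eqref{ass_sHam_k}, and show the result equals $d\{f_1,\ldots,f_{k+1}\}$ plus the ``lower'' Jacobi terms $\sum_{j=2}^{k}\sum_{Sh(j,k+1-j)}\pm\{\{f_{\sigma(1)},\ldots\},\ldots\}$ and the $d$-on-an-argument terms $\sum_{Sh(1,k)}\pm\{df_{\sigma(1)},f_{\sigma(2)},\ldots\}$. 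Concretely: apply $d$ to the definition of $\{f_1,\ldots,f_{k+1}\}$, use $d\,i_x = L_x - (-1)^{|x|}i_x d$ (Cartan) to trade $d\,i_{x_{\{\cdots\}}}f_{\sigma(k+1)}$ for $L_{x_{\{\cdots\}}}f_{\sigma(k+1)}$ plus a term with $df_{\sigma(k+1)}$ that feeds into the $j=1$ shuffle sum, then invoke the inductive Jacobi identity for the $k$-bracket to rewrite $i_{x_{\{f_{\sigma(1)},\ldots,f_{\sigma(k)}\}}}(\,\cdot\,)$ contributions in terms of $(k-1)$- and lower brackets, and finally use $L_x\omega=0$ for Hamilton tensors together with the Schouten-bracket computations (the $[x_{\sigma(k+1)},x_{\{\cdots\}}]$ and $\,\smwedge\,$ pieces appearing in \eqref{ass_sHam_k}) and equation \eqref{multi_rules-2} to reassemble everything. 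The key auxiliary point is that the ``higher Jacobi'' cotensor $y_{J_k}$ of \eqref{higher_Jacobi} is itself Poisson---its associated constraint is built, inductively, from the $y_i$ and the lower brackets' constraints exactly as \eqref{J_2} was built at level $3$---which is what guarantees \eqref{ass_sHam_k} actually lands in $\ker$-consistent tensors and hence that $\{f_1,\ldots,f_{k+1}\}$ is again a Poisson cotensor rather than merely a Hamilton one. The delicate part throughout is verifying that the Koszul signs produced by the reindexing of nested shuffles $Sh(k,1)$ against $Sh(j,k+1-j)$ match those in the target identity; I expect to handle this by the standard decomposition of shuffle permutations and the multiplicativity of the sign $e(\sigma;\cdot)$, rather than by direct enumeration.
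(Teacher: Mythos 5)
Your handling of well-definedness, homogeneity, the shifted antisymmetry, the solvability of \eqref{higher_Jacobi}, and the Poisson constraint \eqref{ass_Ham_k} matches the paper's argument. The gap is in the one place where the real work sits: the shifted homotopy Jacobi equation at level $k+1$. Your proposed route is circular. By construction, contracting $\omega$ along the tensor \eqref{ass_sHam_k} yields $i_{y_{J_{k+1}}}\omega$ plus $\sum_\sigma \pm\, i_{x_{\{f_{\sigma(1)},\ldots,f_{\sigma(k)}\}}}df_{\sigma(k+1)}$, which is, directly from \eqref{higher_Jacobi} and the definition of the bracket, exactly the negative of the sum of the Jacobi terms for $j=1,\ldots,k$. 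So there is nothing to ``show'' there; the only nontrivial claim is that this contraction also equals $d\{f_1,\ldots,f_{k+1}\}$, and that claim \emph{is} the homotopy Jacobi equation in dimension $k+1$ (equivalently, the statement that \eqref{ass_sHam_k} is a Hamilton tensor). You cannot use it to prove itself. The paper keeps these two steps separate: the Jacobi equation is established first, by the standalone induction of section (\ref{homotopy_Lie_algebra}), and only afterwards is \eqref{ass_sHam_k} verified, with the already-proved Jacobi equation invoked in the final line of that contraction computation.

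Your alternative sketch (apply $d$ to the recursive definition, use Cartan's formula, invoke the inductive Jacobi identity, reassemble) points toward the correct computation but omits its decisive mechanism. In the paper the induction closes because applying $d$ to the level-$k$ Jacobi identity and using $d^2=0$ (Proposition \ref{_mvf_sh_J}) shows that the shuffle sum of Hamilton tensors $x_{\{\{f_{\sigma(1)},\ldots,f_{\sigma(j)}\},f_{\sigma(j+1)},\ldots,f_{\sigma(k)}\}}$ lies in $\ker(\omega)$; the long cancellation scheme of the appendix (which needs the separate cases $k=4$ and $k=5$, the identity $x_{\{f_1,f_2\}}=2[x_2,x_1]$, and equation (\ref{multi_rules-2})) reduces the whole Jacobi expression at level $k+1$ to a contraction of the arguments against precisely that kernel tensor, which then vanishes by the kernel property (\ref{kernel_prop}). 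Without this $d^2=0$ argument and an account of how the remaining shuffle sums cancel, your induction does not close; the sign ``reindexing of nested shuffles'' you defer is not the hard part, the vanishing of the leftover terms is.
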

\begin{proof}(By induction)
For $k\leq 3$ this was shown previously. For the induction step assume
that all statements of the theorem are true for some $k\in\N$. We proof that they 
are true for $(k+1)$:

First of all lets see that the definition does not depend on the particular chosen
associated Hamilton tensor. This follows from proposition
(\ref{kernel_prop}). Since the difference of tensors associated to the 
same Poisson cotensor differ only in elements of the kernel of $\omega$ we can 
find a $\xi\in\ker(\omega)$ with
$
i_{x'_{\{f_{1},\ldots,f_{k}\}}}f_{k+1}=
i_{x_{\{f_{1},\ldots,f_{k}\}}+\xi}f_{k+1}=
i_{x_{\{f_{1},\ldots,f_{k}\}}}f_{k+1}
$ because each $f_i$ has the kernel property. 

To see the $(n-1)$-fold shifted graded antisymmetry, we use the assumed 
$(n-1)$-fold shifted graded antisymmetry of any associated 
Hamilton tensor $x_{\{f_1,\ldots,f_k\}}$ (up to elements of the kernel of 
$\omega$) and rewrite the definition in terms of the symmetric group
$$
-\textstyle\frac{1}{(k-1)!}\sum_{\sigma\in S_{k}}\sgn{\sigma+k}
e(\sigma;sx_1,\ldots,sx_k) 
 i_{x_{\{f_{\sigma(1)},\ldots,f_{\sigma(k-1)}\}}}f_{\sigma(k)}\;.
$$
This expression is $(n-1)$-fold shifted graded antisymmetric, since 
$|sx_i|=|s^{n-1}f_i|$.

To see the homogeneity, assume that every 
argument is homogeneous. Then 
\begin{align*}
|\{f_1,\ldots,f_{k}\}|= 
&|i_{x_{\{f_1,\ldots,f_{k-1}\}}}f_{k}|=\\
&|x_{\{f_1,\ldots,f_{k-1}\}}|+|f_{k}|=\\
&|x_1|+\cdots+ |x_{k-1}| + |f_{k}|-1=\\
&|f_1|+\cdots+|f_{k}|-1+(k-1)n\;.
\end{align*}
The proof of the $(n-1)$-fold shifted homotopy Jacobi equation is a very long 
calculations. According to a better readable text, we compute it in section 
(\ref{homotopy_Lie_algebra}).

To see that (\ref{ass_Ham_k}) is a Poisson constraint associated 
to $\{f_1,\ldots,f_k\}$ just compute the contraction
$i_{y_{\{f_1,\ldots,f_k\}}}\omega$.

To see that (\ref{ass_sHam_k}) is a Hamilton tensor associated 
to $\{f_1,\ldots,f_k\}$, first observe that equation (\ref{higher_Jacobi})
always has a solution $y_{J_k}(f_{\sigma(1)},\ldots,f_{\sigma(k)})$, since
the right side of the equation is a Poisson tensor by the induction hypothesis.
Using this and the assumption that the homotopy Jacobi equation holds,
we compute the contraction
 
\begin{align*}
&i_{x_{\{f_1,\ldots,f_k\}}}\omega\\
=&\begin{multlined}[t][\mylength]
 i_{y_{J_k}(f_{\sigma(1)},\ldots,f_{\sigma(k)})}\omega
 +\textstyle\sum_{\sigma\in Sh(k-1,1)}
 \sgn{\sigma}e\left(s;sx_1,\ldots,sx_{k}\right)\\
  \sgn{\sum_{i=1}^{k-1}(|x_{\sigma(i)}|-1)}
  i_{x_{\sigma(k)}\smwedge x_{\{f_{\sigma(1)},\ldots,f_{\sigma(k-1)}\}}}\omega
\end{multlined}\\
=&\!\begin{multlined}[t][\mylength]
-\textstyle\sum_{j=2}^{k-1}\sum_{\sigma\in Sh(j,k-j)}\sgn{\sigma+j(k-j)}
e(\sigma;sx_1,\ldots,sx_k)\\
\{\{f_{\sigma(1)},\ldots, f_{\sigma(j)}\},
 f_{\sigma(j+1)},\ldots, f_{\sigma(k)}\}
\end{multlined}\\
&+\textstyle\sum_{\sigma\in Sh(k-1,1)}
 \sgn{\sigma}e\left(s;sx_1,\ldots,sx_{k}\right)
  \sgn{\sum_{i=1}^{k-1}(|x_{\sigma(i)}|-1)}
  i_{x_{\{f_{\sigma(1)},\ldots,f_{\sigma(k-1)}\}}}df_{\sigma(k)}\\
=&\!\begin{multlined}[t][\mylength]
-\textstyle\sum_{j=2}^{k-1}\sum_{\sigma\in Sh(j,k-j)}\sgn{\sigma+j(k-j)}
e(\sigma;sx_1,\ldots,sx_k)\\
\{\{f_{\sigma(1)},\ldots, f_{\sigma(j)}\},
 f_{\sigma(j+1)},\ldots, f_{\sigma(k)}\}
\end{multlined}\\
&+\textstyle\sum_{\sigma\in Sh(k-1,1)}
 \sgn{\sigma}e\left(s;sx_1,\ldots,sx_{k}\right)
  \sgn{\sum_{i=1}^{k-1}(|x_{\sigma(i)}|-1)}
  \{f_{\sigma(1)},\ldots,f_{\sigma(k-1)},df_{\sigma(k)}\}\\     
=&\!\begin{multlined}[t][\mylength]
-\textstyle\sum_{j=2}^{k-1}\sum_{\sigma\in Sh(j,k-j)}\sgn{\sigma+j(k-j)}
e(\sigma;sx_1,\ldots,sx_k)\\
\{\{f_{\sigma(1)},\ldots, f_{\sigma(j)}\},
 f_{\sigma(j+1)},\ldots, f_{\sigma(k)}\}
\end{multlined}\\
&-\textstyle\sum_{\sigma\in Sh(1,k-1)}\sgn{\sigma+(k-1)}
e(\sigma;sx_1,\ldots,sx_k)
\{df_{\sigma(1)},f_{\sigma(2)},\ldots, f_{\sigma(k)}\}\\
=&\!\begin{multlined}[t][\mylength]
-\textstyle\sum_{j=1}^{k-1}\sum_{\sigma\in Sh(j,k-j)}\sgn{\sigma+j(k-j)}
e(\sigma;sx_1,\ldots,sx_k)\\
\{\{f_{\sigma(1)},\ldots, f_{\sigma(j)}\},
 f_{\sigma(j+1)},\ldots, f_{\sigma(k)}\}
\end{multlined}\\
=&\; d\{f_1,\ldots,f_k\}\;.
\end{align*}
\end{proof}
\begin{corollary}[The shifted homotopy Lie algebra]
Let $(A,\mathfrak{g},\omega)$ be an $n$-plectic structure and 
$\mathcal{P}ois(A,\mathfrak{g},\omega)$ the
graded linear space of Poisson cotensors. The sequence 
$(\{\cdot,\ldots,\cdot\})_{k\in\N}$
of all $k$-ary brackets then defines an $(n-1)$-fold shifted homotopy 
Lie algebra on $\mathcal{P}ois(A,\mathfrak{g},\omega)$.
\end{corollary}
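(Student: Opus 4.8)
The plan is to recognise this corollary as a direct repackaging of Theorem \ref{main_theorem_1}, once one unwinds what an $(n-1)$-fold shifted homotopy Lie algebra is. By definition such a structure on a graded space $V$ is a collection of multilinear maps $\ell_k\colon V^{\otimes k}\to V$, one for every $k\in\N$, which after the degree shift $f\mapsto s^{n-1}f$ assemble into a genuine $L_\infty$-algebra on $s^{n-1}V$: each $\ell_k$ is graded antisymmetric in the Koszul sense, each is homogeneous of the degree forced by the chosen conventions, and the whole family satisfies the generalized Jacobi identities. So I would first fix, once and for all, the dictionary: $V=\mathcal{P}ois(A,\mathfrak{g},\omega)$ with its tensor grading, $\ell_1$ is the de Rham differential $d$ (the Poisson $1$-bracket), and $\ell_k=\{\cdot,\ldots,\cdot\}$ is the homotopy Poisson $k$-bracket for $k\geq 2$.

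With the dictionary fixed, each of the three defining properties is already supplied by Theorem \ref{main_theorem_1}. For \emph{well-definedness and closure}: Theorem \ref{main_theorem_1} states that every $\{f_1,\ldots,f_k\}$ is again a Poisson cotensor and is independent of the chosen associated Hamilton tensors, which is exactly the assertion that $\ell_k$ is a well-defined map $V^{\otimes k}\to V$. For \emph{degree and symmetry}: the homogeneity formula $|\{f_1,\ldots,f_k\}|=\sum_i|f_i|+(k-1)n-1$ and the $(n-1)$-fold shifted graded antisymmetry, both established in Theorem \ref{main_theorem_1} (the $k=2$ prototype being Proposition \ref{jacobi-2}), transport verbatim under $s^{n-1}$ into the required $L_\infty$-degree and Koszul antisymmetry of $\ell_k$; for the unary operator one uses in addition $d^2=0$ from Theorem \ref{theorem_dga_algebra}. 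For \emph{the higher Jacobi relations}: the $(n-1)$-fold shifted homotopy Jacobi equation of Theorem \ref{main_theorem_1} — the one proved there for general $k$, modulo the long computation deferred to section \ref{homotopy_Lie_algebra} — is, term by term and sign by sign, exactly the standard $L_\infty$ structure equation on $s^{n-1}V$; this identification is clean because the Koszul signs $\sgn{\sigma}e(\sigma;sx_1,\ldots,sx_k)$ were built from the shifted degrees $|sx_i|=|s^{n-1}f_i|$ from the very beginning, so no further reshuffling of signs is required.

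Hence the proof is bookkeeping rather than new mathematics, and the \textbf{main obstacle} is precisely that bookkeeping: one must commit to a single consistent convention for the shift $s^{n-1}$ and for the Koszul signs, check under it that the tensor-degree $(k-1)n-1$ of the $k$-bracket becomes the correct $L_\infty$ degree for \emph{every} $k$ simultaneously, and confirm that the deferred Jacobi computation of section \ref{homotopy_Lie_algebra} produces exactly the shuffle-and-sign pattern appearing in the $L_\infty$ relations rather than a cosmetically different one. Once the conventions are fixed, the corollary follows by collecting the three items above. It is worth recording where the hypotheses genuinely enter: the closure of the $\ell_k$ on $\mathcal{P}ois(A,\mathfrak{g},\omega)$ — rather than merely on $\mathcal{H}am^\ast(A,\mathfrak{g},\omega)$ or on all cotensors — is guaranteed by the explicit associated Poisson constraints and Hamilton tensors \eqref{ass_Ham_k} and \eqref{ass_sHam_k}, whose existence rests in turn on the higher Jacobi tensor \eqref{higher_Jacobi} admitting a solution, and that is exactly the point at which the Poisson-constraint half of the definition of a Poisson cotensor, and not just the fundamental equation, is indispensable (cf.\ Example \ref{counterexample_1}).
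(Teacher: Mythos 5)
Your proposal is correct and matches the paper's treatment: the paper offers no separate argument for this corollary, regarding it as an immediate repackaging of Theorem (\ref{main_theorem_1}) (well-definedness, shifted antisymmetry, homogeneity, and the shifted homotopy Jacobi equations for all $k$, with the general Jacobi computation deferred to appendix (\ref{homotopy_Lie_algebra})), which is precisely the dictionary-and-bookkeeping assembly you describe. Your closing remark locating where the Poisson-constraint half of the definition is genuinely needed is also consistent with the paper's discussion around (\ref{higher_Jacobi}) and example (\ref{counterexample_1}).
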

\subsection{The Leibniz operators}\label{Leibniz_operators_sec}
In this section we look in detail at the Poisson-Leibniz equation. 
For $n>1$, it does not hold strictly,
but only up to certain correction terms, which are controlled by what we call
the first $n$-plectic Leibniz operator. It behaves somewhat in between a product
and a bracket. 

This operator is enough to build the homotopy Poisson-$n$ structure in
dimension two, but beyond that, the question remains how the higher Poisson 
brackets interact with the commutative structure. As it turns out, this 
is controlled in a similar manner, by higher Leibniz-like equations.
 
Again these equations don't hold strictly, but up to certain correction 
terms, which are controlled by additional operators. We call them the 
higher $n$-plectic Leibniz operators. 
They complete the homotopy Poisson-$n$ structure. 

We start with the first Leibniz operator and look at all the Leibniz-equations
in dimension two. 
After that, we define the general patter for the higher Leibniz operators and
derive the additional Leibniz-like equations for arbitrary dimension.
\begin{definition}Let $(A,\mathfrak{g},\omega)$ be an n-plectic structure and 
$\mathcal{P}ois(A,\mathfrak{g},\omega)$ the set of Poisson cotensors. The 
\textbf{first n-plectic Leibniz operator} is the map
\begin{equation}
\{\,\cdot\, \|\, \cdot, \cdot \}: \textstyle\bigtimes^{\,3} \mathcal{P}ois(A,\mathfrak{g},\omega) \to 
\mathcal{P}ois(A,\mathfrak{g},\omega)\;,
\end{equation}
which is defined for any homogeneous 
$f_1,f_2$ and $f_3\in \mathcal{P}ois(A,\mathfrak{g},\omega)$ 
and associated Hamilton tensors $x_1$, $x_2$ and $x_3$
by the equation 
\begin{multline}\label{first_leibniz_op}
\{f_1\| f_2,f_3\}=
 -i_{x_{1}}(f_{2}\smwedge f_{3})
  +(-1)^{(|x_{1}|-1)(|x_{f_{2}\wedge f_{3}}|-1)}
   i_{x_{f_{2}\wedge f_{3}}}f_{1}\\
+\left(i_{x_{1}}f_{2}
 -(-1)^{(|x_{1}|-1)(|x_{2}|-1)} i_{x_{2}}f_{1}\right)\smwedge f_{3}\\
+(-1)^{|x_{1}||f_{2}|} f_{2}\smwedge\left(i_{x_{1}}f_{3}
  -(-1)^{(|x_{1}|-1)(|x_{3}|-1)} i_{x_{3}}f_{1}\right)
\end{multline}
and is then extended to all of $\mathcal{P}ois(A,\mathfrak{g},\omega)$ by linearity. 
\end{definition}
As the following theorem shows, this map is well defined, does not depend on the
actual choice of any Hamilton tensor and
interacts with the Poisson $2$-bracket in terms of a homotopy
Leibniz equation. 
\begin{theorem}\label{first_leibniz_theorem}
The first homotopy Leibniz operator 
$\{\,\cdot\, \|\, \cdot, \cdot \}$ is well defined, trilinear,
homogeneous of degree $n$ with respect to the  tensor grading and
it satisfies the symmetry equation
\begin{equation}
\{f_1\| f_2,f_3\}=
(-1)^{|f_2||f_3|}\{f_1\| f_3,f_2\}\;.
\end{equation}
for all (homogeneous) $f_1, f_2$ and $f_3$.
If $x_1$, $x_2$ and $x_3$ are associated Hamilton tensors,
the following \textbf{first n-plectic Leibniz equation} in dimension two holds:
\begin{multline}
\{f_{1},f_{2}\smwedge f_{3}\}=
\{f_{1},f_{2}\}\smwedge f_{3}
+(-1)^{(|x_{1}|-1)|f_{2}|}f_{2}\smwedge\{f_{1},f_{3}\}\\
+d\{f_{1}\|f_{2},f_{3}\}
+\{df_{1}\|f_{2},f_{3}\}
-(-1)^{|x_{1}|}\{f_{1}\|df_{2},f_{3}\}\\
-(-1)^{|f_{2}|+|x_{1}|}\{f_{1}\|f_{2},df_{3}\}\;.
\end{multline}
If $y_1$, $y_2$ resp. $y_3$ are tensors associated to $f_1$, $f_2$ resp. 
$f_3$ by the equations $i_{y_i}\omega=f_i$, the exterior tensor
\begin{multline}\label{Leibniz_1_Poisson_constraint}
y_{\{f_1\|f_2,f_3\}}=-j_{f_{2}}y_{3}\smwedge x_{1}
+(-1)^{(|x_{1}|-1)(|x_{f_{2}\wedge f_{3}}|-1)}y_{1}\smwedge x_{f_{2}\wedge f_{3}}\\
+(-1)^{|x_{1}||f_{2}|+|f_{2}|(|x_{1}|+|f_{3}|)}
 j_{(i_{x_{1}}f_{3}-(-1)^{(|x_{1}|-1)(|x_{3}|-1)}i_{x_{3}}f_{1})}y_{2}\\
+j_{(i_{x_{1}}f_{2}-(-1)^{(|x_{1}|-1)(|x_{2}|-1)}i_{x_{2}}f_{1})}y_{3}
\end{multline}
is Poisson constraint, associated to the first $n$-plectic Leibniz 
operator by the equation $i_{y_{\{f_1\|f_2,f_3\}}}\omega= \{f_1\|f_2,f_3\}$.
Moreover the exterior tensor
\begin{multline}\label{Leibniz_1_Hamilton}
x_{\{f_1\|f_2,f_3\}}\defeq y_{\{f_{1},f_{2}\wedge f_{3}\}}
 -y_{\{f_{1},f_{2}\}\wedge f_{3}}
  -(-1)^{(|x_{1}|-1)|f_{2}|}y_{f_{2}\wedge\{f_{1},f_{3}\}}\\
   -y_{\{df_{1}\|f_{2},f_{3}\}}
    +(-1)^{|x_{1}|}y_{\{f_{1}\|df_{2},f_{3}\}}
     +(-1)^{|f_{2}|+|x_{1}|}y_{\{f_{1}\|f_{2},df_{3}\}}
\end{multline}
is a Hamilton tensor associated to the first $n$-plectic Leibniz operator by the 
equation $i_{x_{\{f_1\|f_2,f_3\}}}\omega= d\{f_1\|f_2,f_3\}$.

\end{theorem}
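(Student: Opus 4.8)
I establish the four assertions in order; the only laborious part is the first $n$-plectic Leibniz equation, on which the assertion about the Hamilton tensor will then rest, so I treat it after the elementary properties and the Poisson constraint. Well-definedness is proved exactly as in proposition (\ref{well_def_2}): each summand of (\ref{first_leibniz_op}) is a contraction $i_{(\cdot)}$ of a \emph{Poisson} cotensor --- namely $f_1$, $f_2$, $f_3$, $f_2\wedge f_3$, or a wedge of such --- and by the kernel property (\ref{kernel_prop}) such a contraction is unchanged when the contracting tensor is altered by an element of $\ker(\omega)$; since any two Hamilton tensors associated to the same Poisson cotensor differ by such an element (again (\ref{kernel_prop})), the value $\{f_1\|f_2,f_3\}$ is independent of the choices of $x_1,x_2,x_3$ and of the representative $x_{f_2\wedge f_3}$. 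Trilinearity is immediate from linearity of $i$, $\wedge$, and of the assignment $(f_2,f_3)\mapsto x_{f_2\wedge f_3}$ of corollary (\ref{product_tensors}). Homogeneity of degree $n$ follows by counting degrees in one summand using $|x_i|=|f_i|+n$ and $|x_{f_2\wedge f_3}|=|f_2|+|f_3|+n$; e.g.\ $|i_{x_1}(f_2\wedge f_3)|=|x_1|+|f_2|+|f_3|=|f_1|+|f_2|+|f_3|+n$. The symmetry $\{f_1\|f_2,f_3\}=(-1)^{|f_2||f_3|}\{f_1\|f_3,f_2\}$ is a sign check: interchange $f_2$ and $f_3$ in (\ref{first_leibniz_op}), use graded commutativity of $\wedge$, and observe that $x_{f_3\wedge f_2}$ and $x_{f_2\wedge f_3}$ are both Hamilton tensors of $f_3\wedge f_2=(-1)^{|f_2||f_3|}f_2\wedge f_3$, so by (\ref{kernel_prop}) the contractions $i_{x_{f_3\wedge f_2}}f_1$ and $i_{x_{f_2\wedge f_3}}f_1$ differ exactly by that sign.

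For the Poisson constraint, to show that (\ref{Leibniz_1_Poisson_constraint}) solves $i_{y_{\{f_1\|f_2,f_3\}}}\omega=\{f_1\|f_2,f_3\}$ I contract $\omega$ along (\ref{Leibniz_1_Poisson_constraint}) summand by summand. On every term of the shape $j_g z$ one invokes the product identity (\ref{fundament_product}), namely $i_{j_g z}\omega=g\wedge i_z\omega$, which is legitimate because the cotensor $g$ appearing there is Poisson; combined with $i_{x_i}\omega=df_i$, $i_{y_i}\omega=f_i$ and $i_{x_{f_2\wedge f_3}}\omega=d(f_2\wedge f_3)$ this reduces the total contraction to precisely the right-hand side of (\ref{first_leibniz_op}). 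This step is a direct, if lengthy, verification and presents no conceptual obstacle.

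The first $n$-plectic Leibniz equation is the core of the theorem. I start from $\{f_1,f_2\wedge f_3\}$, insert the definition (\ref{D_2}) of the Poisson $2$-bracket together with the explicit Hamilton tensor $x_{f_2\wedge f_3}=(-1)^{|f_2|}j_{f_2}x_3+(-1)^{(|f_2|-1)|f_3|}j_{f_3}x_2$ of corollary (\ref{product_tensors}), and rewrite each Lie derivative via Cartan's infinitesimal homotopy formula $L_x=d\,i_x-(-1)^{|x|}i_x\,d$. The essential inputs are the ``higher Leibniz'' rules for contractions and Lie derivatives along \emph{multi}vectors --- equations (\ref{multi_rules-2}), (\ref{multi_rules-5}) together with the companion identities for $L_{j_f x}$ --- which, unlike the vector-field case, make $i_x(\alpha\wedge\beta)$ and $L_x(\alpha\wedge\beta)$ split into an ``$\alpha$-part'', a ``$\beta$-part'' and genuinely mixed contraction terms. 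The pure parts reassemble into $\{f_1,f_2\}\wedge f_3+(-1)^{(|x_1|-1)|f_2|}f_2\wedge\{f_1,f_3\}$, while the mixed terms, together with the $i_{x_1}d(f_2\wedge f_3)$ contribution produced by Cartan's formula, must be identified --- after applying $d$ and regrouping --- with $d\{f_1\|f_2,f_3\}+\{df_1\|f_2,f_3\}-(-1)^{|x_1|}\{f_1\|df_2,f_3\}-(-1)^{|f_2|+|x_1|}\{f_1\|f_2,df_3\}$. Here one repeatedly uses that a Hamilton tensor associated to a coboundary $df$ may be chosen to be $0$ --- hence contributes nothing, by (\ref{kernel_prop}) --- while the original $x$ is a Poisson constraint for $df$; this is what collapses the coboundary arguments of the Leibniz operator so that the two sides match. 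I expect the Koszul-sign accounting in this reassembly --- tracking signs through the $j_{f_i}x_j$ substitutions, the wedge reorderings and the action of $d$ --- to be the main obstacle. The outcome is nonetheless forced, since (\ref{first_leibniz_op}) was defined exactly so as to make this identity hold.

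Granted the Leibniz equation, the assertion about the Hamilton tensor (\ref{Leibniz_1_Hamilton}) is almost immediate. Solving the Leibniz equation for $d\{f_1\|f_2,f_3\}$ expresses it, with precisely the signs displayed in (\ref{Leibniz_1_Hamilton}), as a combination of the six Poisson cotensors $\{f_1,f_2\wedge f_3\}$, $\{f_1,f_2\}\wedge f_3$, $f_2\wedge\{f_1,f_3\}$, $\{df_1\|f_2,f_3\}$, $\{f_1\|df_2,f_3\}$, $\{f_1\|f_2,df_3\}$. Each of these has, by corollary (\ref{product_tensors}), proposition (\ref{well_def_2}) and the Poisson-constraint assertion just proved, an explicit associated Poisson constraint $y_{(\cdot)}$, and contracting $\omega$ along such a $y_{(\cdot)}$ returns the cotensor itself. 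Hence contracting $\omega$ along the right-hand side of (\ref{Leibniz_1_Hamilton}) telescopes, by linearity of $i_{(\cdot)}\omega$, to $d\{f_1\|f_2,f_3\}$, which is the claim.
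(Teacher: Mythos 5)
Your handling of well-definedness, trilinearity, homogeneity, the symmetry, the Poisson constraint (\ref{Leibniz_1_Poisson_constraint}) via the product identity (\ref{fundament_product}), and the derivation of the Hamilton tensor (\ref{Leibniz_1_Hamilton}) by solving the Leibniz equation for $d\{f_1\|f_2,f_3\}$ and contracting $\omega$ along the known Poisson constraints is essentially the paper's own argument. The part that does not close in your sketch is the verification of the first $n$-plectic Leibniz equation itself. The mechanism you invoke --- that a Hamilton tensor associated to a coboundary $df$ may be chosen to be zero --- only disposes of the contractions along $x_{df_1}$, $x_{df_2}$, $x_{df_3}$ inside $\{df_1\|f_2,f_3\}$, $\{f_1\|df_2,f_3\}$ and $\{f_1\|f_2,df_3\}$; it does not touch the contractions along $x_{df_2\wedge f_3}$ and $x_{f_2\wedge df_3}$ occurring in the last two operators, and those tensors cannot be taken in $\ker(\omega)$ because $df_2\smwedge f_3$ and $f_2\smwedge df_3$ are not closed. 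After expanding both sides one is left with a residual term of the form $i_{x_{df_2\wedge f_3}+(-1)^{|f_2|}x_{f_2\wedge df_3}}f_1$ (up to an overall sign), and the decisive observation, which is the hinge of the paper's proof and is absent from your proposal, is that $x_{df_2\wedge f_3}+(-1)^{|f_2|}x_{f_2\wedge df_3}$ is a Hamilton tensor associated to $d(f_2\smwedge f_3)$, hence an element of $\ker(\omega)$, so that its contraction with the Poisson cotensor $f_1$ vanishes by the kernel property (\ref{kernel_prop}).

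Appealing instead to ``the outcome is forced, since the operator was defined to make the identity hold'' is circular and cannot replace this cancellation: without the identity above, the strict part $\{f_1,f_2\}\smwedge f_3+(-1)^{(|x_1|-1)|f_2|}f_2\smwedge\{f_1,f_3\}-\{f_1,f_2\smwedge f_3\}$ and the correction part built from the Leibniz operators do not visibly sum to zero. Everything else in your outline (including the telescoping argument for (\ref{Leibniz_1_Hamilton}) and the use of (\ref{well_def_2}), (\ref{product_tensors}) for the auxiliary associated tensors) is sound and coincides with the paper's route.
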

\begin{proof}Trilinearity is immediate. To see that the definition does not depend on the particular 
chosen associated Hamilton tensors, we use (\ref{kernel_prop}) and proceed 
as in the proof of (\ref{well_def_2}). 
To compute the homogeneity, recall $|x|=|f|+n$. Then
\begin{align*}
|\{f_1\| f_2,f_3\}|
&= |i_{x_1}(f_2\smwedge f_3)|\\
&=|x_1|+|f_2|+|f_3|\\
&= |f_1|+|f_2|+|f_3|+n\;.
\end{align*}
The symmetry is a consequence of the graded symmetry of the exterior cotensor
product.

To see the $n$-plectic Leibniz equation we compute the 'strict part'
and the part that involves all the first $n$-plectic Leibniz operator separately.
For the strict part we get
\begin{align*}
-\{f_{1},f_{2}\smwedge f_{3}\}
&+\{f_{1},f_{2}\}\smwedge f_{3}
+(-1)^{(|x_{1}|-1)|f_{2}|}f_{2}\smwedge\{f_{1},f_{3}\}=\\
&+L_{x_{1}}(f_{2}\smwedge f_{3})
 -(-1)^{(|x_{1}|-1)(|x_{f_{2}\wedge f_{3}}|-1)}
  L_{x_{f_{2}\smwedge f_{3}}}f_{1}\\
&-(L_{x_{1}}f_{2}-
 (-1)^{(|x_{1}|-1)(|x_{2}|-1)}
  L_{x_{2}}f_{1})\smwedge f_{3}\\
&-(-1)^{(|x_{1}|-1)|f_{2}|}f_{2}\smwedge
 (L_{x_{1}}f_{3}
  -(-1)^{(|x_{1}|-1)(|x_{3}|-1)}L_{x_{3}}f_{1})\;.
\end{align*}
To compute the other part recall $|x_{df}|=|x_f|-1$. After a long and
tedious, but completely basic computation we get
\begin{align*}
d\{f_{1}\|f_{2},f_{3}\}
&+\{df_{1}\|f_{2},f_{3}\}
-(-1)^{|x_{1}|}\{f_{1}\|df_{2},f_{3}\}
-(-1)^{|f_{2}|+|x_{1}|}\{f_{1}\|f_{2},df_{3}\}=\\
&-L_{x_{1}}(f_{2}\smwedge f_{3})
 +(-1)^{(|x_{1}|+1)(|x_{f_{2}\wedge f_{3}}|+1)}
  L_{x_{f_{2}\wedge f_{3}}}f_{1}\\
&+(L_{x_{1}}f_{2}
 -(-1)^{(|x_{1}|+1)(|x_{2}|+1)}
  L_{x_{2}}f_{1})\smwedge f_{3}\\
&+(-1)^{(|x_{1}|-1)|f_{2}|}f_{2}\smwedge(
 L_{x_{1}}f_{3}
  -(-1)^{(|x_{1}|+1)(|x_{3}|+1)}
   L_{x_{3}}f_{1})\\
&-(-1)^{|x_{1}|+(|x_{1}|+1)(|x_{df_{2}\wedge f_{3}}|+1)}
 i_{x_{df_{2}\wedge f_{3}}}f_{1}\\
&-(-1)^{|f_{2}|+|x_{1}|+(|x_{1}|+1)(|x_{f_{2}\wedge df_{3}}|+1)}
 i_{x_{f_{2}\wedge df_{3}}}f_{1}\;.
\end{align*}
Since $|x_{df_{2}\wedge f_{3}}|= |x_{f_{2}\wedge df_{3}}|$,
it follows that the homotopy Leibniz equation holds if the term 
$$-(-1)^{|x_{1}|+(|x_{1}|+1)(|x_{df_{2}\wedge f_{3}}|+1)}
 i_{x_{df_{2}\wedge f_{3}}+(-1)^{|v_2|}x_{f_{2}\wedge df_{3}}}f_{1}\\$$
vanishes. This, however, is true since 
$x_{df_{2}\wedge f_{3}}+(-1)^{|v_2|}x_{f_{2}\wedge df_{3}}=
x_{d(f_{2}\wedge f_{3})}\in\ker(\omega)$ and $f_{1}$ has the kernel property
(\ref{kernel_prop}).

To show that the image $\{f_1,\|f_2,f_2\}$ is a Poisson cotensors, it in
enough to proof (\ref{Leibniz_1_Poisson_constraint}) and 
(\ref{Leibniz_1_Hamilton}). To see 
(\ref{Leibniz_1_Poisson_constraint}) we use 
(\ref{product_Poisson_constraint}) and compute
\begin{multline*}
i_{y_{\{f_1\|f_2,f_3\}}}\omega=-i_{j_{f_{2}}y_{3}\wedge x_{1}}\omega
 +(-1)^{(|x_{1}|-1)(|x_{f_{2}\wedge f_{3}}|-1)}
  i_{y_{1}\wedge x_{f_{2}\wedge f_{3}}}\omega\\
+(-1)^{|x_{1}||f_{2}|+|f_{2}|(|x_{1}|+|f_{3}|)}
 i_{j_{(i_{x_{1}}f_{3}-(-1)^{(|x_{1}|-1)(|x_{3}|-1)}i_{x_{3}}f_{1})}y_{2}}\omega\\
+i_{j_{(i_{x_{1}}f_{2}-(-1)^{(|x_{1}|-1)(|x_{2}|-1)}i_{x_{2}}f_{1})}y_{3}}\omega=
\end{multline*}
\begin{multline*}
-i_{x_{1}}i_{j_{f_{2}}y_{3}}\omega
 +(-1)^{(|x_{1}|-1)(|x_{f_{2}\wedge f_{3}}|-1)}
  i_{x_{f_{2}\wedge f_{3}}}i_{y_{1}}\omega\\
+(-1)^{|x_{1}||f_{2}|+|f_{2}|(|x_{1}|+|f_{3}|)}
 (i_{x_{1}}f_{3}-(-1)^{(|x_{1}|-1)(|x_{3}|-1)}i_{x_{3}}f_{1})\wedge f_{2}\\  
 +i_{j_{(i_{x_{1}}f_{2}-(-1)^{(|x_{1}|-1)(|x_{2}|-1)}i_{x_{2}}f_{1})}y_{3}}\omega=
\{f_1\|f_2,f_3\}\;.
\end{multline*}
To see (\ref{Leibniz_1_Hamilton}), observe that it is an immediate consequence 
of the $n$-pletic Leibniz equation, since all involved
terms are Poisson cotensors. To be more precise, we compute
\begin{multline*}
d\{f_{1}\|f_{2},f_{3}\}=\\
+\{f_{1},f_{2}\wedge f_{3}\}
 -\{f_{1},f_{2}\}\wedge f_{3}
  -(-1)^{(|x_{1}|-1)|f_{2}|}f_{2}\wedge\{f_{1},f_{3}\}\\
   -\{df_{1}\|f_{2},f_{3}\}
    +(-1)^{|x_{1}|}\{f_{1}\|df_{2},f_{3}\}
     +(-1)^{|f_{2}|+|x_{1}|}\{f_{1}\|f_{2},df_{3}\}=
\end{multline*}
\begin{multline*}
i_{y_{\{f_{1},f_{2}\wedge f_{3}\}}}\omega
 -i_{y_{\{f_{1},f_{2}\}\wedge f_{3}}}\omega
  -(-1)^{(|x_{1}|-1)|f_{2}|}i_{y_{f_{2}\wedge\{f_{1},f_{3}\}}}\omega\\
   -i_{y_{\{df_{1}\|f_{2},f_{3}\}}}\omega
    +(-1)^{|x_{1}|}i_{y_{\{f_{1}\|df_{2},f_{3}\}}}\omega
     +(-1)^{|f_{2}|+|x_{1}|}i_{y_{\{f_{1}\|f_{2},df_{3}\}}}\omega
\end{multline*}
\end{proof}
\begin{remark}
We say the first $n$-plectic Leibniz equation is satisfied \textit{strictly}
or 'on the nose', if all terms which contain a Leibniz operator vanish. In 
that case the equation is precisely equal to the common Leibniz equation of
a Poisson algebra. For $n=1$, this is always the case.
\end{remark}
Loosely speaking, the Leibniz equation controls the interaction 
between the Poisson bracket and the differential graded commutative structure.
As we will see in corollary (\ref{corollary_main_2}), this is exactly the 
general structure equation
(\ref{main_hom_pois_struc_eq}) for the parameters $k=2$, $p_1=1$ and $p_2=2$.

However, two more Leibniz-like equations have to hold in case $k=2$. 
They derive from (\ref{main_hom_pois_struc_eq}), for the parameters 
$p_1=1$ and $p_2=3$ as well as $p_1=2$ and $p_2=2$. The following theorem
makes this precise: 
\begin{theorem}
Let $(A,\mathfrak{g},\omega)$ be an $n$-plectic structure. Then for any
four Poisson cotensors
$f_1,\ldots,f_4\in \mathcal{P}ois(A,\mathfrak{g},\omega)$ and associated
Hamilton tensors $x_1$, $\ldots$, $x_4$, the following 
\textbf{second n-plectic Leibniz equation} in dimension two holds:
\begin{multline}
\{f_{1}\smwedge f_{2}\|f_{3},f_{4}\}
-\sgn{|x_{f_1\wedge f_2}||x_{f_3\wedge f_4}|}\{f_{3}\smwedge f_{4}\|f_{1},f_{2}\}=\\
f_{1}\smwedge\{f_{2}\|f_{3},f_{4}\}
+\sgn{|f_{2}||x_{f_3\wedge f_4}|}\{f_{1}\|f_{3},f_{4}\}\smwedge f_{2}\\
 -\sgn{|x_{f_1\wedge f_2}||x_{f_3\wedge f_4}|}f_{3}\smwedge\{f_{4}\|f_{1},f_{2}\}\\
  -\sgn{|x_{f_1\wedge f_2}|(|x_{f_3\wedge f_4}|+|f_4|)}
   \{f_{3}\|f_{1},f_{2}\}\smwedge f_{4}\;.
\end{multline}
In addition the \textbf{third n-plectic Leibniz equation} in dimension
two holds for the same arguments:
\begin{multline}
\sgn{|f_{2}||x_{1}|}f_{2}\smwedge\{f_{1}\|f_{3},f_{4}\}
-\{f_{1}\|f_{2},f_{3}\}\smwedge f_{4}=\\
-\{f_{1}\|f_{2},f_{3}\smwedge f_{4}\}
+\{f_{1}\|f_{2}\smwedge f_{3},f_{4}\}
\end{multline}
\end{theorem}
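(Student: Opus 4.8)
The plan is to verify both identities by a direct, sign-careful expansion, reducing first to homogeneous $f_{1},\dots,f_{4}$ by multilinearity of all operators involved. The only ingredients are: the definition (\ref{first_leibniz_op}) of the first Leibniz operator; graded commutativity and associativity of $\wedge$; the explicit Hamilton tensor of a product, $x_{g\wedge h}=(-1)^{|g|}j_{g}x_{h}+(-1)^{(|g|-1)|h|}j_{h}x_{g}$, from Corollary (\ref{product_tensors}); the left-contraction identity of Proposition (\ref{prop_product}), i.e.\ $i_{j_{g}x}\omega=g\wedge i_{x}\omega$ for Poisson $g$ (applied, after writing the relevant Poisson cotensors as contractions of $\omega$, via (\ref{multi_rules-5}), and using that products of Poisson cotensors are again Poisson by Theorem (\ref{theorem_dga_algebra})); and the kernel property (\ref{kernel_prop}), which lets us discard any term whose tensor argument is the difference of two Hamilton (or Poisson) tensors associated to the same Poisson cotensor.

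I would treat the third equation first, since it is the simpler. Rewriting its right-hand side as $\{f_{1}\|f_{2}\wedge f_{3},f_{4}\}-\{f_{1}\|f_{2},f_{3}\wedge f_{4}\}$ and expanding both Leibniz operators by (\ref{first_leibniz_op}), the ``pure'' summands cancel in pairs: $-i_{x_{1}}(f_{2}\wedge f_{3}\wedge f_{4})$ literally after associativity of $\wedge$, and $i_{x_{(f_{2}\wedge f_{3})\wedge f_{4}}}f_{1}$ against $i_{x_{f_{2}\wedge(f_{3}\wedge f_{4})}}f_{1}$ because the two Hamilton tensors of $f_{2}\wedge f_{3}\wedge f_{4}$ differ by an element of $\ker(\omega)$, on which $f_{1}$ vanishes. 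The surviving terms regroup, after one more use of associativity and the elementary identity $(-1)^{|x_{1}|(|f_{2}|+|f_{3}|)}=(-1)^{|f_{2}||x_{1}|}(-1)^{|x_{1}||f_{3}|}$, into exactly $(-1)^{|f_{2}||x_{1}|}f_{2}\wedge\{f_{1}\|f_{3},f_{4}\}-\{f_{1}\|f_{2},f_{3}\}\wedge f_{4}$, the two ``cross'' terms $\pm f_{2}\wedge\big(i_{x_{1}}f_{3}-(-1)^{(|x_{1}|-1)(|x_{3}|-1)}i_{x_{3}}f_{1}\big)\wedge f_{4}$ cancelling against each other. No use of Corollary (\ref{product_tensors}) is needed here.

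For the second equation the same scheme applies, but the products now sit in the left slot, so one expands $x_{f_{1}\wedge f_{2}}$ and $x_{f_{3}\wedge f_{4}}$ by Corollary (\ref{product_tensors}) and then pulls the factors $f_{1},f_{2}$ (resp.\ $f_{3},f_{4}$) out of every contraction $i_{x_{f_{1}\wedge f_{2}}}(\cdot)$ (resp.\ $i_{x_{f_{3}\wedge f_{4}}}(\cdot)$) using the identity of Proposition (\ref{prop_product}). After this step the left-hand side is a sum of signed monomials each having one distinguished Poisson cotensor that is being contracted; sorting by that cotensor matches the monomials against the four summands on the right, with the Koszul sign $(-1)^{|x_{f_{1}\wedge f_{2}}||x_{f_{3}\wedge f_{4}}|}$ implementing the antisymmetric swap $(f_{1},f_{2})\leftrightarrow(f_{3},f_{4})$. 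Residual terms of the form $i_{x_{f_{1}\wedge f_{2}}}(f_{3}\wedge f_{4})$, $i_{x_{f_{3}\wedge f_{4}}}(f_{1}\wedge f_{2})$ and their $d$-shifted relatives (built from $x_{d(g\wedge h)}-x_{dg\wedge h}-(-1)^{|g|}x_{g\wedge dh}\in\ker(\omega)$) are cleared by (\ref{kernel_prop}), exactly as in the proof of Theorem (\ref{first_leibniz_theorem}).

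The main obstacle is purely bookkeeping: under these substitutions the second equation unfolds into several dozen signed monomials in the $i_{x_{i}}f_{j}$, $i_{x_{i}}(f_{j}\wedge f_{k})$ and triple products, and one must check that, collected by type, every residual term either matches a term on the right or is annihilated by (\ref{kernel_prop}). As a consistency check, both displayed equations are precisely the $k=2$, $(p_{1},p_{2})=(1,3)$ and $(2,2)$ instances of the general homotopy Poisson-$n$ structure equation (\ref{main_hom_pois_struc_eq}) once the degree-$n$ structure map supplied by $\{\,\cdot\,\|\,\cdot,\cdot\}$ is inserted alongside the $k=1$ maps of Corollary (\ref{corollary_main_1}); this pins down all signs independently of the direct computation.
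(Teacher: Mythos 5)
Your handling of the third equation is fine and is essentially the paper's own route: expand the definition (\ref{first_leibniz_op}) and collect terms, identifying the two Hamilton tensors of $f_2\smwedge f_3\smwedge f_4$ through the kernel property (\ref{kernel_prop}); the cancellations come out exactly as you describe.

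The gap is in your argument for the second equation. The pivotal step --- expand $x_{f_1\wedge f_2}$ and $x_{f_3\wedge f_4}$ by Corollary (\ref{product_tensors}) and then ``pull the factors out of every contraction $i_{x_{f_1\wedge f_2}}(\cdot)$ using Proposition (\ref{prop_product})'' --- is not available. Proposition (\ref{prop_product}) only states $i_{j_f x}\omega=f\smwedge i_x\omega$, i.e.\ it governs contractions of the $n$-plectic cocycle itself; the analogous factorization $i_{j_{f_1}x_2}\,g=\pm f_1\smwedge i_{x_2}g$ for a general Poisson cotensor $g$ is false, even when $x_2$ is an honest Hamilton tensor. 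Concretely, on $\mathbb{R}^3$ with the $2$-plectic cocycle $\omega=dx\smwedge dy\smwedge dz$, take $f_1=dz$, $f_2=x\,dy$ (so $x_2=\partial_z$) and $g=dy$: then $j_{f_1}x_2=\langle dz,\partial_z\rangle=1$, hence $i_{j_{f_1}x_2}g=dy$, while $f_1\smwedge i_{x_2}g=dz\smwedge i_{\partial_z}dy=0$. Rewriting $g=i_y\omega$ and appealing to (\ref{multi_rules-5}) does not repair this, because $y\smwedge j_{f_1}x_2-j_{f_1}(y\smwedge x_2)$ need not lie in $\ker(\omega)$; in the example its contraction with $\omega$ is exactly the discrepancy $dy$. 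Consequently your ``sort by the distinguished cotensor and match against the four right-hand summands'' bookkeeping does not follow from the tools you cite: for instance $i_{x_{f_1\wedge f_2}}(f_3\smwedge f_4)$ occurs on the left with total coefficient $\sgn{|x_{f_1\wedge f_2}|+|x_{f_3\wedge f_4}|}-1$, which is nonzero whenever $|f_1|+|f_2|+|f_3|+|f_4|$ is odd, and has no term of the same shape on the right, so its elimination requires a genuinely different mechanism than per-term factorization. That is precisely the nontrivial content of the ``rather long, but simple'' computation the paper leaves to the reader (the paper's proof expands only the Leibniz operators themselves and never expands the product Hamilton tensors), and your proposal does not supply a valid substitute for it. Finally, the closing consistency check is circular in this paper's logic: Corollary (\ref{corollary_main_2}) derives the $k=2$ instances of (\ref{main_hom_pois_struc_eq}) from the very theorem under discussion, so it cannot independently certify the equation or its signs.
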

\begin{proof}
To see that both equations are satisfied, it is enough to just apply the 
definition (\ref{first_leibniz_op}) of the first Leibniz operator to all 
expressions and collect the terms. This is a
rather long, but simple computation and it is left to the reader.
\end{proof}
This is everything that needs to happen
in 'dimension two', i.e. for the parameter $k=2$ in the defining structure
equation (\ref{main_hom_pois_struc_eq}). As the following corollary
make precise, we can choose all structure maps 
(\ref{main_hom_pois_struc_maps}) beside the exterior product, the Poisson bracket
and the first Leibniz operator to be zero, in this case.
\begin{corollary}\label{corollary_main_2}
Let $(A,\mathfrak{g},\omega)$ be an $n$-plectic structure.
The main structure equation (\ref{main_hom_pois_struc_eq}) of a homotopy
Poisson-$n$ algebra is satisfied for $k=2$ and all $p_1,p_2\in\N$.
\end{corollary}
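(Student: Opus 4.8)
The plan is to follow the pattern of Corollary \ref{corollary_main_1}: pin down a minimal set of non-zero $k=2$ structure maps (\ref{main_hom_pois_struc_maps}), and then verify that the master equation (\ref{main_hom_pois_struc_eq}) collapses, for each pair $(p_1,p_2)\in\N^2$, either to an identity already proved above or to a triviality. Concretely I would keep the $k=1$ maps $d$ and $\wedge$ of Corollary \ref{corollary_main_1}; take the homotopy Poisson $2$-bracket $\{\cdot,\cdot\}$ as the $k=2$ structure map for $(p_1,p_2)=(1,1)$; take the first $n$-plectic Leibniz operator $\{\,\cdot\, \|\, \cdot, \cdot \}$ as the $k=2$ structure map for $(p_1,p_2)=(1,2)$, with the map for $(2,1)$ determined from it by the shifted symmetry; and set every remaining $k=2$ structure map equal to zero. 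The first task is then purely formal: by Proposition \ref{jacobi-2} the bracket is bilinear, of tensor degree $n-1$ and $(n-1)$-fold shifted graded antisymmetric, and by Theorem \ref{first_leibniz_theorem} the first Leibniz operator is trilinear, of tensor degree $n$ and graded symmetric in its last two arguments; after the suspensions built into (\ref{main_hom_pois_struc_maps}) (exactly as for $D_1$ and $D_2$ in Corollary \ref{corollary_main_1}) these become the required homogeneous degree $1-n$ and the required Harrison/Lie symmetries.

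With the maps fixed, the $k=2$ part of (\ref{main_hom_pois_struc_eq}) is a family of identities indexed by $(p_1,p_2)$, and since the only non-zero maps available are $d$, $\wedge$, $\{\cdot,\cdot\}$ and $\{\,\cdot\, \|\, \cdot, \cdot \}$, only small values of $(p_1,p_2)$ produce a substantive statement. For $(p_1,p_2)=(1,1)$ the equation relates $d\circ D_{(1,1)}$ to $D_{(1,1)}$ precomposed with $d$ in either argument, i.e.\ it is precisely the $(n-1)$-fold shifted homotopy Jacobi equation in dimension two of Proposition \ref{jacobi-2}. For $(p_1,p_2)=(1,2)$ (and, by the symmetry, $(2,1)$) it is the first $n$-plectic Leibniz equation of Theorem \ref{first_leibniz_theorem}: here $d\{f_1\|f_2,f_3\}$ is the genuine $d\circ D_{(1,2)}$ contribution, $\{df_1\|f_2,f_3\}$, $\{f_1\|df_2,f_3\}$, $\{f_1\|f_2,df_3\}$ come from precomposing $D_{(1,2)}$ with $d$, and $\{f_1,f_2\}\smwedge f_3$, $f_2\smwedge\{f_1,f_3\}$ from postcomposing $\{\cdot,\cdot\}$ with $\wedge$. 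For $(p_1,p_2)=(2,2)$ the $d\circ D_{(2,2)}$ term is absent because $D_{(2,2)}=0$, and the equation reduces exactly to the second $n$-plectic Leibniz equation; likewise $(p_1,p_2)=(1,3)$ (and $(3,1)$) gives the third. For every other $(p_1,p_2)$ the identity is vacuous: either each composition occurring in it has a vanishing factor, or — in the mixed cases of shape $(1,p_2)$ with $p_2\geq 4$, built from iterated products and one $\{\,\cdot\, \|\, \cdot, \cdot \}$ — it is a formal consequence of strict associativity and commutativity of $\wedge$ together with the second and third Leibniz equations, via a short induction on $p_2$ that peels off one product factor at a time.

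The genuine content, and the step I expect to be the main obstacle, is the bookkeeping that identifies each of the four substantive instances with the correct already-proved identity: one has to check that the shuffle sums and antisymmetric Koszul signs (\ref{Koszul_sign_rules}) produced by the cobar differential of the homotopy Poisson-$n$ cooperad, evaluated on the shifted arguments $s^{n-1}f_i$, reproduce term by term the signs appearing in Proposition \ref{jacobi-2} and in the two Leibniz theorems, with no surplus or missing term. For $(1,1)$ and $(1,2)$ this is a direct comparison; for $(2,2)$ and $(1,3)$ it additionally involves the graded symmetrisation over the product slots, which is precisely the origin of the signs $\sgn{|x_{f_1\wedge f_2}||x_{f_3\wedge f_4}|}$, $\sgn{|f_2||x_1|}$, etc.\ in those equations. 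Since all four target identities are already in hand, what remains is this sign audit together with the induction of the previous paragraph — long, but entirely mechanical, exactly as in the companion theorem that recorded the second and third Leibniz equations.
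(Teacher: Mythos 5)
Your proposal is correct and is essentially the paper's own proof: keep the $k=1$ maps from Corollary (\ref{corollary_main_1}), set $D_{1,1}$ to the shifted Poisson $2$-bracket and $D_{1,2}$ (with $D_{2,1}$ by symmetry) to the shifted first Leibniz operator, all other $D_{q_1,q_2}=0$, and identify the only non-trivial instances $(p_1,p_2)\in\{(1,1),(1,2),(2,2),(1,3)\}$ of (\ref{main_hom_pois_struc_eq}) with Proposition (\ref{jacobi-2}) and the first, second and third $n$-plectic Leibniz equations. Two small corrections: the required degree of the $k=2$ structure maps is $2-n$, not $1-n$ (which your maps do have), and no induction is needed for the cases $(1,p_2)$ with $p_2\geq 4$ — there every term of the structure equation already contains a vanishing structure map, so these cases are trivially zero.
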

\begin{proof}
Recall the structure in dimension one, as given in the proof of corollary
(\ref{corollary_main_1}) and 
define the following additional maps of (\ref{main_hom_pois_struc_maps}) 
for $k=2$: The Poisson-$2$ bracket is shifted into
$D_{1,1}(s^{n-1}f_1\smwedge s^{n-1}f_2) \defeq s\{f_1,f_2\}$ and the
first Leibniz operator is shifted into
$D_{1,2}(s^{n-1}f_1\smwedge s^{n-2}(\overline{sf_2\otimes sf_3}))\defeq 
\sgn{|sf_2|}s\{f_1\|f_2,f_3\}$. In addition consider the permutation
of the shifted Leibniz operator
\begin{multline*}
D_{2,1}(s^{n-2}(\overline{sf_2\otimes sf_3})\smwedge s^{n-1}f_1)\defeq \\
-\sgn{|s^{n-1}f_1||s^{n-2}(\overline{sf_2\otimes sf_3})|}
D_{1,2}(s^{n-1}f_1\smwedge s^{n-2}(\overline{sf_2\otimes sf_3})\;.
\end{multline*}
All other structure maps $D_{q_1,q_2}$ are assumed to be zero.

These maps are homogeneous of degree $(2-n)$ and 
have the expected symmetry. Since most maps are just zero,
equation (\ref{main_hom_pois_struc_eq}) is only non trivial 
in the following four cases: $(p_1,p_2)\in 
\{(1,1),(1,2),(1,3),(2,2)\}$. 

The case $(1,1)$ is the requirement, that the de Rham differential is a
derivation with respect to bracket.  The cases 
$(1,2)$, $(2,2)$ and $(1,3)$ are the first, second and third $n$-plectic
Leibniz equation, respectively.
\end{proof}
Finally, lets look at the higher Leibniz equations. Basically, they control the 
interaction between the higher Poisson brackets and the commutative structure. 
However, for them to hold we need additional structure: 
The higher Leibniz operators. 

With the first Leibniz operator at hand,
these operators are defined inductively.  Unlike the higher brackets,
contractions along Hamilton tensors of the previously defined bracket, is not
enough. The following definition makes this precise: 
\begin{definition}
Let $(A,\mathfrak{g},\omega)$ be an n-plectic structure and 
$\mathcal{P}ois(A,\mathfrak{g},\omega)$ the set of Poisson cotensors. The 
$k$-th \textbf{n-plectic Leibniz operator} is the map
\begin{equation}
\{\,\cdot\, \|\, \cdot, \cdot \}: \textstyle\bigtimes^{\,k+2} \mathcal{P}ois(A,\mathfrak{g},\omega) \to 
\mathcal{P}ois(A,\mathfrak{g},\omega)\;,
\end{equation}
defined inductively for any $k>1$, 
homogeneous $f_1,\ldots,f_{k+2} \in \mathcal{P}ois(A, g, \omega)$ and
Hamilton tensor $x_{\{\cdot,\cdots,\cdot\|\cdot,\cdot\}}$ 
associated to the $(k-1)$-th Leibniz operator
by

\begin{align}
\label{higher_leibniz_op}
\{f_1,&\ldots,f_k\|f_{k+1},f_{k+2}\}\defeq\\
-&\nonumber\!\begin{multlined}[t][\mylength]
\textstyle \sum_{\sigma\in Sh(k-1,1)}\sgn{\sigma}e(\sigma;sx_{1},\ldots,sx_{k})
 \sgn{k+(|x_{v_{k+1}\wedge v_{k+2}}|-1)(|x_{\sigma(k)}|-1)}\\
 i_{x_{\{v_{\sigma(1)},\ldots,v_{\sigma(k-1)}\|v_{k+1},v_{k+2}\}}}v_{\sigma(k)}
\end{multlined}\\
+&\nonumber\!\begin{multlined}[t][\mylength]
\textstyle \sum_{j_{1}=0}^{k}\sum_{j_{2}=j_{1}}^{k}
 \sum_{\sigma\in Sh(j_{1},j_{2}-j_{1},k-j_{2})}
\sgn{\sigma}e(\sigma;sx_{1},...,sx_{k})\\
 \sgn{j_{1}+\sum_{i=1}^{k}(k-i)(|x_{\sigma(i)}|-1)+(\sum_{i=j_{2}+1}^{k}|x_{\sigma(i)}|)|v_{k+1}|}\phantom{mmmm}\\
  i_{x_{\sigma(1)}}...i_{x_{\sigma(j_{1})}}((i_{x_{\sigma(j_{1}+1)}}...i_{x_{\sigma(j_{2})}}v_{k+1})\wedge(
   i_{x_{\sigma(j_{2}+1)}}...i_{x_{\sigma(k)}}v_{k+2}))
\end{multlined}\\
-&\nonumber\!\begin{multlined}[t][\mylength]
\textstyle \sum_{j_{1}=0}^{k-1}\sum_{j_{2}=j_{1}}^{k-1}\sum_{\sigma\in Sh(j_{1},j_{2}-j_{1},1,k-1-j_{2})}(-1)^{\sigma}e(\sigma;sx_{1},...,sx_{k})\\
 (-1)^{j_{1}+\sum_{i=1}^{k}(k-i)(|x_{\sigma(i)}|-1)+(\sum_{i=j_{2}+2}^{k}|x_{\sigma(i)}|)|v_{k+1}|+(|x_{\sigma(j_{2}+1)}|+1)(|x_{k+1}|+1)}\\
 i_{x_{\sigma(1)}}...i_{x_{\sigma(j_{1})}}((i_{x_{\sigma(j_{1}+1)}}...i_{x_{\sigma(j_{2})}}
  i_{x_{k+1}}v_{\sigma(j_{2}+1)})\wedge(
   i_{x_{\sigma(j_{2}+2)}}...i_{x_{\sigma(k)}}v_{k+2}))
\end{multlined}\\
-&\nonumber\!\begin{multlined}[t][\mylength]
\textstyle \sum_{j_{1}=0}^{k-1}\sum_{j_{2}=j_{1}}^{k-1}\sum_{\sigma\in Sh(j_{1},j_{2}-j_{1},k-1-j_{2},1)}(-1)^{\sigma}e(\sigma;sx_{1},...,sx_{k})\\
(-1)^{j_{1}+\sum_{i=1}^{k}(k-i)(|x_{\sigma(i)}|-1)+(\sum_{i=j_{2}+1}^{k}|x_{\sigma(i)}|)|v_{k+1}|+(|x_{\sigma(k)}|+1)(|x_{k+2}|+1)}\\
 i_{x_{\sigma(1)}}...i_{x_{\sigma(j_{1})}}((i_{x_{\sigma(j_{1}+1)}}...
  i_{x_{\sigma(j_{2})}}v_{k+1})\wedge(i_{x_{\sigma(j_{2}+1)}}...
   i_{x_{\sigma(k-1)}}i_{x_{k+2}}v_{\sigma(k)}))
\end{multlined}\\
+&\nonumber\!\begin{multlined}[t][\mylength]
\textstyle \sum_{j_{1}=0}^{k-2}\sum_{j_{2}=j_{1}}^{k-2}\sum_{\sigma\in Sh(j_{1},j_{2}-j_{1},1,k-2-j_{2},1)}(-1)^{\sigma}e(\sigma;sx_{1},...,sx_{k})\\
\sgn{j_{1}+\sum_{i=1}^{k}(k-i)(|x_{\sigma(i)}|-1)+(\sum_{i=j_{2}+2}^{k}|x_{\sigma(i)}|)|v_{k+1}|}\\
\sgn{(|x_{\sigma(j_{2}+1)}|+1)(|x_{k+1}|+1)+(|x_{\sigma(k)}|+1)(|x_{k+2}|+1)}\\
 i_{x_{\sigma(1)}}...i_{x_{\sigma(j_{1})}}((i_{x_{\sigma(j_{1}+1)}}...i_{x_{\sigma(j_{2})}}
  i_{x_{k+1}}v_{\sigma(j_{2}+1)})\smwedge(\\
   i_{x_{\sigma(j_{2}+2)}}...i_{x_{\sigma(k-1)}}i_{x_{k+2}}v_{\sigma(k)}))
\end{multlined}
\end{align}
and is then extended to all of $\mathcal{P}ois(A,\mathfrak{g},\omega)$ by 
linearity.
\end{definition}
The induction base is the first Leibniz operator. The idea of the notation
$\{\cdot,\ldots,\cdot\|\cdot,\cdot\}$ is, that everything on the left of
$\|$ 'is like a bracket', while everything on the right 'is like a product'. 
This is meant in particular with respect to homogeneity and symmetry.
\begin{remark}
In actual computations it
is sometimes advantageous to consider the commutative product as the 
'zeroth-Leibniz operator'. In that case we follow the notational logic of the
higher Leibniz operators and write $\{\|f_1,f_2\}$ for the product 
$f_1\smwedge f_2$.
\end{remark}
The following theorem basically says, that the infinite
sequence of higher Leibniz operators is well defined and interact with the 
commutative structure and the Poisson brackets in terms of a homotopy Poisson-$n$
algebra:
\begin{theorem}\label{main_theorem_2}For any $k\in\N$,
the $k$-th Leibniz operator is well defined, $(n-1)$-fold shifted
graded antisymmetric with respect to all arguments on the left and graded
symmetric with respect to all arguments on the right. Moreover it is
homogeneous of degree $n\cdot k$ with respect to the tensor grading.

For any homogeneous Poisson cotensors
$f_1,\ldots,f_{k+2}\in \mathcal{P}ois(A,\mathfrak{g},\omega)$ 
and associated (homogeneous)
Hamilton tensor $x_1,\ldots,x_{k+2}$, the following
 \textbf{first n-plectic Leibniz equation} in dimension $(k+1)$ is satsfied:

\begin{align}
-&\nonumber\!\begin{multlined}[t][\mylength]
\{f_{1},\ldots,f_{k},f_{k+1}\smwedge f_{k+2}\}
  +\{f_{1},\ldots,f_{k},f_{k+1}\}\smwedge f_{k+2}\\
+\sgn{(\sum_{i=1}^{k}|x_{i}|-1)|f_{k+1}|}
 f_{k+1}\smwedge\{f_{1},\ldots,f_{k},f_{k+2}\}
\end{multlined}\\
+&\nonumber\!\begin{multlined}[t][\mylength]
\textstyle \sum_{\sigma\in Sh(1,k-1)}
 \sgn{\sigma}e(\sigma;sx_{1},\ldots,sx_{k})\cdot{}\\
\cdot\{df_{\sigma(1)},f_{\sigma(2)},\ldots,f_{\sigma(k)}\|f_{k+1},f_{k+2}\}\\
-\sgn{k}d\{f_{1},\ldots,f_{k}\|f_{k+1},f_{k+2}\}
\end{multlined}\\
+&\nonumber\!\begin{multlined}[t][\mylength]
\sgn{\sum_{i=1}^{k}(|x_{i}|-1)}\{f_{1},\ldots,f_{k}\|df_{k+1},f_{k+2}\}\\
+\sgn{\sum_{i=1}^{k}(|x_{i}|-1)+|f_{k+1}|}\{f_{1},\ldots,f_{k}\|f_{k+1},df_{k+2}\}
\end{multlined}\\
+&\nonumber\!\begin{multlined}[t][\mylength]
\textstyle \sum_{j=2}^{k}\sum_{\sigma\in Sh(j,k-j)}
  \sgn{\sigma}e(\sigma;sx_{1},\ldots,sx_{k})\sgn{(j+1)(k+1-j)}\cdot{}\\
\cdot \{\{f_{\sigma(1)},\ldots,f_{\sigma(j)}\},
  f_{\sigma(j+1)},\ldots,f_{\sigma(k)}\|f_{k+1},f_{k+2}\}
\end{multlined}\\
-&\nonumber\!\begin{multlined}[t][\mylength]
\textstyle \sum_{j=1}^{k-1}\sum_{\sigma\in Sh(j,k-j)}
 \sgn{\sigma}e(\sigma;sx_{1},\ldots,sx_{k})\cdot{}\\
  \cdot{}\sgn{j(k-j)+k+
   (|x_{f_{k+1}\wedge f_{k+2}}|-1)(\sum_{i=j+1}^{k}(|x_{\sigma(i)}|-1))}\cdot{}\\
   \cdot\{\{f_{\sigma(1)},\ldots,f_{\sigma(j)}\|f_{k+1},f_{k+2}\},
    f_{\sigma(j+1)},\ldots,f_{\sigma(k)}\}
\end{multlined}\\
+&\nonumber\!\begin{multlined}[t][\mylength]
\textstyle \sum_{j=1}^{k-1}\sum_{\sigma\in Sh(j,k-j)}
 \sgn{\sigma}e(\sigma;sx_{1},\ldots,sx_{k})\cdot{}\\
  \cdot\sgn{(\sum_{i=k-j+1}^{k}|x_{\sigma(i)}|-1)|f_{k+1}|
    +(j-1)\sum_{i=1}^{k-j}(|x_{\sigma(i)}|-1)}\cdot{}\\
     \cdot\{f_{\sigma(1)},\ldots,f_{\sigma(k-j)}\|f_{k+1},
      \{f_{\sigma(k-j+1)},\ldots,f_{\sigma(k)},f_{k+2}\}\}
\end{multlined}\\
+&\nonumber\!\begin{multlined}[t][\mylength]
\textstyle \sum_{j=1}^{k-1}\sum_{\sigma\in Sh(j,k-j)}
 \sgn{\sigma}e(\sigma;sx_{1},\ldots,sx_{k})
  \sgn{(j-1)(\sum_{i=1}^{k-j}(|x_{\sigma(i)}|-1))}\cdot{}\\
   \cdot \{f_{\sigma(1)},\ldots,f_{\sigma(k-j)}\|\{f_{\sigma(k-j+1)},
    \ldots,f_{\sigma(k)},f_{k+1}\},f_{k+2}\}
\end{multlined}\\
=&\label{higher_first_Leibniz} 0\;.
\end{align} 
\end{theorem}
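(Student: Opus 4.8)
The plan is to argue by induction on $k$, exactly paralleling the proofs of Theorem~\ref{first_leibniz_theorem} and Theorem~\ref{main_theorem_1}. The base case $k=1$ is Theorem~\ref{first_leibniz_theorem}, whose Leibniz equation is precisely~(\ref{higher_first_Leibniz}) for $k=1$. So assume all assertions of the theorem hold for the $(k-1)$-th Leibniz operator, and verify them for the $k$-th. Well-definedness comes first: independence of the chosen associated Hamilton tensors follows from the kernel property~(\ref{kernel_prop}) in the same way as in Proposition~\ref{well_def_2}. Indeed, every contraction $i_{x_{\{\cdots\|\cdots\}}}f_{\sigma(k)}$ and every iterated contraction $i_{x_{\sigma(1)}}\cdots i_{x_{\sigma(j)}}(\cdots)$ occurring in the defining formula~(\ref{higher_leibniz_op}) is taken against a Poisson cotensor or a $\smwedge$-product of such; replacing any $x_i$, or the inner Leibniz-Hamilton tensor (which the induction hypothesis pins down up to $\ker(\omega)$), by a tensor differing by an element of $\ker(\omega)$ leaves the value unchanged, since Poisson cotensors have the kernel property.

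Next come the elementary structural properties. The $(n-1)$-fold shifted graded antisymmetry in $f_1,\dots,f_k$ is obtained, as in Theorem~\ref{main_theorem_1}, by rewriting each shuffle sum as an averaged sum over the full symmetric group $S_k$ weighted by the Koszul signs $e(\sigma;sx_1,\dots,sx_k)$; since $|sx_i|=|s^{n-1}f_i|$ and the inner Leibniz operator is already antisymmetric by induction, antisymmetry propagates through all five sums of~(\ref{higher_leibniz_op}). Graded symmetry in the two right-hand arguments $f_{k+1},f_{k+2}$ holds because they appear only inside the symmetric wedge products of~(\ref{higher_leibniz_op}); one checks termwise that swapping $f_{k+1}\leftrightarrow f_{k+2}$ together with the roles of the inner split indices $j_1\le j_2$ permutes the five sums among themselves, producing exactly the sign $\sgn{|f_{k+1}||f_{k+2}|}$. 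Homogeneity of degree $n\cdot k$ is a direct degree count on the leading term $i_{x_{\sigma(1)}}\cdots(\cdots)$ of~(\ref{higher_leibniz_op}), using $|x_i|=|f_i|+n$, $|i_xv|=|x|+|v|$, and additivity of $|\cdot|$ under $\smwedge$, together with the inductively known degree of the inner operator.

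The substance of the theorem is the Leibniz equation~(\ref{higher_first_Leibniz}). The strategy is to expand each of the several groups of terms on its left-hand side via the inductive definitions: Definition~\ref{niceBracket} for the top brackets $\{f_1,\dots,f_k,f_{k+1}\smwedge f_{k+2}\}$ and the like, formula~(\ref{higher_leibniz_op}) for $\{f_1,\dots,f_k\|\cdots\}$ and its $d$- and $df_i$-variants, and the induction hypothesis (the Leibniz equation in dimension $k$, and the homotopy Jacobi equation of Theorem~\ref{main_theorem_1}) for the nested expressions $\{\{\cdots\},\cdots\|\cdots\}$ and $\{\{\cdots\|\cdots\},\cdots\}$. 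After these substitutions everything is reduced to one template: shuffle-signed sums of iterated contractions of $\smwedge$-products of the cotensor arguments along iterated Schouten brackets of the $x_i$ and along the associated inner Hamilton tensors. Using $i_{x_i}\omega=df_i$, $i_{x_{\{\cdots\}}}\omega=d\{\cdots\}$ and $i_{x_{\{\cdots\|\cdots\}}}\omega=d\{\cdots\|\cdots\}$, Cartan's infinitesimal homotopy formula, and the fact that any Hamilton tensor associated to a $d$-exact cocycle lies in $\ker(\omega)$ and so contracts to zero against a Poisson cotensor, all the $d$'s and $L$'s are rewritten back into the same template. It then remains to verify that every term cancels against exactly one other term; this is a purely combinatorial matching of shuffle decompositions — the compatibility of $Sh(j,k-j)$ with $Sh(k-1,1)$, and of the three- and four-block shuffles of~(\ref{higher_leibniz_op}) with the two-block shuffles of the brackets — carrying along the Koszul signs $\sgn{\sigma}e(\sigma;\cdots)$. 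If the full statement also asserts explicit associated tensors, these are read off from~(\ref{higher_first_Leibniz}) by solving the corresponding $i_y\omega=\cdots$ and $i_x\omega=\cdots$, exactly as in~(\ref{Leibniz_1_Poisson_constraint})--(\ref{Leibniz_1_Hamilton}).

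The main obstacle is precisely the size of this final cancellation: the five-term recursion~(\ref{higher_leibniz_op}) makes each substitution multiply the term count, and the nested brackets force one to track how multi-block shuffle permutations refine and coarsen while the Koszul signs are transported along. As with the homotopy Jacobi equation of Theorem~\ref{main_theorem_1}, whose verification the paper defers to Section~\ref{homotopy_Lie_algebra}, the computation is long but entirely mechanical once the common template above is in place, and it requires no structural input beyond~(\ref{kernel_prop}), Theorem~\ref{main_theorem_1}, and the induction hypothesis. Accordingly I would carry out the detailed sign-and-shuffle accounting in a separate section rather than inline.
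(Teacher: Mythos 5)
Your proposal follows essentially the same route as the paper: induction on $k$ with Theorem (\ref{first_leibniz_theorem}) as base case, well-definedness via the kernel property (\ref{kernel_prop}) as in (\ref{well_def_2}), right-symmetry, left-antisymmetry and homogeneity by direct degree and Koszul-sign arguments, and the Leibniz equation in dimension $(k+1)$ by expanding all the inductive definitions and collecting terms --- a computation the paper itself only sketches and defers. The one nuance is that in the paper's sketch the terms do not all cancel pairwise: the residue is a contraction along the Hamilton tensor of the Leibniz equation one dimension lower, which lies in $\ker(\omega)$ by the induction hypothesis and therefore annihilates any Poisson cotensor --- precisely the kernel mechanism you already invoke, so your argument closes in the same way.
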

\begin{proof}
Multi-linearity is immediate. To see that the definition does not depend on the 
particular chosen associated Hamilton tensors, we use (\ref{kernel_prop}) 
and proceed as in the proof of (\ref{well_def_2}). 
To compute the homogeneity, recall $|x|=|f|+n$. Then
\begin{align*}
|\{f_1,\ldots,f_k\| f_{k+1},f_{k+2}\}|
&= |i_{x_1\wedge\cdots\wedge x_k}(f_{k+1}\smwedge f_{k+2})|\\
&=\textstyle\sum_{i=1}^k|x_i|+|f_{k+1}|+|f_{k+2}|\\
&= \textstyle\sum_{i=1}^k|f_i|+|f_2|+|f_3|+kn\;.
\end{align*}
The graded symmetry for arguments on the right is a consequence of the 
graded symmetry of the exterior cotensor product. The 
$(n-1)$-fold shifted graded antisymmetry for the arguments on the left, is
a consequence of the degree identity $|s^{n-1}f_i|=(|x_i|-1)$ and the 
decalage sign $\sgn{\sum_{i=1}^k(|x_i|-1)}$ which appears in the definition.

To see that $\{f_1,\ldots,f_k\|f_{k+1},f_{k+2}\}$ is a Poisson cotensor, for all
Poisson cotensors $f_1,\ldots,f_{k+2}$, we have to show that both equations
$$
\begin{array}{rcl}
i_{x_{\{f_1,\ldots,f_k\|f_{k+1},f_{k+2}\}}}&=&d\{f_1,\ldots,f_k\|f_{k+1},f_{k+2}\}\\
i_{y_{\{f_1,\ldots,f_k\|f_{k+1},f_{k+2}\}}}&=&\{f_1,\ldots,f_k\|f_{k+1},f_{k+2}\}
\end{array}
$$
have tensor solutions. We proof this by induction on $k$. From the proof of
theorem (\ref{first_leibniz_theorem}), we know that both equations are satisfied 
for $k=1$ and that the first $n$-plectic Leibniz equation holds for $k=1$, too. 
This serves as the induction base. 

For the induction step, suppose that there is a $k\in\N$, such that we 
already know that $\{f_1,\ldots,f_k\|f_{k+1},f_{k+2}\}$ is a Poisson cotensor
and that the first $n$-plectic Leibniz equation holds 
for all $j\leq k$.

Then we need to show,  that the first $n$-plectic Leibniz equation 
is moreover satisfied for $k+1$. 
Unfortunately the present proof of this equation is extremely long
and therefore we only sketch the proof for now. 
What we do is basically just to apply the definition of 
the various Leibniz operators and the Poisson brackets 
to each equation and then collect terms. 
What remains is a contraction along
a tensor, which is the Hamilton tensor of the same equation but for $(k-1)$.
By the induction hypothesis, this tensor is an element of the kernel of 
$\omega$ and therefore of any Poisson tensor by the kernel property 
(\ref{kernel_prop}). This completes the computation. 

Now since the $n$-plectic Leibniz equation is satisfied for all $j\leq k+1$, each
term is in particular a Poisson cotensor and therefore a solution to the equation 
$i_{y_{\{f_1,\ldots,f_k\|f_{k+1},f_{k+2}\}}}=\{f_1,\ldots,f_k\|f_{k+1},f_{k+2}\}$ 
exists. 

A solution to
$i_{x_{\{f_1,\ldots,f_k\|f_{k+1},f_{k+2}\}}}=d\{f_1,\ldots,f_k\|f_{k+1},f_{k+2}\}$
then follows from the $n$-plectic Leibniz equation in dimension $k+1$, too.
\end{proof}
\begin{remark}
We say that the first $n$-plectic Leibniz equation holds \textit{strictly} 
or 'on the nose', if all terms which involve Leibniz operators vaish. 
In that case we could say that the n-plectic Poisson $k$-bracket acts as 
some kind of derivation with respect to the exterior product.
\end{remark}
Loosely speaking we can say, that the first higher Leibniz equations, 
control the interaction 
between the various Poisson brackets and the differential graded commutative 
structure.
As we will see in corollary (\ref{corollary_main_3}), this is precisely the 
general structure equation
(\ref{main_hom_pois_struc_eq}) for the parameters 
$k\in \N$ and $p_1=1$, $\ldots$, $p_k=1$ and $p_{k+1}=2$.

However, two more Leibniz-like equations have to hold in any dimension $k$. 
They derive from (\ref{main_hom_pois_struc_eq}) for the parameters 
$p_1=1,\ldots,p_{k}=1,p_{k+1}=3$ as well as 
$p_1=1,\ldots,p_k=1,p_{k+1}=2,p_{k+2}=2$. The following theorem
makes this precise: 
\begin{theorem}
Let $(A,\mathfrak{g},\omega)$ be an $n$-plectic structure. Then for any
Poisson cotensors
$f_1,\ldots,f_{k+4}\in \mathcal{P}ois(A,\mathfrak{g},\omega)$ and associated
Hamilton tensors $x_1$, $\ldots$, $x_{k+4}$, the following 
\textbf{second n-plectic Leibniz equation} in dimension $(k+2)$ holds:
\begin{align}
-&\nonumber\!\begin{multlined}[t][\mylength]
\textstyle \sum_{j=0}^{k}\sum_{\sigma\in Sh(j,k-j)}\sgn{\sigma+j(k+1-j)}e(\sigma;sx_{1},\ldots,sx_{k})\\
\sgn{|x_{f_{k+1}\smwedge f_{k+2}}|\sum_{i=j+1}^{k}(|x_{\sigma(i)}|-1)}\\
\{\{f_{\sigma(1)},\ldots ,f_{\sigma(j)}\|f_{k+1},f_{k+2}\},f_{\sigma(j+1)},\ldots ,f_{\sigma(k)}\|f_{k+3},f_{k+4}\}
\end{multlined}\\
+&\nonumber\!\begin{multlined}[t][\mylength]
\textstyle \sum_{j=0}^{k}\sum_{\sigma\in Sh(j,k-j)}\sgn{\sigma+j(k+1-j)}e(\sigma;sx_{1},\ldots,sx_{k})\\
\sgn{\sum_{i=j+1}^{k}(|x_{\sigma(i)}|-1)+|x_{f_{k+1}\smwedge f_{k+2}}|)|x_{f_{k+3}\smwedge f_{k+4}}|}\\
\{\{f_{\sigma(1)},\ldots ,f_{\sigma(j)}\|f_{k+3},f_{k+4}\},f_{\sigma(j+1)},\ldots ,f_{\sigma(k)}\|f_{k+1},f_{k+2}\}
\end{multlined}\\
+&\nonumber\!\begin{multlined}[t][\mylength]
\textstyle \sum_{j=0}^{k}\sum_{\sigma\in Sh(j,k-j)}\sgn{\sigma+k}e(\sigma;sx_{1},\ldots,sx_{k})\\
\sgn{j(\sum_{i=1}^{k-j}(|x_{\sigma(i)}|-1)+|f_{k+1}|)+(\sum_{i=k-j+1}^{k}(|x_{\sigma(i)}|-1))(|f_{k+1}|+1)}\\
\{f_{\sigma(1)},\ldots ,f_{\sigma(k-j)}\|f_{k+1},\{f_{\sigma(k-j+1)},\ldots ,f_{\sigma(k)},f_{k+2}\|f_{k+3},f_{k+4}\}\}
\end{multlined}\\
+&\nonumber\!\begin{multlined}[t][\mylength]
\textstyle \sum_{j=0}^{k}\sum_{\sigma\in Sh(j,k-j)}\sgn{\sigma+(k-j)}e(\sigma;sx_{1},\ldots,sx_{k})\\
\sgn{j\sum_{i=1}^{k-j}(|x_{\sigma(i)}|-1)+|f_{k+2}||x_{f_{k+3}\smwedge f_{k+4}}|+\sum_{i=k-j+1}^{k}|x_{\sigma(i)}|}\\
\{f_{\sigma(1)},\ldots ,f_{\sigma(k-j)}\|\{f_{\sigma(k-j+1)},\ldots ,f_{\sigma(k)},f_{k+1}\|f_{k+3},f_{k+4}\},f_{k+2}\}
\end{multlined}\\
-&\nonumber\!\begin{multlined}[t][\mylength]
\textstyle \sum_{j=0}^{k}\sum_{\sigma\in Sh(j,k-j)}\sgn{\sigma+(k-j)}e(\sigma;sx_{1},\ldots,sx_{k})\\
\sgn{j(\sum_{i=1}^{k-j}(|x_{\sigma(i)}|-1))+|x_{f_{k+1}\smwedge f_{k+2}}||x_{k+3}|+\sum_{i=k-j+1}^{k}|x_{\sigma(i)}|}\\
\{f_{\sigma(1)},\ldots ,f_{\sigma(k-j)}\|\{f_{\sigma(k-j+1)},\ldots ,f_{\sigma(k)},f_{k+3}\|f_{k+1},f_{k+2}\},f_{k+4}\}
\end{multlined}\\
-&\nonumber\!\begin{multlined}[t][\mylength]
\textstyle \sum_{j=0}^{k}\sum_{\sigma\in Sh(j,k-j)}\sgn{\sigma+(k-j)}e(\sigma;sx_{1},\ldots,sx_{k})\\
\sgn{j|f_{k+3}|+j(\sum_{i=1}^{k-j}(|x_{\sigma(i)}|-1))+\sum_{i=k-j+1}^{k}|x_{\sigma(i)}|+|x_{f_{k+1}\smwedge f_{k+2}}|)|x_{f_{k+3}\smwedge f_{k+4}}|}\\
\{f_{\sigma(1)},\ldots ,f_{\sigma(k-j)}\|f_{k+3},\{f_{\sigma(k-j+1)},\ldots ,f_{\sigma(k)},f_{k+4}\|f_{k+1},f_{k+2}\}\}
\end{multlined}\\
=&0\label{second_higher_leibniz}\;.
\end{align}
In addition, for any
Poisson cotensors
$f_1,\ldots,f_{k+3}\in \mathcal{P}ois(A,\mathfrak{g},\omega)$ and associated
Hamilton tensors $x_1$, $\ldots$, $x_{k+3}$, the 
\textbf{thirs n-plectic Leibniz equation} in dimension $(k+2)$ holds: 
\begin{align}
0=&\{f_{1},\ldots,f_{k}\|f_{k+1}\smwedge f_{k+2},f_{k+3}\}\\
-&\{f_{1},\ldots,f_{k}\|f_{k+1},f_{k+2}\smwedge f_{k+3}\}\\
+&\{f_{1},\ldots,f_{k}\|f_{k+1},f_{k+2}\}\smwedge f_{k+3}\\
-&\sgn{\sum_{i=1}^{k}|x_{i}||f_{k+1}|}
 f_{k+1}\smwedge{\textstyle \{f_{1},\ldots,f_{k}\|f_{k+2},f_{k+3}\}}\\
+&\!\begin{multlined}[t][\mylength]
\textstyle \sum_{j=1}^{k-1}\sum_{\sigma\in Sh(j,k-j)}
 \sgn{\sigma+j\sum_{i=1}^{k-j}(|x_{\sigma(i)}|-1)}
  e(\sigma;sx_{1},\ldots,sx_{k})\\
\{f_{\sigma(1)},\ldots,f_{\sigma(k-j)}\|
 \{f_{\sigma(k-j+1)},\ldots,f_{\sigma(k)}\|f_{k+1},f_{k+2}\},f_{k+3}\}
\end{multlined}\\
-&\!\begin{multlined}[t][\mylength]
\textstyle \sum_{j=1}^{k-1}\sum_{\sigma\in Sh(j,k-j)}
 \sgn{\sigma}e(\sigma;sx_{1},\ldots,sx_{k})\\
  \sgn{j\sum_{i=1}^{k-j}(|x_{\sigma(i)}|-1)
 +\sum_{i=k-j+1}^{k}|x_{\sigma(i)}||f_{k+1}|}\phantom{mmmmmmm}\\
\{f_{\sigma(1)},\ldots,f_{\sigma(k-j)}\|f_{k+1},
 \{f_{\sigma(k-j+1)},\ldots,f_{\sigma(k)}\|f_{k+2},f_{k+3}\}\}
 \end{multlined}
 \label{third_higher_leibniz}
\end{align}
\end{theorem}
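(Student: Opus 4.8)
The plan is to follow, essentially verbatim, the argument used for the first $n$-plectic Leibniz equation in theorem (\ref{main_theorem_2}): a bookkeeping induction in which one substitutes the recursive definitions of every operator that occurs and then collects terms. Before starting the induction I would note that both sides of (\ref{second_higher_leibniz}) and of (\ref{third_higher_leibniz}) are well posed, since every bracket appearing is a Poisson cotensor by (\ref{main_theorem_1}) and every Leibniz operator is a Poisson cotensor by (\ref{main_theorem_2}), and that the relevant degree and symmetry data are the ones supplied by those two theorems together with the relations $|x_i|=|f_i|+n$ and $|x_{df_i}|=|x_{f_i}|-1$. The induction is on $k$. For the base, the cases $k\le 1$ reduce -- using associativity of $\smwedge$ and the convention $\{\|a,b\}=a\smwedge b$ of the preceding remark -- to the second and third $n$-plectic Leibniz equations in dimension two, which were checked earlier by direct substitution of the explicit formula (\ref{first_leibniz_op}).

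For the inductive step I would assume (\ref{second_higher_leibniz}) and (\ref{third_higher_leibniz}) in every lower dimension, substitute the recursion (\ref{higher_leibniz_op}) into each outermost $k$-fold Leibniz operator, and substitute the recursive definitions of the higher Poisson brackets (\ref{niceBracket}) and (\ref{D_2}) and of the inner Leibniz operators (\ref{first_leibniz_op}). Each resulting summand then becomes an iterated contraction $i_{x_{\sigma(1)}}\cdots i_{x_{\sigma(p)}}$ applied to an exterior product of contractions of the $f_i$, and I would sort all summands by this combinatorial shape. The cancellations rest on the same two devices that already govern (\ref{main_theorem_1}) and (\ref{main_theorem_2}): the composite of a shuffle with a finer shuffle of one of its blocks is again a single shuffle, with the antisymmetric Koszul sign $\sgn{\sigma}e(\sigma;sx_1,\ldots,sx_k)$ factoring compatibly; and the shift (``d\'ecalage'') sign $\sgn{\sum_i(|x_i|-1)}$, read through the degree relations above, forces all remaining sign factors to agree on each cancelling pair of terms.

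After this sorting I expect everything to cancel in pairs except possibly a single leftover contraction $i_{\xi}(\,\cdot\,)$, where $\xi$ coincides, modulo $\ker(\omega)$, with a Hamilton tensor associated to the left-hand side of the \emph{same} equation one dimension lower. By the inductive hypothesis that left-hand side vanishes, so $i_{\xi}\omega=d(0)=0$, i.e. $\xi\in\ker(\omega)$; the kernel property (\ref{kernel_prop}) of each Poisson cotensor $f_i$ then forces $i_{\xi}(\,\cdot\,)=0$. This proves both equations on homogeneous, hence by multilinearity on arbitrary, Poisson cotensors, which is precisely the content of the homotopy Poisson-$n$ structure equation (\ref{main_hom_pois_struc_eq}) for the parameter patterns $p_1=\cdots=p_k=1$, $p_{k+1}=3$ and $p_1=\cdots=p_k=1$, $p_{k+1}=p_{k+2}=2$.

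The hard part will be purely clerical. Unlike the first Leibniz equation, whose right-hand side carried only one shuffle sum, the recursion (\ref{higher_leibniz_op}) already contains five multi-shuffle sums with triple indices, so each substitution produces a sizeable stack of sums decorated with Koszul, shift and permutation signs, and the real work is to exhibit the cancelling pairs explicitly while tracking those signs; as with the first higher Leibniz equation this is long but entirely mechanical, and I would record only the final reorganisation rather than every intermediate sum. I would also remark, both as a sanity check and as a practical simplification, that in any fixed $n$-plectic setting every Leibniz operator of order exceeding $\mathcal{O}(n)$ is identically zero, so for small $n$ only finitely many instances of (\ref{second_higher_leibniz}) and (\ref{third_higher_leibniz}) carry content and the verification is in fact short.
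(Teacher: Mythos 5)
Your proposal is correct and follows essentially the same route as the paper's own (sketched) proof: substitute the definitions of the Leibniz operators and brackets, collect terms using associativity of the exterior product, and observe that the residual term is a contraction along a Hamilton tensor of the same equation in the next lower dimension, which the induction hypothesis places in $\ker(\omega)$ so that the kernel property (\ref{kernel_prop}) makes it vanish. The paper likewise leaves the term-by-term cancellation unrecorded, so your level of detail matches the source.
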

\begin{proof}
The computation is extremely long and therefore we only sketch
the proof for now. What we do is basically just to apply the definition of 
the various Leibniz operators to each equation and then just collect terms, 
using the associativity of the exterior product. What remains is a contraction along
a tensor, which is the Hamilton tensor of the same equation but for $(k-1)$.
By the induction hypothesis, this tensor is an element of the kernel of 
$\omega$ and therefore of any Poisson tensor by the kernel property 
(\ref{kernel_prop}). This completes the computation. 
\end{proof}
The following corollary summaries the partial steps for the various 
computations we did to derive the $n$-plectic homotopy Poisson-$n$ algebra of
Poisson cotensors in higher symplectic geometry:
\begin{corollary}\label{corollary_main_3}
Let $(A,\mathfrak{g},\omega)$ be an $n$-plectic structure.
The main structure equation (\ref{main_hom_pois_struc_eq}) of a homotopy
Poisson-$n$ algebra is satisfied for any $k\in\N$ and all 
$p_1,\ldots,p_k\in\N$.
\end{corollary}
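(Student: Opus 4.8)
The plan is to follow the template of Corollaries \ref{corollary_main_1} and \ref{corollary_main_2}, now packaging \emph{all} of the operators built in the previous sections into a single family of structure maps of (\ref{main_hom_pois_struc_maps}) and verifying the defining identity (\ref{main_hom_pois_struc_eq}) tuple by tuple. Concretely, I would declare the de Rham differential to be the arity-one map $D_{1}$, the exterior product to be $D_{2}$, each homotopy Poisson $k$-bracket of Definition \ref{niceBracket} to be the ``all ones'' map $D_{1,\ldots,1}$ with $k$ entries, each $k$-th Leibniz operator of (\ref{higher_leibniz_op}) to be the mixed map $D_{1,\ldots,1,2}$ with $k$ ones followed by a $2$, and every remaining structure map to be zero; the suspensions $s^{n-1}f_{i}$ are dictated, exactly as in the two earlier corollaries, by the degree relations $|x|=|f|+n$ and $|y|=|x|+1$. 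One then records, once and for all, that each of these $D$'s is homogeneous of the degree prescribed at its level (namely $k-n$ at level $k$, consistent with the values $1-n$ and $2-n$ of Corollaries \ref{corollary_main_1} and \ref{corollary_main_2}); that the maps $D_{1,\ldots,1}$ carry the correct $(n-1)$-fold shifted graded antisymmetry, by Theorem \ref{main_theorem_1}; and that the maps $D_{1,\ldots,1,2}$ carry the correct shifted antisymmetry in their first $k$ slots and the Harrison (graded) symmetry in their last two slots, by Theorem \ref{main_theorem_2}.

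Since all the unlisted structure maps vanish, the identity (\ref{main_hom_pois_struc_eq}) for a tuple $(p_{1},\ldots,p_{k})$ collapses to $0=0$ unless the tuple has one of finitely many shapes: (i) $p_{i}=1$ for all $i$; (ii) exactly one $p_{i}=2$ and the rest $=1$; (iii) exactly one $p_{i}=3$ and the rest $=1$; (iv) exactly two $p_{i}=2$ and the rest $=1$ --- together with the low-arity degenerations $d^{2}=0$, the Leibniz rule for $d$, associativity, and the cases already disposed of in Corollaries \ref{corollary_main_1} and \ref{corollary_main_2}. Shape (i) is exactly the $(n-1)$-fold shifted homotopy Jacobi equation of Theorem \ref{main_theorem_1} (its arity-two piece being the statement that $d$ derives the bracket, its arity-three piece being (\ref{jacoby_dim_4}), and so on). Shape (ii) is the first $n$-plectic Leibniz equation in the appropriate dimension, namely equation (\ref{higher_first_Leibniz}) of Theorem \ref{main_theorem_2}. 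Shapes (iii) and (iv) are the third and second $n$-plectic Leibniz equations (\ref{third_higher_leibniz}) and (\ref{second_higher_leibniz}). So the corollary reduces to checking that, after applying the fixed suspension isomorphisms, each of these four families of identities reproduces (\ref{main_hom_pois_struc_eq}) verbatim.

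The genuine difficulty is entirely in the signs. The theorems are phrased in terms of the unsuspended Hamilton tensors $x_{i}$, the \emph{antisymmetric} Koszul signs $\sgn{\sigma}e(\sigma;sx_{1},\ldots,sx_{k})$, and exponents built from the numbers $|x_{i}|-1$, whereas (\ref{main_hom_pois_struc_eq}) is written in the suspended variables $s^{n-1}f_{i}$ and mixes antisymmetric signs (on the bracket groups) with symmetric, Harrison-type signs (on the product groups). Reconciling the two is the standard d\'ecalage computation: one rewrites $e(\sigma;s^{n-1}f_{1},\ldots)$ in terms of $e(\sigma;sx_{1},\ldots)$ up to a quadratic sign in the degrees $|f_{i}|$, and absorbs the sign twists coming from each defining shift $D_{q}(s^{m}(\cdots))\defeq\sgn{\cdots}s(\cdots)$. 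I would isolate this reconciliation in a lemma for the product, the brackets and the first Leibniz operator --- which is precisely what Corollaries \ref{corollary_main_1} and \ref{corollary_main_2} already did implicitly --- and then observe that the inductive definitions \ref{niceBracket} and (\ref{higher_leibniz_op}) were \emph{built} so that the very same d\'ecalage rule propagates unchanged to all higher arities, so that no new sign phenomena can occur. Everything else --- multilinearity, the fact that all the operators land in $\mathcal{P}ois(A,\mathfrak{g},\omega)$, and the existence of associated Hamilton tensors and Poisson constraints --- is already contained in Theorems \ref{theorem_dga_algebra}, \ref{main_theorem_1} and \ref{main_theorem_2}, so the corollary follows.
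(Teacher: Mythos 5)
Your proposal is correct and follows essentially the same route as the paper's own proof: the same packaging of $d$, $\wedge$, the $k$-brackets and the $k$-th Leibniz operators into the structure maps $D_{1}$, $D_{2}$, $D_{1,\ldots,1}$, $D_{1,\ldots,1,2}$ (with all other maps zero, and the permuted-index maps fixed by symmetry), and the same reduction of (\ref{main_hom_pois_struc_eq}) to the four tuple shapes, identified respectively with the shifted homotopy Jacobi equations of Theorem \ref{main_theorem_1} and the first, second and third $n$-plectic Leibniz equations of Theorem \ref{main_theorem_2}. Your explicit remark on the d\'ecalage sign reconciliation is a point the paper leaves implicit, but it does not change the argument.
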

\begin{proof}
Recall the structure in dimension $k\leq 2$, as given in the proof of corollary
(\ref{corollary_main_1}) and (\ref{corollary_main_2}). 
Define the following additional maps of (\ref{main_hom_pois_struc_maps}) 
for $k\geq 3$: The shifted brackets 
$D_{1,\ldots,1}(s^{n-1}f_1 \smwedge\cdots\smwedge s^{n-1}f_k)\defeq 
s\{f_1,\ldots,f_k\}$, as well as the shifted Leibniz operators
$D_{1,\ldots,1,2}(s^{n-1}f_1\smwedge\cdots\smwedge s^{n-1}f_{k}
\smwedge s^{n-2}(\overline{sf_{k+1}\otimes sf_{k+2}}))\defeq$
$\sgn{|sf_{k+1}|}s\{f_1,\ldots,f_k\|f_{k+1},f_{k+2}\}$. 
the operators $D_{1,\ldots,1,2,1,\ldots,1}$, are defined, similar to the 
one in the proof of (\ref{corollary_main_2}), by permuting the arguments
in $D_{1,\ldots,1,2}$ accordingly. All other structure maps 
$D_{q_1,\ldots,q_k}$ are assumed to be zero.

These maps are homogeneous of degree $(k-n)$ and 
have the expected symmetry. Since most maps are just zero,
equation (\ref{main_hom_pois_struc_eq}) is only non trivial 
in the following four cases: $(p_1,\ldots,p_k)\in 
\{(1,\ldots,1),(1,\ldots,1,2),(1,\ldots,1,3),(1,\ldots,1,2,2)\}$. 

The cases $(1,\ldots,1)$ are precisely the $(n-1)$-fold shifted homotopy
Jacoby equations.The cases 
$(1,\ldots,1,2)$, $(1,\ldots,1,2,2)$ and $(1,\ldots,1,3)$ are the first, second 
and third $n$-plectic Leibniz equation in dimension $k$, respectively.
\end{proof}

\begin{appendix}

\section{A Primer on Homotopy Poisson-n algebras} This chapter aims to give
a short 'generators and relations'-style introduction to 
homotopy Poisson-$n$ algebras. 
By definition these are algebras over the Koszul resolution 
$\Omega\, p_n^\text{!`}$ of the 
Poisson-$n$ operads $p_n$. For a more elaborate introduction see \cite{GeJo}.

We start with an introduction to the very basic notations of shuffles and graded
vector spaces. Then we look at the linear space that underlays the cofree 
Poisson-$n$ coalgebra. After that we make heavy use of the ideas from \cite{GTV} 
to derive the explicit structure of a homotopy Poisson-$n$ algebra in terms of
generating structure maps.
\subsection{Shuffle Permutation}\label{shuffle_permutation}
Let $S_k$ be the symmetric group, i.e the group of all bijective maps 
of the ordinal $\ordinal{k}$. Then for any $p,q\in \N$
a $(p,q)$-shuffle is a permutation 
$(\mu(1),\ldots,\mu(p),\nu(1),\ldots,\nu(q))\in S_{p+q}$
subject to the condition $\mu(1)<\ldots<\mu(p)$ and $\nu(1)<\ldots<\nu(q)$.
We write $Sh(p,q)$ for the set of all $(p,q)$-shuffles.
For more on shuffles, see for example at \cite{RS}.
\begin{remark}
Another common definition of a shuffle is a permutation subject to the condition
$\mu^{-1}(1)<\ldots<\mu^{-1}(p)$ and $\nu^{-1}(1)<\ldots<\nu^{-1}(q)$. However
we stick to our definition above and call a permutation with the latter property
an $(p,q)$-\textit{unshuffle}.
\end{remark}
\subsection{Graded Vector Spaces}We recall the most basics facts
about $\Z$-graded $\R$-vector spaces (just graded vector spaces for short). 

A $\Z$-graded vector space $V$ is the direct sum 
$\oplus_{n\in \Z} V_n$ of vector spaces $V_n$. An element 
$v\in V$ is said to be \textbf{homogeneous} of degree $n$, written as
$|x|=n$, if it is in the image of the natural inclusion $i_n: V_n \to V$, 
which comes from the direct sum. Every vector has a decomposition 
into homogeneous elements. 

A morphism $f : V \to W$ of graded vector spaces, homogeneous of degree
$r$, is a sequence of 
linear maps $f_n : V_n \to W_{n+r}$ for all $n\in \Z$. 
The integer $r$ is called the degree of $f$ and is denoted by $|f|$. 

A $k$-linear morphism $f: V_1 \times ... \times V_k \to W$
of graded vector spaces, , homogeneous of degree $r$, is a sequence of $k$-linear maps 
$f_{n_1,\ldots,n_k} : (V_1)_{n_1} \times \ldots \times (V_k)_{n_k} \to W_{\sum n_i+r}$ for all $n_i\in \Z$.

The (graded )tensor product $V \otimes W$ of two graded vector spaces
$V$ and $W$ is given by
$$
(V \otimes W )_n \defeq \oplus_{i+j=n}\left( V_i \otimes W_j\right)
$$
and the twisting morphism is given by
$\tau: V \otimes W \to  W \otimes V$ on homogeneous elements $v\otimes w \in V \otimes W$
by 
$$\tau(v \otimes w)\defeq (-1)^{|v||w|} w \otimes v$$ and then extended to
$V\otimes W$ by linearity. 
The category of $\Z$-graded vectors spaces is symmetric monoidal, with respect
to this tensor product and twisting morphism.
\subsubsection{The Koszul sign}\label{Koszul_sign_rules} Let $V$ be a $\Z$-graded
vector space, $\sigma\in S_k$ a permuation and let
$v_1,\ldots,v_k\in V$ be homogeneous vectors. Then the 
\textbf{Koszul sign} $e(\sigma;v_1,\ldots,v_k) \in \{-1,+1\}$ of the
permutation and the vectors is defined by the equation
\begin{equation}
v_1\otimes \ldots \otimes v_k= 
e(\sigma;v_1,\ldots,v_k) v_{\sigma(1)}\otimes \ldots \otimes v_{\sigma(k)}.
\end{equation}
In an actual computation the Koszul sign can be computed by the following rules: 
If a permutation $\sigma\in S_k$ is a transposition of consecutive 
neighbors only, i.e. it is a transposition $j\leftrightarrow j+1$, 
then $e(\sigma;v_1,\ldots,v_k)= (-1)^{|v_j||v_{j+1}|}$. 

If $\tau\in S_k$ is another permutation, then the Koszul sign of the 
composition is the composition of the Koszul signs, i.e.
$$e(\tau\sigma;v_1,\ldots,v_k)=
e(\tau;v_{\sigma(1)},\ldots,v_{\sigma(k)})e(\sigma;v_1,\ldots,v_k)\;.$$
Since any permutation can be realized as a composition of transpositions of
consecutive neighbors, these ruse are enough to compute any Koszul sign.
\begin{example} Let $V$ be a $\Z$-graded vector space,
 $v_1,\ldots,v_3\in V$ be homogeneous vectors and $\sigma\in S_3$ be given by
$(3,1,2)$. Then the associated Koszul sign is given by
$$
e(\sigma;v_1,v_2,v_3)=(-1)^{|v_1||v_3|+|v_2||v_3|}\;,
$$
since the permutation can be realized by two transpositions of consecutive
neighbors and for each such transposition, we can apply the twisting morphism.
\end{example}
\subsection{Homotopy Poisson-n algebras}\label{homotopy_poisson_n}
Most of this section is taken, more or less, from the work of Galvez-Cerrillo, 
Tonks and Vallette \cite{GTV} on homotopy Gerstenhaber algebras. 
Only slight adoptions are necessary to deal with
the situation of Poisson-$n$ algebras for arbitrary $n$.

If $V$ is a differential graded module, then
$s^{j}V$ means the $j$-fold shifting of $V$. Moreover,
given any vectors $v_1,\ldots,v_k\in sV$, permutation $\sigma\in S_k$ and interval 
$[i,j]=\{i,i+1,\ldots ,j\}$, $1\leq i\leq j\leq k$, we use the notation:
\begin{align*}
v_{[i,j]}&\defeq 
v_{i}\otimes v_{i+1}\otimes\dots\otimes
v_{j}
\\
v^\sigma_{[i,j]}&\defeq 
v_{\sigma^{-1}(i)}\otimes v_{\sigma^{-1}(i+1)}\otimes\dots\otimes
v_{\sigma^{-1}(j)}
\end{align*}
\subsubsection{The cofree Poisson-n coalgebra}
For any natural number $n\in\N$, let  
$p_n$ be the \textbf{Poisson-n} operad as for example introduced in
Getzler ans Jonas \cite{GeJo}. This operad is Koszul and can be seen as a 
composition of the commutative operad $\mathcal{C}om$ and the $(n-1)$-fold 
shifted Lie operad $\mathcal{S}^{n-1}\mathcal{L}ie$ by certain distributive 
laws.
$$
p_n \simeq \mathcal{C}om \circ \mathcal{S}^{n-1}\mathcal{L}ie\;.
$$
By the general algorithm \cite{LV}, the structure equations of a 
\textit{homotopy} $p_n$-algebra are encoded as a degree $-1$ 
coderivation on the locally nilpotent, cofree $p_n$-coalgebra, which, 
for any differential graded (dg) module $(V,d)$, is given by
$$
p^\text{!`}_n(V)= s^{-1} p_n^c(sV)= 
s^{-n}\mathcal{C}om^c\left(s^{n-1}\mathcal{L}ie^c(sV)\right)\;.
$$
For any (dg) module $W$, $\mathcal{C}om^c(W)$ is the locally nilpotent, 
cofree, commutative and coassociative coalgebra, 
which has the symmetric exterior tensor power 
$SW\defeq \bigoplus_{j\in\N}\odot^j W$ 
as underlying linear space. Moreover 
$\mathcal{L}ie^c(sV)$ is the cofree Lie coalgebra. The underlying 
graded vector space is given by
$$
\mathcal{L}ie^c(sV)=
\textstyle\bigoplus_{j\in\N} \overline{(sV)^{\otimes j}}\;,
$$
where $\overline{sV^{\otimes k}}$ is the quotient of the tensor power 
$(sV)^{\otimes k}$ by the images of the \textit{shuffle maps}
\begin{equation}
Sh_{(i,k-i)}: (sV)^{\otimes i}\otimes
(sV)^{\otimes (k-i)}\to (sV)^{\otimes k}
\end{equation}
for all $(1\leq i\leq k-1)$. These maps are defined for simple 
tensors $(v_1\otimes\cdots\otimes v_i)\otimes
(v_{i+1}\otimes\cdots\otimes v_k)$ by
\begin{multline*}
Sh_{(i,k-1)}((v_1\otimes\cdots\otimes v_i)\otimes
(v_{i+1}\otimes\cdots\otimes v_k))=\\
\textstyle\sum_{\sigma\in Sh(i,k-i)}e(\sigma;v_1,\ldots,v_k)
v_{\sigma^{-1}(1)}\otimes\cdots\otimes v_{\sigma^{-1}(k)}
\end{multline*}
and are then extended to all of $(sV)^{\otimes i}\otimes
(sV)^{\otimes (k-i)}$ by linearity.

From this follows that the underlying linear space of the 
cofree $p_n$-coalgebra is given by
\begin{multline*}
p_n^\text{!`}(V)\simeq 
\textstyle \bigoplus_{k\in \N}
 \bigoplus_{q_1\leq\cdots\leq q_k}s^{-n}\left(
  \left(s^{n-1}\overline{sV^{\otimes q_1}}\right)\odot \cdots 
  \odot\left(s^{n-1}\overline{sV^{\otimes q_k}}\right)\right)\simeq\\
\textstyle \bigoplus_{k\in \N}
 \bigoplus_{q_1\leq\cdots\leq q_k}s^{k-n}\left(
  \left(s^{n-2}\overline{sV^{\otimes q_1}}\right)\wedge \cdots 
  \wedge\left(s^{n-2}\overline{sV^{\otimes q_k}}\right)\right)
\end{multline*}
and that any linear map $D:p_n^\text{!`}(V)\to V$, 
homogeneous of degree $-1$ can equivalently be defined by a family of 
linear maps
$$
D_{q_1,\ldots,q_k}: 
  \left(s^{n-2}\overline{sV^{\otimes q_1}}\right)\wedge \cdots 
  \wedge\left(s^{n-2}\overline{sV^{\otimes q_k}}\right)\to sV\;,
$$
which are homogeneous of degree $k-n$ and are indexed by 
 $k,q_1,\ldots, q_k \in \N$.
\subsubsection{The generating structure maps}
The following definition of a straight unshuffle
is taken more or less verbatim from \cite{GTV}, where it is called a 
straight shuffle instead. The difference is due to a different approach 
to shuffles. 
\begin{definition}[Straight unshuffle]
Consider integers $k\geq 1$ and $l_j,r_j\geq 0$, $q_j\geq 1$,
$p_k=l_k+q_k+r_k$ for each $j=1,\dots,k+1$, and let
$$P_j=\textstyle\sum_{i=1}^{j-1}p_i,\qquad\qquad
L_j=\sum_{i=1}^k l_i+\sum_{i=1}^{j-1}{q}_i.
$$
Then  a \textit{straight
$(l_j,{q}_j,r_j,p_j)_{j=1}^k$-unshuffle}
is a $(p_1,\dots,p_k)$-unshuffle $\sigma$ satisfying the following extra property
for each $j=1,\dots,k$:
$$\sigma[P_j+l_j+1,P_j+l_j+{q}_j]=[L_j+1,L_{j+1}].
$$
By a {\em straight
$({q}_j,p_j)_{j=1}^k$-unshuffle}
we mean a straight $(l_j,{q}_j,r_j,p_j)_{j=1}^k$-unshuffle for some
values of $l_j,r_j\geq0$ with $l_j+r_j=p_j-q_j$.
\end{definition}
Less formally, a straight unshuffle may be thought of as a permutation
$\sigma$ of
a concatenation $X$ of $k$ strings of lengths $p_j$, each of which contains a
non-empty distinguished interval of length $q_j$, with $l_j$ elements on the left 
and $r_j$ elements on the right. For example,
$$
X\;\;=\;\;1\;\underline{2}\;3\;4\;|\;\underline{5\;
6}\;7\;8\;|\;9\;\underline{10\;11}\;
|\;\underline{12\;13}.
$$
For the permutation to be a straight unshuffle it must satisfy 
following conditions:
\begin{itemize}
\item
the orders of the elements within the $k$ strings are preserved by $\sigma$ (\emph{unshuffle}),
\item
the distinguished intervals appear unchanged and contiguously in the image (\emph{straight}).
\end{itemize}
For example, one possible straight unshuffles of $X$ is
$$
\quad\qquad 9\;1\;\;\underline{2\;5\;6\;10\;11\;12\;13}\;\;7\;3\;8\;4.
$$
More precisely, it is a straight $\big((1,0,1,0), (1,2,2,2), (2,2,0,0),(4,4,3,2)\big)$-unshuffles.

Using this definition, we can extend the straight shuffle extensions from 
\cite{GTV} to the situation of Poisson-$n$ algebras for arbitrary $n\in\N$.
\begin{definition}[Shifted straight shuffle extension]
Suppose we have natural numbers $n,k,q_1,\ldots, q_k\in\N$ and a graded
linear map
$$
D_{q_1,\dots,q_k}:
\left(s^{n-2}\overline{sV^{\otimes q_1}}\right)\wedge\cdots\wedge 
\left(s^{n-2}\overline{sV^{\otimes q_k}}\right)\rightarrow sV\;,
$$
which is homogeneous of degree $k-n$. Then, for any integers
$p_k\geq q_k$, the appropriate $(n-2)$-fold \textbf{shifted straight shuffle 
extension} of $D_{q_1,\dots,q_k}$ is the graded linear map
\begin{equation}
D_{q_1,\dots,q_k}^{p_1,\dots,p_k}:
\left(s^{n-2}\overline{sV^{\otimes p_1}}\right)\wedge\cdots\wedge 
\left(s^{n-2}\overline{sV^{\otimes p_k}}\right)\rightarrow 
\left(s^{n-2}\overline{sV^{\otimes p}}\right)
\end{equation}
with $p=1+\sum_{i=1}^k (p_i-{q}_i)$, defined for any simple tensor
$\overline{v_{[1,\,p_1]}}\smwedge \ldots\smwedge
\overline{v_{[P_k+1,\,P_{k+1}]}}\in$
$\left(s^{n-2}\overline{sV^{\otimes p_1}}\right)\wedge\cdots\wedge 
\left(s^{n-2}\overline{sV^{\otimes p_k}}\right)$ by
\begin{multline*}
\textstyle\sum_\sigma
e(\sigma;v_1,\ldots,v_{p_1+\ldots+p_k})\sgn{k\cdot|v^\sigma_{[1,L_1]}|}\cdot{}\\
{}\cdot s^{n-2}\left(\overline{
v^\sigma_{[1,L_1]}\otimes D_{{q}_1,\dots,{q}_k}(
s^{n-2}\overline{v^\sigma_{[L_1+1,L_2]}}
\smwedge\dots\smwedge s^{n-2}\overline{v^\sigma_{[L_k+1,L_{k+1}]}}
)\otimes v^\sigma_{[L_{k+1}+1,P_{k+1}]}}\right)
\end{multline*}
Here $\sigma$ runs over all straight 
$({q}_j,p_j)_{j=1}^k$-unshuffle and the integers $P_k$, $L_k$ are as above. 
\end{definition}
Using these shifted straight shuffle extensions, we can define the structure
of a homotopy Poisson-$n$ algebra in terms of generating functions and their
relations. The following definition makes this precise:
\begin{definition}\label{explicit Ginfty}
Let $n\in \N$ be a natural number an $(V,d)$ a differential graded module.
Then a \textbf{homotopy Poisson-n algebra} over $(V,d)$ is a family of 
graded linear maps
\begin{equation}\label{main_hom_pois_struc_maps}
D_{q_1,\dots,q_k}:
\left(s^{n-2}\overline{sV^{\otimes q_1}}\right)\wedge\cdots\wedge 
\left(s^{n-2}\overline{sV^{\otimes q_k}}\right)\rightarrow sV,
\end{equation}
which are homogeneous of degree $k-n$ and indexed by the parameters
$k,q_1,\ldots,q_k\in\N$, with
$q_1\leq\dots\leq q_k$ and $D_1=d$. In addition this family of structure 
maps has to satisfy the structure equations 
\begin{multline}\label{main_hom_pois_struc_eq}
0=\textstyle \sum_{j=1}^{k}\sum_{\sigma\in Sh(j,k-j)}
 \sum_{q_{1}=1}^{p_{\sigma(1)}}\ldots\sum_{q_{j}=1}^{p_{\sigma(j)}}
  \sgn{\sigma+j(k-j)}e(\sigma;v_{1},\ldots,v_{k})\\
D_{p,p_{\sigma(j+1)},\ldots,p_{\sigma(k)}}(
 D_{q_{1},\ldots,q_{j}}^{p_{\sigma(1)},\ldots,p_{\sigma(j)}}(
  v_{\sigma(1)}\smwedge\cdots\smwedge v_{\sigma(j)})\smwedge 
   v_{\sigma(j+1)}\smwedge\cdots\smwedge v_{\sigma(k)})
\end{multline}
with $p=1+\sum_i (p_{\sigma(i)}-{q}_{i})$,
for all $k,p_1,\ldots,p_k\in\N$ and 
homogeneous $v_i\in \overline{sV^{\otimes p_i}}$, with 
$1\leq i \leq k$.
\end{definition}
\section{A primer on Lie Rinehart pairs}\label{Lie_Rinehart_section}
Lie Rinehart pairs generalize the algebraic structure of
vector fields and smooth functions to commutative- and Lie-algebras, 
which are some kind of module with respect to each other. 

According to Huebschmann \cite{JH2}, this idea was first used implicitly
in the work of Jacobsen \cite{NJ} to study certain field extensions. 
Further research then appeared in Herz \cite{JCH}, Palais
\cite{RSP}, Rinehart \cite{GSR}, Huebschmann \cite{JH1}, Moerdijk \& Mrcun
\cite{MoMr} and now a comprehensive survey is given in \cite{JH2}.

They provide a purely algebraic model for the half geometric,
half algebraic \textit{Lie algebroids}, but appear more general, since 
there is no restriction to any notion of 'smoothness' whatsoever.

As a nontrivial 'horizontal' generalization of Lie \textit{algebras} 
they have their natural interpretation in the Poisson operad 
(not the Lie operad), which might be  
the basic difference to ordinary Lie theory, when it comes to 
differentiation/integration.

\subsection{Lie Rinehart pairs} After a short introduction to
general (commutative) Lie Rinehart pairs we look at the special case,
where the Lie algebra is a torsionless module with respect to its
commutative partner. Those pairs allow for general Cartan calculus
as known from multivector fields and differential forms.
 
In what follows, the symbol $\mathfrak{g}$ will always mean a real Lie algebra,
i.e. a $\R$-vector space together with an antisymmetric, bilinear map
\begin{equation}
[\cdot,\cdot]: \mathfrak{g}\times\mathfrak{g}\to\mathfrak{g}
\end{equation}
called the \textbf{Lie bracket}, such that for any three vector $x_1$, $x_2$ and $x_3\in\mathfrak{g}$ the \textbf{Jacobi identity}
$[x_1,[x_2,x_3]]+[x_2,[x_3,x_1]]+[x_3,[x_1,x_2]]=0$ is satisfied. 
If not stated otherwise, $\mathfrak{g}$ will be considered as a 
(trivially) $\Z$\textit{-graded} Lie algebra, concentrated in degree zero.

In addition $A$ will always mean a real
associative and commutative algebra with unit, 
that is a $\R$-vector space together with an associative and commutative, bilinear map
\begin{equation}
\cdot : A \times A \to A
\end{equation}
called the product of $A$ and a unit $1_A\in A$. If not stated otherwise, $A$ 
will be considered as a (trivially) $\Z$\textit{-graded} algebra, concentrated 
in degree zero.

According to a better readable text, 
we usually suppress the symbol of the multiplication in $A$ and just
write $ab$ instead of $a\cdot b$.

Moreover $Der(A)$ will be the Lie algebra of derivations of $A$, 
i.e. the vector space of linear endomorphisms of $A$, 
with $D(ab)=D(a)b+aD(b)$ and Lie bracket
$[D,D'](a)\defeq D(D'(a))-D'(D(a))$ for any $a,b\in A$ and $D,D'\in Der(A)$.

Before we get to Lie Rinehart pairs, it is handy to define Lie algebra modules first:
\begin{definition}[Lie algebra module]The algebra $A$ is called 
a \textbf{Lie algebra module} for the Lie algebra $\mathfrak{g}$, if
there is a morphism of Lie algebras $D:\mathfrak{g}\to Der(A)$. 
In that case, $D$ is called the $\mathfrak{g}$\textbf{-scalar multiplication}
on $A$.
\end{definition}
Now a \textit{Lie Rinehart pair} is nothing but a Lie algebra 
together with a unital and commutative algebra, 
each of them being a module with respect to the other, such that 
a particular compatibility equation is satisfied:  
\begin{definition}[Lie Rinehart pair]\label{Lie_Rinehart_pair}
Let $A$ be an associative and 
commutative algebra with unit, $\mathfrak{g}$ a Lie algebra and 
$\cdot_A: A\times \mathfrak{g} \to \mathfrak{g}$ as well as
$D: \mathfrak{g}\to Der(A)\;;\; x\mapsto D_x$ maps, such that 
$A$ is a $\mathfrak{g}$-module with $\mathfrak{g}$-scalar multiplication
$D$, the vector space $\mathfrak{g}$ is
an $A$-module with $A$-scalar multiplication $\cdot_A$ and 
the \textbf{Leibniz rule}
\begin{equation}
[x,a\cdot_A y] = D_x(a)\cdot_A y + a \cdot_A[x,y]
\end{equation}
is satisfied for any $x,y\in\mathfrak{g}$ and $a\in A$. Then
$\left(A,\mathfrak{g}\right)$ is called a \textbf{Lie Rinehart pair}
and the map $D$ is called its \textbf{anchor map}.
\end{definition}
This was first referred to as an $(\R, A)$-Lie algebra 
(\cite{JH1} and \cite{GSR})
and later J. Huebschmann called it a Lie Rinehart algebra. 
We use the term Lie Rinehart \textit{pair}, to stress
that both partners should be seen on an equal footing.
\begin{remark}Judged by the fact that any Lie Rinehart pair is predominantly an
algebra for the \textit{Poisson} operad, not the Lie operad \cite{LV},
this structure should eventually be called a \textbf{Poisson}-Rinehart pair, 
therefore pointing to the fundamental role of the Poisson 
structure instead of (only) the Lie structure.
\end{remark}
The two most extreme examples derive from commutative algebras on one side and
Lie algebras on the other. This reflects the fact that a Poisson algebras is a
certain combination of a Lie and a commutative algebra:
\begin{example}\label{example_com_al}
For any commutative and associative algebra with unit $A$, a 
Lie Rinehart pair is given by $(A, Der(A))$, together with the standard 
$A$-module structure of $Der(A)$ and the identity as anchor map.
\end{example}
\begin{example}
Any real Lie algebra $\mathfrak{g}$ is a $\R$-module
with respect to its ordinary scalar multiplication 
and therefore $\left(\R,\mathfrak{g}\right)$ is a Lie Rinehart pair,
with trivial anchor
$$
D:\mathfrak{g}\times\R \to\R\;;\; (x,\lambda)\mapsto D_x(\lambda)\defeq 0\;.
$$
\end{example}
One of the most prominent examples is a particular instance of example 
(\ref{example_com_al}). It is at the heart of calculus in differential geometry
and provides the mathematical background for symplectic and
multisymplectic geometry:
\begin{example}
Let $M$ be a differentiable manifold, $C^\I(M)$ the algebra of smooth, 
real valued functions and $\mathfrak{X}(M)$ the Lie algebra of vector
fields on $M$. $\mathfrak{X}(M)$ is a $C^\I(M)$-module and 
vector fields act as derivations on smooth functions, that is the
map
$$D:\mathfrak{X}(M)\times C^\I(M) \to C^\I(M)\;;\;
 (X,f) \mapsto D_X(f)\defeq  X(f)
$$
satisfies the equation
$D_X(fg)=D_X(f)g + fD_X(g)$. Moreover the Leibniz rule
$[X,fY]=D_X(f)Y+f[X,Y]$ holds. 
\end{example}
The following important example unmasks Lie algebroids as special 
Lie Rinehart pairs, but expressed in a more \textit{geometric} flavor. 
This is analog to the situation of projective modules and smooth vector bundles, as 
exhibited by the Serre-Swan theorem:
\begin{example} 
A \textbf{Lie algebroid} $(E,M,[\cdot,\cdot],D)$ is a 
smooth vector bundle $E \to M$ with a Lie bracket 
$[\cdot,\cdot] : \Gamma(E)\times \Gamma(E) \to \Gamma(E)$ on its sections
$\Gamma(E)$
and a morphisms of vector bundles $D : E \to TM$, called the anchor, such that 
the tangent map $TD$ induce a morphism of Lie algebras,
with $[X, f \cdot Y] = f\cdot [X,Y] + TD_X(f) \cdot Y$
for all $X, Y \in \Gamma(E)$ and $f \in C^\infty(X)$.
\end{example}
With the Lie Rinehart structure at hand, their morphisms are defined
as appropriate algebra maps, that interact properly with
respect to the additional module structures \cite{GSR}:
\begin{definition}[Lie Rinehart Morphism]\label{Lie_Rinehart_morphism}
Let $(A,\mathfrak{g})$ and $(B,\mathfrak{h})$ be two Lie Rinehart pairs.
A \textbf{morphism of Lie Rinehart pairs} is a pair of maps $(f,g)$, such
that $f:A\to B$ is a morphism of associative and commutative, real algebras
with unit, $g:\mathfrak{g}\to\mathfrak{h}$ is a morphism of Lie
algebras and the equations
\begin{equation}\label{LR-morph}
\begin{array}{ccc}
g(a\cdot_A x)=f(a)\cdot_B g(x)
& and &
f(D_x(a))=D_{g(x)}(f(a))
\end{array}
\end{equation} 
are satisfied for any $a\in A$ and $x\in \mathfrak{g}$.  
\end{definition}
This is the covariant definition. If we see a Lie Rinehart pair as a Lie
algebroid, the structure maps are usually defined 
contravariant as functions between the de Rham complexes
of the algebroids.

The usability of differential geometry is build to a large extent on its 
computational simplicity. Often basic analysis can solve 
complex geometric problems. The underlying rules are \textit{Cartan calculus}, 
but to make something like this available in the general Lie Rinehart setting, 
a certain non-degeneracy condition on the double dual of the Lie partner is 
necessary:

We write $\mathfrak{g}^\vee_A\defeq Hom_{A\_ mod}(\mathfrak{g},A)$ 
for the $A$-dual of the $A$-module $\mathfrak{g}$ and 
$\mathfrak{g}^{\vee\vee}$ for the appropriate double dual. 
The following definition specializes torsionless (semi-reflexive)
modules to the Lie Rinehart setting:
\begin{definition}[Torsionless Lie Rinehart pair]\label{torsionless}
Let $(A,\mathfrak{g})$ be a Lie Rinehart pair, such that the natural map
$\mathfrak{g}\to \mathfrak{g}^{\vee\vee}\;;\;
x\mapsto \left(\mathfrak{g}^\vee \to A\;;\; f\mapsto f(x)\right)$ is 
injective. Then $(A,\mathfrak{g})$ is called \textbf{torsionless} (or 
\textbf{semi-reflexive}).
\end{definition}
Torsionless Lie Rinehart pairs hold a non degenerate and therefore
unique pairing between tensors and cotensors,
providing a Cartan calculus, that is completely analog to the 
differential geometric setting (\ref{Cartan_Calculus}).
\subsection{Exterior tensor algebra}We look at the 
exterior tensor power of the Lie algebra, seen
as a module with respect to its commutative partner. The Lie bracket
extends to the \textit{Schouten-Nijenhuis bracket}, which interacts
with the exterior product in terms of a Gerstenhaber structure. This 
is the 'free commutative prolongation' 
$\mathcal{C}om(A\oplus s\mathfrak{g})$ of the Gerstenaber structure on
$A\oplus s\mathfrak{g}$.

We define $\otimes^0_A\mathfrak{g}\defeq A$ 
and write $\otimes^n_A\mathfrak{g}$ for the 
$n$-fold $A$-tensor products of the $A$-module 
$\mathfrak{g}$. Since $A$ is commutative, 
$\otimes^n_A\mathfrak{g}$ is an $A$-module.

\begin{definition}[Exterior tensor algebra]\label{tensor_algebra} 
Let $(A,\mathfrak{g})$ be a Lie Rinehart pair and 
$n\in\Z$. For $n<0$ define $X_n(\mathfrak{g},A)=\{0\}$ and for $n\geq 0$
let $X_n(\mathfrak{g},A)\defeq \otimes^n_A\mathfrak{g}/J^n$ be
the quotient $A$-module of the $n$-th tensor power by the submodule $J^n$, 
which is spanned from all $x_1\otimes\cdots\otimes x_n$ with 
$x_i = x_j$ for some $i \neq j$. Then the direct sum 
\begin{equation}
X_\bullet(\mathfrak{g},A)\defeq \textstyle\bigoplus_{n\in\Z} X_n(\mathfrak{g},A)
\end{equation}
together with the quotient 
$\smwedge: X_\bullet(\mathfrak{g},A) \times 
 X_\bullet(\mathfrak{g},A) \mapsto
  X_\bullet(\mathfrak{g},A)\;;\; (x,y)\mapsto x\smwedge y$ of the
$A$-tensor multiplication, is called the \textbf{exterior tensor algebra} of 
$(A,\mathfrak{g})$ and the product is called the \textbf{exterior tensor product}. 
The induced grading on $X_\bullet(\mathfrak{g},A)$ is called the 
\textbf{tensor grading}.

In addition we define $X^{-n}(\mathfrak{g},A)\defeq  X_n(\mathfrak{g},A)$ for any
integer $n\in\Z$. The induced grading on the direct sum
\begin{equation}
X^\bullet(\mathfrak{g},A)\defeq \textstyle\bigoplus_{n\in\Z} X^n(\mathfrak{g},A)
\end{equation}
is called the \textbf{cotensor grading}. If the grading is irrelevant we just 
write $X(\mathfrak{g},A)$.
\end{definition}
\begin{example}
If $(C^\I(M),\mathfrak{X}(M))$ is the Lie Rinehart pair of smooth 
functions and vector fields, the exterior tensor algebra is the algebra 
of \textbf{multivector fields} $\bwedge\mathfrak{X}(M)$.
\end{example}
The exterior tensor power of a Lie Rinehart pair has the structure of a 
\textit{Gerstenhaber algebra} (also called Poisson-$2$ algebra) with respect to the 
exterior product and the Schouten-Nijenhuis bracket. The latter is nothing but
the free commutative lift of the Lie bracket on the Gerstenhaber algebra
 $A\oplus s\mathfrak{g}$.
\begin{definition}[Schouten-Nijenhuis bracket]
Let $(A,\mathfrak{g})$ be a Lie Rinehart pair with exterior
tensor power $X(\mathfrak{g},A)$. Then the
\textbf{Schouten Nijenhuis bracket} is the map
\begin{equation}
\left[\cdot\;,\cdot\right]: X(\mathfrak{g},A) \times 
 X(\mathfrak{g},A) \to X(\mathfrak{g},A)\;,
\end{equation}
defined by $[a,b]=0$ as well as $[x,a]=[a,x]=D_x(a)$ on scalars $a,b\in A$ and 
vectors $x\in\mathfrak{g}$ and by
\begin{multline}\label{SN_1}
[x_1\smwedge\cdots \smwedge x_n,y_1\smwedge\cdots\smwedge y_m]=\\
	\textstyle\sum_{i,j}(-1)^{i+j}[x_i,y_j]\smwedge x_1\smwedge\cdots\smwedge 
		\widehat{x_i}\smwedge\cdots\smwedge x_n\smwedge
			y_1\smwedge\cdots\smwedge \widehat{y_j}\smwedge\cdots\smwedge y_m
\end{multline}
on simple tensors 
$x_{1}\smwedge\cdots\smwedge x_{n}$, 
$y_{1}\smwedge\cdots\smwedge y_{m}\in X(\mathfrak{g},A)$ 
and then extend to $X(\mathfrak{g},A)$ by $A$-additivity.

We write $(X(\mathfrak{g},A),\smwedge,[\cdot,\cdot])$ for
the appropriate Gerstenhaber algebra and call it the 
\textbf{Schouten-Nijenhuis} algebra of $(A,\mathfrak{g})$. 
\end{definition}

In case of multivector fields, a proof can be found for example 
in \cite{MM} or \cite{PM}. Those proofs are an implication of the Lie 
Rinehart structure only and therefore carry over
verbatim to the general situation.
\subsection{The de Rham complex}
We generalize the de Rham complex of differential forms to the
setting of a torsionless Lie Rinehart pair.

Recall that we consider $\mathfrak{g}$ as an $A$-module
but concentrated in cotensor degree $-1$. Therefore the dual module 
$\mathfrak{g}^\vee$ is an $A$-module concentrated in cotensor degree $1$.
We put $\otimes^0_A\mathfrak{g}^\vee_A\defeq A$ (concentrated in degree zero)
and write $\otimes^n_A\mathfrak{g}^\vee_A$ for the 
$n$-fold graded $A$-tensor products of the $A$-module 
$\mathfrak{g}^\vee$. Since $A$ is commutative, 
each $\otimes^n_A\mathfrak{g}_A^\vee$  is an $A$-module.
\begin{definition}[Exterior cotensor algebra]\label{ctensor_algebra} 
Let $(A,\mathfrak{g})$ be a Lie Rinehart pair and 
$n\in\Z$. For $n<0$ define $\Omega^{n}(\mathfrak{g},A)=\{0\}$ and if 
$n\geq 0$ let $\Omega^{n}(\mathfrak{g},A)\defeq \otimes^n_A\mathfrak{g}^\vee_A/J^n$ be
the quotient $A$-module of the $n$-th tensor product of the dual module
and the submodule $J^n$, spanned by all cotensors $x^1\otimes\cdots\otimes x^n$ with 
$x^i = x^j$ for some $i \neq j$. Then the direct sum 
\begin{equation}
\Omega^\bullet(\mathfrak{g},A)=\textstyle\bigoplus_{n\in\Z} 
 \Omega^{n}(\mathfrak{g},A)
\end{equation}
together with the quotient 
$\smwedge: \Omega^\bullet(\mathfrak{g},A) \times 
 \Omega^\bullet(\mathfrak{g},A) \mapsto
  \Omega^\bullet(\mathfrak{g},A)\;;\; (x,y)\mapsto x\smwedge y$ of the
$A$-cotensor multiplication, is called the \textbf{exterior cotensor algebra} of 
$(A,\mathfrak{g})$ and the product is called the \textbf{exterior cotensor product}. 
The grading on $\Omega^\bullet(\mathfrak{g},A)$ is called the 
\textbf{cotensor grading}.

In addition we define $\Omega_{n}(\mathfrak{g},A)\defeq  
\Omega^{-n}(\mathfrak{g},A)$ for any integer $n\in\Z$. The induced grading on the 
direct sum
\begin{equation}
\Omega_\bullet(\mathfrak{g},A)\defeq \textstyle\bigoplus_{n\in\Z} 
\Omega_n(\mathfrak{g},A)
\end{equation}
is called the \textbf{tensor grading}. If the grading is irrelevant we just 
write $\Omega(\mathfrak{g},A)$.
\end{definition}
The exterior cotensor power has
the structure of a differential graded algebra, at least if the
Lie Rinehart pair is torsionless. This refines the well
known de Rham Complex of differential forms to the general torsionless Lie 
Rinehart setting:
\begin{definition}[De Rham differential]
Let $(A,\mathfrak{g})$ be a torsionless Lie Rinehart pair with exterior 
cotensor algebra $\Omega(\mathfrak{g},A)$. Then the \textbf{de Rham differential} (or exterior differential)
$$
d:\Omega(\mathfrak{g},A)\to \Omega(\mathfrak{g},A)
$$
is defined for a homogeneous cotensor $f\in \Omega^k(\mathfrak{g},A)$ 
and any simple exterior tensors
$x_0\smwedge\cdots\smwedge x_k\in X(\mathfrak{g},A)$ 
by the equation
\begin{multline}\label{deRam-diff}
df(x_0\smwedge\cdots\smwedge x_k) 
= \textstyle\sum_j (-1)^{j} 
 D_{x_j}(f(x_0\smwedge \cdots\smwedge \widehat{x}_j\smwedge
   \cdots\smwedge x_k))\\
+\textstyle\sum_{i<j}
 (-1)^{i+j}f([x_i, x_j]\smwedge x_0\smwedge \cdots\smwedge 
  \widehat{x}_i\smwedge \cdots\smwedge 
  \widehat{x}_j\smwedge \cdots\smwedge x_k)
\end{multline}
and is then extended to all of $\Omega(\mathfrak{g},A)$ by $A$-linearity. 

We call both the chain complex
$\left(\Omega^\bullet(\mathfrak{g},A),d\right)$ as well as the cochain complex
$\left(\Omega_\bullet(\mathfrak{g},A),d\right)$, the 
\textbf{de Rham complex} of a Lie Rinehart pair.
\end{definition}
Since the natural pairing is non degenerate, the exterior derivative is
uniquely defined by equation (\ref{deRam-diff}). It is moreover a graded map, 
homogeneous of degree $1$ with respect to the cotensor grading
(hence of degree $-1$ with respect to the tensor grading).
Moreover $d$ is a differential, that is $d^2=0$.
\begin{example}If the Lie Rinehart pair happens to be a Lie algebra (), the
de Rham complex is equal to the Chevalley-Eilenberg complex of that Lie algebra.
It should however be noted, that in general  the de Rham complex of a Lie 
Rinehart pair is \textit{not} equivalent to the Chevalley-Eilenberg complex of 
just the Lie partner.
\end{example}
\subsection{Cartan Calculus}\label{Cartan_Calculus}
We close this section with an introduction to the basic computational rules of 
Cartan calculus. These rules are simple yet powerful tools, which we 
use extensively in the present work.

The sceptical reader should pay special attention to the last equation in 
(\ref{multi_rules-5}). To the best of the authors knowledge,
this equation does not appear anywhere else, but is of central importance in
this work. 

To start, observe that the dual nature of tensors and cotensors 
gives rise to a bilinear map, that basically \textit{evaluates}
cotensors along tensors of equal degree: 
\begin{definition}[Natural Pairing]Let $(A,\mathfrak{g})$
be a Lie Rinehart pair. The graded $A$-linear map
\begin{equation}
\langle\cdot,\cdot\rangle:\Omega^\bullet(\mathfrak{g},A)\otimes_A 
 X^\bullet(\mathfrak{g},A)\to A,
\end{equation}
defined by $\langle a,b\rangle=ab$ on scalars $a,b\in A$, by
$\langle f,x\rangle=0$ on cotensors $f\in \Omega^p(\mathfrak{g},A)$ and
tensors $x\in X^q(\mathfrak{g},A)$, such that $p+q\neq 0$ and by
\begin{equation}\label{determinant_eq_1}
\langle f_1\smwedge\cdots\smwedge f_k, x_1\smwedge\cdots\smwedge x_k\rangle=
\textstyle\sum_{s\in S_k}\sgn{s}
 \langle f_{s(1)},x_1\rangle\cdots\langle f_{s(k)},x_k\rangle
\end{equation}
on simple cotensors 
$f_1\smwedge\cdots\smwedge f_k\in \Omega^\bullet(\mathfrak{g},A)$
and tensors $x_1\smwedge\cdots\smwedge x_k\in X^\bullet(\mathfrak{g},A)$
and then extended to all of 
$\Omega^\bullet(\mathfrak{g},A)\otimes_A X^\bullet(\mathfrak{g},A)$ 
by $A$-linearity, is called the \textbf{natural pairing}.
\end{definition}
Since the determinant is invariant with respect to transposition, 
the defining equation can equivalently be written in its transposed version
\begin{equation}\label{determinant_eq_2}
\langle f_1\smwedge\cdots\smwedge f_m,x_1\smwedge\cdots\smwedge x_m\rangle =
\textstyle\sum_{s\in S_m}\sgn{s}
 \langle f_1,x_{s(1)}\rangle\cdots\langle f_m,x_{s(m)}\rangle\;.
\end{equation}
The following proposition shows that the natural pairing is non-degenerate
and therefore unique, in case the Lie Rinehart pair is torsionless. Since
all operators in Cartan calculus are defined in terms of this pairing, it
justifies our restriction to the torsionless setting:
\begin{proposition}\label{unique_pairing}
Let $(A,\mathfrak{g})$ be a \textit{torsionless} Lie Rinehart pair with natural
pairing 
$\langle\cdot,\cdot\rangle:
 \Omega^\bullet(\mathfrak{g},A)\otimes_A 
  X^\bullet(\mathfrak{g},A)\to A$
and $x,y\in \mathfrak{g}$ two vectors. Then 
$\langle f,x \rangle = \langle f, y\rangle$ for all 
$f \in \mathfrak{g}^\vee$ implies $x=y$.
\end{proposition}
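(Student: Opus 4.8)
The plan is to reduce the statement directly to the defining property of a torsionless Lie Rinehart pair. First I would unwind the definition of the natural pairing in the relevant bidegree. For $f\in\mathfrak{g}^\vee$ and $x\in\mathfrak{g}$ one has $f\in\Omega^1(\mathfrak{g},A)$ and $x\in X^{-1}(\mathfrak{g},A)$, so the total cotensor degree $1+(-1)$ vanishes and the pairing is not forced to be zero; in this lowest bidegree $\langle\cdot,\cdot\rangle$ restricts to the canonical $A$-module evaluation $f\otimes x\mapsto f(x)$ of the functional $f\in\mathfrak{g}^\vee_A=Hom_{A\_mod}(\mathfrak{g},A)$ on $x$ (this is the base case from which the determinant formula (\ref{determinant_eq_1}) builds the pairing in higher degrees). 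Hence the hypothesis $\langle f,x\rangle=\langle f,y\rangle$ for all $f\in\mathfrak{g}^\vee$ says precisely that $f(x)=f(y)$ for every $f\in\mathfrak{g}^\vee_A$.

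Next I would translate this into a statement about the double dual. The images of $x$ and $y$ under the natural evaluation map $\mathfrak{g}\to\mathfrak{g}^{\vee\vee}$, $z\mapsto(f\mapsto f(z))$, are elements of $\mathfrak{g}^{\vee\vee}$ which, by the previous step, take the same value on every $f\in\mathfrak{g}^\vee$; therefore they coincide. By Definition \ref{torsionless}, torsionlessness of $(A,\mathfrak{g})$ is exactly the injectivity of this evaluation map, so $x=y$ follows at once. In effect this records the ``converse'' side of semi-reflexivity: non-degeneracy of the natural pairing in the lowest bidegree is equivalent to injectivity of $\mathfrak{g}\to\mathfrak{g}^{\vee\vee}$.

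There is no genuine obstacle here; the only point requiring care is the grading bookkeeping, namely that $\mathfrak{g}$ sits in cotensor degree $-1$ and $\mathfrak{g}^\vee$ in cotensor degree $+1$, so that the pairing genuinely restricts to the $A$-module duality pairing $\mathfrak{g}^\vee_A\otimes_A\mathfrak{g}\to A$ rather than vanishing for degree reasons. Once that identification is in place the proposition is a one-line consequence of the definition, and it is precisely what licenses Cartan calculus in the torsionless setting, since every contraction and Lie derivative used later is built from this pairing.
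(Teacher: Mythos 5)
Your argument is correct and is essentially the paper's own proof: identify $\langle f,x\rangle$ with the evaluation $f(x)$ for $f\in\mathfrak{g}^\vee$, $x\in\mathfrak{g}$, and then invoke the injectivity of the natural map $\mathfrak{g}\to\mathfrak{g}^{\vee\vee}$, which is exactly the definition of torsionless. The extra paragraph on degree bookkeeping only makes explicit what the paper leaves implicit; no substantive difference.
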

\begin{proof}Since $(A,\mathfrak{g})$ is torsionless, the inclusion 
$\mathfrak{g}\to \mathfrak{g}^{\vee\vee}\;;\;
x\mapsto \left(\mathfrak{g}^\vee \to A\;;\; f\mapsto f(x)\right)$ is injective.
Hence $f(x)=f(y)$ for all $f\in \mathfrak{g}^\vee$ implies $x=y$.
\end{proof}
With a non degenerate pairing, contraction can be defined consistently
by a certain adjointness property.
In fact we determine the usual right contraction of a cotensor along
a tensor as right adjoint to the exterior product and dually 
the left contraction of a tensor by a cotensor as left adjoint to the exterior 
product.

The latter operator seems to be not widely known in the literature. 
However it appears in \cite{MM} for multivector fields and differential 
forms and we generalize it to arbitrary torsionless Lie Rinehart pairs:
\begin{definition}[Right/Left Contraction]
Let $(A,\mathfrak{g})$ be a torsionless Lie Rinehart pair and
$x\in X_{-k}(\mathfrak{g},A)\simeq X^{k}(\mathfrak{g},A)$ a tensor homogeneous
of cotensor degree $k$. Then the 
\textbf{right contraction along }$x$ is the map
$$
i_x:\Omega^\bullet(\mathfrak{g},A)\to \Omega^\bullet(\mathfrak{g},A)\;,
$$
that is defined for any integer $n\in\Z$ and homogeneous cotensor 
$f\in \Omega^{n}(\mathfrak{g},A)$ by the equation
\begin{equation}\label{right contration}
\langle i_xf, y \rangle = \langle f, x\smwedge y\rangle
\end{equation}
for all $y\in X^\bullet(\mathfrak{g},A)$ and is then extended to 
$\Omega^\bullet(\mathfrak{g},A)$ by additivity. 

Dually for any
cotensor $f\in \Omega^{k}(\mathfrak{g},A)\simeq \Omega_{-k}(\mathfrak{g},A)$, 
homogeneous of cotensor degree $k$, the 
\textbf{left contraction along }$f$ is the map
$$
j_f:X^\bullet(\mathfrak{g},A)\to X^\bullet(\mathfrak{g},A)
$$
that is defined for any integer $n\in\Z$ and homogeneous tensor 
$x\in X^n(\mathfrak{g},A)$ by the equation
\begin{equation}\label{left contration}
\langle g, j_f x \rangle = \langle g\smwedge f, x\rangle
\end{equation}
for all $g\in \Omega^\bullet(\mathfrak{g},A)$ and then extended to  
$\Omega^\bullet(\mathfrak{g},A)$ by additivity.
\end{definition}
The following corollary summarizes the basic rules, regarding the left and
right contraction operators:
\begin{corollary}Let $(A,\mathfrak{g})$ be a torsionless Lie Rinehart
pair, $x,y\in X^\bullet(\mathfrak{g},A)$ and 
$f,g\in \Omega^\bullet(\mathfrak{g},A)$. Then
\begin{equation}
\begin{array}{lcr}
i_{x\wedge y}f=i_y \circ i_x f & and  & j_{f\wedge g}x = j_f \circ j_g x\;.
\end{array}
\end{equation}
If $x$ and $f$ are homogeneous, the contractions $i_xf$ and $j_fx$ are 
homogeneous, with $|i_xf|=|x|+|f|$ and $|j_fx|=|x|+|f|$.
\end{corollary}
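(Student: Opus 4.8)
The plan is to obtain both identities as a purely formal consequence of the two adjunction relations (\ref{right contration}) and (\ref{left contration}) together with the associativity of the exterior product, and then to read off the gradings from the degree conventions built into the natural pairing. The one external ingredient is the non-degeneracy of the natural pairing throughout the full range of (co)tensor degrees; for torsionless pairs this is exactly what makes the operators $i_x$ and $j_f$ well defined in the first place, and it specializes to Proposition \ref{unique_pairing} on $\mathfrak{g}$, so it is legitimate to invoke it here. Since all operators involved are additive in every slot, it suffices to argue on homogeneous $x,y,f,g$ and then extend.

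First I would establish $i_{x\smwedge y}f=i_y\circ i_xf$. Pairing the left-hand side against an arbitrary tensor $z\in X^\bullet(\mathfrak{g},A)$ and applying (\ref{right contration}) twice, using $(x\smwedge y)\smwedge z=x\smwedge(y\smwedge z)$, gives
\[
\langle i_{x\smwedge y}f,\,z\rangle=\langle f,\,x\smwedge(y\smwedge z)\rangle=\langle i_xf,\,y\smwedge z\rangle=\langle i_y(i_xf),\,z\rangle .
\]
As $z$ is arbitrary, non-degeneracy of the pairing in its right slot forces the two cotensors to coincide. Dually, to prove $j_{f\smwedge g}x=j_f\circ j_gx$ I would pair against an arbitrary cotensor $h$ and apply (\ref{left contration}) twice, using $(h\smwedge f)\smwedge g=h\smwedge(f\smwedge g)$:
\[
\langle h,\,j_{f\smwedge g}x\rangle=\langle(h\smwedge f)\smwedge g,\,x\rangle=\langle h\smwedge f,\,j_gx\rangle=\langle h,\,j_f(j_gx)\rangle ,
\]
and conclude from non-degeneracy in the left slot, then extend to arbitrary (not necessarily homogeneous) arguments by additivity.

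For the grading claim I would use that the natural pairing $\langle f,z\rangle$ vanishes unless the cotensor degrees of $f$ and $z$ are opposite, together with additivity of the cotensor degree under $\smwedge$. For homogeneous $x$ and $f$, the quantity $\langle f,x\smwedge z\rangle$ can be non-zero only when $z$ has cotensor degree $-(|f|+|x|)$; by (\ref{right contration}) this means $\langle i_xf,z\rangle$ vanishes for every $z$ outside that degree, so $i_xf$ is homogeneous of cotensor degree $|f|+|x|$, i.e. $|i_xf|=|x|+|f|$. Running the same argument with (\ref{left contration}) in place of (\ref{right contration}) yields $|j_fx|=|x|+|f|$. I do not expect a genuine obstacle here; the only points requiring attention are keeping the tensor/cotensor degree conventions straight and making the use of non-degeneracy in all degrees explicit, so the whole proof is the standard ``contraction is the adjoint of the wedge'' bookkeeping.
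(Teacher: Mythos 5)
Your proposal is correct and is essentially the paper's own argument: both identities are obtained by pairing against an arbitrary (co)tensor, applying the defining adjunctions (\ref{right contration}), (\ref{left contration}) twice together with associativity of the exterior product, and concluding from the non-degeneracy of the natural pairing, with the degree statement read off from the grading conventions. The only difference is cosmetic (you start the chain from the other side and spell out the degree bookkeeping slightly more explicitly), so no further comment is needed.
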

\begin{proof}The first two equations follow from the associativity of the 
exterior products. In particular, let $z\in X^\bullet(A,\mathfrak{g})$ be
any tensor. Then 
\begin{align*}
\langle i_y \circ i_x f, z\rangle
&=\langle i_x f, y \smwedge z\rangle\\
&= \langle f, x \smwedge y \smwedge z\rangle\\
&= \langle i_{x \wedge y} f, z\rangle\;.
\end{align*}
Hence the first equation follows from (\ref{unique_pairing}).
The computation of the second equation is analog.

To see the homogeneity part, observe that we consider tensors as concentrated
in negative degrees and cotensors as concentrated in positive degrees, with 
respect to the cotensor grading.
\begin{comment}
To see the first equation, let $h\in E(A,\mathfrak{g}^\vee)$ be
any cotensor. Then 
\begin{align*}
\langle h, j_f \circ j_g x\rangle
&=\langle h \smwedge f, j_g x\rangle\\
&= \langle h \smwedge f \smwedge g, x\rangle\\
&= \langle h, j_{f\wedge g} x\rangle\;.
\end{align*}
\end{comment}
\end{proof}
Since the contraction operators are linear in both arguments, we can introduce
their kernels the usual way:
\begin{definition}[Kernel]Let $(A,\mathfrak{g})$ be a torsionless
Lie Rinehart pair. The \textbf{kernel} of a cotensor 
$f\in \Omega^\bullet(\mathfrak{g},A)$ is the
set 
$$\ker(f)\defeq \{x\in X^\bullet(\mathfrak{g},A)\;|\;
 i_xf=0\}$$
of all tensors, such that the right contraction of $f$ along that tensor
vanishes and the \textbf{kernel} of a tensor 
$x\in X^\bullet(\mathfrak{g},A)$ is the
set 
$$\ker(x)\defeq \{f\in \Omega^\bullet(\mathfrak{g},A)\;|\;
 j_fx=0\}$$
of all cotensors, such that the left contraction of $x$ along that cotensor
vanishes. 
\end{definition}
The Lie derivative along tensors in now defined as usual in terms of 
Cartans infinitesimal homotopy formula: 
\begin{definition}Let $(A,\mathfrak{g})$ be a torsionless Lie Rinehart pair and
$x\in X^\bullet(\mathfrak{g},A)$ a homogeneous tensor. Then the map
\begin{equation}\label{Cartans_formula}
L_x: \Omega^\bullet(\mathfrak{g},A)\to \Omega^\bullet(\mathfrak{g},A)\;;\;
f\mapsto di_xf - (-1)^{|x|}i_xdf
\end{equation}
is called the \textbf{Lie derivative} along $x$ and equation
(\ref{Cartans_formula}) is called \textbf{Cartans infinitesimal graded
homotopy formula}.
\end{definition}
The following proposition shows the basic computation rules that we need in
what follows. Most of these equations are long known. However
to the best of the authors knowledge it seems that the last equation does not
appear anywhere else in the literature.
\begin{proposition}Let $(A,\mathfrak{g})$ be a torsionless Lie Rinehart 
pair, $x,y\in X_\bullet(\mathfrak{g},A)$ homogeneous tensors and 
$f\in \Omega_\bullet(\mathfrak{g},A)$ a cotensor. Then
\begin{align}
\label{multi_rules-1}
dL_x f &=(-1)^{|x|-1}L_x df \\
\label{multi_rules-2}
i_{[x,y]} f &=(-1)^{(|x|-1)|y|}L_x i_y\, f - i_y L_x f\\
\label{multi_rules-3}
L_{[x,y]} f &= (-1)^{(|x|-1)(|y|-1)}L_x L_x f - L_y L_x f\\
\label{multi_rules-4}
L_{x\wedge y} f &= (-1)^{|y|} i_y L_x f + L_y i_x f \\
\label{multi_rules-5}
i_{j_{(i_y f)} x}f &= i_{y}f \smwedge i_xf
\end{align}
\end{proposition}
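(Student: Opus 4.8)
The plan is to dispatch the four classical Cartan identities (\ref{multi_rules-1})--(\ref{multi_rules-4}) by a routine bootstrap from the case of vectors, and then to concentrate the real effort on (\ref{multi_rules-5}), which is the genuinely new point.

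For the classical identities I would argue by induction on tensor degree. All of them hold for $|x|=|y|=1$ by the same computations as for vector fields and differential forms, since in that degree only the anchor $D$, the Lie bracket, and the de Rham differential (\ref{deRam-diff}) enter. Identity (\ref{multi_rules-1}) needs no induction: it is immediate from $d^2=0$ and Cartan's formula (\ref{Cartans_formula}), since $dL_xf=d(di_xf-\sgn{|x|}i_xdf)=-\sgn{|x|}di_xdf$ while $L_xdf=di_xdf$. For (\ref{multi_rules-2}) I would induct on $|x|$, writing $x=x'\smwedge\xi$ with $\xi\in\mathfrak{g}$; the Schouten--Nijenhuis Leibniz rule (\ref{SN_1}) expresses $[x,y]$ through $[x',y]$, $[\xi,y]$ and exterior products, and since $i_{x'\smwedge\xi}=i_\xi\circ i_{x'}$ the inductive hypothesis and the $|x|=1$ case combine into the claim once the Koszul signs are collected. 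Identity (\ref{multi_rules-3}) then follows by inserting (\ref{multi_rules-2}) into Cartan's formula and using (\ref{multi_rules-1}), and (\ref{multi_rules-4}) follows directly from Cartan's formula together with $i_{x\smwedge y}=i_y\circ i_x$; in each case the remaining content is only a sign computation.

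The main obstacle is (\ref{multi_rules-5}). Since $(A,\mathfrak{g})$ is torsionless, the natural pairing is non-degenerate (Proposition \ref{unique_pairing}), so it suffices to show that the two sides pair equally against an arbitrary tensor $w$ of the appropriate degree. Using the adjointness (\ref{right contration}) defining the right contraction, the left-hand side becomes $\langle f,(j_{(i_yf)}x)\smwedge w\rangle$; moving $w$ to the other side of the exterior product (graded commutativity of the pairing) and then invoking (\ref{left contration}) for the left contraction rewrites it as $\pm\langle (i_wf)\smwedge(i_yf),x\rangle$. Expanding the right-hand side $\langle i_yf\smwedge i_xf,w\rangle$ dually (via the deconcatenation of $w$) and reducing to simple cotensors $f=\phi^1\smwedge\cdots\smwedge\phi^n$ and simple tensors $x,y,w$, every pairing becomes a determinant by (\ref{determinant_eq_1}); both expressions then turn into signed sums of products of two $n\times n$ minors of the matrix $(\phi^i(v))$, with columns drawn from the entries of $x$, $y$ and $w$, and the two sums agree by a Laplace/Cauchy--Binet-type rearrangement of minors (reflecting the coassociativity of the exterior-algebra pairing).

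The hard part is purely bookkeeping: getting the Koszul and permutation signs to line up through the adjointness manipulations and the determinant rearrangement. It is worth emphasizing in the proof that (\ref{multi_rules-5}) genuinely fails with an arbitrary cotensor in place of $i_yf$ --- this is precisely why Proposition \ref{prop_product} requires the contracted cotensor to be Poisson, and the counterexample there (the presymplectic $(\R^3,dx^1\smwedge dx^2)$ with the closed form $dx^3$) makes the failure explicit. What rescues the identity is that the same $f$, hence the same matrix $(\phi^i(v))$, reappears on both sides, so the two minor-sums are expansions of one and the same determinant.
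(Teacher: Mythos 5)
Your handling of (\ref{multi_rules-1})--(\ref{multi_rules-4}) is unobjectionable: the paper itself disposes of these by citing \cite{FPR}, your derivation of (\ref{multi_rules-1}) from Cartan's formula and $d^2=0$ is exactly right, and the inductive reduction of the remaining classical identities to the vector case is standard. The real content of the proposition is (\ref{multi_rules-5}), and there your plan has two problems. First, the reduction ``it suffices to pair both sides against an arbitrary tensor $w$'' needs non-degeneracy of the natural pairing in the \emph{cotensor} slot in every degree; Proposition (\ref{unique_pairing}) only provides non-degeneracy on vectors, so this step has to be justified separately (the paper avoids it by instead deriving the explicit expansions (\ref{right_contrac_1}) and (\ref{left_contrac_1}) for contractions of simple arguments and proving the equality at the level of elements). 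The adjointness juggling itself is fine: it converts the claim into the identity $\langle i_wf\smwedge i_yf,\,x\rangle=\pm\langle i_yf\smwedge i_xf,\,w\rangle$, which is a genuinely attractive reformulation.

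Second, and decisively, the concluding step is not a rearrangement of ``one and the same determinant,'' and it is not Cauchy--Binet (which concerns $\det(AB)$). After reduction to simple arguments the two sides are \emph{quadratic} in maximal minors: one is a signed sum over splittings $w=w'\sqcup w''$ of products $\langle f,y\smwedge w'\rangle\,\langle f,x\smwedge w''\rangle$, the other a signed sum over splittings $x=x'\sqcup x''$ of products $\langle f,w\smwedge x'\rangle\,\langle f,y\smwedge x''\rangle$. These sums range over splittings of different argument sets, are not termwise identifiable, and since $2m$ vectors are in play there is no single $m\times m$ determinant of which both are Laplace expansions. What is actually required is a Pl\"ucker-type quadratic relation among minors together with careful multiplicity counting, and that is precisely what the paper's appendix proof spends its length on: the explicit permutation expansions, the restriction to the index sets $I_1,J_1,I_2,J_2$, the falling-factorial count of fibers, and the final bijection between $J_1$ and $J_2$. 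In other words, the step you dismiss as ``purely bookkeeping'' is the whole proposition --- the author explicitly states no short derivation was found --- so as written the crux is asserted rather than proved, and the specific justification offered for it would not go through. Your closing remark, that the identity fails for a general cotensor in place of $i_yf$ and that the reappearance of the same $f$ on both sides is what saves it, is the correct intuition; it still has to be converted into the actual quadratic minor identity before the argument is complete.
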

\begin{proof} Except for (\ref{multi_rules-5}), 
these equations are long known.
A proof is given in \cite{FPR} in the context of multivector fields
and differential forms, but can be carried over verbatim into the general 
Lie Rinehart setting.
My proof of  (\ref{multi_rules-5}) is verly long and therefore 
done separately in the next section.
\end{proof}
\subsubsection{Proof of Equation (\ref{multi_rules-5})}
This section is devoted to the proof of
equation (\ref{multi_rules-5}). Unfortunately I was not able to derive
it easily, but only by a direct and very long computation.

To start, we derive explicit expressions for
the left and right contraction operations, in case both arguments are simple:
\begin{proposition}Given a torsionless Lie Rinehart pair $(A,\mathfrak{g})$, a
simple tensor
$x\defeq x_1\smwedge\cdots\smwedge x_r\in X(\mathfrak{g},A)$  and a simple
cotensor $f\defeq f_1\smwedge\cdots\smwedge f_m\in \Omega(\mathfrak{g},A)$, the
right contraction $i_xf$ vanishes for $r>m$ and can be computed by 
\begin{equation}\label{right_contrac_1}
i_xf=
\textstyle\frac{1}{(m-r)!}\sum_{\sigma\in S_m}\sgn{\sigma}
\langle f_{\sigma(1)},x_1\rangle\cdots\langle f_{\sigma(r)},x_r\rangle
\cdot f_{\sigma(r+1)}\smwedge\cdots\smwedge f_{\sigma(m)}
\end{equation}
if $r\leq m$.
Similar the left contraction $j_fx$ vanishes for $m>r$ and 
can be computed by
\begin{equation}\label{left_contrac_1}
j_fx=
\textstyle\frac{1}{(r-m)!}\sum_{\sigma\in S_r}\sgn{\sigma}
\langle f_{1},x_{\sigma(r-m+1)}\rangle\cdots\langle f_m,x_{\sigma(r)}\rangle
\cdot x_{\sigma(1)}\smwedge\cdots\smwedge x_{\sigma(r-m)}\;.
\end{equation}
if $m\leq r$.
\end{proposition}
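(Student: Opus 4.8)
The plan is to identify each right-hand side with $i_xf$ and $j_fx$ respectively, by checking that it satisfies the \emph{defining} adjointness relation \eqref{right contration}, resp. \eqref{left contration}. Since the natural pairing is non-degenerate on every exterior power — this is precisely what makes the contraction operators well defined, and it follows from torsionlessness by the argument of Proposition (\ref{unique_pairing}) applied to simple tensors — an element of $\Omega^\bullet(\mathfrak{g},A)$ is pinned down by its pairings against all simple tensors, and dually for elements of $X^\bullet(\mathfrak{g},A)$. Writing $\Phi$ for the right-hand side of \eqref{right_contrac_1}, a routine relabelling of the index $\sigma$ shows that $\Phi$ is alternating and $A$-multilinear both in $x_1,\dots,x_r$ and in $f_1,\dots,f_m$, so it descends to a well-defined element of $\Omega^{m-r}(\mathfrak{g},A)$; hence it is enough to verify $\langle\Phi,y\rangle=\langle f,x\smwedge y\rangle$ for every simple tensor $y=y_1\smwedge\cdots\smwedge y_{m-r}$. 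The vanishing claims then need no separate argument: $i_xf$ lives in $\Omega^{m-r}(\mathfrak{g},A)$ and $j_fx$ in $X^{m-r}(\mathfrak{g},A)$, which are the zero module when $r>m$ resp. $m>r$, by Definitions (\ref{ctensor_algebra}) and (\ref{tensor_algebra}).

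For the pairing identity I would expand both sides with the determinant formula. By \eqref{determinant_eq_1}, $\langle f,x\smwedge y\rangle$ is a signed sum over $\rho\in S_m$ of products $\langle f_{\rho(1)},x_1\rangle\cdots\langle f_{\rho(r)},x_r\rangle\langle f_{\rho(r+1)},y_1\rangle\cdots\langle f_{\rho(m)},y_{m-r}\rangle$. Pairing $\Phi$ with $y$ and expanding the residual pairing $\langle f_{\sigma(r+1)}\smwedge\cdots\smwedge f_{\sigma(m)},\,y_1\smwedge\cdots\smwedge y_{m-r}\rangle$ again by \eqref{determinant_eq_1} turns $\langle\Phi,y\rangle$ into a double sum over $(\sigma,\tau)\in S_m\times S_{m-r}$. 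Setting $\rho=\sigma\circ\tilde\tau$, where $\tilde\tau\in S_m$ fixes $1,\dots,r$ and acts as $\tau$ on the last $m-r$ letters, one has $(-1)^{\rho}=(-1)^{\sigma}(-1)^{\tau}$, each $\rho\in S_m$ arises from exactly $(m-r)!$ pairs, the prefactor $\tfrac{1}{(m-r)!}$ cancels, and the two sums coincide term by term. The formula \eqref{left_contrac_1} is handled by the mirror computation: test against an arbitrary simple cotensor $g=g_1\smwedge\cdots\smwedge g_{r-m}$, use \eqref{left contration}, and expand with the transposed determinant formula \eqref{determinant_eq_2} — now it is the $x_i$ that get shuffled rather than the test factors — so that the same $(r-m)!$-fold overcounting produces the cancellation and reconstitutes $\langle g\smwedge f,x\rangle$.

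A shorter but essentially equivalent route is induction on $r$, resp. on $r-m$, using the single-factor contraction identity as the base case and the composition rules $i_{x\smwedge y}f=i_y\circ i_xf$ and $j_{f\smwedge g}x=j_f\circ j_gx$ from the corollary above for the inductive step, which then reduces to a Laplace expansion of a minor. Either way, I expect the only genuine difficulty to be the sign and counting bookkeeping: making precise that the signed sum over $S_m$ produced by ``contract, then pair'' really does collapse, with the correct global sign, onto the full determinant over $S_m$ once one divides by $(m-r)!$. Everything else is a formal consequence of the adjointness definitions of $i_x$ and $j_f$ together with non-degeneracy of the natural pairing on the exterior powers.
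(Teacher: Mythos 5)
Your proposal is correct and follows essentially the same route as the paper: you verify the candidate expressions against the defining adjointness relations \eqref{right contration} and \eqref{left contration}, expand with the determinant formulas \eqref{determinant_eq_1}/\eqref{determinant_eq_2}, and absorb the inner permutation sum into $S_m$ (resp.\ $S_r$) so that the $\tfrac{1}{(m-r)!}$ (resp.\ $\tfrac{1}{(r-m)!}$) prefactor cancels, exactly as in the paper's proof. The only (harmless) deviation is that you get the vanishing claims from the degree of $i_xf$ and $j_fx$ landing in the zero module, whereas the paper argues that the relevant pairings vanish identically.
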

\begin{proof}Both equations are direct consequences of the defining identities
for the contraction operators (\ref{left contration}), (\ref{right contration}) 
and the natural pairing (\ref{determinant_eq_1}). To be more precise we use 
the following arguments:

To see that the right contraction vanishes for $r>m$ observe that the defining
equation $\langle i_xf,y\rangle=\langle f,x\smwedge y \rangle$ for all 
$y\in X(\mathfrak{g},A)$ implies $i_x f=0$, since either $|x\smwedge y|>|f|$,
or $x\smwedge y=0$ and the natural pairing is zero in both cases.

To show equation (\ref{right_contrac_1}) for any
$r\leq m$, let $y_1\smwedge\cdots\smwedge y_{m-r} \in X(\mathfrak{g},A)$ be an 
arbitrary simple tensor of degree $(m-r)$. Then substitute 
(\ref{right_contrac_1}) into the left side of (\ref{right contration}) to get
\begin{multline*}
\langle i_{x_1\smwedge\cdots\smwedge x_r} f_1\smwedge\cdots\smwedge f_m,
y_1\smwedge\cdots\smwedge y_{m-r} \rangle =\\
\textstyle\frac{1}{(m-r)!}\sum_{\sigma\in S_m}\sgn{\sigma}
\langle f_{\sigma(1)},x_1\rangle\cdots\langle f_{\sigma(r)},x_r\rangle\;\cdot\\
\cdot\langle f_{\sigma(r+1)}\smwedge\cdots\smwedge f_{\sigma(m)},
y_1\smwedge\cdots\smwedge y_{m-r} \rangle\;.
\end{multline*}
Then apply (\ref{determinant_eq_1}) to the most right pairing on the right 
side. This can be written as follows:
(According to a better readable text we write $S_{M}$ for the set of all
permutations defined explicit on the finite set $M$)
\begin{multline}\label{rigt_contract_eq_2}
\textstyle\frac{1}{(m-r)!}\sum_{\sigma\in S_m}
 \sum_{\tau \in S_{\{\sigma(r+1),\ldots,\sigma(m)\}}} 
  \sgn{\sigma}\sgn{\tau}\;\cdot\\
\cdot\langle f_{\sigma(1)},x_1\rangle\cdots\langle f_{\sigma(r)},x_r\rangle
 \cdot\langle f_{\tau(\sigma(r+1))},y_1\rangle\cdots\langle f_{\tau(\sigma(m))},
  y_{m-r}\rangle\;.
\end{multline}
Now observe, that for any $\sigma\in \mathbb{S}_m$ and 
$\tau\in \mathbb{S}_{\{\sigma(r+1),\ldots,\sigma(m)\}}$, the
permutation $(\sigma(1),\ldots, \sigma(r), 
\tau\sigma(r+1),\ldots,\tau\sigma(m))$ is again an element of 
$\mathbb{S}_m$ and since there are precisely $(m-r)!$ many permutations in 
$\mathbb{S}_{\{\sigma(r+1),\ldots,\sigma(m)\}}$, we can just 'absorb' the sum over 
$\mathbb{S}_{\{\sigma(r+1),\ldots,\sigma(m)\}}$ in (\ref{rigt_contract_eq_2}) into a 
sum over $\mathbb{S}_m$ and therefore (\ref{rigt_contract_eq_2}) is the same as
\begin{multline}\label{rigt_contract_eq_3}
\textstyle\frac{1}{(m-r)!}\sum_{\sigma\in S_m} 
  \sgn{\sigma}\langle f_{\sigma(1)},x_1\rangle\cdots\langle f_{\sigma(r)},x_r\rangle
 \cdot\langle f_{\sigma(r+1)},y_1\rangle\cdots\langle f_{\sigma(m)},
  y_{m-r}\rangle\;.
\end{multline}
Using the definition of the natural pairing (\ref{determinant_eq_1}), 
this transforms expression (\ref{rigt_contract_eq_3}) into
$$
\langle f_1\smwedge\cdots\smwedge f_m,x_1\smwedge\cdots\smwedge x_k 
 \smwedge y_1\smwedge\cdots\smwedge y_{m-r} \rangle
$$
and therefore proofs (\ref{right contration}) for arbitrary simple tensors 
$y_1\smwedge\cdots\smwedge y_{m-r}$ and hence for any tensor of degree
$(m-r)$, since (\ref{right contration}) is $A$-linear in both arguments and 
any such tensor is a sum of simple ones. For tensor $y\in X(\mathfrak{g},A)$
of degree other that $(m-r)$ bot sides of equation (\ref{right contration})
vanish and this proof equation (\ref{right_contrac_1}).

The proof for the second equation is analog, taking the 'left nature' of
the defining equation (\ref{left contration}) into account.

To see that the left contraction vanishes for $m>r$, observe that
$\langle g,j_f x\rangle=\langle g\smwedge f,x\rangle$ for all 
$g\in \Omega(\mathfrak{g},A)$ implies $j_f x=0$, since either 
$|g\smwedge f|>|x|$, or $g\smwedge f=0$ and the natural pairing 
is zero in both cases.

To see that the left contraction operator satisfies equation 
(\ref{left_contrac_1}) for any $r\leq m$, let
$g_1\smwedge\cdots\smwedge g_{r-m} \in \Omega(\mathfrak{g},A)$ be any arbitrary 
simple cotensor of degree $(r-m)$. Substitute (\ref{left_contrac_1}) into the left side of (\ref{left contration}) to get
\begin{multline*}
\langle g_1\smwedge\cdots\smwedge g_{r-m},
 j_{f_1\wedge\cdots\wedge f_m}x_1\smwedge\cdots\smwedge x_{r} \rangle =\\
\textstyle\frac{1}{(r-m)!}\sum_{\sigma\in \mathbb{S}_r}\sgn{\sigma}
\langle f_{1},x_{\sigma(r-m+1)}\rangle\cdots\langle f_m,x_{\sigma(r)}\rangle\;\cdot\\
\cdot\langle g_1\smwedge\cdots\smwedge g_{r-m}, 
 x_{\sigma(1)}\smwedge\cdots\smwedge x_{\sigma(r-m)}\rangle\;.
\end{multline*}
and apply the transposed identity (\ref{determinant_eq_2}) to the 
most right pairing on the
right side. This can be written as follows:
(According to a better readable text we write $\mathbb{S}_{M}$ for the set of 
all permutations defined explicit on the finite set $M$.)
\begin{multline*}
\textstyle\frac{1}{(r-m)!}\sum_{\sigma\in \mathbb{S}_r}
 \sum_{t \in \mathbb{S}_{\{\sigma(1),\ldots,\sigma(r-m)\}}} 
  \sgn{\sigma}\sgn{\tau}\;\cdot\\
\cdot\langle f_1,x_{\sigma(r-m+1)}\rangle\cdots\langle f_m,x_{\sigma(r)}\rangle
 \cdot\langle g_1,x_{\tau(\sigma(1))}\rangle\cdots
  \langle g_{r-m}, x_{\tau(\sigma(r-m))}\rangle\;.
\end{multline*}
Now observe, that for any $\sigma\in \mathbb{S}_r$ and 
$\tau\in \mathbb{S}_{\{\sigma(1),\ldots,\sigma(r-m)\}}$, the
permutation $(\tau(\sigma(1)),\ldots, \tau(\sigma(r-m)), 
\sigma(r-m+1),\ldots,\sigma(r))$ is again 
an element of $\mathbb{S}_r$ and since there are precisely $(r-m)!$ many
permutations in $\mathbb{S}_{\{\sigma(1),\ldots,\sigma(r-m)\}}$, we can just 
'absorb' the sum over 
$\mathbb{S}_{\{\sigma(1),\ldots,\sigma(r-m)\}}$ 
in the previous expression into the sum over $\mathbb{S}_r$. 
This gives
\begin{multline*}
\textstyle\sum_{\sigma\in \mathbb{S}_{r}} \sgn{\sigma}
\langle f_1,x_{\sigma(r-m+1)}\rangle\cdots\langle f_m,x_{\sigma(r)}\rangle
\langle g_1,x_{\sigma(1)}\rangle\cdots\langle g_{r-m},x_{\sigma(r-m)}\rangle=\\
\langle f_1\smwedge\cdots\smwedge f_m\smwedge 
 g_1\smwedge\cdots\smwedge g_{r-m}, 
 x_{r-m+1}\smwedge\cdots\smwedge x_{r} 
  \smwedge x_1\smwedge\cdots\smwedge x_{r-m} \rangle=\\
 (-1)^{m(r-m)+m(r-m)}\langle g_1\smwedge\cdots\smwedge g_{r-m}\smwedge 
 f_1\smwedge\cdots\smwedge f_m, x_1\smwedge\cdots\smwedge x_r \rangle\;.
\end{multline*}
This proofs (\ref{left contration}) for arbitrary simple cotensors 
$g_1\smwedge\cdots\smwedge g_{r-m}$ and hence for any cotensor of degree
$(r-m)$, since (\ref{left contration}) is $A$-linear in both arguments and 
any such cotensor is a sum of simple ones. For cotensor s
$g\in \Omega(\mathfrak{g},A)$ of degree other that $(r-m)$ both sides of 
equation (\ref{left contration}) vanish.
\end{proof}
With these precomputations we can now go on and proof equation 
(\ref{multi_rules-5}) by a long but
very basic combinatorial exercise.
\begin{proposition}Let $(A,\mathfrak{g})$ be a torsionless Lie Rinehart pair,
$x,y\in X(\mathfrak{g},A)$ homogeneous tensors and
$f\in \Omega(\mathfrak{g},A)$ a homogeneous cotensor. Then
\begin{equation}\label{eq_contraction}
i_{j_{(i_y f)} x}f = i_{y}f \smwedge i_xf\;.
\end{equation}
\end{proposition}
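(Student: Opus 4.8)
We prove (\ref{eq_contraction}), i.e. equation (\ref{multi_rules-5}). Both sides are homogeneous cotensors of one and the same rank, and since $(A,\mathfrak{g})$ is torsionless the natural pairing is non degenerate (\ref{unique_pairing}); hence it suffices to show that the two sides pair equally against every \emph{simple} tensor $z\in X(\mathfrak{g},A)$. Every operator occurring is $A$-linear in each of its slots, so we may take $x=x_{1}\smwedge\cdots\smwedge x_{r}$ and $y=y_{1}\smwedge\cdots\smwedge y_{s}$ simple. The cotensor $f$ enters quadratically; writing $f=\sum_{\alpha}f^{(\alpha)}$ with the $f^{(\alpha)}$ simple, both sides of (\ref{eq_contraction}) become double sums over $(\alpha,\beta)$, so it is enough to establish the symmetric bilinear relation
\begin{equation*}
i_{j_{(i_{y}f)}x}g+i_{j_{(i_{y}g)}x}f=i_{y}f\smwedge i_{x}g+i_{y}g\smwedge i_{x}f
\end{equation*}
for all simple cotensors $f=f_{1}\smwedge\cdots\smwedge f_{m}$ and $g=g_{1}\smwedge\cdots\smwedge g_{m}$; the case $g=f$ returns (\ref{eq_contraction}).

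Now fix a simple test tensor $z$. If $z$ does not have rank $p:=2m-r-s$ then all the cotensors in sight pair to zero with $z$ and there is nothing to prove, so assume it does. The plan is to remove all three contraction operators by adjointness and to reduce to an identity between pairings. Using $\langle i_{z}h,v\rangle=\langle h,z\smwedge v\rangle$ (see (\ref{right contration})) and the defining relation of the left contraction $\langle G,j_{H}u\rangle=\langle G\smwedge H,u\rangle$ (see (\ref{left contration})), one computes
\begin{equation*}
\langle i_{j_{(i_{y}f)}x}g,\,z\rangle=\langle g,\,(j_{(i_{y}f)}x)\smwedge z\rangle=\sgn{p(r-m+s)}\,\langle (i_{z}g)\smwedge(i_{y}f),\,x\rangle
\end{equation*}
(the sign being the Koszul sign for moving $z$ past $j_{(i_{y}f)}x$, which has rank $r-m+s$), and likewise
\begin{equation*}
\langle i_{y}f\smwedge i_{x}g,\,z\rangle=\langle i_{y}f,\,j_{(i_{x}g)}z\rangle=\langle f,\,y\smwedge(j_{(i_{x}g)}z)\rangle,
\end{equation*}
together with the analogues obtained by interchanging $f$ and $g$.

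Finally, apply the explicit formulas (\ref{right_contrac_1}) and (\ref{left_contrac_1}) (which rest on (\ref{determinant_eq_1})) to the contractions still present. Both sides of the bilinear relation then unfold into signed sums, indexed by permutations of the $m$ factors of $f$ and of the $m$ factors of $g$, of products of pairing entries of the form $\langle f_{a},x_{b}\rangle$, $\langle f_{a},y_{b}\rangle$, $\langle f_{a},z_{b}\rangle$ and their $g$-counterparts, with the unpaired factors wedged together; on the left the summation is organised by a split of the factors of $x$, on the right by a split of the factors of $z$. The content of the proof is to see that these two signed sums agree: on each side the factors of $f$ (and of $g$) break up into the block contracted against the $y_{b}$, the block contracted against the $x_{b}$, and the surviving block, and one has to check that the accompanying signs --- the permutation signs $\sgn{\sigma}$, the normalising factorials appearing in (\ref{right_contrac_1}) and (\ref{left_contrac_1}), the sign built into the left contraction, and the reordering signs of the exterior products --- all match up. This matching is a Laplace-/Cauchy--Binet-type bookkeeping; it is purely combinatorial, and it is the only real obstacle in the proof. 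Specialising $g=f$ in the identity so obtained gives (\ref{eq_contraction}).
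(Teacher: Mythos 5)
Your setup is sound and in one respect more careful than the paper's own reduction: you correctly observe that $f$ enters equation (\ref{eq_contraction}) quadratically, so the usual ``reduce to simple arguments by $A$-linearity'' step really requires the polarized, symmetrized identity $i_{j_{(i_{y}f)}x}g+i_{j_{(i_{y}g)}x}f=i_{y}f\smwedge i_{x}g+i_{y}g\smwedge i_{x}f$ for simple $f,g$ of equal degree (the paper glosses this), and your adjointness manipulations of the three contractions against a test tensor $z$, including the sign $\sgn{p(r-m+s)}$, are correct and legitimately use the non-degeneracy of the pairing from (\ref{unique_pairing}).

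However, there is a genuine gap: the proof stops exactly where the real work begins. After unfolding both sides via (\ref{right_contrac_1}) and (\ref{left_contrac_1}) you assert that the two signed permutation sums ``agree'' by ``Laplace-/Cauchy--Binet-type bookkeeping,'' but this matching is the entire content of the proposition and is not routine. It is precisely here that the identity is delicate: the unsymmetrized two-cotensor version $i_{j_{(i_{y}f)}x}g=i_{y}f\smwedge i_{x}g$ is false already for $f=f_{1}\smwedge f_{2}$, $g=g_{1}\smwedge g_{2}$ and $x,y$ vectors (the left side is a multiple of $g_{1}\smwedge g_{2}$, the right side mixes $f$- and $g$-factors), so the vanishing of the cross terms depends on exact cancellations that must be exhibited, not invoked. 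The paper's proof spends its length on exactly this: restricting the permutation sums to the index sets where the contracted blocks are disjoint, showing that terms with overlapping blocks cancel pairwise under a transposition of the $\tau$-sum, counting the fibers of the ``unshuffling'' maps by falling factorials to reconcile the normalizing factorials, and matching the residual reordering signs. Until you carry out that verification (now for the symmetrized two-cotensor statement, which is slightly more intricate than the diagonal case the paper treats), the proposition is not established.
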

\begin{proof}To see that the equation is well defined, it is enough to show
that both sides are homogeneous 
of the same tensor degree. Therefore recall that we consider tensors as 
concentrated in positive degrees and cotensors as concentrated in negative degrees. Then 
\begin{align*}
|i_{j_{(i_y f)}x}f| 
&= |f|+ |j_{i_y f} x|\\
&= |f|+ \left(|x|+ |i_y f|\right)\\ 
&= \left( |f| + |y|\right) + \left(|f| + |x|\right)\\ 
&= |i_y f|+|i_x f|\\
&= |i_y f \smwedge i_x f|\;.
\end{align*} 
To proof the equation it is enough to consider simple (co)tensors only. 
The general situation then follows since all involved operations
are $A$-linear in all arguments and since any tensor is a (not necessarily unique) 
sum of simple tensors. 

We write $\langle x,f\rangle$ simultaneously for the natural
pairing, the left contraction $i_xf$, or the right contraction $j_fx$, if 
$|x|\leq 1$ and if $-|f|\leq 1$. In 
particular this means $\langle x,f\rangle= x\cdot f$, if at least one argument
is an element of $A$ (has degree zero). This reduces the overall complexity of
the proof, since we don't have to distinguish cases where at least one argument is 
an element of $A$.

To proceed let $r,k,m\in\N$ be natural numbers and 
for all $1\leq i \leq r$, $1\leq j \leq k$ and $1\leq h\leq m$ let
$x_i,y_j\in A \cup \mathfrak{g}$ be any scalar or vector and
let $f_h\in A\cup \mathfrak{g}^\vee $ be any scalar or covector. 
Therefore $x_1\smwedge\cdots\smwedge x_r$
and $y_1\smwedge\cdots\smwedge y_k$ are simple tensors and
$f_1\smwedge\cdots\smwedge f_m$ is a simple cotensor.

We first look at the right side of equation (\ref{eq_contraction}).
Due to (\ref{right_contrac_1}) it vanishes whenever $k>m$ or $r>m$ and
for $k\leq m$ and $r\leq m$ we can rewrite it into the more explicit form
\begin{multline}\label{right_side}
\textstyle( i_{y_1\wedge\cdots\wedge y_k}f_1\smwedge\cdots\smwedge f_m)\smwedge
( i_{x_1\wedge\cdots\wedge x_r}f_1\smwedge\cdots\smwedge f_m)=\\
\textstyle(\frac{1}{(m-k)!}\sum_{\sigma\in S_m}\sgn{\sigma}
 \langle f_{\sigma(1)},y_1\rangle\cdots\langle f_{\sigma(k)},y_k\rangle
  f_{\sigma(k+1)}\smwedge\cdots\smwedge f_{\sigma(m)})\smwedge\\
\textstyle(\frac{1}{(m-r)!}\sum_{t\in S_m}\sgn{\tau}
 \langle f_{\tau(1)},x_1\rangle\cdots\langle f_{\tau(r)},x_r\rangle
  f_{\tau(r+1)}\smwedge\cdots\smwedge f_{\tau(m)})\;.
\end{multline}
Due to the antisymmetry of the exterior product, this vanishes if $r+k<m$. 
To see that observe
$\{\sigma(k+1),\ldots,\sigma(m)\}\cap \{\tau(r+1),\ldots,r(m)\} \neq \emptyset$,
for $r+k<m$ and any permutations $\sigma,\tau\in S_m$. Hence
$
(f_{\sigma(k+1)}\smwedge\cdots\smwedge f_{\sigma(m)})\smwedge 
(f_{\tau(r+1)}\smwedge\cdots\smwedge f_{\tau(m)})=0\;.
$

In conclusion we can say that the right side of the equation is only nontrivial
in a narrow range of tensor degrees, namely for $r+k\geq m$ and $r\leq m$ as
well as $k\leq m$. 

Now suppose $r+k\geq m$, $r\leq m$ and $k\leq m$. Since the product in 
$A$ is commutative we rewrite (\ref{right_side}) further:
\begin{multline*}
\textstyle\frac{1}{(m-k)!(m-r)!r!}
 \sum_{\sigma,\upsilon\in S_m}\sum_{\tau\in S_r}
 \sgn{\sigma + \upsilon}\;\cdot\\
  \cdot\langle f_{\sigma(1)},y_1\rangle\cdots\langle f_{\sigma(k)},y_k\rangle
   \langle f_{\upsilon(\tau(1))},x_{\tau(1)}\rangle\cdots\langle 
    f_{\upsilon(\tau(r))},x_{\tau(r)}\rangle\;\cdot\\
\cdot f_{\sigma(k+1)}\smwedge\cdots\smwedge f_{\sigma(m)}\smwedge
 f_{\upsilon(r+1)}\smwedge\cdots\smwedge f_{\upsilon(m)}=
\end{multline*}
\begin{multline}\label{right_side-2}
\textstyle\frac{1}{(m-k)!(m-r)!r!}
 \sum_{\sigma,\upsilon\in S_m}\sum_{\tau\in S_r}
 \sgn{\sigma + \upsilon + \tau}\;\cdot\\
  \cdot\langle f_{\sigma(1)},y_1\rangle\cdots\langle f_{\sigma(k)},y_k\rangle
   \langle f_{\upsilon(1)},x_{\tau(1)}\rangle\cdots\langle 
    f_{\upsilon(r)},x_{\tau(r)}\rangle\;\cdot\\
\cdot f_{\sigma(k+1)}\smwedge\cdots\smwedge f_{\sigma(m)}\smwedge
 f_{\upsilon(r+1)}\smwedge\cdots\smwedge f_{\upsilon(m)}\;.
\end{multline}
Due to the antisymmetry of the exterior product only terms
for those permutations $\sigma,\upsilon\in S_m$ 
with $\{\sigma(k+1),\ldots,\sigma(m)\}\cap
\{\upsilon(r+1),\ldots,\upsilon(m)\}=\emptyset$ are non zero here
and since we assume $(m-k)\leq r$ we can restrict the summation in 
(\ref{right_side-2}) to those $\sigma,\upsilon \in S_m$ satisfying
$\{\sigma(k+1),\ldots,\sigma(m)\}\subset 
\{\upsilon(1),\ldots,\upsilon(r)\}$. Writing
$$I_1(m,k,r)=\{(\sigma,\upsilon)\in S_m \times S_m\;|\; 
\{\sigma(k+1),\ldots,\sigma(m)\}\subset\{\upsilon(1),\ldots,\upsilon(r)\}\}$$
for the appropriate index set of restricted permutations,
(\ref{right_side-2}) rewrites into
\begin{multline}\label{right_side-3}
\textstyle\frac{1}{(m-k)!(m-r)!r!}
 \sum_{\sigma,\upsilon\in I_1(m,k,r)}
  \sum_{\tau\in S_r}\sgn{\sigma+\upsilon+\tau}\cdot{}\\
  \cdot\langle f_{\sigma(1)},y_1\rangle\cdots\langle f_{\sigma(k)},y_k\rangle
   \langle f_{\upsilon(1)},x_{\tau(1)}\rangle\cdots\langle 
     f_{\upsilon(r)},x_{\tau(r)}\rangle\;\cdot\\
\cdot f_{\sigma(k+1)}\smwedge\cdots\smwedge f_{\sigma(m)}\smwedge
 f_{\upsilon(r+1)}\smwedge\cdots\smwedge f_{\upsilon(m)}\;.
\end{multline}
To simplify this further, observe that for any
$(\sigma,\upsilon)\in I_1(m,k,r)$ there is a unique permutation,  
$\phi\in S_r$, which 'unshuffles $\sigma(k+1),\ldots,\sigma(m)$ in
$\{\upsilon(1),\ldots,\upsilon(r)\}$ to the left', i.e. which acts by
\begin{multline*}
\upsilon\circ (\phi\times id_{m-r})(\{1,\ldots,m\})=\\
\{\sigma(k+1),\ldots, \sigma(m),
\upsilon\phi(m-k+1),\ldots,\upsilon\phi(r),\upsilon(r+1),\ldots,\upsilon(m)\}\;,
\end{multline*}
where $id_{m-r}\in S_{m-r}$ is the identity permutation, and where the order 
in the (ordered) set $\{\upsilon(1),\ldots,\upsilon(r)\}
\backslash\{\sigma(k+1),\ldots,\sigma(m)\}$ 
is not chanced by $\phi$. Using this we can 
use the commutativity of the multiplication in $A$, to reorder the
scalar factors. In particular we get
\begin{multline*}
\textstyle{\sum_{\tau\in S_r}}\sgn{\sigma+\upsilon+\tau}
 \langle f_{\sigma(1)},y_1\rangle\cdots\langle f_{\sigma(k)},y_k\rangle
  \langle f_{\upsilon(1)},x_{\tau(1)}\rangle\cdots\langle 
     f_{\upsilon(r)},x_{\tau(r)}\rangle\;\cdot\\
\cdot f_{\sigma(k+1)}\smwedge\cdots\smwedge f_{\sigma(m)}\smwedge
 f_{\upsilon(r+1)}\smwedge\cdots\smwedge f_{\upsilon(m)}=
\end{multline*}
\begin{multline*}
\textstyle{\sum_{\tau\in S_r}}\sgn{\sigma+
\upsilon(\phi\times id_{m-r})+\tau\phi}\\
 \langle f_{\sigma(1)},y_1\rangle\cdots\langle f_{\sigma(k)},y_k\rangle
  \langle f_{\upsilon\phi(1)},x_{\tau\phi(1)}\rangle\cdots\langle 
     f_{\upsilon\phi(r)},x_{\tau\phi(r)}\rangle\;\cdot\\
\cdot f_{\sigma(k+1)}\smwedge\cdots\smwedge f_{\sigma(m)}\smwedge
 f_{\upsilon(r+1)}\smwedge\cdots\smwedge f_{\upsilon(m)}=
\end{multline*}
\begin{multline*}
\textstyle{\sum_{\tau\in S_r}}\sgn{\sigma+\upsilon+\tau}
 \langle f_{\sigma(1)},y_1\rangle\cdots\langle f_{\sigma(k)},y_k\rangle
  \langle f_{\sigma(k+1)},x_{\tau(1)}\rangle\cdots\langle 
     f_{\sigma(m)},x_{\tau(m-k)}\rangle\cdot\\
     \langle f_{\upsilon\phi(m-k+1)},x_{\tau(m-k+1)}\rangle\cdots\langle 
     f_{\upsilon\phi(r)},x_{\tau(r)}\rangle\cdot{}\\
\cdot f_{\sigma(k+1)}\smwedge\cdots\smwedge f_{\sigma(m)}\smwedge
 f_{\upsilon(r+1)}\smwedge\cdots\smwedge f_{\upsilon(m)}
\end{multline*}
The number of ways  
$\{\sigma(k+1),\ldots,\sigma(m)\}$ can be a subset of 
$\{\upsilon(1),\ldots,\upsilon(r)\}$ is the
same as counting the number of ways to put $(m-k)$ labeled balls into 
$r$ labeled slots with exclusion, which is computed by the
\textit{falling factorial}
$$
P(r,m-k)=r\cdot(r-1)\cdots (r-(m-k)+1)=\textstyle\frac{r!}{(r+k-m)!}
$$
and if we define another index set 
$$J_1(m,k,r)=
\{(\sigma,\upsilon)\in S_m\times S_m\;|\;
\upsilon(1)=\sigma(k+1),\ldots, \upsilon(m-k)=\sigma(m)\}$$
we finally can rewrite (\ref{right_side-3}) into
\begin{multline}\label{right_side_4}
\textstyle\frac{1}{(m-k)!(m-r)!(r+k-m)!}
 \sum_{\sigma,\upsilon\in J_1(m,r,k)}
  \sum_{\tau\in S_r}\sgn{\sigma+\upsilon+\tau}\cdot{}\\
 \langle f_{\sigma(1)},y_1\rangle\cdots\langle f_{\sigma(k)},y_k\rangle
  \langle f_{\sigma(k+1)},x_{\tau(1)}\rangle\cdots\langle 
     f_{\sigma(m)},x_{\tau(m-k)}\rangle\cdot\\
     \langle f_{\upsilon(m-k+1)},x_{\tau(m-k+1)}\rangle\cdots\langle 
     f_{\upsilon(r)},x_{\tau(r)}\rangle\cdot{}\\
\cdot f_{\sigma(k+1)}\smwedge\cdots\smwedge f_{\sigma(m)}\smwedge
 f_{\upsilon(r+1)}\smwedge\cdots\smwedge f_{\upsilon(m)}\;.
\end{multline}
In the next step we transform the left side of equation 
(\ref{eq_contraction}), exactly into (\ref{right_side_4}). We start by 
making the nested contraction $i_{j_{i_y f}x}f$ explicit using (\ref{right_contrac_1}) and (\ref{left_contrac_1}).
For $k>m$, the contraction $i_yf$ vanishes and we assume $k\leq m$ to compute  
\begin{multline}\label{eq_1}
i_{y_{1}\wedge\cdots\wedge y_{k}}
 f_{1}\smwedge\cdots\smwedge f_{m}=\\
\textstyle \frac{1}{(m-k)!}\sum_{\sigma\in S_{m}}\sgn{\sigma}
 \langle f_{\sigma(1)},y_{1}\rangle\cdots\langle f_{\sigma(k)},y_{k}\rangle
  f_{\sigma(k+1)}\smwedge\cdots\smwedge f_{\sigma(m)}\;.
\end{multline}
For $r+k<m$, the double contraction $j_{i_y f}x$ is zero and therefore we assume 
$r+k\geq m$ to compute
\begin{multline*}
j_{(i_{y_{1}\wedge\cdots\wedge y_{k}}
 f_{1}\wedge\cdots\wedge f_{m})}
  x_{1}\smwedge\cdots\smwedge x_{r}=\\
\textstyle\frac{1}{(m-k)!}\sum_{\sigma\in S_{m}}\sgn{\sigma}
 \langle f_{\sigma(1)},y_{1}\rangle\cdots\langle f_{\sigma(k)},y_{k}\rangle
  j_{f_{\sigma(k+1)}\wedge\cdots\wedge f_{\sigma(m)}}x_{1}\smwedge\cdots\smwedge x_{r}=
\end{multline*}
\begin{multline*}
\textstyle\frac{1}{(m-k)!(r+k-m)!}
 \sum_{\sigma\in S_{m}}\sum_{\tau\in S_{r}}\sgn{\sigma+\tau}\langle f_{\sigma(1)},y_{1}\rangle\cdots
  \langle f_{\sigma(k)},y_{k}\rangle\;\cdot\\
   \cdot\langle f_{\sigma(k+1)},x_{\tau(r+k-m+1)}\rangle\cdots\langle f_{\sigma(m)},x_{\tau(r)}\rangle 
    x_{\tau(1)}\smwedge\cdots\smwedge x_{\tau(r+k-m)}\;.
\end{multline*}
For $r+k>2m$, the contraction $i_{j_{(i_y f)}x}f$ is zero and therefore we assume 
$r+k\leq 2m$ to compute
\begin{comment}

\begin{multline*}
\textstyle\frac{1}{(m-k)!(r+k-m)!}
 \sum_{\sigma\in S_{m}}\sum_{t\in S_{r}}\sgn{\sigma}\sgn{\tau}\langle f_{\sigma(1)},y_{1}\rangle\cdots
  \langle f_{\sigma(k)},y_{k}\rangle\;\cdot\\
\cdot\langle f_{\sigma(k+1)},x_{\tau(r+k-m+1)}\rangle\cdots\langle f_{\sigma(m)},x_{\tau(r)}\rangle 
 i_{x_{\tau(1)}\wedge\cdots\wedge x_{\tau(r+k-m)}}f_{1}\smwedge\cdots\smwedge f_{m}=
\end{multline*}

\end{comment}
\begin{multline}\label{left_side-1}
\textstyle\frac{1}{(m-k)!(r+k-m)!(2m-r-k)!}
 \sum_{\sigma,\upsilon\in S_{m}}\sum_{\tau\in S_{r}}
 \sgn{\sigma+\tau+\upsilon}\;\cdot\\
\cdot\langle f_{\sigma(1)},y_{1}\rangle\cdots\langle f_{\sigma(k)},y_{k}\rangle\cdot
 \langle f_{\sigma(k+1)},x_{\tau(r+k-m+1)}\rangle\cdots 
  \langle f_{\sigma(m)},x_{\tau(r)}\rangle\;\cdot\\
\cdot\langle f_{\upsilon(1)},x_{\tau(1)}\rangle
 \cdots\langle f_{\upsilon(r+k-m)},x_{\tau(r+k-m)}\rangle 
  f_{\upsilon(r+k-m+1)}\smwedge\cdots\smwedge f_{\upsilon(m)}\;.
\end{multline}
To simplify this, we show that
for arbitrary but fixed permutations $\sigma,\upsilon\in S_m$ with 
$\{\sigma(k+1),\ldots,\sigma(m)\}\cap
\{\upsilon(1),\ldots,\upsilon(r+k-m)\} \neq \emptyset$
the summation over all permutations from $S_r$ vanishes in 
(\ref{left_side-1}).

To see that, let $\sigma,\upsilon\in S_m$ with 
$\{\sigma(k+1),\ldots,\sigma(m)\}\cap
\{\upsilon(1),\ldots,\upsilon(r+k-m)\} \neq \emptyset$.
Then there is a $k+1\leq j \leq m$ and a $1\leq i \leq r-m+k$, such that 
$\sigma(j)=\upsilon(i)$ and for any permutation $\tau\in S_r$ precisely one other 
permutation $\tau'\in S_r$, which is equal to $\tau$ but with 
$\tau(i)$ and $\tau(r-m+j)$ transposed. Moreover,
since $i\neq r-m+j$ this transposition is not the identity.

Then
$\langle f_{\sigma(j)},x_{\tau(r-m+j)}\rangle=
\langle f_{\upsilon(i)},x_{\tau'(i)}\rangle$ 
and $\langle f_{\sigma(j)},x_{\tau'(r-m+j)}\rangle=
\langle f_{\upsilon(i)},x_{\tau(i)}\rangle$
and since $\sgn{\tau'}=-\sgn{\tau}$, the term for $\tau$ cancel against the 
term for $\tau'$.
 
Consequently we can restrict the summation in expression (\ref{left_side-1})
to those $\sigma,\upsilon \in S_m$ satisfying
$\{\sigma(k+1),\ldots,\sigma(m)\}\subset
\{\upsilon(r+k-m+1),\ldots,\upsilon(m)\}$. 

From this follows that the left side is non zero only if
$m-k\leq 2m-r-k$, which means $r\leq m$. Summarizing this,
we can say that the left side of equation () is only nontrivial
in a narrow range of tensor degrees, namely for $r + k \geq m$ 
and $r \leq m$ as well as $k \leq m$, which coincides with the triviality of
the right side.

To proceed we assume, $r + k \geq m$ and $r \leq m$ as well as $k \leq m$
and write
$$I_2(m,k,r)=\{(\sigma,\upsilon)\in S_m \times S_m\;|\; 
\{\sigma(k+1),\ldots,\sigma(m)\}\subset\{\upsilon(r+k-m+1),\ldots,\upsilon(m)\}\}$$
for the appropriate set of restricted permutations, 
(\ref{left_side-1}) can equivalently be written as
\begin{multline}\label{left_side-2}
\textstyle\frac{1}{(m-k)!(r+k-m)!(2m-r-k)!}
 \sum_{\sigma,\upsilon\in I_2(m,k,r)}\sum_{\tau\in S_{r}}
 \sgn{\sigma+\tau+\upsilon}\;\cdot\\
\cdot\langle f_{\sigma(1)},y_{1}\rangle\cdots\langle f_{\sigma(k)},y_{k}\rangle\cdot
 \langle f_{\sigma(k+1)},x_{\tau(r+k-m+1)}\rangle\cdots 
  \langle f_{\sigma(m)},x_{\tau(r)}\rangle\;\cdot\\
\cdot\langle f_{\upsilon(1)},x_{\tau(1)}\rangle
 \cdots\langle f_{\upsilon(r+k-m)},x_{\tau(r+k-m)}\rangle 
  f_{\upsilon(r+k-m+1)}\smwedge\cdots\smwedge f_{\upsilon(m)}\;.
\end{multline}
To simplify this further, observe that for any
$(\sigma,\upsilon)\in I_2(m,k,r)$ there is a unique $\phi\in S_{2m-r-k}$,
which 'unshuffles $\sigma(k+1),\ldots, \sigma(m)$ in 
$\{\upsilon(r+k-m+1),\ldots, \upsilon(m)\}$ to the left', but leaves the
order in the (ordered) set 
$\{\upsilon(r+k-m+1),\ldots,\upsilon(m)\} \backslash 
\{\sigma(k+1),\ldots,\sigma(m)\}$ unchanged. In particular it acts by
\begin{multline*}
(id_{r+k-m}\times\phi)\upsilon(\{1,\ldots,m\})=\\
\{\upsilon(1),\ldots,\upsilon(r+k-m),
\sigma(k+1),\ldots,\sigma(m),\phi\upsilon(r+1),\ldots,\phi\upsilon(m)\}
\end{multline*}
Writing 
$$J_2(m,k,r)=\{(s,u)\in S_m\times S_m\;|\;
\upsilon(r+k-m+1)=\sigma(k+1),\ldots, \upsilon(r)=\sigma(m)\}$$
this 'unshuffling' gives a surjection of sets 
$\pi: I_2(m,k,r)\to J_2(m,k,r)$, with 
fiber over $\pi^{-1}(\sigma,\upsilon)$ the set of all 
$(\sigma,\delta)\in J_2(m,k,r)$,
such that the $\sigma(k+1),\ldots, \sigma(m)$ are somehow 
inside $\{\upsilon(r+k-m+1),\ldots, \upsilon(m)\}$.

The number of ways  
$\{\sigma(k+1),\ldots,\sigma(m)\}$ can be a subset of the set
$\{u(r+k-m+1),\ldots,u(m)\}$, is the
same as counting the number of ways to put $(m-k)$ labeled balls into 
$2m-(r+k)$ labeled slots with exclusion, which is computed by the
\textit{falling factorial}
$$
P(2m-(r+k),m-k)=
(2m-(r+k))\cdot(2m-(r+k)-1)\cdots (m-r+1)=
\textstyle\frac{(2m-(r+k))!}{(m-r)!}\;.
$$
Now the point is, that all terms in (\ref{left_side-2}), indexed by elements of 
the same fiber are in fact equal. To see that, 
let $(\sigma,\delta)\in \pi^{-1}(\sigma,\upsilon)$ for some 
$(\sigma,\upsilon)\in J_2(m,k,r)$. Starting with
\begin{multline*}
\sgn{\sigma+\upsilon+\tau}\cdot
 \langle f_{\sigma(1)},y_{1}\rangle\cdots\langle f_{\sigma(k)},y_{k}\rangle\cdot
 \langle f_{\sigma(k+1)},x_{\tau(r+k-m+1)}\rangle\cdots 
  \langle f_{\sigma(m)},x_{\tau(r)}\rangle\;\cdot\\
\cdot\langle f_{\upsilon(1)},x_{\tau(1)}\rangle
 \cdots\langle f_{\upsilon(r+k-m)},x_{\tau(r+k-m)}\rangle\;\cdot\\
  \cdot f_{\sigma(k+1)}\smwedge\cdots\smwedge f_{\sigma(m)}\smwedge
   f_{\upsilon(r+1)}\smwedge\cdots\smwedge f_{\upsilon(m)}
\end{multline*}
observe that there is a unique 
permutation $\phi\in S_{2m-r-k}$ with
$(id_{r+k-m}\times \phi)\circ \delta =\upsilon$. We can reorder the cotensor 
factors in the previous expression according to the inverse of $\phi$.
 Since the exterior product is antisymmetric we get
\begin{multline*}
\sgn{\sigma+\upsilon+\tau +\phi^{-1}}\cdot
 \langle f_{\sigma(1)},y_{1}\rangle\cdots\langle f_{\sigma(k)},y_{k}\rangle\cdot
 \langle f_{\sigma(k+1)},x_{\tau(r+k-m+1)}\rangle\cdots 
  \langle f_{\sigma(m)},x_{\tau(r)}\rangle\;\cdot\\
\cdot\langle f_{\upsilon(1)},x_{\tau(1)}\rangle
 \cdots\langle f_{\upsilon(r+k-m)},x_{\tau(r+k-m)}\rangle\;\cdot\\
  \cdot f_{\phi^{-1}\sigma(k+1)}\smwedge\cdots\smwedge f_{\phi^{-1}\sigma(m)}
   \smwedge f_{\phi^{-1}\upsilon(r+1)}\smwedge\cdots\smwedge
    f_{\phi^{-1}\upsilon(m)}
\end{multline*}
which we can write as
\begin{multline*}
\sgn{\sigma+(id_{r+k-m}\times \phi)^{-1}\upsilon+\tau }\cdot
 \langle f_{\sigma(1)},y_{1}\rangle\cdots\langle f_{\sigma(k)},y_{k}\rangle\cdot{}\\
 \langle f_{\sigma(k+1)},x_{\tau(r+k-m+1)}\rangle\cdots 
  \langle f_{\sigma(m)},x_{\tau(r)}\rangle
\cdot\langle f_{\upsilon(1)},x_{\tau(1)}\rangle
 \cdots\langle f_{\upsilon(r+k-m)},x_{\tau(r+k-m)}\rangle\;\cdot\\
  \cdot f_{\phi^{-1}\sigma(k+1)}\smwedge\cdots\smwedge f_{\phi^{-1}\sigma(m)}
   \smwedge f_{\phi^{-1}\upsilon(r+1)}\smwedge\cdots\smwedge
    f_{\phi^{-1}\upsilon(m)}=
\end{multline*}
\begin{multline*}
\sgn{\sigma+\delta+\tau}\cdot
 \langle f_{\sigma(1)},y_{1}\rangle\cdots\langle f_{\sigma(k)},y_{k}\rangle\cdot
 \langle f_{\sigma(k+1)},x_{\tau(r+k-m+1)}\rangle\cdots 
  \langle f_{\sigma(m)},x_{\tau(r)}\rangle\;\cdot\\
\cdot\langle f_{\delta(1)},x_{\tau(1)}\rangle
 \cdots\langle f_{\delta(r+k-m)},x_{\tau(r+k-m)}\rangle 
  f_{\delta(r+k-m+1)}\smwedge\cdots\smwedge f_{\delta(m)}\;.
\end{multline*}
Since we can do this for any element of the same fiber, they really
describe the same term in (\ref{left_side-2}).

Using this we can rewrite (\ref{left_side-2}) as a sum over the index
set $J_2(m,r,k)$. This gives
\begin{multline*}
\textstyle\frac{1}{(m-k)!(m-r)!(r+k-m)!}
 \sum_{\sigma,\upsilon\in J_2(m,k,r)}
  \sum_{\tau\in S_{r}}\sgn{\sigma+\upsilon+\tau}\;\cdot\\
\cdot\langle f_{\sigma(1)},y_{1}\rangle\cdots\langle 
 f_{\sigma(k)},y_{k}\rangle\cdot
 \langle f_{\sigma(k+1)},x_{\tau(r+k-m+1)}\rangle\cdots 
  \langle f_{\sigma(m)},x_{\tau(r)}\rangle\;\cdot\\
\cdot\langle f_{\upsilon(1)},x_{\tau(1)}\rangle
 \cdots\langle f_{\upsilon(r+k-m)},x_{\tau(r+k-m)}\rangle \\
  f_{\sigma(k+1)}\smwedge\cdots\smwedge f_{\sigma(m)}\smwedge
   f_{\upsilon(r+1)}\smwedge\cdots\smwedge f_{\upsilon(m)}\;.
\end{multline*}
And by reordering the summations over $\tau$ we can rewrite this further into 
\begin{multline}\label{left_side-4}
\textstyle\frac{1}{(m-k)!(m-r)!(r+k-m)!}
 \sum_{\sigma,\upsilon\in J_2(m,k,r)}
  \sum_{\tau\in S_{r}}\sgn{\sigma+\upsilon+\tau+(m-k)(r+k-m)}\;\cdot\\
\cdot\langle f_{\sigma(1)},y_{1}\rangle\cdots\langle 
 f_{\sigma(k)},y_{k}\rangle\cdot
 \langle f_{\sigma(k+1)},x_{\tau(1)}\rangle\cdots 
  \langle f_{\sigma(m)},x_{\tau(m-k)}\rangle\;\cdot\\
\cdot\langle f_{\upsilon(1)},x_{\tau(m-k+1)}\rangle
 \cdots\langle f_{\upsilon(r+k-m)},x_{\tau(r)}\rangle \\
  f_{\sigma(k+1)}\smwedge\cdots\smwedge f_{\sigma(m)}\smwedge
   f_{\upsilon(r+1)}\smwedge\cdots\smwedge f_{\upsilon(m)}\;.
\end{multline}
To compare this expression with (\ref{right_side_4}), we have to change to a
summation over the index set $J_1(m,r,k)$. Therefore consider the map 
$\Phi: J_1(m,r,k)\to J_2(m,r,k)$ defined by 
$\Phi(\sigma,\upsilon)=(\sigma,\upsilon\kappa)$
where $\kappa\in S_m$ is given by the parmutation that maps
$(r+k-m+1,\ldots,r,1,\ldots,r+k-m,r+1,\ldots,m)$ to $(1,\ldots,m)$. This permutation
can be realized by $(m-k)(r+k-m)$ transposition and 
since $|J_1(m,r,k)| = |J_2(m,r,k)|$ the map $\Phi
$ is a bijection. In conclusion (\ref{right_side_4})
can be written as
\begin{multline}\label{left_side-5}
\textstyle\frac{1}{(m-k)!(m-r)!(r+k-m)!}
 \sum_{\sigma,\upsilon\in J_2(m,k,r)}
  \sum_{\tau\in S_{r}}\sgn{\sigma+\upsilon+\tau}\;\cdot\\
\cdot\langle f_{\sigma(1)},y_{1}\rangle\cdots\langle 
 f_{\sigma(k)},y_{k}\rangle\cdot
 \langle f_{\sigma(k+1)},x_{\tau(1)}\rangle\cdots 
  \langle f_{\sigma(m)},x_{\tau(m-k)}\rangle\;\cdot\\
\cdot\langle f_{\upsilon(m-k+1)},x_{\tau(m-k+1)}\rangle
 \cdots\langle f_{\upsilon(r)},x_{\tau(r)}\rangle \\
  f_{\sigma(k+1)}\smwedge\cdots\smwedge f_{\sigma(m)}\smwedge
   f_{\upsilon(r+1)}\smwedge\cdots\smwedge f_{\upsilon(m)}\;.
\end{multline}
which is to to the right side (\ref{right_side_4}), which 
proofs equation () for simple and therefore on arbitrary
arguments.
\end{proof}
\begin{corollary}
Equation (\ref{multi_rules-5}) is only non zero for 
$|x|,|y|\leq |f|$ and $|x|+|y|\geq |f|$.
\end{corollary}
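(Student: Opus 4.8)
The plan is to read the vanishing conditions straight off the explicit contraction formulas (\ref{right_contrac_1}) and (\ref{left_contrac_1}) from the preceding proposition and apply them to the two sides of equation (\ref{multi_rules-5}), exactly as the degree bookkeeping inside the proof of that proposition already anticipated. Since both sides of (\ref{multi_rules-5}) are now known to be equal, it is enough to exhibit each of the three stated inequalities as a necessary condition for at least one of the two sides to be nonzero: two of them will come from the right hand side, and the remaining one from the left hand side.

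First I would examine the right hand side $i_yf\smwedge i_xf$. By (\ref{right_contrac_1}) the right contraction $i_xf$ vanishes whenever the tensor degree of $x$ exceeds the cotensor degree of $f$, and likewise $i_yf$ vanishes once the tensor degree of $y$ exceeds that of $f$; hence a nonvanishing right hand side forces $|x|\leq|f|$ and $|y|\leq|f|$. Next I would turn to the left hand side $i_{j_{(i_yf)}x}f$ and apply (\ref{left_contrac_1}) to the inner left contraction $j_{(i_yf)}x$: it is zero as soon as the cotensor degree of $i_yf$, namely $|f|-|y|$, exceeds the tensor degree of $x$, so a nonvanishing left hand side forces $|f|-|y|\leq|x|$, that is $|x|+|y|\geq|f|$. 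Combining the three conditions finishes the argument; one observes in passing that $|x|\leq|f|$ and $|y|\leq|f|$ already imply $|x|+|y|\leq 2|f|$, so that the upper bound coming from the outer contraction on the left hand side is automatic and no extra inequality is needed.

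I do not expect a genuine obstacle here: everything required is essentially contained, almost verbatim, in the range-of-nontriviality discussion carried out inside the proof of the proposition establishing (\ref{multi_rules-5}), and the corollary only records that range. The single point deserving care is keeping the degree conventions consistent, i.e. reading "$|x|>|f|$" as "the tensor degree of $x$ exceeds the cotensor degree of $f$" when translating the vanishing statements of (\ref{right_contrac_1}) and (\ref{left_contrac_1}) into the symbolic inequalities of the corollary.
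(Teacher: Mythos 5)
Your argument is correct and rests on the same facts the paper itself uses: the corollary is, in the paper, simply a record of the ``range of nontriviality'' worked out inside the proof of equation (\ref{multi_rules-5}), where the right side is shown to vanish for $|x|>|f|$ or $|y|>|f|$ (via (\ref{right_contrac_1})) and for $|x|+|y|<|f|$, and the left side is shown to vanish on the same range. Your one genuine streamlining is logical rather than computational: instead of verifying the full set of inequalities separately on each side (which in the paper requires the antisymmetry argument for $i_yf\smwedge i_xf$ when $|x|+|y|<|f|$ and a permutation-cancellation argument for the left side), you observe that, the identity being already established, it suffices to see each inequality as necessary for \emph{one} side --- $|x|\leq|f|$ and $|y|\leq|f|$ from the factors of the right side, and $|x|+|y|\geq|f|$ from the vanishing of the inner left contraction $j_{(i_yf)}x$ once the cotensor degree $|f|-|y|$ of $i_yf$ exceeds $|x|$. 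This transfer-by-equality is sound and buys a shorter proof; the only care needed, which you correctly flag, is the degree bookkeeping (tensor degree of $x,y$ versus cotensor degree of $f$), and the harmless extension of the vanishing statements from simple to general homogeneous arguments by linearity.
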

\section{The n-plectic homotopy Jacobi equation}\label{homotopy_Lie_algebra}
In this section, we proof the
\textbf{n-plectic homotopy Jacobi equation} in dimension $k$:
\begin{multline}
\textstyle\sum_{j=1}^k\sum_{\sigma\in Sh(j,k-j)}\sgn{\sigma+j(k-j)}
e(\sigma;sx_1,\ldots,sx_k)\cdot{}\\\label{shJ_1}
\cdot\{\{v_{\sigma(1)},\ldots, v_{\sigma(j)}\},
 v_{\sigma(j+1)},\ldots, v_{\sigma(k)}\}=0\;.
\end{multline}
Here the appearance of the $(n-1)$-fold shifting is due to the degree relation
$|s^{n-1}v|=|sx|$ between Poisson cotensors and associated Hamilton tensors.

We proof these equations by induction on $k$. Sine () and (),
they are satisfied for all $k\leq 3$, which serves as the induction base.

In a nutshell the proof is roughly as follows: 

The following proposition gives the key observation, necessary for
the  induction to work. Loosely speaking it is the reflection of the 
structure equation on the
level of associated Hamilton tensors.
\begin{proposition}\label{_mvf_sh_J}Suppose $k\in\N$ and that 
the $n$-plectic Jacobi equation (\ref{shJ_1}) holds for all $j\leq(k-1)$. 
Then the exterior tensor
\begin{multline*}
\textstyle\sum_{j=1}^{k-2}\sum_{\sigma\in Sh(j,(k-1)-j)}\sgn{\sigma+j((k-1)-j)}
 e(s;sx_1,\ldots,sx_{k-1})\cdot{}\\
\cdot x_{\{\{v_{\sigma(1)},\ldots,v_{\sigma(j)}\},
  v_{\sigma(j+1)},\ldots,v_{\sigma(k-1)}\}}		
\end{multline*}
is an element of the kernel $\ker(\omega)$ for all Poisson cotensors 
$v_1,\ldots,v_k\in V$ and associated Hamilton tensors $x_1,\ldots,x_k$.
\end{proposition}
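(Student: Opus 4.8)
The plan is to contract $\omega$ along the displayed exterior tensor and show the result vanishes; since $\ker(\omega)=\{x\mid i_x\omega=0\}$ by definition, this is exactly what is needed. Abbreviate
$$X\defeq\textstyle\sum_{j=1}^{k-2}\sum_{\sigma\in Sh(j,(k-1)-j)}\sgn{\sigma+j((k-1)-j)}e(\sigma;sx_1,\ldots,sx_{k-1})\,x_{\{\{v_{\sigma(1)},\ldots,v_{\sigma(j)}\},v_{\sigma(j+1)},\ldots,v_{\sigma(k-1)}\}}$$
and let $S$ be the same sum with each Hamilton tensor $x_{\{\{\cdots\}\}}$ replaced by the Poisson cotensor $\{\{\cdots\}\}$ itself. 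By the inductive hypothesis (the statements of Theorem \ref{main_theorem_1} hold for all arities $\le k-1$), every iterated bracket occurring here is a Poisson cotensor — the inner bracket has arity $j\le k-2$, the outer one arity $k-j\le k-1$, and for $j=1$ the inner ``$1$-bracket'' is $d$, which preserves $\mathcal{P}ois(A,\mathfrak{g},\omega)$ by (\ref{theorem_dga_algebra}) — and the tensor written next to it is an associated Hamilton tensor. Hence $i_{x_{\{\{\cdots\}\}}}\omega=d\{\{\cdots\}\}$ by the fundamental equation (\ref{fundamental_eq}), so by linearity of contraction $i_X\omega=dS$.

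It therefore suffices to show $dS=0$, and for this I would compare $S$ with the $(n-1)$-fold shifted homotopy Jacobi equation (\ref{shJ_1}) in dimension $k-1$, which holds by assumption. That equation is the sum of the $j=1,\ldots,k-1$ terms, carrying precisely the same sign $\sgn{\sigma+j((k-1)-j)}$ and Koszul factor $e(\sigma;sx_1,\ldots,sx_{k-1})$ that appear in $S$; the only summand not already present in $S$ is the one with $j=k-1$, for which $Sh(k-1,0)=\{\mathrm{id}\}$, both $\sgn{\sigma+j((k-1)-j)}$ and $e(\mathrm{id};\ldots)$ equal $+1$, and the summand is $\{\{v_1,\ldots,v_{k-1}\}\}$, which by the convention that the homotopy Poisson ``$1$-bracket'' is the de Rham differential equals $d\{v_1,\ldots,v_{k-1}\}$. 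Thus (\ref{shJ_1}) in dimension $k-1$ reads $S+d\{v_1,\ldots,v_{k-1}\}=0$, i.e. $S=-d\{v_1,\ldots,v_{k-1}\}$, whence $dS=-d^2\{v_1,\ldots,v_{k-1}\}=0$ because $d$ is a differential on $\Omega^\bullet(\mathfrak{g},A)$. Combining with $i_X\omega=dS$ gives $i_X\omega=0$, so $X\in\ker(\omega)$, as claimed.

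The computation is short, so the only real work is bookkeeping, and this is where I expect the main (minor) obstacle to lie. First, every iterated bracket appearing must genuinely be a Poisson cotensor so that the fundamental equation (\ref{fundamental_eq}) applies to each associated $x$; this is exactly the point that forces the induction hypothesis on all lower arities. Second, one must match the conventions in (\ref{shJ_1}) so that the omitted $j=k-1$ term is exactly $+d\{v_1,\ldots,v_{k-1}\}$ with no stray sign — this uses the degree normalization $|s^{n-1}v_i|=|sx_i|$ between a Poisson cotensor and an associated Hamilton tensor, which makes the decalage signs in $S$ and in (\ref{shJ_1}) agree term by term. Beyond that I expect no further difficulty: once $S$ is recognized as a coboundary, the conclusion is immediate from $d^2=0$ together with the observation that $X$ is, by construction, a Hamilton tensor associated to $S$.
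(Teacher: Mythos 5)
Your argument is correct and coincides with the paper's own proof: both isolate the $j=k-1$ term of the Jacobi equation (\ref{shJ_1}) in dimension $k-1$ as $d\{v_1,\ldots,v_{k-1}\}$, kill it with $d^2=0$, and then use the induction hypothesis together with the fundamental equation (\ref{fundamental_eq}) to rewrite $d$ of each nested bracket as a contraction of $\omega$, yielding $i_X\omega=0$. The only difference is presentational (you compute $i_X\omega=dS$ first and then show $dS=0$, while the paper applies $d$ to the whole identity first), so nothing further is needed.
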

\begin{proof}
Apply the de Rham differential $d$ to the $n$-plectic Jacobi 
equation (\ref{shJ_1}) in dimension $(k-1)$. Since $d^2=0$ we get 
\begin{multline*}
\textstyle\sum_{j=1}^{k-2}
 \sum_{\sigma\in Sh(j,(k-1)-j)}\sgn{\sigma+j((k-1)-j)}
  e(\sigma;sx_1,\ldots,sx_{k-1})\cdot{}\\
\cdot d\{\{v_{\sigma(1)},\ldots,v_{\sigma(j)}\},
    v_{\sigma(j+1)},\ldots,v_{\sigma(k-1)}\}=0\;.
\end{multline*}
From () we know that if the homotopy Jacobi equation holds in dimension $j$, then
the images of $j$-ary Poisson bracket are Poisson cotensors. 
Therefore we can use the fundamental pairing () to find Hamilton tensors
associated to the nested brackets and transform 
the last equation into
\begin{multline*}
\textstyle\sum_{j=1}^{k-2}
 \sum_{\sigma\in Sh(j,(k-1)-j)}\sgn{\sigma+j((k-1)-j)}
  e(\sigma;sx_1,\ldots,sx_{k-1})\cdot{}\\
\cdot i_{x_{\{\{v_{\sigma(1)},\ldots,v_{\sigma(j)}\},
    v_{\sigma(j+1)},\ldots,v_{\sigma(k-1)}\}}}\omega=0\;.
\end{multline*}
\end{proof}
The following theorem computes the induction step. 
\begin{theorem}[Induction Step]Let $k\in\N$ be any natural number and
assume that the $n$-plectic Jacobi equations (\ref{shJ_1})
hold for all $j\leq (k-1)$. Then the same equation is
also satisfied for $k$.
\end{theorem}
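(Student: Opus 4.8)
For $k\le 3$ equation~(\ref{shJ_1}) is Proposition~\ref{jacobi-2} or Theorem~\ref{homotopy_3_bracket} (and $d^2=0$ for $k=1$), so assume $k\ge4$. By hypothesis~(\ref{shJ_1}) holds in all dimensions $j\le k-1$, and Theorem~\ref{main_theorem_1} then makes available, for all $j\le k-1$, that the homotopy Poisson $j$-bracket is well defined, $(n-1)$-fold shifted graded antisymmetric and takes values in $\mathcal{P}ois(A,\mathfrak{g},\omega)$, together with the explicit associated Hamilton tensors and Poisson constraints; and it makes available Proposition~\ref{_mvf_sh_J}, stating that the exterior tensor
\[
\textstyle\sum_{j=1}^{k-2}\sum_{\sigma\in Sh(j,(k-1)-j)}\sgn{\sigma+j((k-1)-j)}
e(\sigma;sx_1,\dots,sx_{k-1})\,
x_{\{\{v_{\sigma(1)},\dots,v_{\sigma(j)}\},v_{\sigma(j+1)},\dots,v_{\sigma(k-1)}\}}
\]
lies in $\ker(\omega)$. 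Write $\mathcal{J}_k$ for the left-hand side of~(\ref{shJ_1}).

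The first step is to single out the two extreme ranges of $j$. By Definition~\ref{niceBracket} of the homotopy Poisson $1$-bracket the $j=k$ summand of $\mathcal{J}_k$ is $d\{v_1,\dots,v_k\}$, and the $j=1$ summands form the family $\sum_{\sigma\in Sh(1,k-1)}\sgn{\sigma+(k-1)}e(\sigma;sx_1,\dots,sx_k)\{dv_{\sigma(1)},v_{\sigma(2)},\dots,v_{\sigma(k)}\}$. Expanding $d\{v_1,\dots,v_k\}$ by Definition~\ref{niceBracket} and then Cartan's infinitesimal homotopy formula~(\ref{Cartans_formula}) summand-wise,
\[
d\,i_{x_{\{v_{\sigma(1)},\dots,v_{\sigma(k-1)}\}}}v_{\sigma(k)}
=L_{x_{\{v_{\sigma(1)},\dots,v_{\sigma(k-1)}\}}}v_{\sigma(k)}
+(-1)^{|x_{\{v_{\sigma(1)},\dots,v_{\sigma(k-1)}\}}|}\,
i_{x_{\{v_{\sigma(1)},\dots,v_{\sigma(k-1)}\}}}\,dv_{\sigma(k)}\,,
\]
produces a Lie-derivative part and a contraction part. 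The Lie-derivative part cancels cleanly against the $j=k-1$ summands of $\mathcal{J}_k$: by Definition~\ref{D_2} each of those equals $-L_{x_{\{v_{\sigma(1)},\dots,v_{\sigma(k-1)}\}}}v_{\sigma(k)}+(-1)^{(|x_{\{v_{\sigma(1)},\dots,v_{\sigma(k-1)}\}}|-1)(|x_{\sigma(k)}|-1)}L_{x_{\sigma(k)}}\{v_{\sigma(1)},\dots,v_{\sigma(k-1)}\}$, with the same representative Hamilton tensor of the $(k-1)$-bracket. The contraction part, the $j=1$ family, and -- via the kernel property~(\ref{kernel_prop}), which allows a vanishing Hamilton tensor to be assigned to any coboundary argument -- the contributions in which an exact argument sits inside a nested bracket, are all carried into the global rearrangement of the next step; for $k=3$ this rearrangement is trivial and simply identifies these terms with $\sum_{Sh(1,2)}\pm\{df_{\sigma(1)},f_{\sigma(2)},f_{\sigma(3)}\}$, recovering the argument of Theorem~\ref{homotopy_3_bracket}.

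After these cancellations $\mathcal{J}_k$ equals $\sum_{\sigma\in Sh(k-1,1)}\pm\,L_{x_{\sigma(k)}}\{v_{\sigma(1)},\dots,v_{\sigma(k-1)}\}$ plus the residual contraction terms plus the middle summands $2\le j\le k-2$ of $\mathcal{J}_k$. Next I would expand every remaining bracket: $\{v_{\sigma(1)},\dots,v_{\sigma(k-1)}\}$ and the outer $\ell$-brackets of the middle summands ($3\le\ell=1+k-j\le k-1$) by Definition~\ref{niceBracket} down to the base case, pushing $L_{x_{\sigma(k)}}$ inward across contractions by~(\ref{multi_rules-2}) and rearranging iterated Lie derivatives by~(\ref{multi_rules-3}), which converts the nested-bracket Hamilton tensors into Schouten--Nijenhuis brackets. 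Reindexing the shuffle sums and using the Jacobi identity of the Schouten--Nijenhuis bracket for exterior tensors to annihilate the genuine multivector Jacobiator, all surviving terms collect into $\sum_{\sigma\in Sh(k-1,1)}\pm\,i_{z_\sigma}v_{\sigma(k)}$, where $z_\sigma$ is a representative of the exterior tensor of Proposition~\ref{_mvf_sh_J} formed from the $k-1$ cotensors $v_{\sigma(1)},\dots,v_{\sigma(k-1)}$. Since $z_\sigma\in\ker(\omega)\subset\ker(v_{\sigma(k)})$ by Propositions~\ref{_mvf_sh_J} and~\ref{kernel_prop}, each of these contractions vanishes, so $\mathcal{J}_k=0$, which is the assertion.

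The main obstacle is the last step: the décalage and Koszul sign bookkeeping, together with the shuffle reindexing, needed to verify that expanding all the $\ell$-brackets for $3\le\ell\le k-1$ and commuting $L_{x_{\sigma(k)}}$ through the contractions -- modulo the Schouten--Nijenhuis Jacobi identity -- really do reassemble, with no residue, into a single contraction of the kernel tensor of Proposition~\ref{_mvf_sh_J} against the remaining Poisson cotensor. This is the long calculation the paper defers to this section; the earlier steps are the $k=3$ argument of Theorem~\ref{homotopy_3_bracket} carried over essentially verbatim.
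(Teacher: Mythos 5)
Your outline reproduces the paper's own strategy almost step for step: isolate the $j=k$ and $j=1$ summands, expand $d\{v_1,\ldots,v_k\}$ through the recursive definition (\ref{niceBracket}) and Cartan's formula, cancel the resulting Lie-derivative terms against the $-L_{x_{\{v_{\sigma(1)},\ldots,v_{\sigma(k-1)}\}}}v_{\sigma(k)}$ halves of the $j=k-1$ summands, push the remaining Lie derivatives through contractions via (\ref{multi_rules-2}), and reduce what survives to a contraction of the kernel tensor of Proposition (\ref{_mvf_sh_J}) against the last Poisson cotensor, which vanishes by the induction hypothesis together with the kernel property (\ref{kernel_prop}). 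That is exactly the paper's route, so the approach is not in question.

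The genuine gap is that the decisive step is asserted rather than carried out. The claim that, after expanding the middle summands $2\le j\le k-2$ and the $L_{x_{\sigma(k)}}\{v_{\sigma(1)},\ldots,v_{\sigma(k-1)}\}$ terms, \emph{all} surviving contributions reassemble ``with no residue'' into $\sum_\sigma \pm\, i_{z_\sigma}v_{\sigma(k)}$ is precisely the content of the theorem, and it is where the paper spends essentially its entire proof: the cases $k=4$, $k=5$ and $k\ge 6$ have to be treated separately because the 2- and 3-bracket contributions interleave differently; the middle terms only disappear through a pairwise cancellation of the $j$ against the $(k-j)$ contributions using $i_xi_{x'}=(-1)^{|x||x'|}i_{x'}i_x$; the identity $x_{\{f_i,f_j\}}=2[x_j,x_i]$ and the combinatorial prefactors $1/j!(k-j)!$ must be tracked to make the surviving terms match the tensor of Proposition (\ref{_mvf_sh_J}) coefficient by coefficient; and the Schouten--Nijenhuis Jacobi identity, which you invoke in the general collection step, actually enters only in the $k=4$ case (for $k\ge5$ the cancellations come from the graded commutativity of contractions and the $j\leftrightarrow k-j$ pairing). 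Labelling this bookkeeping ``the main obstacle'' and deferring it means the induction step has not been established; to complete the argument you would need to exhibit the cancellations explicitly, at least in the three regimes the paper distinguishes.
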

\begin{proof}
Consider the general $n$-plectic Jacobi equation 
(\ref{shJ_1}), assume that each argument $v_i\in V$ is 
homogeneous and choose appropriate Hamilton tensors $x_i$.

Since all $k$-ary brackets are $(n-1)$-fold shifted graded antisymmetric, we can 
rewrite equation (\ref{shJ_1}) as a sum over arbitrary permutations:
\begin{multline}
\textstyle\frac{1}{j!(k-j)!}\sum_{j=1}^k\sum_{\sigma\in S_k}\sgn{\sigma+j(k-j)}
e(\sigma;sx_1,\ldots,sx_k)\cdot{}\\\label{shJ_2}
\cdot\{\{v_{\sigma(1)},\ldots, v_{\sigma(j)}\},
 v_{\sigma(j+1)},\ldots, v_{\sigma(k)}\}=0\;.
\end{multline}
Now recall that the homotopy Poisson $1$-bracket $\{\cdot\}$ is nothing
but the de Rham differential. To exploit this, rephrase
(\ref{shJ_2}) by separating all terms, which contain the $1$-bracket:
\begin{align}\label{shJ_3_1}
&d\{v_{1},\ldots, v_{k}\}\\\label{shJ_3_2}
&+\textstyle\frac{1}{(k-1)!}\sum_{\sigma\in S_k}\sgn{\sigma+(k-1)}
e(\sigma;sx_1,\ldots,sx_k)
\{dv_{\sigma(1)},v_{\sigma(2)},\ldots, v_{\sigma(k)}\}\\\label{shJ_3_3}
&\!\begin{multlined}
+\textstyle\frac{1}{j!(k-j)!}\sum_{j=2}^{k-1}
 \sum_{\sigma\in S_k}\sgn{\sigma+j(k-j)}
e(\sigma;sx_1,\ldots,sx_k)\cdot{}\\
\phantom{mmmmmmmmmmmmmm}\cdot\{\{v_{\sigma(1)},\ldots, v_{\sigma(j)}\},
 v_{\sigma(j+1)},\ldots, v_{\sigma(k)}\}=0\;.
\end{multlined} 
\end{align}
According to () as well as (), we know that this equation is satisfied 
for all $k\leq 3$ and therefore we assume $k>3$. In this case 
the homotopy Poisson $k$-bracket () is given by recursion, which can 
be used to replace the bracket in the cocycle (\ref{shJ_3_1}) by its definition:
\begin{multline}
d\{v_{1},\ldots,v_{k}\}=\\\label{shJ_4}
\textstyle \frac{1}{(k-1)!}\sum_{\sigma\in S_{k}}(-1)^{\sigma}
 e(\sigma;sx_{1},\ldots,sx_{k})
  (-1)^{(k-1)}di_{x_{\{v_{\sigma(1)},\ldots,v_{\sigma(k-1)}\}}}v_{\sigma(k)}\;.
\end{multline}
To rewrite the $k$-ary bracket in (\ref{shJ_3_2}), observe that its 
definition can be broken down into two parts. One that fixes the first argument 
and another one which shuffles the first argument to the $k$-th position. After simplification we get 
\begin{multline}
\textstyle \frac{1}{(k-1)!}\sum_{\sigma\in S_k}
 (-1)^{\sigma+1(k-1)}e(\sigma;v_{1},\ldots,v_{k})
  \{dv_{\sigma(1)},v_{\sigma(2)},\ldots,v_{\sigma(k)}\}=\\\label{shJ_5}
\textstyle \frac{1}{(k-2)!}\sum_{\sigma\in S_{k}}
 (-1)^{\sigma}e(\sigma;sx_{1},\ldots,sx_{k})
  i_{x_{\{dv_{\sigma(1)},v_{\sigma(2)},\ldots,v_{\sigma(k-1)}\}}}v_{\sigma(k)}\\
-\textstyle \frac{1}{(k-1)!}\sum_{\sigma\in S_{k}}
(-1)^{\sigma+(k-1)}e(\sigma;sx_{1},\ldots,sx_{k})\cdot{}\\
 \cdot(-1)^{|x_{\{v_{\sigma(1)},\ldots,v_{\sigma(k-1)}\}}|}
  i_{x_{\{v_{\sigma(1)},\ldots,v_{\sigma(k-1)}\}}}dv_{\sigma(k)}\;.
\end{multline}
Now insert (\ref{shJ_4}) and (\ref{shJ_5}) back into (\ref{shJ_3_1}) and 
(\ref{shJ_3_2}) and apply Cartans infinitesimal homotopy formula ().
This transform (\ref{shJ_2}) into
\begin{align}
&\nonumber
 \textstyle \frac{1}{(k-2)!}\sum_{\sigma\in S_{k}}
 (-1)^{\sigma}e(\sigma;sx_{1},\ldots,sx_{k})
  i_{x_{\{dv_{\sigma(1)},v_{\sigma(2)},\ldots,v_{\sigma(k-1)}\}}}v_{\sigma(k)}\\
&\label{shJ_6}
+\textstyle \frac{1}{(k-1)!}\sum_{\sigma\in S_{k}}
 (-1)^{\sigma+(k-1)}e(\sigma;sx_{1},\ldots,sx_{k})
  L_{x_{\{\sigma(1),\ldots,\sigma(k-1)\}}}v_{\sigma(k)}\\
&\nonumber
\!\begin{multlined}
+\textstyle\frac{1}{j!(k-j)!}\sum_{j=2}^{k-1}
 \sum_{\sigma\in S_k}\sgn{\sigma+j(k-j)}
e(\sigma;sx_1,\ldots,sx_k)\cdot{}\\
\phantom{mmmmmmmmmmmmmmmmmm}\cdot\{\{v_{\sigma(1)},\ldots, v_{\sigma(j)}\},
 v_{\sigma(j+1)},\ldots, v_{\sigma(k)}\}=0\;.
\end{multlined} 
\end{align}
To proceed we have to consider the cases $k=4$ and $k=5$ separately, since the
last terms in (\ref{shJ_6}) are different in those situations.
For $k=4$ equation (\ref{shJ_6}) is equal to
\begin{align}
&\nonumber
\textstyle \frac{1}{2}\sum_{\sigma\in S_{4}}
 (-1)^{\sigma}e(\sigma;sx_{1},\ldots,sx_{4})
  i_{x_{\{dv_{\sigma(1)},v_{\sigma(2)},v_{\sigma(3)}\}}}v_{\sigma(4)}\\
&\nonumber
-\textstyle \frac{1}{6}\sum_{\sigma\in S_{4}}
 (-1)^{\sigma}e(\sigma;sx_{1},\ldots,sx_{4})
  L_{x_{\{v_{\sigma(1)},v_{\sigma(2)},v_{\sigma(3)}\}}}v_{\sigma(4)}\\
&\label{shJ_7}
+\textstyle\frac{1}{4}\sum_{\sigma\in S_4}(-1)^{\sigma}
 e(\sigma;sx_1,\ldots,sx_4)
  \{\{v_{\sigma(1)}, v_{\sigma(2)}\},v_{\sigma(3)}, v_{\sigma(4)}\}\\
&\nonumber
 -\textstyle\frac{1}{6}\sum_{\sigma\in S_4}(-1)^{\sigma}
 e(\sigma;sx_1,\ldots,sx_4)
 	\{\{v_{\sigma(1)}, v_{\sigma(2)}, v_{\sigma(3)}\},v_{\sigma(4)}\}=0\;.
\end{align}
To see that (\ref{shJ_7}) is correct, expand the remaining nested 
brackets according to their definition. After 
simplification this gives
\begin{align}
&\label{shJ_8_1}
\textstyle \frac{1}{2}\sum_{\sigma\in S_{4}}
 (-1)^{\sigma}e(\sigma;sx_{1},\ldots,sx_{4})
  i_{x_{\{dv_{\sigma(1)},v_{\sigma(2)},v_{\sigma(3)}\}}}v_{\sigma(4)}\\
&\label{shJ_8_2}
-\textstyle \frac{1}{6}\sum_{\sigma\in S_{4}}
 (-1)^{\sigma}e(\sigma;sx_{1},\ldots,sx_{4})
  L_{x_{\{v_{\sigma(1)},v_{\sigma(2)},v_{\sigma(3)}\}}}v_{\sigma(4)}\\
&\label{shJ_8_3}
-\textstyle \sum_{\sigma\in S_{4}}(-1)^{\sigma}
 e(\sigma;sx_{1},\ldots,sx_{4})
  i_{[x_{\sigma(3)},[x_{\sigma(2)},x_{\sigma(1)}]]}v_{\sigma(4)}\\
&\label{shJ_8_4}
-\textstyle \frac{1}{2}\sum_{\sigma\in S_{4}}
 (-1)^{\sigma}e(\sigma;sx_{1},\ldots,sx_{4})
  i_{[x_{\sigma(2)},x_{\sigma(1)}]}
   L_{x_{\sigma(3)}}v_{\sigma(4)}\\
&\label{shJ_8_5}
+\textstyle \frac{1}{6}\sum_{\sigma\in S_{4}}
 (-1)^{\sigma}e(\sigma;sx_{1},\ldots,sx_{4})
  L_{x_{\{v_{\sigma(1)},v_{\sigma(2)},v_{\sigma(3)}\}}}v_{\sigma(4)}\\
&\label{shJ_8_6}
-\textstyle \frac{1}{2}\sum_{\sigma\in S_{4}}
 (-1)^{\sigma}e(\sigma;sx_{1},\ldots,sx_{4})
  (-1)^{|x_{\sigma(1)}|}
   L_{x_{\sigma(1)}}i_{[x_{\sigma(3)},x_{\sigma(2)}]}v_{\sigma(4)}=0\;.
\end{align}
Now (\ref{shJ_8_2}) cancels againt (\ref{shJ_8_5}) and using equation (), 
the terms (\ref{shJ_8_4}) and (\ref{shJ_8_6}) can be combined into
$\textstyle \sum_{\sigma\in S_{4}}(-1)^{\sigma}
 e(\sigma;sx_{1},\ldots,sx_{4})
  i_{[x_{\sigma(3)},[x_{\sigma(2)},x_{\sigma(1)}]]}v_{\sigma(4)}$.
This expression vanishes together with  (\ref{shJ_8_3}),
since the Schouten-Nijenhuis brackets satisfies the strict Jacobi equation of a
Gerstenhaber algebra. The vanishing of (\ref{shJ_8_1}) follows from 
theorem (), proposition () and the kernel property, since 
equation (\ref{shJ_1}) holds for $k=3$.
This proofs (\ref{shJ_1}) for $k=4$.

Now suppose $k\geq 5$. To proceed we separate all terms 
in equation (\ref{shJ_6}), that contains the homotopy Poisson $2$-bracket.

\begin{align}
&\nonumber
\textstyle \frac{1}{(k-2)!}\sum_{\sigma\in S_{k}}
 (-1)^{\sigma}e(\sigma;sx_{1},\ldots,sx_{k})
  i_{x_{\{dv_{\sigma(1)},v_{\sigma(2)},\ldots,v_{\sigma(k-1)}\}}}v_{\sigma(k)}\\
&\nonumber
+\textstyle \frac{1}{(k-1)!}\sum_{\sigma\in S_{k}}
 (-1)^{\sigma+(k-1)}e(\sigma;sx_{1},\ldots,sx_{k})
  L_{x_{\{\sigma(1),\ldots,\sigma(k-1)\}}}v_{\sigma(k)}\\
&\label{shJ_9_1}
+\textstyle \frac{1}{2(k-2)!}\sum_{\sigma\in S_k}
 (-1)^{\sigma}e(\sigma;sx_{1},\ldots,sx_{k})
  \{\{v_{\sigma(1)},v_{\sigma(2)}\},v_{\sigma(3)},\ldots,v_{\sigma(k)}\}\\
&\label{shJ_9_2}
+\textstyle \frac{1}{(k-1)!}\sum_{\sigma\in S_k}
 (-1)^{\sigma+(k-1)}e(\sigma;sx_{1},\ldots,sx_{k})
  \{\{v_{\sigma(1)},\ldots,v_{\sigma(k-1)}\},v_{\sigma(k)}\}\\  
&\nonumber
\!\begin{multlined}
+\textstyle\sum_{j=3}^{k-2}\frac{1}{j!(k-j)!}\sum_{\sigma\in Sh(j,k-j)}
 \sgn{\sigma+j(k-j)}e(\sigma;sx_1,\ldots,sx_k)\cdot{}\\
\phantom{mmmmmmmmmmmmmmmmmm}\cdot\{\{v_{\sigma(1)},\ldots, v_{\sigma(j)}\},
 v_{\sigma(j+1)},\ldots, v_{\sigma(k)}\}=0\;.
\end{multlined} 
\end{align}
Now consider (\ref{shJ_9_1}) and (\ref{shJ_9_2}) and express the nested brackets
in terms of their definitions. Since $k\geq 5$ this leads to 
the following identities:
\begin{multline*}
\textstyle \frac{1}{2(k-2)!}\sum_{\sigma\in S_k}
 (-1)^{\sigma}e(\sigma;sx_{1},\ldots,sx_{k})
  \{\{v_{\sigma(1)},v_{\sigma(2)}\},v_{\sigma(3)},\ldots,v_{\sigma(k)}\}=\\
\textstyle \frac{1}{2(k-3)!}\sum_{\sigma\in S_{k}}
 (-1)^{\sigma+(k-2)}e(\sigma;sx_{1},\ldots,sx_{k})
   i_{x_{\{\{v_{\sigma(1)},v_{\sigma(2)}\},
    v_{\sigma(3)},\ldots,v_{\sigma(k-1)}\}}}v_{\sigma(k)}\\
-\textstyle \frac{1}{(k-2)!}\sum_{\sigma\in S_{k}}
 (-1)^{\sigma}e(\sigma;sx_{1},\ldots,sx_{k})
  i_{x_{\{v_{\sigma(1)},\ldots,v_{\sigma(k-2)}\}}}
   L_{x_{\sigma(k-1)}}v_{\sigma(k)} \;.
\end{multline*}
\begin{multline*}
\textstyle \frac{1}{(k-1)!}\sum_{\sigma\in S_k}
 (-1)^{\sigma+(k-1)}e(\sigma;sx_{1},\ldots,sx_{k})
  \{\{v_{\sigma(1)},\ldots,v_{\sigma(k-1)}\},v_{\sigma(k)}\}=\\
\textstyle -\frac{1}{(k-2)!}\sum_{\sigma\in S_{k}}
 (-1)^{\sigma+(k-1)|x_{\sigma(1)}|}
  e(\sigma;sx_{1},\ldots,sx_{k})
   L_{x_{\sigma(1)}}i_{x_{\{v_{\sigma(2)},\ldots,v_{\sigma(k-1)}\}}}v_{\sigma(k)}\\
-\textstyle \frac{1}{(k-1)!}\sum_{\sigma\in S_{k}}
 (-1)^{\sigma+(k-1)}e(\sigma;sx_{1},\ldots,sx_{k})
  L_{x_{\{v_{\sigma(1)},\ldots,v_{\sigma(k-1)}\}}}v_{\sigma(k)}\;.    
\end{multline*}
Now using $[x_j,x_i]=\frac{1}{2}x_{\{v_i,v_j\}}$ and 
equation (), we have the following additional identity
\begin{multline*}
\textstyle -\frac{1}{(k-2)!}\sum_{\sigma\in S_{k}}
 (-1)^{\sigma+(k-1)|x_{\sigma(1)}|}e(\sigma;sx_{1},\ldots,sx_{k})
  L_{x_{\sigma(1)}}i_{x_{\{v_{\sigma(2)},\ldots,v_{\sigma(k-1)}\}}}v_{\sigma(k)}\\
-\textstyle \frac{1}{(k-2)!}\sum_{\sigma\in S_{k}}(-1)^{\sigma}e(\sigma;sx_{1},\ldots,sx_{k})i_{x_{\{v_{\sigma(1)},\ldots,v_{\sigma(k-2)}\}}}L_{x_{\sigma(k-1)}}v_{\sigma(k)}=\\
\phantom{mmmm}\textstyle \frac{1}{(k-2)!}\sum_{\sigma\in S_{k}}
 (-1)^{\sigma}e(\sigma;sx_{1},\ldots,sx_{k})
  i_{[x_{\sigma(k-2)},x_{\{v_{\sigma(1)},\ldots,
   v_{\sigma(k-2)}\}}]}v_{\sigma(k)}=\\
\textstyle \frac{1}{2(k-2)!}\sum_{\sigma\in S_{k}}
 (-1)^{\sigma}e(\sigma;sx_{1},\ldots,sx_{k})
  i_{x_{\{\{v_{\sigma(1)},\ldots,
   v_{\sigma(k-2)}\},v_{\sigma(k-1)}\}}}v_{\sigma(k)}   
\end{multline*}
Substituting this back into expression (\ref{shJ_9_1}) and
(\ref{shJ_9_2}) we can further rewrite the $n$-plectic Jacobi equation 
as
\begin{align}
&\nonumber
\textstyle \frac{1}{(k-2)!}\sum_{\sigma\in S_{k}}
 (-1)^{\sigma}e(\sigma;sx_{1},\ldots,sx_{k})
  i_{x_{\{dv_{\sigma(1)},v_{\sigma(2)},\ldots,v_{\sigma(k-1)}\}}}v_{\sigma(k)}\\
&\nonumber
+\textstyle \frac{1}{2(k-3)!}\sum_{\sigma\in S_{k}}
 (-1)^{\sigma+(k-2)}e(\sigma;sx_{1},\ldots,sx_{k})
   i_{x_{\{\{v_{\sigma(1)},v_{\sigma(2)}\},
    v_{\sigma(3)},\ldots,v_{\sigma(k-1)}\}}}v_{\sigma(k)}\\
&\label{shJ_10}
+\textstyle \frac{1}{2(k-2)!}\sum_{\sigma\in S_{k}}
 (-1)^{\sigma}e(\sigma;sx_{1},\ldots,sx_{k})
  i_{x_{\{\{v_{\sigma(1)},\ldots,
   v_{\sigma(k-2)}\},v_{\sigma(k-1)}\}}}v_{\sigma(k)}\\   
&\nonumber
\!\begin{multlined}
+\textstyle\frac{1}{j!(k-j)!}\sum_{j=3}^{k-2}
 \sum_{\sigma\in Sh(j,k-j)}\sgn{\sigma+j(k-j)}
e(\sigma;sx_1,\ldots,sx_k)\cdot{}\\
\phantom{mmmmmmmmmmmmmmmmmm}\cdot\{\{v_{\sigma(1)},\ldots, v_{\sigma(j)}\},
 v_{\sigma(j+1)},\ldots, v_{\sigma(k)}\}=0\;.
\end{multlined} 
\end{align}
Since this equation behaves differently for $k=5$ and $k> 5$,
first assume $k=5$. In this case the last row of (\ref{shJ_10})
can be computed by applying the definition of the nested Poisson $k$-brackets.
After simplification this gives  

\begin{multline}
\label{shJ_11}
\textstyle \frac{1}{3!2!}\sum_{\sigma\in S_5}(-1)^{\sigma+3\cdot 2}
 e(\sigma;sx_{1},\ldots,sx_{5})
 \{\{v_{\sigma(1)},v_{\sigma(2)},v_{\sigma(3)}\},
  v_{\sigma(4)},v_{\sigma(5)}\}=\\
\textstyle \frac{1}{4}\sum_{\sigma\in S_{5}}
 (-1)^{\sigma}e(\sigma;sx_{1},\ldots,sx_{5})
  (-1)^{|x_{\sigma(1)}|+|x_{\sigma(2)}|}i_{[x_{\sigma(2)},x_{\sigma(1)}]}
   i_{[x_{\sigma(4)},x_{\sigma(3)}]}v_{\sigma(5)}\\
+\textstyle \frac{1}{6}\sum_{\sigma\in S_{5}}
 (-1)^{\sigma}e(\sigma;sx_{1},\ldots,sx_{5})
  i_{[x_{\sigma(4)},x_{\{v_{\sigma(1)},v_{\sigma(2)},v_{\sigma(3)}\}}]}
   v_{\sigma(5)}
\end{multline}
Since $(-1)^{|x_{1}|+|x_{2}|}i_{[x_{2},x_{1}]}
 i_{[x_{4},x_{3}]}(\cdot)=-(-1)^{|x_{3}|+|x_{4}|}i_{[x_{4},x_{3}]}
  i_{[x_{2},x_{1}]}(\cdot)$ the terms in the second row vanish. Using $[x_j,x_i]=\frac{1}{2}x_{\{v_i,v_j\}}$
we can substitute (\ref{shJ_11}) back into (\ref{shJ_10}) to rewrite
the $n$-plectic Jacobi expression for $k=5$ into
\begin{align*}
&-\textstyle \frac{1}{3!}\sum_{\sigma\in S_{5}}
 (-1)^{\sigma+1\cdot 3}e(\sigma;sx_{1},\ldots,sx_{5})
  i_{x_{\{dv_{\sigma(1)},v_{\sigma(2)},v_{\sigma(3)},v_{\sigma(4)}\}}}
   v_{\sigma(5)}\\
&-\textstyle \frac{1}{2\cdot 2}\sum_{\sigma\in S_{5}}
 (-1)^{\sigma+2\cdot 2}e(\sigma;sx_{1},\ldots,sx_{5})
   i_{x_{\{\{v_{\sigma(1)},v_{\sigma(2)}\},
    v_{\sigma(3)},v_{\sigma(4)}\}}}v_{\sigma(5)}\\
&-\textstyle \frac{1}{3!}\sum_{\sigma\in S_{5}}
 (-1)^{\sigma+3\cdot 1}e(\sigma;sx_{1},\ldots,sx_{5})
  i_{x_{\{\{v_{\sigma(1)},v_{\sigma(2)},
   v_{\sigma(3)}\},v_{\sigma(4)}\}}}v_{\sigma(5)}\;.
\end{align*}
Since the $n$-plectic Jacobi equation is satisfied for $k=4$, 
proposition () and the kernel property (), imply that this expression 
vanishes and therefore that the $n$-plectic Jacobi equation is satisfied
for $k=5$.

Now suppose $k\geq 6$ and separate all terms that contain the
homotopy Poisson 3-bracket from the last row of equation (\ref{shJ_10}). This gives
\begin{align}
&\nonumber
\textstyle \frac{1}{(k-2)!}\sum_{\sigma\in S_{k}}
 (-1)^{\sigma}e(\sigma;sx_{1},\ldots,sx_{k})
  i_{x_{\{dv_{\sigma(1)},v_{\sigma(2)},\ldots,v_{\sigma(k-1)}\}}}v_{\sigma(k)}\\
&\nonumber
+\textstyle \frac{1}{2(k-3)!}\sum_{\sigma\in S_{k}}
 (-1)^{\sigma+(k-2)}e(\sigma;sx_{1},\ldots,sx_{k})
   i_{x_{\{\{v_{\sigma(1)},v_{\sigma(2)}\},
    v_{\sigma(3)},\ldots,v_{\sigma(k-1)}\}}}v_{\sigma(k)}\\
&\label{shJ_12}
+\textstyle \frac{1}{2(k-2)!}\sum_{\sigma\in S_{k}}
 (-1)^{\sigma}e(\sigma;sx_{1},\ldots,sx_{k})
  i_{x_{\{\{v_{\sigma(1)},\ldots,
   v_{\sigma(k-2)}\},v_{\sigma(k-1)}\}}}v_{\sigma(k)}\\
&\nonumber
+\textstyle \frac{1}{3!(k-3)!}\sum_{\sigma\in S_k}
 (-1)^{\sigma+3(k-3)}e(\sigma;sx_{1},\ldots,sx_{k})
  \{\{v_{\sigma(1)},v_{\sigma(2)},v_{\sigma(3)}\},
    v_{\sigma(4)},\ldots,v_{\sigma(k)}\}\\
&\nonumber
+\textstyle \frac{1}{2(k-2)!}\sum_{\sigma\in S_k}
 (-1)^{\sigma+(k-2)(2)}e(\sigma;sx_{1},\ldots,sx_{k})\{\{v_{\sigma(1)},\ldots,v_{\sigma(k-2)}\},v_{\sigma(k-1)},v_{\sigma(k)}\}\\      
&\nonumber
\!\begin{multlined}
+\textstyle\frac{1}{j!(k-j)!}\sum_{j=4}^{k-3}
 \sum_{\sigma\in Sh(j,k-j)}\sgn{\sigma+j(k-j)}
e(\sigma;sx_1,\ldots,sx_k)\cdot{}\\
\phantom{mmmmmmmmmmmmmmmmmm}\cdot\{\{v_{\sigma(1)},\ldots, v_{\sigma(j)}\},
 v_{\sigma(j+1)},\ldots, v_{\sigma(k)}\}=0\;.
\end{multlined} 
\end{align}
Next consider the two rows which contain the homotopy Poisson $3$-bracket
and apply the definition of all involved nested brackets. After simplification
this gives the following two identities:
\begin{align}
&\nonumber
\!\begin{multlined}
\textstyle \frac{1}{3!(k-3)!}\sum_{\sigma\in S_k}
 (-1)^{\sigma+3(k-3)}e(\sigma;sx_{1},\ldots,sx_{k})\cdot{}\\
\phantom{mmmmmmmmmmmmmmm}\cdot\{\{v_{\sigma(1)},v_{\sigma(2)},v_{\sigma(3)}\},
    v_{\sigma(4)},\ldots,v_{\sigma(k)}\}=
\end{multlined}\\
&\textstyle \frac{1}{3!(k-4)!}\sum_{\sigma\in S_{k}}(-1)^{\sigma}e(\sigma;sx_{1},\ldots,sx_{k})i_{x_{\{\{v_{\sigma(1)},v_{\sigma(2)},v_{\sigma(3)}\},v_{\sigma(4)},\ldots,v_{\sigma(k-1)}\}}}v_{\sigma(k)}
\end{align}
\begin{multline}\label{shJ_13}
\textstyle +\frac{1}{2!(k-3)!}\sum_{\sigma\in S_{k}}
 (-1)^{\sigma+3(k-3)}e(\sigma;sx_{1},\ldots,sx_{k})\cdot{}\\
\cdot(-1)^{\sum_{i=1}^{k-3}|x_{\sigma(i)}|}
   i_{\{v_{\sigma(1)},...,v_{\sigma(k-3)}\}}
    i_{[x_{\sigma(k-1)},x_{\sigma(k-2)}]}v_{\sigma(k)}
\end{multline}
\begin{multline*}
\textstyle \frac{1}{2(k-2)!}\sum_{\sigma\in S_k}
 (-1)^{\sigma}e(\sigma;sx_{1},\ldots,sx_{k})
  \{\{v_{\sigma(1)},\ldots,v_{\sigma(k-2)}\},
   v_{\sigma(k-1)},v_{\sigma(k)}\}=\\
+\textstyle \frac{1}{(k-2)!}\sum_{\sigma\in S_{k}}
 (-1)^{\sigma}e(\sigma;sx_{1},\ldots,sx_{k})
  i_{[x_{\sigma(k-1)},x_{\{v_{\sigma(1)},\ldots,v_{\sigma(k-2)}\}}]}v_{\sigma(k)}
\end{multline*}
\begin{multline}\label{shJ_14}
+\textstyle \frac{1}{(k-3)!2!}\sum_{\sigma\in S_{k}}
 (-1)^{\sigma+(k-3)}e(\sigma;sx_{1},\ldots,sx_{k})\cdot{}\\
 \cdot(-1)^{(k-2)(|x_{\sigma(1)}|+|x_{\sigma(2)}|)}
  i_{[x_{\sigma(2)},x_{\sigma(1)}]}
   i_{x_{\{v_{\sigma(3)},\ldots,v_{\sigma(k-1)}\}}}v_{\sigma(k)}
\end{multline}
From $i_xi_{x'}=(-1)^{|x||x'|}i_{x'}i_x$ follows that (\ref{shJ_13}) cancels
against (\ref{shJ_14}). Insert the remaining terms back into (\ref{shJ_12}) and
use $[x_j,x_i]=\frac{1}{2}x_{\{v_i,v_j\}}$ gives the following expression for
the $n$-plectic Jacobi identity: 
\begin{align}
&\nonumber
\textstyle \frac{1}{(k-2)!}\sum_{\sigma\in S_{k}}
 (-1)^{\sigma}e(\sigma;sx_{1},\ldots,sx_{k})
  i_{x_{\{dv_{\sigma(1)},v_{\sigma(2)},\ldots,v_{\sigma(k-1)}\}}}v_{\sigma(k)}\\
&\nonumber
+\textstyle \frac{1}{2(k-3)!}\sum_{\sigma\in S_{k}}
 (-1)^{\sigma+k}e(\sigma;sx_{1},\ldots,sx_{k})
   i_{x_{\{\{v_{\sigma(1)},v_{\sigma(2)}\},
    v_{\sigma(3)},\ldots,v_{\sigma(k-1)}\}}}v_{\sigma(k)}\\
&\nonumber
+\textstyle \frac{1}{3!(k-4)!}\sum_{\sigma\in S_{k}}(-1)^{\sigma}e(\sigma;sx_{1},\ldots,sx_{k})i_{x_{\{\{v_{\sigma(1)},v_{\sigma(2)},v_{\sigma(3)}\},v_{\sigma(4)},\ldots,v_{\sigma(k-1)}\}}}v_{\sigma(k)}\\
&\label{shJ_15}
+\textstyle \frac{1}{(k-2)!}\sum_{\sigma\in S_{k}}
 (-1)^{\sigma}e(\sigma;sx_{1},\ldots,sx_{k})
  i_{x_{\{\{v_{\sigma(1)},\ldots,
   v_{\sigma(k-2)}\},v_{\sigma(k-1)}\}}}v_{\sigma(k)}\\     
&\nonumber
\!\begin{multlined}
+\textstyle\frac{1}{j!(k-j)!}\sum_{j=4}^{k-3}
 \sum_{\sigma\in S_k}\sgn{\sigma+j(k-j)}
e(\sigma;sx_1,\ldots,sx_k)\cdot{}\\
\phantom{mmmmmmmmmmmmmmmmmm}\cdot\{\{v_{\sigma(1)},\ldots, v_{\sigma(j)}\},
 v_{\sigma(j+1)},\ldots, v_{\sigma(k)}\}=0\;.
\end{multlined} 
\end{align}
To rewrite the last row in (\ref{shJ_15}), suppose $4\leq j \leq k-3$ is fixed and
rephrase the outer Poisson bracket in terms of its definition.
\begin{multline*}
\textstyle \sum_{\sigma\in Sh(j,k-j)}
 (-1)^{\sigma+j(k-j)}e(\sigma;sx_{1},\ldots,sx_{k})\cdot{}\\
\cdot\{\{f_{\sigma(1)},\ldots,f_{\sigma(j)}\},
 f_{\sigma(j+1)},\ldots,f_{\sigma(k)}\}=
\end{multline*}
\begin{multline*}
\textstyle \frac{1}{j!(k-j-1)!}\sum_{\sigma\in S_{k}}
 (-1)^{\sigma+(j+1)(k-j)}e(\sigma;sx_{1},\ldots,sx_{k})\cdot{}\\
\cdot i_{x_{\{\{f_{\sigma(1)},\ldots,f_{\sigma(j)}\},
   f_{\sigma(j+1)},\ldots,f_{\sigma(k-1)}\}}}f_{\sigma(k)}
\end{multline*}
\begin{multline}\label{shJ_16}
\textstyle +\frac{1}{j!(k-j)!}\sum_{\sigma\in S_{k}}
 (-1)^{\sigma+j(k-j)}e(\sigma;sx_{1},\ldots,sx_{k})\cdot{}\\
\cdot(-1)^{j\sum_{i_{1}=1}^{k-j}|x_{\sigma(i_{1})}|}i_{x_{\{f_{\sigma(1)},\ldots,f_{\sigma(k-j)}\}}}\{f_{\sigma(k-j+1)},\ldots,f_{\sigma(k)}\}\;.
\end{multline}
Expending the remaining Poisson bracket in (\ref{shJ_16}), using 
$i_xi_{x'}=(-1)^{|x||x'|}i_{x'}i_x$ leads to the following identity
\begin{multline*}
\textstyle +\frac{1}{j!(k-j)!}\sum_{\sigma\in S_{k}}(-1)^{\sigma+j(k-j)}e(\sigma;sx_{1},\ldots,sx_{k})\\
 (-1)^{j\sum_{i=1}^{k-j}|x_{\sigma(i)}|}i_{x_{\{f_{\sigma(1)},\ldots,f_{\sigma(k-j)}\}}}\{f_{\sigma(k-j+1)},\ldots,f_{\sigma(k)}\}=
\end{multline*}
\begin{multline*}
\textstyle -\frac{1}{(k-j)!j!}\sum_{\sigma\in S_{k}}
 (-1)^{\sigma+(k-j)j}e(\sigma;sx_{1},\ldots,sx_{k})\\
 (-1)^{(k-j)\sum_{i_{1}=1}^{j}|x_{\sigma(i_{1})}|}
  i_{x_{\{f_{\sigma(1)},\ldots,f_{\sigma(j)}\}}}
   \{f_{\sigma(j+1)},...,f_{\sigma(k)}\}
\end{multline*}
and it follows that for given $j$ each term (\ref{shJ_16}) cancel against 
the same term for $(k-j)$. Taking this
into account the left side of the $n$-plectic Jacobi equation (\ref{shJ_2}) 
can finally be written as
\begin{multline*}
\textstyle \sum_{j=1}^{k-1}\frac{1}{j!((k-1)-j)!}\sum_{\sigma\in S_{k}}
 (-1)^{\sigma+j((k-1)-j)}e(\sigma;sx_{1},\ldots,sx_{k})\cdot{}\\
\phantom{mmmmmmmmmmmmmmmmmmm}\cdot i_{x_{\{\{f_{\sigma(1)},\ldots,f_{\sigma(j)}\},
   f_{\sigma(j+1)},\ldots,f_{\sigma(k-1)}\}}}v_{\sigma(k)}
\end{multline*}
By the induction hypothesis, equation (\ref{shJ_2}) is satisfied for 
$(k-1)$ and proposition () as well as the kernel property () imply that 
the previous expression vanishes and therefore that equation 
(\ref{shJ_2}) is satisfied for $k$, too. This completes the induction.
\end{proof}
\end{appendix}
\newpage

\end{document}